\theoremstyle{plain}
\newcommand\macDisplayText{m.a.c.}
\newcommand\mecDisplayText{m.e.c.}
\newif\ifmacdot
\IfEndWith{\macDisplayText}{.}{\macdottrue}{\macdotfalse}
\newif\ifmecdot
\IfEndWith{\mecDisplayText}{.}{\mecdottrue}{\mecdotfalse}
\newcommand\mac\macDisplayText
\newcommand\mec\mecDisplayText
\newtheorem{thrm}{Theorem}
\newtheorem{theorem}{Theorem}[subsection]
\newaliascnt{proposition}{theorem}
\newaliascnt{lemma}{theorem}
\newaliascnt{corollary}{theorem}
\newaliascnt{claim}{theorem} 
\newaliascnt{fact}{theorem}
\newaliascnt{observation}{theorem}
\newaliascnt{conjecture}{theorem}
\newaliascnt{definition}{theorem}
\newaliascnt{example}{theorem}
\newaliascnt{examples}{theorem}
\newaliascnt{question}{theorem}
\newaliascnt{remark}{theorem}
\newaliascnt{property}{theorem}
\newaliascnt{construction}{theorem}
\newaliascnt{setting}{theorem}
\newaliascnt{problem}{theorem}
\theoremstyle{plain}
\newtheorem{proposition}[proposition]{Proposition}
\newtheorem{lemma}[lemma]{Lemma}
\newtheorem{corollary}[corollary]{Corollary}
\newtheorem{claim}[claim]{Claim} 
\newtheorem{fact}[fact]{Fact}
\theoremstyle{definition}
\newtheorem{definition}[definition]{Definition}
\newtheorem{example}[example]{Example}
\newtheorem{examples}[examples]{Examples}
\newtheorem{question}[question]{Question}
\newtheorem{conjecture}[conjecture]{Conjecture}
\newtheorem{problem}[problem]{Problem}
\newtheorem{remark}[remark]{Remark}
\def\Aut{\mathop{\rm Aut}\nolimits}
\def\Alt{\mathop{\rm Alt}\nolimits}
\def\Th{\mathop{\rm Th}\nolimits}
\def\tp{\mathop{\rm tp}\nolimits}
\def\Min{\mathop{\rm Min}\nolimits}
\newcommand{\Rad}{\mathrm{Frac}(R)}
\def\bil{\mathop{\rm bil}\nolimits}
\def\qe{\mathop{\rm qe}\nolimits}
\def\rings{\mathop{\rm rings}\nolimits}
\newcommand{\mon}[1]{\langle {Z}^{#1}\rangle}
\renewcommand{\emptyset}{\varnothing}
\renewcommand{\phi}{\varphi}
\renewcommand{\epsilon}{\varepsilon}
\renewcommand{\longrightarrow}{\to}
\renewcommand{\longmapsto}{\mapsto}
\renewcommand{\Leftrightarrow}{\iff}
\title[Multidimensional asymptotic classes]{Multidimensional asymptotic classes}
\author{Sylvy Anscombe, Dugald Macpherson, Charles Steinhorn and Daniel Wolf}
\thanks{\today\\This research was funded by EPSRC grant EP/K020692/1.
The third author was partially funded by Simons Foundation Mathematics and Physical Sciences Collaboration Grant for Mathematicians, award \#524012.
The fourth author (born Daniel Wood) was funded by the Leeds School of Mathematics through a Graduate Teaching Assistantship.
Part of the present work forms part of his PhD thesis \cite{WolfPhD}.}
\address{Universit\'{e} Paris Cit\'{e} and Sorbonne Universit\'{e}, CNRS, IMJ-PRG, F-75013 Paris, France}
\email{sylvy.anscombe@imj-prg.fr}
\address{School of Mathematics, University of Leeds, Leeds LS2 9JT, United Kingdom}
\email{h.d.macpherson@leeds.ac.uk}
\address{Department of Mathematics \& Statistics, Vassar College, 124 Raymond Avenue, Poughkeepsie, New York 12604}
\email{steinhorn@vassar.edu}
\address{\emph{Formerly of the} School of Mathematics, University of Leeds, Leeds LS2 9JT, United Kingdom}
\email{dwolfeu@gmail.com}
\begin{document}
\begin{abstract}
We develop a general framework (multidimensional asymptotic classes, or \mac{}s) for handling classes of finite first order structures with a strong uniformity condition on cardinalities of definable sets: The condition asserts that definable families given by a formula $\phi(\bar{x},\bar{y})$ should take on a fixed number $n_\phi$ of approximate sizes in any $M$ in the class, with those sizes varying with $M$. The prototype is the class of all finite fields, where the uniformity is given by a theorem of Chatzidakis, van den Dries and Macintyre. It inspired the development of asymptotic classes of finite structures, which this new framework extends. 
 
The underlying theory of \mac{}s is developed, including preservation under bi-interpretability, and a proof that for the \mac{} condition to hold  it suffices to consider formulas $\phi(x,\bar{y})$ with $x$ a single variable. Many examples of \mac{}s are given, including 2-sorted structures $(F,V)$ where $V$ is a vector space over a finite field $F$ possibly equipped with a bilinear form, and an example arising from representations of quivers of finite representation type. We also give examples and structural results for multidimensional {\em exact} classes (\mec{}s), where the definable sets take a fixed number of precisely specified cardinalities, which again vary with $M$.

We also develop a notion of infinite {\em generalised measurable} structure, whereby definable sets are assigned values in an ordered semiring. We show that any infinite ultraproduct of a \mac{} is generalised measurable, that values can be taken in an ordered {\em ring} if the \mac{} is a \mec{}, and explore model-theoretic consequences of generalised measurability. Such a structure cannot have the strict order property, and stability-theoretic properties can be read off from the measures in the semiring.

\end{abstract}
\maketitle

\tableofcontents

\section{Introduction}
\label{section:introduction}

A classical theorem of Lang and Weil (see \cite[Theorem 1]{LW54}) states that the number of $\mathbb{F}_{q}$-rational points of an absolutely irreducible variety $V\subseteq\mathbb{P}^{n}$ defined over $\mathbb{F}_q$ is approximately equal to $q^{r}$, where $\mathbb{F}_{q}$ is the finite field of order $q$ and $V$ is of algebraic dimension $r$. `Approximately' here means that the difference $\big||V(\mathbb{F}_{q})|-q^{r}\big|$ is bounded by a constant multiple of $q^{r-\frac{1}{2}}$. This constant depends only on certain information about $V$: namely $n$, $r$, and the degrees of the polynomials defining $V$. Using `big $O$ notation', we may re-write this approximation (sacrificing some information) as an asymptotic statement:
\begin{equation}
\big||V(\mathbb{F}_{q})|-q^{r}\big|=O(q^{r-\frac{1}{2}})\quad\text{as $q\longrightarrow\infty$}.
\end{equation}
Later work of Chatzidakis, van den Dries, and Macintyre (see \cite[Main Theorem]{CvdDM92}) extends these asymptotics to definable sets in finite fields, in the language of rings $L_{\mathrm{ring}}$. More precisely, for any $L_{\mathrm{ring}}$-formula $\phi(\bar{x};\bar{y})$,
there is a finite set $D\subseteq\{0,\ldots,|\bar{x}|\}\times\mathbb{Q}^{>0}$ such that for each prime power $q$ and each $\bar{b}\in\mathbb{F}_{q}^{|\bar{y}|}$ either $\phi(\mathbb{F}_{q}^{|\bar{x}|};\bar{b})$ is empty or there exists $(d,\mu)\in D$ such that
\begin{equation}
\label{eq:2}
\big||\phi(\mathbb{F}_{q}^{|\bar{x}|};\bar{b})|-\mu q^{d}\big|=O(q^{d-\frac{1}{2}})\quad\text{as $q\longrightarrow\infty$}.
\end{equation}
In this case, $d$ is known as the `dimension' and $\mu$ as the `measure' of the set $\phi(\mathbb{F}^{|\bar{x}|}_{q};\bar{b})$.
Moreover, the set of $\bar{b}$ such that \eqref{eq:2} holds for a specified $(d,\mu)$  is uniformly definable by an $L_{\mathrm{ring}}$-formula without parameters.

With a change of perspective, in \cite{MS08} Macpherson and Steinhorn turned these asymptotic results into a definition: Roughly speaking, a class $\mathcal{C}$ of finite $L\/$-structures is called a {\em $1$-dimensional asymptotic class} if it satisfies the Chatzidakis--van den Dries--Macintyre Theorem. In \cite{Elwes07}, Elwes generalised this further to study `$N$-dimensional asymptotic classes'; see \autoref{eg:N-dim} below for details. By a theorem of Ryten \cite[Theorem 1.1.1]{ryten}, for any fixed Lie type $\tau$ (possibly twisted) the collection of all finite simple groups of Lie type $\tau$ is an asymptotic class. 

In \cite{MS08} the authors also introduced the notion of a {\em measurable} structure -- this is  an infinite structure such that dimension-measure pairs $(d,\mu)$ can be assigned to definable sets, in the manner that follows for pseudofinite fields from the theorem of Chatzidakis, van den Dries and Macintyre. Measurability has content from the viewpoint of model-theoretic generalised stability theory -- measurable structures are supersimple of finite SU-rank. Elwes in \cite{Elwes07} (see also \cite[Proposition 3.9]{EM08}) noted among other results that any ultraproduct of an asymptotic class is measurable, and also in \cite[Proposition 6.5]{Elwes07} that any stable measurable structure is one-based. \cite{MS08} gives a range of examples of measurable structures. 

Aspects of this work were followed up by Garcia, Macpherson and Steinhorn in \cite{GMS15}. That paper considers the Hrushovski--Wagner notion of {\em pseudofinite dimension} (see \cite{hrushovski-wagner, hrush-pseud}), identifying conditions on this dimension which ensure that an ultraproduct of finite structures is supersimple (or simple, or stable) and showing that the conditions that imply supersimplicity hold for asymptotic classes. The present paper revisits some examples from \cite{GMS15}, but we do not fully explore the connections between our work here and that of \cite{GMS15}. 

The notion of {\em asymptotic class} is very restrictive, implying that ultraproducts have  finite SU-rank, and with nearly all the known examples closely related to finite fields. In addition, there are very simple examples which fail the definition because of their many-sorted nature. One such, in a language $L$ with a single unary predicate $P$, would be the collection of all finite structures $(M,P)$. This is not an asymptotic class since $P$ can pick out an arbitrarily sized subset of $M$, but this is the only obstruction.

In this paper we develop a considerably broader framework. We still consider classes of finite structures, and still impose that  for any formula $\phi(\bar{x},\bar{y})$ determining a family of definable sets in each structure parameterized by $\bar{y}$, there is a uniform finite bound on the number of (approximate) sizes of these sets in each structure, together with a corresponding definability clause, analogous to definability of Morley rank or degree. In the definition of a {\em multidimensional asymptotic class} (\mac{}) of finite structures, we allow different parts of the finite structures -- e.g.\ sorts, or definable sets, or coordinatising geometries -- to vary independently, and we do not specify the form of the functions determining cardinalities. We obtain extra model-theoretic information, e.g.\ supersimplicity of the limit theory, when these functions are known. We separate out the regime of a multidimensional {\em exact} class (\mec{}) when the cardinalities of definable sets are given {\em exactly}, rather than just asymptotically. When the cardinalities of definable sets are determined by polynomials, we talk of a {\em polynomial} \mac{} (or \mec{}).
We also develop the notion of a {\em generalised measurable} infinite structure, the appropriate analogue of a measurable structure, and draw connections to \mac{}s and \mec{}s. 

We now summarise the main results, with fuller definitions and more detailed and precise statements appearing later in the paper. 

\begin{thrm}\label{interp}
Let $\mathcal{C}$ be a class of finite structures.
\begin{enumerate}[(i)]
\item (One variable criterion, {\rm\autoref{thm:projection.lemma}}) If all formulas $\phi(x,\bar{y})$ satisfy the definition of a \mac{} (or \mec{}) for $\mathcal{C}$   then so do all formulas $\phi(\bar{x},\bar{y})$, that is, 
$\mathcal{C}$ is a \mac{} (respectively \mec{}). 
\item (Interpretability, {\rm\autoref{thminterpmac}(i)}) If the class $\mathcal{D}$ is uniformly interpretable in 
$\mathcal{C}$ and $\mathcal{C}$ is a m.a.c (respectively \mec{}) then $\mathcal{D}$ is a weak \mac{} (respectively weak \mec{}). 
\item (Bi-interpretability,  {\rm\autoref{thminterpmac}(ii)}) In (ii), the word `weak' can be dropped if $\mathcal{C}$ and $\mathcal{D}$ are uniformly bi-interpretable.
\end{enumerate}
\end{thrm}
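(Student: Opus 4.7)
The plan is to prove the three parts in sequence. For part (i), the natural strategy is induction on $|\bar{x}|$, reducing the general case to the single-variable hypothesis by slicing the set $\phi(M^{|\bar{x}|},\bar{b})$ one coordinate at a time. For part (ii), the key tool is the correspondence under the interpretation $\Gamma \colon \mathcal{C} \rightsquigarrow \mathcal{D}$ between $\mathcal{D}$-definable sets and $E$-invariant $\mathcal{C}$-definable sets, allowing the \mac{} property to be transported across. For part (iii), the inverse interpretation supplied by bi-interpretability lets us translate the definability data back to $\mathcal{D}$, upgrading the weak conclusion of (ii).

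For (i), proceed by induction on $|\bar{x}|$; the base case is exactly the hypothesis. For the inductive step, write $\bar{x} = (x_0,\bar{x}')$ and apply the inductive hypothesis to $\phi(\bar{x}',x_0,\bar{y})$, treating $(x_0,\bar{y})$ as the parameter tuple. This produces a uniformly definable partition of the parameter space into finitely many pieces $\psi_1(x_0,\bar{y}),\ldots,\psi_r(x_0,\bar{y})$, with the property that for $(a,\bar{b})$ in piece $i$ the slice $\phi(M^{|\bar{x}'|},a,\bar{b})$ has a prescribed approximate size $h_i(M)$. Summing over $a \in M$ gives
\[
|\phi(M^{|\bar{x}|},\bar{b})| \;\approx\; \sum_{i=1}^r |\psi_i(M,\bar{b})| \cdot h_i(M).
\]
Now each $\psi_i(x_0,\bar{y})$ has $x_0$ as a single variable, so the one-variable hypothesis applies and yields finitely many possible approximate sizes for $|\psi_i(M,\bar{b})|$ together with a uniformly definable partition of $\bar{y}$-space. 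Intersecting these partitions over $i = 1, \ldots, r$ produces the required refinement for $\phi$. The technical care needed is in propagating the asymptotic error terms through the sum; this is routine since $r$ is bounded and the matching of exponents carries through from the inductive hypothesis.

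For (ii), let $\Gamma$ be given by a coordinate domain $\chi$ and equivalence relation $E$ in the language of $\mathcal{C}$. Each $\mathcal{D}$-formula $\psi(\bar{u},\bar{v})$ admits a translation $\psi^{\Gamma}(\bar{x},\bar{y})$ in which elements of $\bar{u},\bar{v}$ are replaced by coordinate tuples. For $N = \Gamma(M)$ and $\bar{d} \in N^{|\bar{v}|}$ with coordinate representative $\bar{c}$, the set $\psi(N^{|\bar{u}|},\bar{d})$ is in bijection with the quotient of the set defined by $\psi^{\Gamma}(\bar{x},\bar{c})$ in the appropriate power of $M$ by the componentwise relation $E^{|\bar{u}|}$. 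Partition this set by $E^{|\bar{u}|}$-class size — a partition that is $\mathcal{C}$-definable — and divide each block by the corresponding class size to express $|\psi(N^{|\bar{u}|},\bar{d})|$ as a sum of ratios. Since $\mathcal{C}$ is a \mac{}, both the block sizes and the class sizes fall into finitely many approximate values as $\bar{c}$ varies, yielding finitely many possible approximate values for $|\psi(N^{|\bar{u}|},\bar{d})|$. The resulting partition of parameter space is $\mathcal{C}$-definable on $\bar{c}$ but not \emph{a priori} $\mathcal{D}$-definable on $\bar{d}$, which is why the conclusion is only weak.

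For (iii), bi-interpretability supplies an inverse interpretation $\Delta \colon \mathcal{D} \rightsquigarrow \mathcal{C}$ together with $\mathcal{D}$-definable isomorphisms relating $\Delta \circ \Gamma$ to the identity. Applying $\Delta$ to the coordinate formula from (ii) yields a $\mathcal{D}$-formula on $\bar{d}$ that defines the desired partition, upgrading the weak clause to the full \mac{} condition. The main obstacle throughout is the careful counting in (ii) when equivalence classes have varying sizes; but class sizes themselves fall into only finitely many approximate values because $E$, viewed as a formula with one coordinate as parameter, is subject to the \mac{} property of $\mathcal{C}$. Once this is in place, the remainder is bookkeeping, and the \mec{} variants follow by replacing ``approximate size'' with ``exact size'' throughout.
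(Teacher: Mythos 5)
Your proof follows the paper's approach in all three parts: the same fibering induction on $|\bar{x}|$ for (i), peeling off a single variable and applying the one-variable hypothesis to the partition formulas (you peel from the front, the paper from the back — a cosmetic difference); the same device for (ii) of partitioning $\phi^{\Gamma}(M^{r|\bar{u}|},\bar{c})$ into blocks $Y_j$ according to the measuring function governing the $E$-class size and dividing each block size $g_{jk}$ by the class size $h_j$; and the same pull-back of the defining formulas through the inverse interpretation for (iii). The paper carries out the asymptotic arithmetic in (ii) at length (the two cases comparing $t\cdot h_j$ with $g_{jk}$, the positive-definiteness lemma handling $h_j(\alpha(N))=0$, and the $\epsilon$--$Q$ bookkeeping) and is explicit in (iii) that what is needed is $\emptyset$-definability in $N$ of the composite $g^\ast\circ f$, but you have correctly identified the skeleton of the argument throughout.
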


The next theorem gives two key motivating examples.  

\begin{thrm}\label{exquiver}~
\begin{enumerate}[(i)]
\item {\rm (\autoref{quiver-theorem}.)} Let $Q$ be a quiver of finite representation type, and let $\mathcal{C}(Q)$ be the set of all finite structures
$(F, FQ, M)$, where $F$ is a finite field, $FQ$ is the path algebra of $Q$ over $F$, and $M$ is a finite $FQ$-module, viewed in a 3-sorted language where  $F$ and $FQ$ both carry (copies of) the language of rings, $M$ carries the language of groups, and there are function symbols for the maps $F \times FQ \longrightarrow FQ$, $F \times M \longrightarrow M$, $FQ \times M \longrightarrow M$. Then $\mathcal{C}(Q)$ is a weak polynomial \mac{}.
\item {\rm (\autoref{bilinear-granger}.)}  Let $\mathcal{C}_{\bil}$ be the collection of all $L_{\bil}$ structures $(V,F)$ where $V$ is a finite-dimensional vector space over the finite field $F$, equipped with a non-degenerate alternating bilinear form $\beta$. Let $R=\mathbb{Q}(\mathbf{F})[\mathbf{V}]$.  Then $\mathcal{C}_{\bil}$ is an $R$-m.a.c..
\end{enumerate}
\end{thrm}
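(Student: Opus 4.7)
The two parts of the theorem are independent and I would address them separately.

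\emph{Part (i).} The plan is to combine the representation theory of Dynkin quivers with the Baur--Monk analysis of definable sets in modules. Because $Q$ has finite representation type, Gabriel's theorem (valid over an arbitrary field via Auslander--Reiten theory) yields finitely many indecomposable $FQ$-modules $I_1^F,\ldots,I_r^F$ up to isomorphism, each of $F$-dimension given by a positive root of the underlying Dynkin diagram and thus independent of $F$. By Krull--Schmidt every finite $FQ$-module $M$ is isomorphic to $\bigoplus_{j=1}^r (I_j^F)^{n_j}$ for a unique tuple $(n_1,\ldots,n_r)\in\mathbb{N}^r$, so the isomorphism type of $(F,FQ,M)$ is determined by $(|F|,n_1,\ldots,n_r)$.

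I would then apply the one-variable criterion \autoref{thm:projection.lemma} and reduce to formulas $\phi(x,\bar y)$ with $x$ in a single sort. The sorts $F$ and $FQ$ have cardinalities $|F|$ and $|F|^{\dim_F FQ}$ respectively, so the essential case is $x$ in the module sort. Here Baur--Monk elimination expresses every definable subset of $M$ as a Boolean combination of cosets of pp-definable subgroups; a pp-subgroup of $\bigoplus_j (I_j^F)^{n_j}$ splits as a direct sum of pp-subgroups of the $(I_j^F)^{n_j}$, each of cardinality $|F|^{\sum_j e_j n_j}$ for non-negative integers $e_j$ depending only on the pp-formula and on $Q$. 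Inclusion--exclusion then produces polynomial measures in $|F|$ and $n_1,\ldots,n_r$, which is the polynomial \mac{} conclusion. The main obstacle is the uniformity demanded by the (weak) \mac{} definition: one must check that the set of parameters realising a specified measure polynomial is uniformly definable across the class. This relies crucially on finite representation type, which forces only finitely many pp-formulas (up to equivalence) to arise, so that the parameter space is stratified by finitely many first-order conditions.

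\emph{Part (ii).} Here I would invoke Granger's thesis analysis of vector spaces equipped with a non-degenerate alternating bilinear form. Granger establishes a quantifier elimination for the natural theory of such structures, showing that every definable set reduces to a Boolean combination of basic pieces described by linear conditions together with bilinear evaluations on coordinates, with bookkeeping in the field sort. Transferring this QE to the finite class $\mathcal{C}_{\bil}$, the cardinality of a typical piece is computed by counting configurations of vectors and isotropic subspaces in a symplectic space of dimension $2n$ over $\mathbb{F}_q$; such counts are polynomials in $|V|=q^{2n}$ with coefficients in $\mathbb{Q}(q)$, hence lie in $R=\mathbb{Q}(\mathbf{F})[\mathbf{V}]$. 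The uniformity built into Granger's QE then supplies the definability of the parameter strata required for an $R$-\mac{} (not merely a weak one).

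The more delicate step, I expect, is Part (ii): extracting from Granger's infinitary QE a clean finitary statement that cardinalities are honest polynomials in $|V|$ with coefficients in $\mathbb{Q}(|F|)$, together with uniform first-order definability of the stratification by measure across the whole class $\mathcal{C}_{\bil}$. In Part (i) the representation-theoretic machinery packages the arithmetic content of the counting very cleanly, and the remaining work is model-theoretic bookkeeping enabled by \autoref{thm:projection.lemma}.
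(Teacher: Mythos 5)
Your Part~(ii) sketch follows the paper's strategy: use Granger's quantifier elimination (in the expanded language with the $\lambda_{n,j}$-functions, which the paper notes are needed to make the QE correct), reduce to a single variable by \autoref{thm:projection.lemma}, and count. The paper carries out the count explicitly via a case split on whether $x$ lies in the span of the vector-sort parameters, using \autoref{beta} to compute fibre sizes of the evaluation map; your appeal to ``counting configurations of vectors and isotropic subspaces'' gestures at the same computation without giving it, but the overall architecture is the same.

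Your Part~(i) has a genuine gap. You propose to invoke Baur--Monk elimination for the formulas $\phi(x,\bar y)$ with $x$ in the module sort, but Baur--Monk QE is a theorem about the one-sorted language $L_R$ of left $R$-modules over a fixed ring $R$, with one unary function symbol $f_r$ for each fixed $r\in R$. The structures in $\mathcal{C}(Q)$ are $3$-sorted, with sorts for $F$ and $FQ$ that are both quantifiable and parameter-accessible, function symbols for the various scalar actions, and the ring $FQ$ itself varying with $F$. A formula such as $\exists\lambda\ (\lambda\cdot x = y)$ with $\lambda$ ranging over the field sort, or one quantifying over the algebra sort, is not a formula in any fixed $L_{FQ}$, and Baur--Monk gives no normal form for it. The paper avoids this entirely: it uses the tensor-product decomposition $W_i(\mathbb{F}_q)^{l_i} \cong W_i(\mathbb{F}_q)\otimes_{\mathbb{F}_q} U(l_i,q)$ to interpret $(F,FQ,M)$ uniformly in $(\mathbb{F}_q,U(l_1,q),\ldots,U(l_t,q))$, which is a polynomial \mac{} by \autoref{GMSgen} (whose proof itself rests on a Granger-style QE, precisely in order to control the field sort), and then applies the interpretability theorem \autoref{thminterpmac}. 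To salvage your route you would in effect need to prove a QE for the $3$-sorted theory that handles the field and algebra sorts directly, which is substantially more than Baur--Monk plus Krull--Schmidt; your concern about uniform definability of the measure strata is legitimate but downstream of this. Your observations that the indecomposables have $F$-independent dimension vectors and that the module sort is the essential one are correct and are also used by the paper, but there they feed an interpretability argument rather than a direct pp-elimination.
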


The following theorem provides a wide range of examples of multidimensional {\em exact} classes. 

\begin{thrm}\label{exactex}
The following are multidimensional exact classes (polynomial \mec{}s in (i), (ii) and (iv)).
\begin{enumerate}[(i)]
\item {\rm (\autoref{pillaysm}.)} For any pseudofinite strongly minimal set $M$, any class of finite structures whose non-principal ultraproducts are all elementarily equivalent to $M$.
\item {\rm(\autoref{modules}.)} The class of all finite abelian groups.
\item {\rm (\autoref{gaifman}.)} For any $d\in \mathbb{N}$, the class of all finite graphs of degree at most $d$.
\end{enumerate}
In addition
\begin{enumerate}[(i)]
\item[(iv)] {\rm (\autoref{wolf-thesis}.)} for  any finite language $L$ and $d\in \mathbb{N}$, the class of all finite $L$-structures $M$ such that $\Aut(M)$ has at most $d$ orbits on $M^4$ is a \mec{}, after expansion to a finite extension $L'\supseteq L$ that does not change the automorphism groups. 
\end{enumerate}
\end{thrm}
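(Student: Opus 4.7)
The overall plan is, in each case, to invoke the one-variable criterion (\autoref{thm:projection.lemma}) so that it suffices to analyse formulas $\phi(x,\bar{y})$ with a single free variable $x$, and then to exhibit a definable partition of the parameter space on each piece of which $|\phi(M,\bar{b})|$ is given by a fixed cardinality function of invariants of $M$. For part (i), let $T=\Th(M)$ and let $\mathcal{C}$ be a class of finite structures all of whose non-principal ultraproducts model $T$. Strong minimality of $T$ together with definability of Morley rank and degree yields formulas $\theta_1(\bar{y}),\ldots,\theta_k(\bar{y})$ partitioning the parameter space, and nonnegative integers $c_i,c'_i$, such that in any $N\models T$ satisfying $\theta_i(\bar{b})$, the set $\phi(N,\bar{b})$ has size either $c_i$ or $|N|-c'_i$ according as its Morley rank is $0$ or $1$. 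An ultraproduct argument then transports this to all sufficiently large $N\in\mathcal{C}$; the finitely many small $N\in\mathcal{C}$ are absorbed by extending the list of cardinality polynomials. This gives a polynomial \mec{}, with cardinality polynomials of degree $0$ or $1$ in $|N|$.

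For part (ii), I would invoke Szmielew's theorem: every formula in the language of abelian groups is equivalent, modulo the theory of abelian groups, to a Boolean combination of positive-primitive (pp) formulas and sentences involving the Szmielew invariants. In any finite abelian group $G$, a pp-formula defines a subgroup, and an instantiated pp-formula $\psi(x,\bar{b})$ defines either $\emptyset$ or a coset of a pp-subgroup. The index of a pp-subgroup is a product of prime powers, expressible as a polynomial in the Ulm-type invariants $\dim_{\mathbb{F}_p}(p^k G[p]/p^{k+1}G[p])$ of $G$. Inclusion--exclusion then turns any Boolean combination into a polynomial in these invariants, yielding a polynomial \mec{}, with the piece of parameter space giving each polynomial being pp-definable.

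Part (iii) is handled by Gaifman locality. In a graph of degree at most $d$, there are only finitely many isomorphism types of $r$-balls around tuples of bounded length, since such balls are bounded in size by a function of $d$ and $r$. For a formula $\phi(x,\bar{y})$ with Gaifman radius $r$, the truth of $\phi(a,\bar{b})$ depends only on the isomorphism type $\tau$ of the $r$-ball around $a\bar{b}$. Hence $|\phi(M,\bar{b})|$ is the sum, over admissible types $\tau$, of the number of vertices $a\in M$ whose local neighbourhood together with $\bar{b}$ has type $\tau$. Only finitely many such cardinality functions arise as $\bar{b}$ varies, indexed by the local type of $\bar{b}$ itself, which is definable.

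Part (iv) I expect to be the main obstacle. The hypothesis that $\Aut(M)$ has at most $d$ orbits on $M^4$ should bound the number of orbits of $\Aut(M)$ on $M^n$ for each $n$ by a function $f(d,n)$, via an iterated counting argument. I would expand $L$ to $L'$ by adjoining one predicate symbol for each orbit on $M^n$, for $n$ up to a suitable bound, obtaining an expansion $M'$ with $\Aut(M')=\Aut(M)$ in which every $L$-formula is, modulo the theory of $M'$, equivalent to a quantifier-free $L'$-formula. Then $|\phi(M,\bar{b})|$ becomes a sum of orbit sizes on $M$, each orbit size being an integer depending on the $L'$-type of $\bar{b}$; these orbit sizes serve as the polynomial invariants. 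The main difficulty is to make the expansion $L'$ and the bounding function $f(d,n)$ uniform across the whole class, and to verify the \mec{} definability clause uniformly; for this I would appeal to the detailed analysis carried out in \cite{WolfPhD}.
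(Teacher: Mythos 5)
Your treatment of parts (i)--(iii) follows the same route as the paper. For (i) the paper simply cites Pillay's theorem on pseudofinite strongly minimal sets, and your sketch is in effect a reconstruction of that argument; for (ii) the paper reduces via Baur quantifier elimination for modules (with abelian groups as a special case) to positive-primitive formulas plus inclusion--exclusion, which is what your Szmielew-based argument does, though your further assertion that the measuring functions are polynomials in Ulm invariants goes beyond what the paper establishes (the paper explicitly says it has no clean description of these functions outside the homocyclic case). For (iii) both you and the paper use Gaifman locality; your sketch omits the basic local \emph{sentences}, which must be absorbed into the parameter partition, and glosses the split between vertices in a bounded ball around $\bar{b}$ (counted by the local type of $\bar{b}$) and vertices far from $\bar{b}$ (counted as a total-minus-local quantity), but these are exactly the steps the paper also carries out, so there is no real difference of approach.

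Part (iv) has a genuine gap, and it is precisely the one the paper flags in the discussion preceding \autoref{wolf-thesis}. First, the claim that an ``iterated counting argument'' bounds the number of $\Aut(M)$-orbits on $M^n$ in terms of $d$ and $n$ is not true at the level of elementary combinatorics: the fact that the number of $4$-types bounds the number of $n$-types for each $n$, uniformly over the class, is a corollary of the Cherlin--Hrushovski classification of finite structures with few types, that is, of Lie coordinatization, not of any direct counting. Second, and more seriously, expanding by predicates naming the orbits on $M^n$ (even up to a suitable bound on $n$) does \emph{not} yield the required uniform definability clause. To label the orbits consistently across the class one must already be able to compare dimensions of coordinatizing Lie geometries, and ``$\dim J_1 < \dim J_2$'' is a global assertion not expressible by any first-order formula over orbit predicates. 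What is needed is the expansion to the language $\mathcal{M}^D$ of \cite[Definition 8.3.1]{Cherlin-Hrushovski03}, which adjoins dimension quantifiers and Witt-index quantifiers; this is exactly the correction to \cite[Corollary 4.4.2]{Wolf} that the paper records, the uncorrected argument applying \cite[Proposition 4.4.3]{Cherlin-Hrushovski03} to envelopes that need not have ``true dimensions'' relative to the skeletal language. Your plan, taken at face value, reproduces the uncorrected version; and if one instead reads it as a wholesale appeal to \cite{WolfPhD}, one should note that this would inherit the same error and that the expansion to $L'$ must be via $\mathcal{M}^D$, not via orbit predicates.
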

Note that part (iv) involves a correction to \cite[Corollary 4.4.2]{Wolf}, discussed below before \autoref{wolf-thesis}.

We note that~(i) has been extended to pseudofinite uncountably categorical structures in \cite{vanabel}. As a partial converse to (ii) above we obtain the following.

\newtheorem*{propn}{\autoref{prpn:groups-sol}}
\begin{propn}
{\it If $\mathcal{C}$ is a \mec{} of finite groups, then there is $d\in \mathbb{N}$ such that each $G\in \mathcal{C}$ has a (uniformly definable across $\mathcal{C}$) soluble radical $R(G)$ of index at most $d$, and $R(G)/F(G)$ has derived length at most $d$, where $F(G)$ is the Fitting subgroup of $G$.}
\end{propn}


We also prove the following result, and conjecture that it holds for all finite relational languages (the right-to-left direction follows from Theorem~\ref{exactex}(iv) and the work of Lachlan). 
\begin{thrm} {\rm (\autoref{homog1}} and {\rm \autoref{homog2}.)} \label{conj-graphs}
Let $M$ be a countably infinite homogeneous graph. Then $M$ is elementarily equivalent to an ultraproduct of a \mec{} if and only if it is stable. 
\end{thrm}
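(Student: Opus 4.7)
The plan is to split along the direction of implication, using as framework the Lachlan--Woodrow classification of countably infinite homogeneous graphs: up to complement, any such $M$ is either of the form $m\cdot K_n$ with $m,n\in\mathbb{N}\cup\{\infty\}$ (at least one infinite), the random graph $R$, or a Henson graph $H_k$ for some $k\geq 3$. A short standard computation (counting types, or exhibiting the independence property for the edge relation on $R$ and on $H_k$) identifies the stable examples as precisely the $m\cdot K_n$ together with their complements, the complete multipartite graphs.

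For the direction (stable $\Rightarrow$ \mec{} ultraproduct), I would exhibit for each stable $M$ an explicit sequence of finite approximating graphs. Taking $M_i = i\cdot K_i$ when $M = \omega\cdot K_\omega$, with the obvious truncations when $m$ or $n$ is finite (and the complements when appropriate), a direct \L o\'s-style argument gives $M \equiv \prod_i M_i / \mathcal{U}$ for any non-principal $\mathcal{U}$. These $M_i$ are highly homogeneous: the $\Aut(M_i)$-orbit of a tuple $(a_1,\ldots,a_4)$ is determined by the partition of $\{1,\ldots,4\}$ into fibres of the $K_n$-components together with the equality pattern, so the number of orbits on $M_i^4$ is bounded independently of $i$. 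Then \autoref{wolf-thesis} (that is, Theorem C(iv)) yields that $\{M_i\}$ is a \mec{} after the harmless language expansion described there, completing this direction.

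For the direction (\mec{} ultraproduct $\Rightarrow$ stable), assume $M$ is a countable homogeneous graph elementarily equivalent to an ultraproduct of a \mec{}, and rule out the three unstable Lachlan--Woodrow types. The Henson graphs $H_k$ and their complements should be excluded by the general stability-theoretic consequences of generalised measurability advertised in the introduction: the abstract records that any generalised measurable structure has no strict order property, and in the \mec{} case the values lie in an ordered ring, which should be enough to preclude the non-simple, tree-property-possessing theory of $H_k$. The real difficulty is the random graph, which is supersimple of $\mathrm{SU}$-rank $1$ and therefore \emph{not} excluded by any generic simplicity or \mbox{NSOP} statement; indeed $R$ is already an ultraproduct of Paley graphs, which form a one-dimensional asymptotic class and hence a \mac{}, so the \mac{} versus \mec{} distinction is essential here.

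Ruling out the random graph is where the main obstacle lies, and I would attempt a combinatorial rigidity argument. Suppose for contradiction that $R \equiv \prod_i M_i/\mathcal{U}$ with $\{M_i\}$ a \mec{}. For each $n\geq 1$ and each $\bar\epsilon\in\{0,1\}^n$, the formula $\bigwedge_{j\leq n}(yEx_j)^{\epsilon_j}$ defines, in every $M_i$, a family whose cardinalities (by the \mec{} condition applied to the one fixed formula) take only finitely many \emph{exact} values, with the count bounded in terms of $n$. Homogeneity of $R$ and its extension axioms force each such cardinality in $M_i$ to be essentially $|M_i|/2^n$, so each $M_i$ must be ``$n$-regular'' in the strong sense that for every $n$-vertex pattern and every sign pattern $\bar\epsilon$ the common-neighbour counts assume only a tightly controlled finite set of exact values. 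I expect iterating this in $n$ to force the $M_i$ to be so combinatorially rigid (approaching association schemes or strongly $t$-regular designs) that their ultraproducts cannot realise the full independence property of the edge relation witnessed in $R$, producing the desired contradiction.
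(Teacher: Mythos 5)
Your treatment of the stable direction is sound: the Lachlan--Woodrow classification does reduce to the complete multipartite graphs (and complements), the sequences $i\cdot K_i$ have boundedly many $4$-orbits, and \autoref{wolf-thesis} then applies. The paper takes a shorter and more general route in \autoref{homog1} (stable $+$ homogeneous over a finite relational language $\Rightarrow\;\omega$-stable $\Rightarrow$ smoothly approximable by \cite[Cor.~7.4]{chl}, then apply \autoref{Liesoft}), which avoids the classification entirely, but your explicit construction is a correct alternative for graphs.

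The unstable direction is where you have genuine gaps. First, the Henson graphs $H_k$ cannot be excluded by the abstract consequences of generalised or ring measurability you cite. $H_k$ does \emph{not} have the strict order property (it has $\mathrm{SOP}_3$ but is $\mathrm{NSOP}_4$), so \autoref{NSOP} says nothing about it. And whether ring-measurability implies simplicity is explicitly listed as an \emph{open question} in Section~7 of the paper, so "which should be enough to preclude the non-simple theory of $H_k$" is not a step you can take. The paper instead disposes of $H_k$ and its complement by exactly the same concrete combinatorial argument it uses for the random graph --- no appeal to abstract tameness results is made for any of the three unstable cases.

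Second, your combinatorial attack on the random graph has the right shape but stops short of a proof, and misses two load-bearing ingredients. The first is quantifier elimination of the limit theory $T$: the \mec{} definition gives an $\emptyset$-definable partition, but a priori its defining formulas $\psi_h(\bar y)$ may have quantifiers; one needs the QE of $T$ (together with \L o\'s) to replace each $\psi_h$ by a quantifier-free $\chi_h$ valid in all sufficiently large $M_i$, and only then does exactness yield that $|\phi(M_i,\bar a)|$ depends \emph{only on the quantifier-free type} of $\bar a$ --- i.e.\ the genuine $n$-tuple regularity you want. Without that step your "$n$-regularity" claim does not follow from the \mec{} definition. The second missing ingredient is the combinatorial endgame: you "expect iterating this in $n$" to force a contradiction, but the paper closes the argument at $n=5$ by invoking Cameron's classification \cite{cameron} of finite $5$-tuple-regular graphs (pentagon, $L(K_{3,3})$, disjoint unions of equal-sized complete graphs, and their complements), none of which can satisfy the extension axioms of $R$ (or of $H_k$, or of the complements) once large enough. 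Without a result of this type your induction does not terminate in a contradiction. Also, the assertion that the cardinalities are "essentially $|M_i|/2^n$" is neither needed nor supplied by the \mec{} hypothesis; what matters is exactness and definability, not the specific growth rate --- that asymptotic is exactly what the \mac{} of Paley graphs gives, and as you yourself observe, that is not enough.
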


Regarding generalised measurability, we introduce the notion of (totally ordered) measuring semiring and show in \autoref{monomialise} that any measuring semiring may be replaced by a homomorphic image which is a `monomial' measuring semiring $\mathbb{R}\mon{D}$ consisting of monomials $r{Z}^d$ for $d\in D$ and $r\in \mathbb{R}^{\geq0}$. Here ${Z}$ is an indeterminate, and the set $D$ of `dimensions' of definable sets is a `tropical' semiring. This `monomialisation' has useful consequences, e.g. in ensuring that generalised measurability is inherited by $M^{{\rm eq}}$ (Proposition~\ref{meq}). The following result links generalised measurability to \mac{}s. and \mec{}s.
\begin{thrm}~
\begin{enumerate}[(i)]
\item {\rm (\autoref{macgm}.)} If $\mathcal{C}$ is a \mac{} then any ultraproduct of $\mathcal{C}$ is generalised measurable.
\item {\rm (\autoref{ring}.)} If $\mathcal{C}$ is a \mec{} then any ultraproduct of $\mathcal{C}$ is ring-measurable, that is, generalised measurable with values in an ordered ring (an integral domain). 
\end{enumerate}
\end{thrm}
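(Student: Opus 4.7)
My plan is, given an ultraproduct $M = \prod_{i\in I} M_i/\mathcal{U}$ of a \mac{} $\mathcal{C}$, to construct a measuring semiring $R$ and a measuring function $h$ sending each $M$-definable set to an element of $R$, obtained by taking the $\mathcal{U}$-ultralimit of the approximate cardinalities supplied by the \mac{} data. The semiring $R$ is the natural home for such limits: from each formula $\phi(\bar{x},\bar{y})$ the \mac{} condition extracts a finite collection of \emph{asymptotic size functions} in the various sort cardinalities, and I would take $R$ to be the semiring generated by all such functions under the operations induced by disjoint union and Cartesian product. Invoking the monomialisation \autoref{monomialise}, I may then replace $R$ by a homomorphic image $\mathbb{R}\mon{D}$ whose elements are monomials indexed by a tropical semiring $D$ of dimensions, where addition and multiplication behave predictably.

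\textbf{Definition of $h$ and well-definedness.} By the uniform definability clause of the \mac{}, for each formula $\phi(\bar{x},\bar{y})$ the parameter space is partitioned by $\emptyset$-definable pieces $Y_1,\ldots,Y_{n_\phi}$, one for each approximate size $f_k$. For an $M$-definable set $X=\phi(M;\bar{b})$ with $\bar{b} = [\bar{b}_i]_\mathcal{U}$, \L{}o\'s's theorem picks out the unique $k$ with $\{i:\bar{b}_i \in Y_k\}\in \mathcal{U}$; I set $h(X):=f_k \in R$. For well-definedness under reparameterisation, if $\phi(M;\bar{b}) = \psi(M;\bar{c})$ in $M$ then this equality holds in $M_i$ for $\mathcal{U}$-a.e.~$i$, so the exact cardinalities agree in $M_i$, which forces the approximate sizes attached to $\phi$ at $\bar b$ and to $\psi$ at $\bar c$ to coincide as elements of $R$.

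\textbf{Axioms, the \mec{} case, and the main obstacle.} The axioms of generalised measurability, namely invariance under definable bijections, additivity on disjoint unions, and compatibility with projections and fibres, all transfer from the corresponding exact counting identities in each $M_i$ using \L{}o\'s's theorem and the asymptotic equivalence given by the \mac{} data. For part~(ii), if $\mathcal{C}$ is a \mec{} then the approximate sizes are replaced by exact ones, that is, genuine functions of the sort cardinalities, so the image of $h$ embeds into an ordered integral domain of such functions rather than merely a semiring. The main obstacle I anticipate is the fibre rule: for a definable surjection $\pi:X\to Y$ with every fibre of measure $r$, one needs $h(X)=r\cdot h(Y)$, which requires that the leading asymptotic of $|X|_i$ factor compatibly as the product of the leading asymptotics of $|\pi^{-1}(\bar{c})|_i$ and $|Y|_i$. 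This amounts to checking that semiring multiplication in $\mathbb{R}\mon{D}$ corresponds to asymptotic multiplication of approximate sizes, which is precisely why monomialisation is essential; the one-variable criterion \autoref{thm:projection.lemma} should play a role here, reducing the general case to fibrewise situations in which the \mac{} data for $\pi$, for the base, and for the total space can be compared directly.
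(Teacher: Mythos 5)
Your overall plan points in the right direction: the measuring semiring does indeed arise from the measuring functions supplied by the \mac{} data, and the value $h(X)$ is indeed determined by which $\emptyset$-definable piece of the parameter partition the parameter tuple lands in $\mathcal{U}$-almost everywhere, via \L{}o\'s's theorem. However, there is a genuine gap in the construction of the semiring and, consequently, in the well-definedness argument.

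You propose to take $R$ to be the semiring of functions $\mathcal{C}\to\mathbb{R}^{\geq 0}$ generated by the measuring functions, and then to set $h(X):=f_k\in R$. But $R$ with pointwise operations is only \emph{pre}-ordered by the ultrafilter (via $f\leq g$ iff $\{i:f(M_i)\leq g(M_i)\}\in\mathcal{U}$); it is not totally ordered and is not a measuring semiring. More importantly, your well-definedness claim fails at exactly this point. If $\phi(M;\bar b)=\psi(M;\bar c)$ then indeed $|\phi(M_i^{|\bar x|};\bar b_i)|=|\psi(M_i^{|\bar x|};\bar c_i)|$ for $\mathcal{U}$-a.e.\ $i$, but the associated measuring functions $h_\pi$ and $h_\lambda$ are only guaranteed to satisfy $|h_\pi(M_i)-h_\lambda(M_i)|=o(h_\pi(M_i))$ along $\mathcal{U}$; they need not coincide as elements of the raw function semiring $R$. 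The missing and essential step is to quotient $R$ by the equivalence $f\approxeq g$ (asymptotic agreement along $\mathcal{U}$), identify $R_{=0}$ with a single zero class, and then verify that the quotient $S=(R_{>0}/{\approxeq})\cup\{[0]\}$ inherits well-defined total order and semiring operations and satisfies the measuring axiom \textbf{(MS)}. Only after this quotient is $h$ well-defined and is the codomain a measuring semiring. These verifications are the real content of the proof, not a formality.

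Relatedly, the appeal to \autoref{monomialise} is circular as a construction device: that theorem takes a measuring semiring $S$ as \emph{input} and produces a homomorphism onto a monomial semiring $\mathbb{R}\mon{D}$, where $D$ is already $S/{\sim}$. It cannot be used to manufacture the measuring semiring from the raw function semiring, and the paper's proof of \autoref{macgm} does not use it at all. Nor is the Fubini axiom the principal obstacle needing monomialisation; additivity and the fibre rule are checked by the same style of $\epsilon$-estimate on the approximate cardinalities (after splitting into comparable and non-comparable dimension cases). For part (ii), the outline that exactness lets one land in an ordered integral domain is correct in spirit, but again requires quotienting the ring of $\mathbb{Z}$-valued functions by the convex ideal of functions vanishing $\mathcal{U}$-a.e.\ before one obtains a totally ordered integral domain.
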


We also explore model-theoretic consequences of generalised measurability, and a sample of results is the following.
A theory $T$ has the {\em strict order property} if some model  of $T$ has an interpretable partial order containing an infinite totally ordered subset.  For the notion of {\em functional unimodularity}, see Definition~\ref{func-un}. (This was called just {\em unimodularity} in \cite{Elwes07} -- certain confusions related to the usage in \cite{hrush-unimod} were clarified later in \cite{kestner-pillay}.) 

\begin{thrm}~
\begin{enumerate}[(i)]
\item {\rm (\autoref{theory}.)} Generalised measurability is preserved by elementary equivalence.
\item {\rm (\autoref{NSOP}} and {\rm \autoref{unimod}.)} If $M$ is generalised measurable then ${\rm Th}(M)$ does not have the strict order property and is functionally unimodular.
\item {\rm (\autoref{injsur}.)} If $M$ is ring-measurable then any definable function from a definable set in $M$ to itself is injective if and only if it is surjective.
\item {\rm (\autoref{supersimple}.)} If $M$ is generalised measurable with well-ordered set of `dimensions' $D$ (e.g.\ if $M$ is an ultraproduct of a polynomial \mac{}) then ${\rm Th}(M)$ is supersimple.
\end{enumerate}
\end{thrm}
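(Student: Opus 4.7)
I treat the four parts in turn.

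For (i), the key observation is that, once the measuring semiring $S$ and the value function $h$ on $M$-definable sets are fixed, generalised measurability is captured by a first-order scheme on ${\rm Th}(M)$: for each formula $\phi(\bar x;\bar y)$ there are finitely many values $s_1,\dots,s_k\in S$ and corresponding formulas $\theta_1(\bar y),\dots,\theta_k(\bar y)$ partitioning the parameter space so that $h(\phi(M;\bar b)) = s_i$ iff $M\models \theta_i(\bar b)$. The compatibility axioms of generalised measurability (additivity over definable disjoint unions, behaviour under Cartesian products and under definable bijections) translate into further first-order sentences asserting that, for all parameter tuples realising a given $\theta$-type, the corresponding semiring identity holds. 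Any $N\equiv M$ therefore inherits the same partitions and the same values, yielding a well-defined measure $h_N$ on $N$-definable sets satisfying all of the axioms.

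For (ii), suppose ${\rm Th}(M)$ has the strict order property. By (i) and compactness we may work in a sufficiently saturated $N\equiv M$ carrying a formula $\psi(\bar x;\bar y)$ and parameters $(\bar a_i)_{i\in\mathbb{N}}$ with $\psi(N;\bar a_0)\subsetneq\psi(N;\bar a_1)\subsetneq\dots$. A preliminary lemma, which I would derive after reducing to the monomial form of $S$ supplied by \autoref{monomialise}, states that nonempty definable sets carry strictly positive measure. Additivity applied to $\psi(N;\bar a_{i+1})=\psi(N;\bar a_i)\sqcup(\psi(N;\bar a_{i+1})\setminus\psi(N;\bar a_i))$ then forces $h(\psi(N;\bar a_0))<h(\psi(N;\bar a_1))<\dots$, contradicting the finiteness clause in measurability. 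Functional unimodularity reduces to the same positivity fact: if $f_1,f_2\colon X\to Y$ are $m$-to-$1$ and $n$-to-$1$ definable surjections, additivity over fibres gives $m\cdot h(Y)=h(X)=n\cdot h(Y)$, and nonzero $h(Y)$ forces $m=n$.

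For (iii), the direction ``injective $\Rightarrow$ surjective'' follows directly from the positivity principle above: if $f\colon X\to X$ is a definable injection then $h(f(X))=h(X)$ via the definable bijection $X\to f(X)$, so additivity on $X=f(X)\sqcup(X\setminus f(X))$ gives $h(X\setminus f(X))=0$, whence emptiness (available in the ring-measurable case via transfer from \mec{} members, where nonempty finite sets have positive cardinality). For the converse, partition $X$ by fibre size $X_k=\{x\in X\colon |f^{-1}(f(x))|=k\}$; only finitely many $X_k$ are nonempty by the finiteness-of-values clause, and additivity together with cancellation in the ring---using that $h(f(X_k))\neq0$ when $f(X_k)$ is nonempty---forces every fibre to be a singleton.

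For (iv), the strategy is a pseudofinite-dimension argument in the spirit of \cite{GMS15}. Writing $d(X)\in D$ for the dimension of a definable set $X$, the well-orderedness of $D$ allows a transfinite-recursion definition of an SU-rank-like function satisfying Lascar inequalities, which translate to multiplicativity of dimension under definable fibrations---an axiom already built into generalised measurability. This yields simplicity of ${\rm Th}(M)$ (via local character and extension for dividing independence), and the ordinal-valued rank gives supersimplicity. The main obstacles I anticipate are: in (ii), precisely isolating and verifying the positivity axiom, for which monomialisation is essential; in (iii), the surjective-to-injective direction, which relies on cancellation in the ring and careful bookkeeping of fibre-size partitions; and in (iv), converting the abstract dimension axioms into the forking-calculus inequalities needed for supersimplicity.
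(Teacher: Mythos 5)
The proposals for (i) and for the functional unimodularity half of (ii) track the paper's arguments closely and are essentially correct, though you should note that ``nonzero $h(Y)$ forces $m=n$'' uses the measuring axiom {\bf (MS)} via \autoref{lem:multiplication.by.n}(iii), not just positivity; in a general ordered semiring one can have $2x=x$ with $x>0$.

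The NSOP half of (ii) has a genuine gap. You claim that additivity over a strict chain $\psi(N;\bar a_0)\subsetneq\psi(N;\bar a_1)\subsetneq\cdots$ forces $h(\psi(N;\bar a_0))<h(\psi(N;\bar a_1))<\cdots$. This does not follow, even after monomialisation and even granting that nonempty sets have positive measure: in a monomial semiring $\mathbb{R}\langle Z^D\rangle$ addition is max-plus, so if $d(B\setminus A)<d(A)$ then $h(B)=h(A)+h(B\setminus A)=h(A)$, and the measures along the chain may stabilise after one step. Consequently the finiteness clause is not contradicted. The paper's argument avoids this by using Ramsey to produce a triple $\bar a_1,\bar a_2,\bar a_3$ for which the intervals $\psi(M;\bar a_i,\bar a_j)$ all receive the \emph{same} value $s\in S$; the disjoint decomposition of $\psi(M;\bar a_1,\bar a_3)$ then gives $s\geq s+s+1$, which contradicts \autoref{lem:multiplication.by.n}(iii). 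The Ramsey step is essential and cannot be replaced by monotonicity along the chain.

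In (iii), the surjective-to-injective direction also has a gap. You partition $X$ by exact fibre size $X_k=\{x:|f^{-1}(f(x))|=k\}$ and assert that only finitely many $X_k$ are nonempty ``by the finiteness-of-values clause''. That clause bounds the number of values of $h(f^{-1}(y))$, not of $|f^{-1}(y)|$: fibres may be infinite, and infinitely many finite sizes could in principle occur with the same $h$-value. The paper instead partitions into $Y_1$ (fibre a singleton) and $Y_2$ (fibre with $\geq2$ elements), uses the definable bijection $f\restriction_{Y_1}$ and cancellation to get $h(Y_2)=h(Z_2)$ where $Z_2=f(Y_2)$, and separately the ``at least two-to-one'' structure to get $h(Y_2)\geq 2h(Z_2)$, whence $h(Z_2)=0$. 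Also, the justification that $h(Y)=0\Rightarrow Y=\emptyset$ is not ``via transfer from a \mec{}''; it is immediate from axiom (i) of \autoref{def:T-measurable} together with monotonicity (\autoref{easy-facts}(ii)).

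For (iv) your sketch is too vague to constitute a proof. The paper does \emph{not} derive supersimplicity from ``multiplicativity of dimension under definable fibrations''; the crux is a pigeonhole counting argument for $k$-inconsistent families: given $X_1,\ldots,X_m\subseteq X$, $k$-inconsistent, with $D(X_i)\geq\beta$ and hence (inductively) $d(X_i)\geq\beta$, one stratifies each $X_i$ by the multiplicity with which elements are covered and concludes that the relative measure $\rho(X)$ exceeds any fixed multiple of $\min_i\rho(X_i)$, so $d(X)>\beta$; this yields $D(X)\leq d(X)$ and supersimplicity follows from \cite[Proposition 2.5.11]{kim}. Without this counting step, appealing to Lascar inequalities or Fubini alone does not bound $D$-rank by dimension.
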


Here is the overall structure of the paper. We introduce the basic definitions around multidimensional asymptotic and exact classes in Section 2 and then, referring ahead for full details, give a brief overview of some key examples. We also consider certain structures related to finite fields which we half-expected to yield examples, and show that they do not.
Section 2 also contains the proof of Theorem~\ref{interp}. The two key examples in Theorem~\ref{exquiver} are considered in Section 3, along with multisorted structures of the form $(F,V_1,\ldots,V_t)$ where $F$ is a finite field and the $V_i$ are finite-dimensional $F$-vector spaces. Multidimensional {\em exact} classes are explored in detail in Section 4, which includes results yielding Theorem~\ref{exactex}, Proposition~\ref{prpn:groups-sol}, and Theorem~\ref{conj-graphs}. The general theory around generalised measurability is developed in Sections 5 and 6, with Section 6 focussing on stability-theoretic consequences. We conclude with some open questions in Section 7.

\subsection{Notation, conventions, and model-theoretic background}
Throughout this paper,  $L$ is a first-order language, $L$-structures are usually denoted by $M$ or $N$, and `definable' means `definable with parameters'.
For a tuple $\bar{x}$ of variables and a set $A$, $A^{|\bar{x}|}$ denotes the set of $|\bar{x}|$-tuples from $A$.  For a structure $M$, let $\mathrm{Def}(M)$ denote the collection of sets in $M$ which are definable with parameters. A \em definable family \rm in $M$ is a set 
$\{\phi(M^{|\bar{x}|};\bar{b}) :\bar{b}\in M^{|\bar{y}|}\}$, for some formula $\phi(\bar{x};\bar{y})$. 

The paper makes occasional reference to concepts from generalised stability theory, in particular stability, simplicity, and NSOP${}_1$. These are increasingly broad notions of model-theoretic tameness for first order theories. For stability, simplicity and other model-theoretic background, \cite{tent-ziegler} can be used as a general source. More detail on simple theories can be found in \cite{kim} or \cite{wagner2}. The general theory around NSOP${}_1$ has been developed more recently, and \cite{ramsey} provides an introduction.

\section{Multidimensional asymptotic and exact classes}

\subsection{Key definitions} \label{section:macs}

\medskip

For a class $\mathcal{C}$ of $L$-structures and a tuple $\bar{y}$ of variables, we denote by $(\mathcal{C},\bar{y})$ the set $\big\{(M,\bar{a}):M\in\mathcal{C},\bar{a}\in M^{|\bar{y}|}\big\}$ of pairs consisting of a structure in $\mathcal{C}$ and a $\bar{y}$-tuple from that structure. A partition $\Pi$ of $(\mathcal{C},\bar{y})$ is said to be {\em $\emptyset$-definable} if for each part $\pi\in\Pi$ there exists an $L$-formula $\phi_{\pi}(\bar{y})$ without parameters such that
\begin{equation}
\phi_{\pi}(M^{|\bar{y}|}) = \big\{\bar{b}\in M^{|\bar{y}|}:(M,\bar{b})\in\pi\big\}
\end{equation}
for each $M\in\mathcal{C}$. We say that a partition is {\em finite} if it has finitely many parts. 

The following is the main definition of this paper.

\begin{definition}\label{def:mac}
Let $R$ be any set of functions $\mathcal{C}\longrightarrow\mathbb{R}^{\geq0}$. A class $\mathcal{C}$ of finite $L$-structures is an {\em $R$-multidimensional asymptotic class} (or an {\em $R$-\mac{}} for short) if for every formula $\phi(\bar{x};\bar{y})$ there is a finite $\emptyset$-definable partition $\Pi=\Pi_\phi$ of $(\mathcal{C},\bar{y})$ and an indexed set $H_{\Pi}:=\{h_{\pi}\in R:\pi\in\Pi\}$ such that for $(M,\bar{b})\in\pi$ we have
\begin{equation}\label{eq:3}
\big||\phi(M^{|\bar{x}|};\bar{b})|-h_{\pi}(M)\big|=o(h_{\pi}(M))
\end{equation}
as $|M|\longrightarrow\infty$.
The functions $h_{\pi}$ are called the {\em measuring functions} of $\phi(\bar{x};\bar{y})$. When $R$ is understood, we just say $\mathcal{C}$ that is a {\em \mac{}}.
\end{definition}

The `little $o$-notation' in \autoref{eq:3} means that for every real number $\epsilon>0$ there exists a natural number $C$ such that, for all $(M,\bar{b})\in\pi$, if $|M|>C$ then
\begin{equation}
\big||\phi(M^{|\bar{x}|};\bar{b})|-h_{\pi}(M)\big|\leq\epsilon h_{\pi}(M).
\end{equation}

\begin{remark}
The elements of $R$ are called {\em measuring functions}. Often we will assume that $R$ is closed under the addition and multiplication of functions.
\end{remark}

\begin{definition}\label{def:mec}
If in \autoref{def:mac} we have $|\phi(M^{|\bar{x}|};\bar{b})|=h_{\pi}(M)$ for all  $M\in {\mathcal C}$ and $\bar b\in M^{|\bar{x}|}$ with $(M,\bar{b})\in \pi$, then we say that $\mathcal{C}$ is an {\em exact $R$-\mac{}}, or an $R$-\mec{} (multidimensional {\em exact} class).
In this case the measuring functions may be chosen to have codomain ${\mathbb Z}^{\geq 0}$. 
\end{definition}

\begin{remark} 
We say that $\mathcal{C}$  is a {\em weak} \mac{} if \autoref{def:mac} holds but without requiring that the partition $\Pi$ is $\emptyset$-definable (or even parameter-definable).
Analogously, we say that $\mathcal{C}$ is a {\em weak \mec{}} if \autoref{def:mec} holds, but without requiring the $\emptyset$-definability of the partition $\Pi$.
We do not explore intermediate notions given by demanding that $\Pi$ is parameter-definable, or definable by formulas of a certain complexity.
\end{remark}

\begin{lemma}\label{constants}~
\begin{enumerate}[(i)]
\item Let $\mathcal{C}$ be an $R$-\mac{} (respectively $R$-\mec{}) of $L$-structures,  let $L'$ be an extension of $L$ by constants, and for each $M \in \mathcal{C}$ let $M'$ be an $L'$-expansion of $M$. Put $\mathcal{C}':=\{M':M\in \mathcal{C}\}$. Then $\mathcal{C}'$ is an $R$-\mac{} (respectively $R$-\mec{}).

\item The assertion of (i) holds with \mac{} and \mec{} replaced by weak \mac{} and weak \mec{} respectively.

\item Suppose $L'=L\cup\{c\}$ where $c$ is a constant symbol not in  $L$, and let $\mathcal{C}$ be a class of $L$-structures. Let $\phi(x)$ be an $L$-formula and suppose that for $M\in \mathcal{C}$, $\Aut(M)$ is transitive on $\phi(M)$. Let $\mathcal{C}'$ consist of $L'$-expansions of members of $\mathcal{C}$ with $c$ interpreted by a realisation of $\phi$. If $\mathcal{C}'$ is a m.a.c. (respectively m.e.c.), then so is $\mathcal{C}$. 
\end{enumerate}
\end{lemma}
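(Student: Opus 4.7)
The plan is to treat parts~(i) and~(ii) by straightforward reductions and to focus effort on part~(iii), where the transitivity of $\Aut(M)$ on $\phi(M)$ is the essential ingredient.

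For~(i), the key step is to convert each $L'$-formula into an $L$-formula. Given $\phi'(\bar{x};\bar{y})$ I would write it as $\phi(\bar{x};\bar{y},\bar{c})$ for an $L$-formula $\phi$ and an enumeration $\bar{c}$ of the new constants occurring, then apply the $R$-\mac{} (resp.\ $R$-\mec{}) hypothesis for $\mathcal{C}$ to $\phi(\bar{x};\bar{y},\bar{z})$ with fresh $\bar{z}$. This yields a finite $\emptyset$-$L$-definable partition $\Pi$ of $(\mathcal{C},\bar{y}\bar{z})$ with parts defined by $\chi_\pi(\bar{y},\bar{z})$ and measuring functions $h_\pi$; re-substituting $\bar{c}$ for $\bar{z}$ transports this to a partition of $(\mathcal{C}',\bar{y})$ defined by the parameter-free $L'$-formulas $\chi_\pi(\bar{y},\bar{c})$, with measuring functions obtained by pullback along the forgetful map $M'\mapsto M$. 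Part~(ii) is the same recipe without the $\emptyset$-definability clause.

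For~(iii) the plan is to project the constant $c$ out by quantification and then use transitivity to verify the resulting measuring functions. Given $\psi(\bar{x};\bar{y})\in L$, viewing it as an $L'$-formula and applying the hypothesis of $\mathcal{C}'$ produces a finite $\emptyset$-$L'$-definable partition $\Pi'$ of $(\mathcal{C}',\bar{y})$ with parts given by $\rho_{\pi'}(\bar{y})=\sigma_{\pi'}(\bar{y},c)$ for $L$-formulas $\sigma_{\pi'}(\bar{y},z)$, and measuring functions $h_{\pi'}$. I would then define, for each $S\subseteq\Pi'$,
\[
\tau_S:=\big\{(M,\bar{b}):S=\{\pi'\in\Pi':M\models\exists z(\phi(z)\wedge\sigma_{\pi'}(\bar{b},z))\}\big\},
\]
obtaining a finite $\emptyset$-$L$-definable partition of $(\mathcal{C},\bar{y})$. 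Fixing $\pi'_0\in S$ and, by choice, a representative $a_M\in\phi(M)$ for each $M\in\mathcal{C}$, I would set $h^*_{\tau_S}(M):=h_{\pi'_0}(M,a_M)$. To verify the asymptotic clause, for $\bar{b}\in\tau_S\cap M^{|\bar{y}|}$ the membership $\pi'_0\in S$ supplies some $a_1\in\phi(M)$ with $\sigma_{\pi'_0}(\bar{b},a_1)$; transitivity then furnishes $\tau\in\Aut(M)$ with $\tau(a_1)=a_M$, so $\sigma_{\pi'_0}(\tau(\bar{b}),a_M)$ holds while $|\psi(M^{|\bar{x}|};\tau(\bar{b}))|=|\psi(M^{|\bar{x}|};\bar{b})|$, and the \mac{} property applied to $(M,a_M,\tau(\bar{b}))\in\pi'_0$ delivers the required estimate with $h^*_{\tau_S}(M)$. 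The \mec{} variant replaces every $o$-estimate by exact equality throughout.

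The main obstacle will be justifying the construction of $h^*_{\tau_S}$ in~(iii): its definition invokes the arbitrary representative $a_M\in\phi(M)$, and without further input one cannot compare $h_{\pi'_0}(M,a_M)$ for different choices of $a_M$ nor guarantee that $h^*_{\tau_S}$ correctly estimates $|\psi(M^{|\bar{x}|};\bar{b})|$ uniformly on $\tau_S$. The transitivity hypothesis is exactly what supplies the automorphism used in the transport step, making both the definition of $h^*_{\tau_S}$ and its asymptotic correctness intrinsic to $\mathcal{C}$ and compatible with the $\emptyset$-$L$-definable partition $\{\tau_S\}$.
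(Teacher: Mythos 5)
Your argument is correct, and it is a careful, fully detailed realisation of the paper's extremely terse proof (the paper simply declares (i) and (ii) ``immediate'' and, for (iii), offers only the hint that ``any use of $c$ in a defining formula can be replaced by a quantifier relativised to $\phi(M)$''). Your treatment of (i) and (ii) -- viewing each $L'$-formula as an $L$-formula with the new constants substituted for fresh parameter variables, applying the hypothesis on $\mathcal{C}$, and substituting back -- is the standard and intended reduction.

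For (iii), your $\tau_S$ construction is precisely what is needed to turn the paper's hint into a proof. The naive replacement of $\sigma_{\pi'}(\bar{y},c)$ by $\exists z(\phi(z)\wedge\sigma_{\pi'}(\bar{y},z))$ does \emph{not} yield a partition, since for a fixed $\bar{b}$ different witnesses $a\in\phi(M)$ can place $(M,a,\bar{b})$ in different parts of $\Pi'$; passing to the common refinement indexed by subsets $S\subseteq\Pi'$ repairs this, and the transitivity of $\Aut(M)$ on $\phi(M)$ is exactly what guarantees that the resulting estimate $h^{*}_{\tau_S}(M)=h_{\pi'_0}(M')$ is independent of the choice of $\pi'_0\in S$ and of the representative $a_M$, via the transport-by-automorphism argument you give. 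Your worry in the final paragraph is thus already resolved by the argument you sketch; the automorphism carrying $a_1$ to $a_M$ identifies $|\psi(M;\bar{b})|$ with $|\psi(M;\tau(\bar{b}))|$, and the latter is controlled by $h_{\pi'_0}(M')$ because $(M',\tau(\bar{b}))\in\pi'_0$.

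One small implicit hypothesis worth flagging (which the paper also glosses over) is that $\phi(M)\neq\emptyset$ for all $M\in\mathcal{C}$: otherwise $M$ has no expansion in $\mathcal{C}'$, the set $S$ is empty for every $\bar{b}$, and no estimate is available. Under this implicit assumption $S$ is always non-empty (as the chosen $a_M$ supplies a witness), and your proof goes through.
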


\begin{proof} This is immediate. In (iii), we keep the definability clause in the definition of m.a.c./m.e.c., since any use of $c$ in a defining formula can be replaced by a quantifier relativised to $\phi(M)$. 
\end{proof} 

For an initial sense of the content of the $R$-\mac{} definition, note the following. 

\begin{lemma}\label{nonmac}
Let $\mathcal{C}$ be a class of finite structures, and suppose there is a formula $\phi(\bar{x},\bar{y})$ and $r\in {\mathbb R}$ with $r>1$ such that the following holds: For all $n\in {\mathbb N}$ there is $M_n \in \mathcal{C}$ and $\bar{a}_1,\ldots,\bar{a}_n \in M_n^{|\bar{y}|}$ such that $|\phi(M_n^{|\bar{x}|},\bar{a}_{i+1})| > r|\phi(M_n^{|\bar{x}|},\bar{a}_i)|$ for each $i=1,\ldots,n-1$. Then $\mathcal{C}$ is not a weak $R$-\mac{} for any $R$.
\end{lemma}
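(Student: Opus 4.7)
The plan is a contradiction argument combined with the pigeonhole principle, exploiting the tension between the fixed finite number of parts in any \mac{} partition of $(\mathcal{C},\bar{y})$ for the formula $\phi$ and the unboundedly long chains of set-sizes separated by a fixed multiplicative factor supplied by the hypothesis. I would assume, towards a contradiction, that $\mathcal{C}$ is a weak $R$-\mac{}, apply \autoref{def:mac} to $\phi(\bar{x},\bar{y})$, and fix the resulting finite partition $\Pi_\phi = \{\pi_1,\ldots,\pi_k\}$ together with its measuring functions $h_1,\ldots,h_k \in R$.

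The first quantitative step is to choose $\epsilon > 0$ satisfying $(1+\epsilon)/(1-\epsilon) < r$; any $\epsilon < (r-1)/(r+1)$ works. Applying the little-$o$ estimate \eqref{eq:3} to each of the finitely many parts, I would extract a uniform threshold $C \in \mathbb{N}$ such that whenever $(M,\bar{b}) \in \pi_j$ and $|M| > C$,
\[
(1-\epsilon)\,h_j(M) \;\leq\; |\phi(M^{|\bar{x}|};\bar{b})| \;\leq\; (1+\epsilon)\,h_j(M).
\]
Separately, iterating the ratio hypothesis shows $|\phi(M_n^{|\bar{x}|};\bar{a}_i)| > r^{i-2}$ for all $i \geq 2$, and since each such set is contained in $M_n^{|\bar{x}|}$ this forces $|M_n| \to \infty$ as $n \to \infty$.

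Now the pigeonhole step: pick $n$ large enough that $n - 1 > k$ and $|M_n| > C$. Among the $n-1$ pairs $(M_n,\bar{a}_2),\ldots,(M_n,\bar{a}_n)$, two must lie in the same part $\pi \in \Pi_\phi$, say $(M_n,\bar{a}_i)$ and $(M_n,\bar{a}_j)$ with $2 \leq i < j \leq n$. Since $|\phi(M_n^{|\bar{x}|};\bar{a}_i)|$ and $|\phi(M_n^{|\bar{x}|};\bar{a}_j)|$ are both positive, $h_\pi(M_n) > 0$, and the displayed estimate gives
\[
\frac{|\phi(M_n^{|\bar{x}|};\bar{a}_j)|}{|\phi(M_n^{|\bar{x}|};\bar{a}_i)|} \;\leq\; \frac{1+\epsilon}{1-\epsilon} \;<\; r,
\]
whereas iterating the hypothesis bounds the same ratio from below by $r^{j-i} \geq r$, a contradiction. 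No step presents a genuine obstacle; the one subtlety worth flagging is that one must start the pigeonhole from index $2$, since $|\phi(M_n^{|\bar{x}|};\bar{a}_1)|$ is allowed to vanish and would otherwise spoil the ratio comparison. The argument never invokes any definability of the partition, which is consistent with the statement being about \emph{weak} \mac{}s.
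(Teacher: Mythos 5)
Your proof is correct and takes essentially the same route as the paper's: both proceed by pigeonholing the parameter tuples into the finitely many parts of $\Pi_\phi$ and then confronting the little-$o$ estimate with the ratio $\ge r$. The only cosmetic difference is that you fix $\epsilon < (r-1)/(r+1)$ in advance and derive the contradiction from $(1+\epsilon)/(1-\epsilon) < r$, whereas the paper lets $\epsilon$ shrink and argues via the triangle inequality; these are equivalent, and your explicit remark about starting the pigeonhole at index $2$ (to avoid the possibly empty $\phi(M_n^{|\bar{x}|},\bar{a}_1)$) is a worthwhile clarification that the paper leaves implicit.
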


\begin{proof} Under the given assumption, put $X_i(M_n):=\phi(M_n^{|\bar{x}|},\bar{a}_i)$, and let $\epsilon>0$.
By the pigeon-hole principle, if the assumption holds then there are some $h_\pi \in R$ and for arbitrarily large $n$ distinct $i_n,j_n$ such that $|X_{j_n}(M_n)|>r|X_{i_n}(M_n)|$ and
\begin{equation}\label{chain} 
\big||X_{i_n}(M_n)|-h_\pi(M_n)\big| \leq\epsilon h_\pi(M_n) \mbox{~~and}
\end{equation}
\begin{equation}
\big||X_{j_n}(M_n)|-h_\pi(M_n)\big| \leq \epsilon h_\pi(M_n).
\end{equation} Thus by the triangle inequality
$\big| |X_{j_n}(M_n)|-|X_{i_n}(M_n)|\big|\leq 2\epsilon h_\pi(M_n)$, so $(r-1)|X_{i_n}(M_n)|\leq 2\epsilon h_\pi(M_n)$. Since
 $r$ is fixed and $\epsilon$ is arbitrarily small, this contradicts \eqref{chain}. 
\end{proof}

\begin{remark} \label{nonex} It follows immediately from \autoref{nonmac}, applied to the formula $x<y$, that the class of finite total orders is not a weak \mac{}. In fact, by \autoref{mac-nsop}, no weak \mac{} can have an ultraproduct with the strict order property. Likewise, the class of all finite structures consisting of a set equipped with an equivalence relation $E$ is not a weak \mac{} (consider the formula $Exy$). However, given a fixed bound on the class size or number of classes one obtains a weak \mac{}. For a stronger version of this last statement, see \autoref{examples-macs}(2)(c).
\end{remark}


\subsection{Examples of \mac{}s and \mec{}s}

\medskip

There are several special cases of m.a.c.s, corresponding to particular choices of the set $R$ of functions, which are worth picking out in more detail.

\begin{example}
\label{eg:N-dim}
As indicated in the introduction, the above definition, \autoref{def:mac}, generalises the notions of {\em $1$-dimensional asymptotic class} and {\em $N$-dimensional asymptotic class} introduced in \cite{MS08} and \cite{Elwes07}, respectively. We can reformulate the definition from \cite{Elwes07} in our framework as follows. For $N\in\mathbb{N}$, we let $\mathbb{R}M^{\frac{1}{N}}$ be the set of functions $\mathcal{C}\longrightarrow\mathbb{R}^{\geq0}$ given by \[m_{r,i}:M\longmapsto r|M|^{\frac{i}{N}}\] for some $r\in\mathbb{R}^{\geq0}$ and some $i\in\mathbb{N}$. A class $\mathcal{C}$ is an $N$-dimensional asymptotic class (in the sense of \cite{Elwes07}) if and only if it is an $\mathbb{R}M^{\frac{1}{N}}$-\mac{} with the additional requirement that, for every formula $\phi(\bar{x};\bar{y})$ with $\bar{x}$ an $m$-tuple of variables, the corresponding measuring functions are of the form $m_{r,i}$ for $\frac{i}{N}\leq m$. In this context, if $m_{r,i}$ is the measuring function corresponding to a definable set, then we say that $r$ is the {\em measure} and that $\frac{i}{N}$ is the {\em dimension} of that set.

The original notion of a $1$-dimensional asymptotic class (in the sense of \cite{MS08}) is essentially the special case of the above with $N=1$. By the main theorem of \cite{CvdDM92}, the class of all finite fields is a 1-dimensional asymptotic class. Furthermore,  if $p$ is prime and $m,n\in {\mathbb N}$ with $(m,n)=1$ and $m>1$, $n\geq 1$, then let $\mathcal{C}_{m,n,p}$ be the collection of all difference fields $(\mathbb{F}_{p^{kn+m}}, x\longmapsto x^{p^k})$. Then by Theorem 3.5.8 of \cite{ryten}, $\mathcal{C}_{m,n,p}$ is a 1-dimensional asymptotic class, and Ryten shows that it follows that the collection of all finite simple groups of any fixed Lie type is an asymptotic class. In  the original definition of {\em 1-dimensional asymptotic class}, and in the results of \cite{CvdDM92} on finite fields and of \cite{ryten} on $\mathcal{C}_{m,n,p}$, there is stronger information on the error term -- it takes the form  given in \cite{CvdDM92}.

\end{example}

\begin{definition}\label{polynomialmac}~
\begin{enumerate}[(i)]
\item If $\mathcal{C}$ is an $R$-\mac{} then we say that the $L$-formula $\delta(\bar{x},\bar{y})$ is {\em balanced} if for every $\epsilon\in \mathbb{R}^{>0}$ there is $C \in \mathbb{N}$ such that for any $M\in \mathcal{C}$ with $|M|>C$ and any $\bar{a},\bar{b}\in M^{|\bar{y}|}$ with $\delta(M^{|\bar{x}|},\bar{a})$ and $\delta(M^{|\bar{x}|},\bar{b})$ nonempty, we have
\[\big| |\delta(M^{|\bar{x}|},\bar{a})|-|\delta(M^{|\bar{x}|},\bar{b})|\big|<\epsilon |\delta(M^{|\bar{x}|},\bar{b})|.\]
We say $\delta(\bar{x},\bar{y})$ is {\em exactly balanced} if $|\delta(M^{|\bar{x}|},\bar{a})|=|\delta(M^{|\bar{x}|},\bar{b})|$ whenever $\bar{a},\bar{b}\in M^{|\bar{y}|}$ with $\delta(M^{|\bar{x}|},\bar{a})$ and $\delta(M^{|\bar{x}|},\bar{b})$ are nonempty.
\item Suppose in \autoref{def:mac} that $\mathcal{C}$ is an $R$-\mac{} satisfying the following condition: There are balanced  $L$-formulas $\delta_1(\bar{x}_1,\bar{y}_1),\ldots, \delta_k(\bar{x}_k,\bar{y}_k)$ and $N_1,\ldots,N_k \in \mathbb{N}^{>0}$, and for each $h\in R$ a polynomial $P_h\in \mathbb{R}[X_1,\ldots,X_k]$,  such that for every $M \in \mathcal{C}$ there are $\bar{a}_1,\ldots,\bar{a}_k$ in $M$ with $ \delta_i(M^{|\bar{x}_i|},\bar{a}_i)\neq \emptyset$ for each $i$, such that 
$$h(M)=P_h(|\delta_1(M^{|\bar{x}_1|},\bar{a}_1)|^{\frac{1}{N_1}},\ldots, |\delta_k(M^{|\bar{x}_k|},\bar{a}_k)|^{\frac{1}{N_k}}).$$ Then we say that $\mathcal{C}$ is a {\em polynomial} $R$-\mac{}, or {\em polynomial \mac{}}. The notion of {\em polynomial \mec{}} is defined similarly, but with `balanced' replaced by `exactly balanced'.
\end{enumerate}
\end{definition}

\begin{remark}~
\begin{enumerate}[1.]
\item In view of \autoref{thminterpmac} we sometimes slightly extend the above terminology, allowing the formulas $\delta(\bar{x},\bar{y})$ to be $L^{{\rm eq}}$-formulas using quotients by equivalence relations uniformly $\emptyset$-definable across $\mathcal{C}$. The point here is that in the key example of envelopes of a smoothly approximable structure (see \autoref{wolf-thesis}), the above functions $h$ will be polynomials in the cardinalities of certain `Lie geometries' which may live in $M^{{\rm eq}}$ but are uniformly definable across the class and could be added as additional sorts to the members of $\mathcal{C}$. We do not labour this point. 

In all the examples of polynomial \mac{}s or \mec{}s which we consider in this paper, the polynomials are over ${\mathbb Q}$ (and in some cases over ${\mathbb Z}$). 

\item The condition that $\delta$ is balanced is easily arranged in a \mac{}. Let $\Pi$ be the $\emptyset$-definable partition associated with $\delta(\bar{x},\bar{y})$, let $\pi\in \Pi$, and let $\phi_\pi(\bar{y})$ be the corresponding formula defining $\pi$. Then the formula $\delta'(\bar{x},\bar{y})$ of form
$\delta(\bar{x},\bar{y})\wedge \phi(\bar{y})$ is balanced.

\end{enumerate}
\end{remark}

\begin{example}
\label{eg:natural.mac}

We explain how the notion of {\em polynomial $R$-\mac{}} stems from \autoref{eg:N-dim}. The elements of $\mathbb{R}M^{\frac{1}{N}}$ are monomial functions of $|M|^{\frac{1}{N}}$. We may recast this by thinking of these functions as `formal' monomials in a new variable $X$. These formal monomials are then represented as functions $\mathcal{C}\longrightarrow\mathbb{R}^{\geq0}$ by mapping $X$ to $|M|^{\frac{1}{N}}$. In particular, any asymptotic class is a polynomial \mac{} In many cases one can dispense with the exponents $\frac{1}{N}$. For example, for the 3-dimensional asymptotic class of groups ${\rm SL}_2(q)$ we may choose the formula $\delta(x,\bar{y})$ to define the (1-dimensional) group of upper unitriangular matrices. 

In our original approach we  generalised this viewpoint to consider `polynomial' functions in several variables with non-negative real exponents (or just positive integer exponents). Each of the  variables is mapped to the cardinality of a definable set (or in some cases a sort) in the finite model. We allow $L$ to be multi-sorted. 
For each sort $s$ 
in $L$ 
we let $X_{s}$ be a new variable symbol. Let $R_{L}$ be the field of fractions of the ring
\[\mathbb{R}[X_{s}^{\mathbb{R}^{\geq0}}:s\text{ is a sort of }L].\]
We have not developed this extra generality (beyond the notion of polynomial m.a.c) since it does not  seem to be forced by natural examples.

\end{example}


\begin{example}\label{examples-macs}
The following are examples of \mac{}s and \mec{}s which are discussed in this paper, and which (apart from the first) extend beyond the earlier notion of asymptotic class. The intention here, and in Subsection 2.3, is to help the reader build an intuition for the key concepts of \mac{} and \mec{}, before the lengthy proofs of Theorems~\ref{thm:projection.lemma} and \ref{thminterpmac}. The more intricate examples in (c) and (d) below are worked out in more detail in Section 3 below, and the examples of \mec{}s are mostly given in detail in Section 4. 

\begin{enumerate}[(1)]
\item Examples of \mac{}s

\begin{enumerate}[(a)]

\item Any asymptotic class is a \mac{}, as noted above. In particular, the collection of finite fields is a 1-dimensional asymptotic class so forms a \mac{}. As noted in \cite[Example 3.4]{MS08}, another example of a 1-dimensional asymptotic class is the class of {\em Paley graphs} $P_q$. Here $q$ is a prime power with $q \equiv 1 ~~({\rm mod}~~ 4)$, and $P_q$ has as vertex set the finite field ${\mathbb F}_q$, with vertices $a,b$ adjacent if and only if $a-b$ is a square. The key fact here -- see \cite[Theorem 13.10]{bollobas} -- is that if $U,W$ are disjoint sets of vertices of $P_q$ with 
$m:=|U\cup W|$, and $v(U,W)$ denotes the number of vertices not in $U \cup W$ adjacent to all vertices of $U$ and to none of $W$, then
\[
|v(U,W)-2^{-m}q| \leq \tfrac{1}{2}(m-2+2^{-m+1})q^{\frac{1}{2}} + \tfrac{m}{2}.
\]
Any non-principal ultraproduct of distinct Paley graphs is elementarily equivalent to the random graph. There is no \mec{} of Paley graphs -- see 2(d) below.

\item The collection ${\mathcal V}_F$ of 2-sorted structures $(V,F)$ where $V$ is a finite vector space over a finite field $F$ is a polynomial \mac{}. Ultraproducts are supersimple but may have SU-rank $\omega$.  The functions in the set $R$ are polynomials in two variables over ${\mathbb Q}$. See  \cite[Theorem 4.1]{GMS15}, or \autoref{GMSgen} for a more general result.

\item The collection of 2-sorted structures $(V,F, \beta)$ where $V$ is a finite vector space over a finite field $F$, and $\beta\colon V\times V \longrightarrow F$ is a non-degenerate alternating bilinear form, is a \mac{}. An ultraproduct where the field is infinite and the vector space is infinite-dimensional will not have simple theory but will be NSOP${}_1$. The functions in $R$ are rational in $|V|$ and $|F|$. See Section 3.3 for details.

\item Let $Q$ be a quiver of finite representation type. Then the collection of all 3-sorted structures $({\mathbb F}_q, {\mathbb F}_q Q, V)$ is a weak \mac{}, where ${\mathbb F}_q$ is a finite field, ${\mathbb F}_q Q$ is the corresponding path algebra, and $V$ is a finite
${\mathbb F}_q Q$-module. Details are given in Section~\ref{finiterep}. Again, the corresponding functions are polynomials in several variables over ${\mathbb Q}$, the variables corresponding to the field and to the indecomposable ${\mathbb F}Q$-modules. 

\item Fix a positive integer $d$ and let $L:=L_{{\rm rings}} \cup\{P_1,\ldots,P_d\}$ where the $P_i$ are unary predicates. Consider the collection $\mathcal{C}_d$ of all finite residue rings ${\mathbb Z}/n{\mathbb Z}$, where $n$ has form $n=p_1^{l_1} \cdot \ldots \cdot p_d^{l_d}$, where $p_1<\ldots < p_d$ are distinct primes and $0\leq l_i \leq d$. Note that the number of prime divisors of $n$ and the exponents of these primes are bounded, but not the primes themselves. View each member of $\mathcal{C}_d$ as an $L$-structure, with $P_i$ picking out the canonical subring of form ${\mathbb Z}/p^{l_i}{\mathbb Z}$. Then by \cite[Proposition 3.3.4]{bello}, $\mathcal{C}_d$ is a polynomial \mac{}, and it is easily seen that the collection of reducts to $L_{{\rm rings}}$ is a weak \mac{}. (The statement in \cite{bello} is slightly different, since there the exponents are fixed and not just bounded, but it is easily checked that the above assertions hold.) 

\end{enumerate}

\item Examples of \mec{}s

\begin{enumerate}[(a)]

\item By a result of Pillay (see \autoref{pillaysm} below) if $M$ is a pseudofinite strongly minimal set, then there is a polynomial \mec{} all of whose ultraproducts are elementarily equivalent to $M$, in which the corresponding functions are given by polynomials in one variable over ${\mathbb Z}$.

\item By a theorem of Wolf using earlier work of Cherlin and Hrushovski, if $L$ is any countable first order language and $d$ is a natural number, then there is finite $L'\supseteq L$ such that we may form a polynomial \mec{} in $L'$ containing, for each finite $L$-structure $M$ with at most $d$ 4-types, an expansion $M'$ of $M$ to $L'$ with the same automorphism group as $M$. See \autoref{wolf-thesis}. In particular, if $M$ is a smoothly approximable structure then the collection of all finite `envelopes' of $M$ is a \mec{} (after expanding the language without changing automorphism groups).

\item As a special case of (b), let $\mathcal{C}_d$ be the class of all structures $(M,E)$ where $E$ is an equivalence relation on $M$ with at most $d$ different sizes of equivalence classes. Then $\mathcal{C}_d$ is a polynomial \mec{}, after expansion by unary predicates picking out the union of the equivalence classes of given size.

\item It is shown in \autoref{homog2} (and \autoref{homog1}) that if $M$ is a homogeneous graph in the sense of Fra\"iss\'e, then there is a \mec{} with an ultraproduct elementarily equivalent to $M$ if and only if $M$ is stable. We conjecture (see \autoref{mec-conj}(i)) that the corresponding statement holds  for any finite relational language -- the right-to-left direction follows from Wolf's result above and earlier work of Lachlan and co-authors.

\item By \autoref{modules} (ii), the collection of all finite abelian groups is a \mec{} We do not have a clean description of the corresponding functions, apart from the special case of the collection $\mathcal{C}$ of all finite homocyclic groups, that is groups of the form $({\mathbb Z}/p^n{\mathbb Z})^m$, where $p$ is prime and $m,n$ are positive integers (see Proposition 4.4.2 of \cite{GMS15}). As a very partial converse (\autoref{prpn:groups-sol}), if $\mathcal{C}$ is a \mec{} of finite groups, then there is a number $d$ such that the groups $G\in \mathcal{C}$ have a soluble uniformly definable normal subgroup $R$ whose quotient by its Fitting subgroup has derived length at most $d$, and with $|G:R|\leq d$. 

\item If $L$ is a finite relational language, and $d\in \mathbb{N}$, then the collection of all finite $L$-structures such that each element lies in at most $d$ tuples satisfying a relation is a \mec{} See \autoref{gaifman}. Again, we do not have a description of the functions giving cardinalities.

\item By \autoref{weakmec-fields}, there is no {\em weak} \mec{} consisting of arbitrarily large finite fields.

\end{enumerate}

\end{enumerate}
\end{example}

The following lemma gives a tool for constructing further \mac{}s and \mec{}s.

\begin{lemma}\label{product}
Let $\mathcal{C}_1,\ldots,\mathcal{C}_k$ be classes of finite structures in disjoint languages $L_1,\ldots, L_k$ respectively. Let $L=L_1\cup \ldots \cup L_k$, a multi-sorted language with sorts $S_1,\ldots,S_k$, with the symbols of each $L_i$ restricted to $S_i$. Let $\mathcal{C}$ be the class of $L$-structures $M=M_1 \cup \ldots \cup M_k$ (disjoint union), where each $M_i$ is the restriction of $M$ to $S_i$ and lies in $\mathcal{C}_i$. If each $\mathcal{C}_i$ is an $R_i$-\mac{} (respectively $R_i$-m.e.c), then $\mathcal{C}$ is an $R$-\mac{} (respectively $R$-\mec{}), where $R$ is the set of functions $\mathcal{C} \longrightarrow \mathbb{R}$ generated from the $R_i$ by addition and multiplication.
\end{lemma}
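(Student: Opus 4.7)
The plan is a Feferman--Vaught-style decomposition of $L$-formulas into $L_i$-formulas, combined with the atom-product structure forced by the disjointness of the sorts. By induction on formula complexity---exploiting that the symbols of $L_i$ apply only to sort $S_i$, so atomic formulas and equalities already involve only one sort, negations and disjunctions preserve the shape, and a quantifier $\exists x$ of sort $S_s$ passes through the Boolean structure by acting only on the $L_s$-factor of each conjunct---one shows that any $L$-formula $\phi(\bar{x},\bar{y})$, with variables split by sort as $\bar{x}=\bar{x}_1\cdots\bar{x}_k$ and $\bar{y}=\bar{y}_1\cdots\bar{y}_k$, is equivalent to a finite disjunction $\bigvee_j \bigwedge_i \phi_{i,j}(\bar{x}_i,\bar{y}_i)$ of conjunctions of $L_i$-formulas. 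Passing to atoms of the Boolean algebra generated by $\{\phi_{i,j}\}_j$ separately for each $i$, one obtains a fixed finite set $A$ of atom-tuples $(a_1,\ldots,a_k)$ such that $\phi(M^{|\bar{x}|},\bar{b})$ is a disjoint union of products of the $L_i$-definable sets $a_i(M_i^{|\bar{x}_i|},\bar{b}_i)$, giving
\[
|\phi(M^{|\bar{x}|},\bar{b})|\ =\ \sum_{(a_1,\ldots,a_k)\in A}\ \prod_{i=1}^k |a_i(M_i^{|\bar{x}_i|},\bar{b}_i)|.
\]

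Next I would apply the $\mathcal{C}_i$-\mac{} (respectively $\mathcal{C}_i$-\mec{}) hypothesis to each atom $a_i(\bar{x}_i,\bar{y}_i)$, obtaining a finite $\emptyset$-definable partition of $(\mathcal{C}_i,\bar{y}_i)$ with measuring functions in $R_i$. Taking the common refinement $\Pi_i$ over the finitely many atoms involved, and then the product partition $\Pi=\Pi_1\times\cdots\times\Pi_k$ of $(\mathcal{C},\bar{y})$, gives a finite $\emptyset$-definable partition in $L$, since each $\Pi_i$ is defined by $L_i$-formulas on $\bar{y}_i$ that are a fortiori $L$-formulas. On a product part $\pi=\pi_1\times\cdots\times\pi_k$ I put
\[
h_\pi(M)\ :=\ \sum_{(a_1,\ldots,a_k)\in A}\ \prod_{i=1}^k h_{i,a_i,\pi_i}(M_i),
\]
which lies in $R$ by construction. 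In the \mec{} case each factor is exact on its part, so this identity yields $|\phi(M^{|\bar{x}|},\bar{b})|=h_\pi(M)$ for all $(M,\bar{b})\in\pi$, proving the \mec{} claim.

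The main obstacle is the asymptotic \mac{} case, since $|M|=|M_1|+\cdots+|M_k|\to\infty$ does not force every $|M_i|$ to grow. To handle this I would further refine each $\pi_i$ by splitting off, for a chosen threshold $C$, the sub-part on which $|M_i|\leq C$ into the finitely many $\emptyset$-definable sub-parts given by the isomorphism type of $(M_i,\bar{b}_i)$ (each cut out by an $L_i$-sentence encoding the cardinality of $M_i$ and the quantifier-free diagram of $\bar{b}_i$); on each such ``small'' sub-part I reset $h_{i,a_i}$ to the exact (constant) value $|a_i(M_i^{|\bar{x}_i|},\bar{b}_i)|$, while the complementary ``large'' sub-part $\{|M_i|>C\}$ retains the asymptotic measuring function. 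Then factors from ``small'' components are exact, while those from ``large'' components satisfy $|a_i|=h_{i,a_i}(1+o(1))$ as $|M_i|\to\infty$; expanding
\[
\prod_i(h_i+\delta_i)\ -\ \prod_i h_i\ =\ \sum_{\emptyset\neq S}\ \prod_{i\in S}\delta_i\ \prod_{i\notin S} h_i
\]
with $\delta_i=|a_i|-h_i$, and using that $|M|\to\infty$ within a product part forces the ``large'' component sizes to account for the growth of $|M|$, delivers the required $o(h_\pi(M))$ bound after summing over $A$. The technical crux is the careful choice of thresholds so that the uniform-in-$\epsilon$ $o$-estimate of the \mac{} definition can be inherited from the $\mathcal{C}_i$-\mac{} estimates on each sort.
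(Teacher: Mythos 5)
Your Feferman--Vaught decomposition, the passage to atoms, the construction of the product partition, and the formula $h_\pi(M)=\sum_{(a_1,\ldots,a_k)\in A}\prod_i h_{i,a_i,\pi_i}(M_i)\in R$ are all correct, and they handle the \mec{} case completely: exactness on each sort forces exactness of the product. This is essentially the route the paper takes, which for the \mec{} case simply cites Wolf [Lemma~2.4.2] and then asserts that the \mac{} case ``is essentially the same.''

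The \mac{} case, however, has a real gap that your threshold device does not close. The load-bearing sentence is: ``using that $|M|\to\infty$ within a product part forces the `large' component sizes to account for the growth of $|M|$.'' This is false. Since $|M|=\sum_i|M_i|$, the hypothesis $|M|\to\infty$ forces only $\max_i|M_i|\to\infty$; it does not force each $|M_i|\to\infty$, not even each ``large'' one. Concretely, take $k=2$ and a product part $\pi=\pi_1\times\pi_2$ in which both factors are of your ``large'' type, i.e.\ $|M_i|>C$ on $\pi_i$. Within $\pi$ one may hold $|M_1|=C+1$ fixed while $|M_2|\to\infty$. Writing $\delta_i=|a_i(M_i,\bar b_i)|-h_{i,\pi_i}(M_i)$, all that the \mac{} hypothesis on $\mathcal{C}_1$ tells you at $|M_1|=C+1$ is that $|\delta_1|/h_1\leq\epsilon_0$ for whatever $\epsilon_0$ was used to fix $C$; this ratio is a fixed positive constant independent of $|M_2|$. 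In your expansion $\prod_i(h_i+\delta_i)-\prod_i h_i=\sum_{\emptyset\neq S}\prod_{i\in S}\delta_i\prod_{i\notin S}h_i$, the term with $S=\{1\}$ then contributes a relative error of exactly $\delta_1/h_1$, bounded away from $0$, so the total is not $o(h_\pi(M))$ as $|M|\to\infty$. Choosing a larger fixed $C$ shrinks $\epsilon_0$ but does not eliminate it: the partition must be fixed in advance of $\epsilon$, and for $\epsilon<\epsilon_0$ the clause fails on this part.

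What is really needed is that the relative error on \emph{every} sort tends to $0$ whenever $|M|\to\infty$ within the part; that requires $\delta_i=0$ whenever $|M_i|$ is bounded, i.e.\ exactness on small $M_i$. Your ``small'' sub-parts correctly enforce this below the threshold, but the ``large'' sub-parts do not, and separating out all bounded $M_i$ exactly would a priori refine into infinitely many parts, which the definition forbids. You flag this obstacle (``the technical crux is the careful choice of thresholds'') but do not resolve it, and I do not see that the device you propose can be made to work as written. The paper itself does not display an argument for the \mac{} case beyond the assertion that it parallels the \mec{} one, so this gap is not resolved by comparison with the source either; it would need to be addressed head-on, e.g.\ by an argument that forces exactness of the chosen $h_{i,\pi_i}$ on all structures of bounded size within each part, or by some other refinement that controls the relative error uniformly.
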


The construction of $R$ here  needs a little elucidation. We view $R_i$ as a set of functions $\mathcal{C}\longrightarrow \mathbb{R}$, by putting $f_i(M_1\cup \ldots \cup M_k)=f_i(M_i)$ for $f_i\in R_i$.  

\begin{proof} See \cite[Lemma 2.4.2]{Wolf} for the \mec{} case. The \mac{} argument is essentially the same.
\end{proof}

\subsection{Non-examples associated with finite fields}

The class of finite fields forms the motivating example of a $1$-dimensional asymptotic class, so it is natural to ask what additional structure may be carried by a multidimensional asymptotic class of expansions of finite fields. An obvious example of such an expansion is the 1-dimensional asymptotic class $\mathcal{C}_{m,n,p}$ from \autoref{eg:N-dim}.  Below we show that another function of interest on a finite (prime) field, the discrete logarithm, does not furnish examples of \mac{}s.

\begin{enumerate}[1.]

\item {\em Discrete logarithm in finite fields.}
The multiplicative group $\mathbb{F}_{q}^{\times}$ of a finite field $\mathbb{F}_{q}$ is cyclic.
Fix a generator $a$ of $\mathbb{F}_{q}^{\times}$.
For integers $k,l$, we have $a^{k}=a^{l}$ if and only if $k$ and $l$ are congruent modulo $q-1$.
Thus one may define the {\em discrete logarithm} with base $a$ to be the isomorphism
\begin{align*}
\log_{a}\colon\mathbb{F}_{q}^{\times}&\longrightarrow C_{q-1}\\
a^{l}&\longmapsto l,
\end{align*}
where $C_{q-1}$ is the cyclic group of order $q-1$,
perhaps identified with the additive group of integers modulo $q-1$.

There are at least two obvious ways to turn this into a first-order structure expanding $\mathbb{F}_{q}$. In the first (i) we write the codomain of the logarithm as a separate sort, and in the second (ii) we choose some identification of $\{0,...,q-2\}$ with a subset of $\mathbb{F}_{q}$, at least in the case that $q$ is prime.

\begin{enumerate}[(i)]

\item For each prime power $q$ consider the two-sorted structure $\big(\mathbb{F}_{q},C_{q-1};\log_{a}\big)$, where $\log_{a}\colon\mathbb{F}_{q}^{\times}\longrightarrow C_{q-1}$ is as above. We view the first sort $\mathbb{F}_{q}$ as a field, and the second sort $C_{q-1}$ as a group. Consider the class
\[\mathcal{C}_{1}:=\big\{(\mathbb{F}_{q},C_{q-1};\log_{a}):\text{$q$ a prime power, $a$ a generator}\big\}.\]
However there is nothing new here because the logarithm is only re-writing the multiplication in $\mathbb{F}_{q}^{\times}$. In fact these structures are uniformly bi-interpretable with finite fields, which form an asymptotic class. The class $\mathcal{C}_1$ is a polynomial m.a.c..


\item The second approach, for each {\em prime} $q$, is to identify the codomain $C_{q-1}=\{0,...,q-2\}$ with the subset $\{0,...,q-2\}\subseteq\mathbb{F}_{q}$.
This is somewhat artificial since the group structure on the codomain does not match the additive group of $\mathbb{F}_{q}$.
Consider the one-sorted structures $(\mathbb{F}_{q};\log_{a})$ and the class
\[\mathcal{C}_{2}:=\big\{(\mathbb{F}_{q};\log_{a}):\text{$q$ a prime, $a$ a generator}\big\}.\]

\begin{claim}
There is a formula $\phi(x,y)$ which uniformly defines a total ordering on each structure in $\mathcal{C}_{2}$.
\end{claim}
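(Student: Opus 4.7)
The plan is to pull back the standard integer order on $\{0,1,\ldots,q-2\}$ to $\mathbb{F}_q^\times$ through the bijection $\log_a\colon\mathbb{F}_q^\times\to\{0,1,\ldots,q-2\}\subseteq\mathbb{F}_q$, and then to extend to all of $\mathbb{F}_q$ by declaring $0$ the bottom element. The main conceptual obstacle is that the integer order on $\{0,1,\ldots,q-2\}$, regarded as a subset of $\mathbb{F}_q$, is not first-order definable from the field structure alone, so the interplay between $\log_a$ and field arithmetic must be exploited.

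The crucial lemma to establish is that for nonzero $x=a^i$ and $y=a^j$ with $i,j\in\{0,\ldots,q-2\}$, the integer inequality $i\le j$ is detected by the equation
\[
\log_a(y\cdot x^{-1})\;=\;\log_a(y)-\log_a(x)
\]
holding in $\mathbb{F}_q$. The verification splits into two cases. By definition $\log_a(yx^{-1})=(j-i)\bmod(q-1)$, viewed first as an integer in $\{0,\ldots,q-2\}$ and then as an element of $\mathbb{F}_q$ via the fixed identification. If $i\le j$, no reduction modulo $q-1$ is needed, so this equals $j-i$ both as integer and as field element, and hence coincides with $\log_a(y)-\log_a(x)$ computed in $\mathbb{F}_q$. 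If instead $i>j$, the reduction adds $q-1$, and since $q-1\equiv-1\pmod{q}$ the values of $\log_a(yx^{-1})$ and $\log_a(y)-\log_a(x)$ as elements of $\mathbb{F}_q$ differ by $-1$. This discrepancy is nonzero in every $\mathbb{F}_q$, reflecting that the log-arithmetic is taken modulo $q-1$ while the ambient field arithmetic is modulo $q$.

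The defining formula will then be
\[
\phi(x,y)\;:=\;x=0\;\lor\;\bigl(x\neq 0\;\land\;y\neq 0\;\land\;\exists z\bigl(xz=y\;\land\;\log_a(z)=\log_a(y)-\log_a(x)\bigr)\bigr),
\]
where $yx^{-1}$ is handled via the existentially quantified $z$. For nonzero $x,y$ the existential clause holds iff $\log_a(x)\le\log_a(y)$ in the integer order on $\{0,\ldots,q-2\}$, so $\phi$ defines a total order on $\mathbb{F}_q^\times$ (since $\log_a$ is a bijection onto that set), and the first disjunct inserts $0$ as the minimum element of $\mathbb{F}_q$. Reflexivity, antisymmetry, transitivity and totality of the relation defined by $\phi$ reduce immediately to the corresponding properties of the integer order on $\{0,1,\ldots,q-2\}$. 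The only substantive step is the arithmetic case analysis above distinguishing the moduli $q$ and $q-1$, which is a short direct calculation rather than a genuine obstacle.
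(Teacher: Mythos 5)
Your proof is correct. Both your argument and the paper's exploit the same underlying phenomenon — that $\log_a$ intertwines arithmetic modulo $q-1$ on the exponents with arithmetic modulo $q$ on the residues, and the off-by-one between those moduli is detectable with the field structure — but you reach the order by a genuinely more direct route. The paper defines an auxiliary binary operation $\oplus$ (pushing forward addition modulo $q-1$ to the set $\{\bar 0,\dots,\overline{q-2}\}$), compares $\oplus$ with field addition to obtain a definable set $\Delta$ of ``non-overflowing'' pairs, and then uses a cut-style formula $\exists c\bigl((a,c)\in\Delta\wedge(b,c)\notin\Delta\bigr)$ to extract the linear order from $\Delta$. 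You instead observe that $\log_a(yx^{-1})$ agrees with the field-difference $\log_a(y)-\log_a(x)$ exactly when $\log_a(x)\le\log_a(y)$ as integers in $\{0,\dots,q-2\}$ (the two quantities differ by $\overline{-1}$ exactly when the subtraction in the exponents wraps around modulo $q-1$ but the field subtraction wraps modulo $q$). This gives the order relation in a single step, with no auxiliary operation and no cut argument. Your verification of the two cases $i\le j$ and $i>j$ is correct, and the formula you exhibit is parameter-free and uniform across $\mathcal{C}_2$. What the paper's approach buys, if anything, is that $\Delta$ is itself a natural and reusable object (it measures where the two additions agree); what yours buys is a shorter formula and a self-contained characterisation of the order without passing through $\Delta$. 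One small point worth making explicit is that your formula must guard against applying $\log_a$ to $0$, which you do correctly by the conjuncts $x\neq 0$, $y\neq 0$, and noting that $xz=y$ then forces $z\neq 0$.
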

\begin{proof}
To keep this argument as clear as possible, for integers $x,y$, we write $\bar{x},\bar{y}$ for their residues modulo $q$. Thus $\mathbb{F}_{q}=\{\bar{0},...,\overline{q-1}\}$.
Consider the structure $(\mathbb{F}_{q};\log_{a})\in\mathcal{C}_{2}$. Then $\log_{a}(\bar{x})=\bar{y}$ if and only if $\bar{a}^{y}=\bar{x}$. We define a new operation on $\mathbb{F}_{q}^{\times}$ by:
\begin{align*}
\oplus\colon\{0,...,q-2\}\times\{0,...,q-2\}&\longrightarrow\mathbb{F}_{q}\\
(\bar{x},\bar{y})&\longmapsto \log_{a}(\bar{a}^{x}\cdot\bar{a}^{y}).
\end{align*}
This operation is definable in $(\mathbb{F}_{q};\log_{a})$. Suppose that $x+y\geq q$. Write $x+y=q+l$ where $l\in\{0,...,q-2\}$. Then $\bar{a}^{x+y}=\bar{a}^{q+l}=\bar{a}^{1+l}$, by Fermat's Little Theorem. Thus
\[\bar{x}\oplus\bar{y}
=\log_{a}(\bar{a}^{x}\cdot\bar{a}^{y})\\
=\log_{a}(\bar{a}^{x+y})\\
=\log_{a}(\bar{a}^{1+l})\\
=\overline{1+l}\\
\neq\bar{x}+\bar{y}\]
whereas if $x+y<q$ then $\bar{x}\oplus\bar{y}=\bar{x}+\bar{y}$. Since $\oplus$ is definable, we may define the set
\[\Delta:=\{(a,b)\in\mathbb{F}_{q}^{\times2}:a\oplus b=a+b\}\]
in the language with the logarithm. Now
\[\exists c\;((a,c)\in\Delta\wedge(b,c)\notin\Delta)\]
defines the order $\bar{1}<\bar{2}<...<\overline{q-1}$ on $\mathbb{F}_{q}^{\times}$. We can easily add $\bar{0}<\bar{1}$ to this ordering. This definition is uniform in $(\mathbb{F}_{q};\log_{a})$, as required.
\end{proof}

\end{enumerate}

The conclusion is that $\mathcal{C}_{1}$ is too simple to be interesting and that $\mathcal{C}_{2}$ is too wild to fit into our context. Indeed, by \autoref{nonex}, or more generally \autoref{mac-nsop}, $\mathcal{C}_2$  cannot form a weak \mac{}.

\item \emph{Lie algebras.} It is well-known that the class of finite general linear groups is model-theoretically wild if no bound is imposed on dimension (see e.g \cite[Section 6.3]{bunina}). One might hope that the corresponding class of Lie algebras would be tame, on the grounds that the Lie algebras are a linearisation of the groups. However, we observe the following.

\begin{proposition}
Let $\mathcal{C}$ be a collection of Lie algebras of the form ${\rm gl}_n(q)$, for a fixed prime power $q$ and unbounded $n$. Then $\mathcal{C}$ is not a weak \mac{}.

\end{proposition}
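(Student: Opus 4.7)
The plan is to apply \autoref{nonmac} with a single formula defining centralizers, producing unboundedly long geometric chains of definable-set sizes as the Lie algebra dimension grows.

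Take $\phi(x,y):=([x,y]=0)$, which expresses that $y$ centralizes $x$ in the Lie bracket (any reasonable language for the Lie algebra includes the bracket, so this formula is available). For each $m\in\mathbb{N}$ choose $n\geq m$ with $\mathrm{gl}_n(q)\in\mathcal{C}$, and for $k=0,1,\ldots,n-1$ let $a_k\in\mathrm{gl}_n(q)$ be a nilpotent matrix whose Jordan partition is $(n-k,1^k)$, i.e.\ a single Jordan block of size $n-k$ together with $k$ trivial blocks. Such $a_k$ clearly exist for all permissible $k$.

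The key calculation uses the classical formula $\dim_{\mathbb{F}_q} C_{\mathrm{gl}_n}(e)=\sum_i(\lambda'_i)^2$, where $\lambda'$ is the partition conjugate to the Jordan type $\lambda$ of $e$. The conjugate of $(n-k,1^k)$ is $(k+1,1^{n-k-1})$, giving
\[
\dim_{\mathbb{F}_q} C_{\mathrm{gl}_n}(a_k)=(k+1)^2+(n-k-1)=k^2+k+n,
\]
so $|\phi(\mathrm{gl}_n(q),a_k)|=q^{k^2+k+n}$. Hence the ratio of consecutive centralizer sizes is $q^{2k+2}\geq q^2$, and $q\geq 2$ yields $r:=q^2>1$ as a uniform lower bound on all the ratios in the chain $a_0,a_1,\ldots,a_{n-1}$.

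Since $n$ (and thus the chain length) is unbounded across $\mathcal{C}$, the hypothesis of \autoref{nonmac} is met with this $\phi$ and $r$, so $\mathcal{C}$ is not a weak \mac{}. The only mild hazard in the write-up is the centralizer dimension computation; once the conjugate partition is identified correctly the formula gives the answer immediately, and nothing beyond the existence of the $a_k$ as parameters is required.
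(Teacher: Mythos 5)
Your proof is correct: the conjugate partition of $(n-k,1^k)$ is indeed $(k+1,1^{n-k-1})$, the centralizer dimension formula $\sum_i(\lambda_i')^2$ gives $k^2+k+n$, and the resulting chain of centralizers grows by a factor of $q^{2k+2}\geq q^2>1$ at each step, so \autoref{nonmac} applies with $\phi(x,y):=([x,y]=0)$ and $r=q^2$.

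The overall strategy coincides with the paper's: use the centralizer formula and produce an unboundedly long chain of parameters whose definable sets change size by a bounded factor. The difference lies entirely in the choice of witnesses and the computational tool. The paper takes the \emph{diagonal idempotent-like} matrices $A_r$ with $r$ ones on the diagonal, for which $C(A_r)$ is seen by a two-line block computation to have size $q^{r^2+(n-r)^2}$, and the chain there has ratios $q^{2n-4r-2}$, so one must restrict to $r$ below roughly $n/2$. You take nilpotent matrices with Jordan type $(n-k,1^k)$, which gives a cleaner, uniform ratio $q^{2k+2}\geq q^2$ over all $k=0,\ldots,n-1$ (so a longer chain), but relies on the classical conjugate-partition formula for nilpotent centralizers rather than on a hands-on matrix computation. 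Both are perfectly valid; the paper's version is more self-contained, while yours gives a slightly tidier chain at the cost of invoking a standard but non-trivial fact from the representation theory of $\mathrm{gl}_n$.
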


\begin{proof}  Let $A_r=(a_{ij})\in {\rm gl}_n(q)$ with $a_{11}=\ldots =a_{rr}=1$, and with all other $a_{ij}$ zero. It is easily checked that
if $B=(b_{ij})\in {\rm gl}_n(q)$, then $AB=BA$ if and only if $b_{ij}=0$ whenever $i\leq r$ and $j>r$ or $j\leq r$ and $i>r$. 
It follows that if $C(A_r)=\{X:[A_r,X]=0\}$, then $|C(A_r)|=q^{r^2}.q^{(n-r)^2}$. 
Thus, $\frac{|C(A_{r})|}{|C(A_{r+1})|}=q^{2n-4r-2}>q$ provided $n$ is large enough. The result now follows from \autoref{nonmac}.
\end{proof}

\item \emph{Expansions of finite fields.} If $\mathcal{C}$ is a weak \mac{} class of expansions of finite fields, then it is not possible uniformly to define small subsets of the fields of increasing size. We formulate this more precisely for finite fields $\mathbb{F}_q$ with $q\equiv 1 \mbox{~ mod~} 4$. Suppose that $\mathcal{C}$ is a class of finite structures which expand such finite fields, and that there is a formula $\phi(x,\bar{y})$ such that for every $d\in \mathbb{N}$ there is a structure $M\in \mathcal{C}$ of size $q>d$ and $\bar{a}\in M^{|\bar{y}|}$ satisfying that if $m:=|\phi(M,\bar{a})|$, then $d\leq m$ and
\[
2^{-m}q> \tfrac{1}{2}(m-2+2^{-m+1})q^{\frac{1}{2}} + \tfrac{m}{2}.
\]
Let $X:=\phi(M,\bar{a})$. Then for any subset $S$ of $X$, by the statement from \cite{bollobas} mentioned in \autoref{examples-macs}(1)(a), there is $a\in M$ such that for all $b\in X$, $a-b$ is a square (in the field structure on $M$) if and only if $b\in S$. Thus, the subsets of $X$ are uniformly definable, and it follows easily that $\mathcal{C}$ has an ultraproduct with the strict order property, so cannot be a weak \mac{} by \autoref{mac-nsop} below.

In particular, it follows that the collection of all pairs of finite fields is not a weak m.a.c, and hence, by considering the fixed field,  the collection of all finite difference fields is not a weak \mac{} -- some restrictions as in the definition of $\mathcal{C}_{m,n,p}$ are needed. Such phenomena, e.g. undecidability of the theory of pairs of finite fields, are well-known – see for example Section 4 of \cite{Chatzidakis97}.

\end{enumerate}

\subsection{Projection Lemma}

Here we use  a fibering argument to show that to check whether a class of structures is a \mac{} or \mec{}, it suffices to consider formulas $\phi(x,\bar{y})$ where $x$ is a single variable.  We argue as in Theorem 2.1 from \cite{MS08} and Lemma 2.2 from \cite{Elwes07} but for the $R$-\mac{} context, where $R$ is a set of functions $\mathcal{C}\longrightarrow\mathbb{R}^{\geq0}$. Let $\langle R\rangle$ denote the ring generated by $R$ under the usual addition and multiplication operations for real-valued functions.

\begin{theorem}\label{thm:projection.lemma}~
\begin{enumerate}[(i)]

\item Let $\mathcal{C}$ be a class of $L$-structures. Suppose that $\mathcal{C}$ satisfies the definition of an $R$-\mac{} (\autoref{def:mac}) for formulas $\phi(x;\bar{y})$ where $x$ is a singleton. Then $\mathcal{C}$ is an $\langle R\rangle$-\mac{}.

\item \cite[Lemma 2.3.1]{Wolf} The assertion of (i) holds with \mec{}s in place of \mac{}s.

\end{enumerate}
\end{theorem}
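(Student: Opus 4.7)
The plan is to argue by induction on $n=|\bar{x}|$, the base case $n=1$ being exactly the hypothesis. The main idea is to fibre $\phi(x_{1},\bar{x}',\bar{y})$ over the first coordinate $x_{1}$, apply the inductive hypothesis to the fibres, and then apply the one-variable hypothesis to the formulas defining the pieces of the resulting partition. Taking a common refinement produces the required $\emptyset$-definable partition on $(\mathcal{C},\bar{y})$, and the measuring function on each piece is built as a finite sum of products of functions coming from $R$, hence lies in $\langle R\rangle$.

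In detail, given $\phi(x_{1},\bar{x}',\bar{y})$ with $|\bar{x}'|=n-1$, view it as a formula $\tilde\phi(\bar{x}';\bar{z})$ with $\bar{z}=(x_{1},\bar{y})$, of arity $n-1$ in the object variables. By induction I obtain an $\emptyset$-definable finite partition $\Pi'$ of $(\mathcal{C},\bar{z})$ with parts defined by formulas $\chi_{\pi'}(x_{1},\bar{y})$ and measuring functions $h_{\pi'}\in\langle R\rangle$ such that
\[
\bigl||\tilde\phi(M^{n-1};a_{1},\bar{b})|-h_{\pi'}(M)\bigr|=o(h_{\pi'}(M))
\]
uniformly for $(M,a_{1},\bar{b})\in\pi'$. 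For each $\pi'\in\Pi'$ I now apply the one-variable hypothesis to the formula $\chi_{\pi'}(x_{1},\bar{y})$: this yields an $\emptyset$-definable finite partition $\Pi_{\pi'}$ of $(\mathcal{C},\bar{y})$ with measuring functions $g_{\pi''}\in R$. I let $\Pi$ be the common refinement of all the $\Pi_{\pi'}$ as $\pi'$ ranges over $\Pi'$; this is a finite $\emptyset$-definable partition of $(\mathcal{C},\bar{y})$ since $\Pi'$ is finite and a Boolean combination of $\emptyset$-definable families is $\emptyset$-definable. For each $\pi\in\Pi$ and each $\pi'\in\Pi'$, let $\pi''(\pi,\pi')$ denote the unique element of $\Pi_{\pi'}$ containing $\pi$, and define
\[
H_{\pi}(M):=\sum_{\pi'\in\Pi'}g_{\pi''(\pi,\pi')}(M)\,h_{\pi'}(M),
\]
which lies in $\langle R\rangle$ as a finite sum of products of elements of $R\cup\langle R\rangle=\langle R\rangle$.

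The verification of the approximation is then a straightforward fibering computation. For $(M,\bar{b})\in\pi$,
\[
|\phi(M^{n};\bar{b})|=\sum_{\pi'\in\Pi'}\ \sum_{a_{1}\in\chi_{\pi'}(M;\bar{b})}|\tilde\phi(M^{n-1};a_{1},\bar{b})|,
\]
and the inner sum equals $|\chi_{\pi'}(M;\bar{b})|\cdot h_{\pi'}(M)\bigl(1+o(1)\bigr)$ by the uniform error estimate from $\Pi'$, while $|\chi_{\pi'}(M;\bar{b})|=g_{\pi''(\pi,\pi')}(M)\bigl(1+o(1)\bigr)$ by the $|\bar{x}|=1$ hypothesis. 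Multiplying and summing over the finitely many parts $\pi'\in\Pi'$ (using that a finite sum of $o(H_i)$ with $H_i\ge 0$ is $o(\sum H_i)$) yields $\bigl||\phi(M^{n};\bar{b})|-H_{\pi}(M)\bigr|=o(H_{\pi}(M))$, completing the induction.

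For part (ii) one runs exactly the same argument with every $o(\cdot)$ replaced by equality: the inner sum is then $|\chi_{\pi'}(M;\bar{b})|\cdot h_{\pi'}(M)$ exactly, and the outer sum gives $|\phi(M^{n};\bar{b})|=H_{\pi}(M)$. Two mild technical points have to be watched but do not obstruct the argument: one must handle parts where $\phi(a_{1},M^{n-1},\bar{b})$ is empty (assign them a separate part of the partition with zero measuring function), and one must observe that the subsidiary partitions $\Pi_{\pi'}$ are themselves $\emptyset$-definable in $\bar{y}$ alone (which is built into the hypothesis of the theorem). The only real obstacle is bookkeeping: choosing a clean notation for the iterated partition refinements so that the uniform error bounds in the fibres combine to a uniform error bound after summation; once the notation is in place, the estimate is immediate from the triangle inequality and the finiteness of $\Pi'$.
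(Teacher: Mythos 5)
Your argument is correct and follows essentially the same inductive fibering strategy as the paper's proof: move one object variable into the parameter side, apply the induction hypothesis to get a partition over the enlarged parameter tuple, apply the one-variable hypothesis to the defining formulas of that partition, take a common refinement, and form the measuring function as a finite sum of products. The only cosmetic difference is that the paper peels off the \emph{last} object variable $y$ from $\phi(\bar{x}y;\bar{z})$ while you peel off the first, which changes nothing; your remarks on the empty-fibre case, the $\emptyset$-definability of the refinement, and the uniform $o(\cdot)$ bookkeeping are all exactly the points the paper's proof handles.
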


\begin{proof}
We just prove (i), which is a bit more intricate than (ii) (given in \cite{Wolf}). 

We proceed by induction on the length of $\bar{x}$.
The base case is our assumption.
Consider a formula $\phi(\bar{x}y;\bar{z})$, with $\bar{x}$ a non-empty tuple and $y$ a single variable.
By our inductive hypothesis, we may assume that the definition applies to $\phi(\bar{x};y\bar{z})$.
Let $\Pi$ be the finite partition of $(\mathcal{C},y\bar{z})$ corresponding to $\phi(\bar{x};y\bar{z})$, let $H_{\Pi}=\{h_{\pi}\in \langle R\rangle:\pi\in\Pi\}$ be the corresponding measuring functions and let 
$\Gamma_{\Pi}=\{\gamma_{\pi}(y\bar{z}):\pi\in\Pi\}$ be the indexed set of formulas 
which define $\Pi$.

For each $\pi\in\Pi$, we examine the family of sets defined by $\gamma_{\pi}(y;\bar{z})$. We apply our hypothesis to obtain a finite partition $\Psi_{\pi}$ of $(\mathcal{C},\bar{z})$, corresponding measuring functions $L_{\Psi_{\pi}}=\{l_{\psi}\in R:\psi\in\Psi_{\pi}\}$, and an indexed set of formulas 
$\Delta_{\Psi_{\pi}}=\{\delta_{\psi}(\bar{z}):\psi\in\Psi_{\pi}\}$ that define $\Psi_{\pi}$.

Each $\Psi_{\pi}$ is a partition of $(\mathcal{C},\bar{z})$. For each choice function $f\colon\Pi\longrightarrow\bigcup_{\pi\in\Pi}\Psi_{\pi}$, put 
\[\Omega_{f}:=\bigcap_{\pi\in\Pi}f(\pi)\]
and let $\Omega:=\{\Omega_{f}:f\text{ a choice function}\}$. Then $\Omega$ is the unique coarsest partition of $(\mathcal{C},\bar{z})$ such that $\Omega$ refines $\Psi_{\pi}$ for each $\pi\in\Pi$. If we view elements of $\prod_{\pi\in\Pi}\Psi_{\pi}$ as choice functions, we see that there is a bijection between $\prod_{\pi\in\Pi}\Psi_{\pi}$ and $\Omega$ given by $f\longmapsto\Omega_{f}$. 

Since each $\Psi_{\pi}$ is finite and $\Pi$ is finite, $\Omega$ is finite. Furthermore, $\Omega$ is definable: For $\Omega_{f}\in\Omega$, write $\alpha_{f}(\bar{z})$ for the formula 
\[\bigwedge_{\pi\in\Pi}\delta_{f(\pi)}(\bar{z}).\]
Then for each $(M,\bar{c})\in(\mathcal{C},\bar{z})$ we have that 
\[(M,\bar{c})\in\Omega_{f}\hbox{\ if and only if\ } M\models\alpha_{f}(\bar{c}).\]
For each $\pi\in\Pi$, let $\chi_{\pi}(\bar{x}y;\bar{z}):=\phi(\bar{x}y;\bar{z})\wedge\gamma_{\pi}(y;\bar{z})$ and note that $\phi(M^{|\bar{x}y|};\bar{c})=\bigsqcup_{\pi\in\Pi}\chi_{\pi}(M^{|\bar{x}\bar{y}|};\bar{c})$.

Let $\Omega_{f}\in\Omega$. We aim to understand the size of the set $\phi(M^{|\bar{x}y|};\bar{c})$ for $(M,\bar{c})\in\Omega_{f}$ as $|M|\longrightarrow\infty$. 

Let $\pi\in\Pi$. We abbreviate $h_{\pi}:=h_{\pi}(M)$ and $l_{f(\pi)}:=l_{f(\pi)}(M)$. Let $\epsilon>0$. By our inductive hypothesis, there exists $N_{\pi}\in\mathbb{N}$ such that for all $(M,\bar{c})\in f(\pi)$ with $|M|\geq N_{\pi}$ we have 
\[\big||\gamma_{\pi}(M;\bar{c})|-l_{f(\pi)}\big|<\epsilon l_{f(\pi)},\]
and for all $b\in M$ such that $(M,b\bar{c})\in\pi$ -- that is, 
$M\models\gamma_{\pi}(b\bar{c})$ -- we also have
\[\big||\phi(M^{|\bar{x}|};b\bar{c})|-h_{\pi}\big|<\epsilon h_{\pi}.\]
As $\chi_{\pi}(M^{|\bar{x}y|};\bar{c})$ is fibered over $\gamma_{\pi}(M;\bar{c})$, we have 
\[|\chi_{\pi}(M^{|\bar{x}y|};\bar{c})|
=\sum_{b\in\gamma_{\pi}(M;\bar{c})}|\phi(M^{|\bar{x}|};b\bar{c})|.\]
Thus
\[(1-\epsilon)^{2}h_{\pi}l_{f(\pi)}<|\chi_{\pi}(M^{|\bar{x}y|};\bar{c})|
<(1+\epsilon)^{2}h_{\pi}l_{f(\pi)}.\]
Since $(1-\epsilon)^{2},(1+\epsilon)^{2}\longrightarrow1$ as $\epsilon\longrightarrow0$, 
we have 
\[\big||\chi_{\pi}(M^{|\bar{x}y|};\bar{c})|-h_{\pi}l_{f(\pi)}|=o(h_{\pi}l_{f(\pi)}).\]

Finally, put $m_{f}:=\sum_{\pi\in\Pi}h_{\pi}l_{f(\pi)}$. Then 
\[\big||\phi(M^{|\bar{x}y|};\bar{c})|-m_{f}\big|=o(m_{f})\]
for $(M,\bar{c})\in\Omega_{f}$ as $|M|\longrightarrow\infty$, as required.
\end{proof}

\begin{remark} \label{onvevarpolymac}
\autoref{thm:projection.lemma} holds if `$R$-\mac{}' (respectively `$R$-\mec{}') is replaced by `polynomial $R$-\mac{}' (respectively `polynomial $R$-\mec{}') throughout. The understanding here is that the same formulas $\delta_1,\ldots,\delta_k$ (as in 
\autoref{polynomialmac})
are used for the 1-variable condition and the general condition. The proof is essentially as above. This is noted in a special case in the proof of Theorem 4.1 of \cite{GMS15}. 
\end{remark}



\subsection{Interpretability and bi-interpretability}

We prove two results from \cite{WolfPhD}. We assume familiarity with the usual model-theoretic notion of an $L'$-structure $N$ being $A$-interpretable in an $L$-structure $M$. This means essentially that $N$ is isomorphic to a structure with domain a quotient by an $A$-definable equivalence relation of an $A$-definable subset of a cartesian power of $M$, with the relations of $N$ induced by $A$-definable relations of $M$.

The structures $M$ and $N$ are {\em bi-interpretable} if $N$ is $\emptyset$-interpretable in $M$ via an interpretation (an isomorphism) $f\colon N \longrightarrow N^*$ (so $N^*$ lives on a quotient of a power of $M$), and $M$ is $\emptyset$-interpretable in $N$ via an interpretation $g\colon M\longrightarrow M^*$. We require in addition that if $f$ induces the isomorphism $f^*\colon M^* \longrightarrow M^{**}$ where $M^{**}$ is an $L$-structure interpreted in $N^*$, and $g$ induces the corresponding isomorphism $g^*\colon N^* \longrightarrow N^{**}$, then the isomorphisms $f^* \circ g\colon M \longrightarrow M^{**}$ and $g^* \circ f\colon N \longrightarrow N^{**}$ are $\emptyset$-definable in $M$ and $N$ respectively. Slightly weakening this, we say that $N$ is {\em weakly bi-interpretable in $M$} if we drop the assumption that $f^*\circ g$ is definable in $M$ but keep that $g^*\circ f$ is $\emptyset$-definable in $N$.

Let $\mathcal{C}$ be a class of $L$-structures and $\mathcal{C}'$ be a class of $L'$-structures. We say that $\mathcal{C}'$ is {\em parameter-interpretable} in $\mathcal{C}$ if there is an injection $\alpha\colon \mathcal{C}' \longrightarrow \mathcal{C}$ such that each $N\in \mathcal{C}'$ is parameter-interpretable in $\alpha(N)$ {\em uniformly}, i.e.\ through fixed formulas giving the interpretations. Likewise, we say that $\mathcal{C}$ and $\mathcal{C}'$ are {\em bi-interpretable} if there is a bijection $\alpha\colon \mathcal{C}'\longrightarrow \mathcal{C}$ such that each $N\in \mathcal{C}'$ is bi-interpretable with $\alpha(N)$, without use of parameters and uniformly as $N$ ranges through $\mathcal{C}'$. Here we say that $\mathcal{C}'$ is weakly bi-interpretable in $\mathcal{C}$ if we just require that  each $N\in \mathcal{C}'$ is uniformly weakly bi-interpretable in $\alpha(N)$. 

In these definitions, we replace the word `interpretable' with `definable' if no quotienting is involved.

\begin{definition}\label{defnrad}
Let $R$ be a set of functions from a class $\mathcal{C}$ to $\mathbb{R}^{\geq 0}$. We define 
$\Rad$ to be the set of functions from $\mathcal{C}$ to $\mathbb{R}^{\geq 0}$ given by 
\[
\Rad:=\left\{\sum_{i=1}^n\frac{g_i}{h_i}:
g_i,h_i\in R, n\in {\mathbb N}^{>0}\text{\ and\ }h_i(M)\neq 0\text{\ for all\ }M\in\mathcal{C}\right\},
\]
where for all $M\in\mathcal{C}$ we put 
\[
\left(\sum_{i=1}^n\frac{g_i}{h_i}\right)(M ):=\sum_{i=1}^n\frac{g_i(M)}{h_i(M)},
\]
and we define $\frac{g_i(M)}{h_i(M)}:=0$ if $h_i(M)=0$. 
\end{definition}
It is convenient to introduce the notion of positive-definiteness for $R$-\mac{}s:
 
\begin{definition}Let $\Pi$ be a finite partition of a class of finite $L$-structures $(\mathcal{C},\bar{y})$ as in the context of \autoref{def:mac}\label{pos-def-defn}. For $\pi\in\Pi$ let $\pi^M:=\{\bar{a}\in M^{|\bar{y}|}:(M^{\bar{x}},\bar{a})\in \pi\}$. The measuring function $h_\pi$ associated with the formula $\phi(\bar{x},\bar{y})$ is \emph{positive-definite} if
\begin{equation}\label{pos-def-eq}
\phi(M^{|\bar{x}|},\bar{a})=\emptyset ~\text{ for all }~\bar{a}\in \pi^M \Leftrightarrow h_\pi(M)=0
\end{equation}
for all $M\in\mathcal{C}$ with $\pi^M\neq\emptyset$.
\end{definition}

Measuring functions for a weak \mac{} are eventually positive-definite, in the sense of the following lemma. 

\begin{lemma}\label{pos-def-lem}
Suppose that the class of finite $L$-structures $\mathcal{C}$ is a weak $R$-\mac{}. Let $\phi(\bar{x},\bar{y})$ be an $L$-formula and let $\Pi=\Pi_\phi$ be a partition of $(\mathcal{C},\bar{y})$ corresponding to $\phi$, with measuring functions $\{h_\pi:\pi\in\Pi\}\subseteq R$. Then for each $\pi\in\Pi$ there exists $Q_\pi\in {\mathbb N}^{>0}$ such that 
\eqref{pos-def-eq} holds for all $M\in\mathcal{C}$ with $|M|>Q_\pi$ and $\pi^M\neq\emptyset$.
\end{lemma}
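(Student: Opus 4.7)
The plan is to apply the little-$o$ estimate in \autoref{def:mac} with any fixed $\epsilon\in(0,1)$; the value $\epsilon=\tfrac{1}{2}$ works comfortably. Fixing $\pi\in\Pi$, the weak $R$-\mac{} condition furnishes a natural number $Q_\pi$ such that for every $(M,\bar{b})\in\pi$ with $|M|>Q_\pi$ we have
\[
\big||\phi(M^{|\bar{x}|},\bar{b})|-h_\pi(M)\big|\leq\tfrac{1}{2}h_\pi(M).
\]
I would then show that this $Q_\pi$ already witnesses the conclusion of the lemma, treating the two directions of the biconditional \eqref{pos-def-eq} separately.

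For the forward direction, suppose $|M|>Q_\pi$, $\pi^M\neq\emptyset$, and $\phi(M^{|\bar{x}|},\bar{a})=\emptyset$ for every $\bar{a}\in\pi^M$. I would pick any such $\bar{a}$ (using $\pi^M\neq\emptyset$), substitute $|\phi(M^{|\bar{x}|},\bar{a})|=0$ into the displayed estimate, and read off $h_\pi(M)\leq\tfrac{1}{2}h_\pi(M)$. Since $h_\pi$ takes values in $\mathbb{R}^{\geq0}$, this forces $h_\pi(M)=0$.

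For the reverse direction, suppose $|M|>Q_\pi$, $\pi^M\neq\emptyset$, and $h_\pi(M)=0$. Then for every $\bar{a}\in\pi^M$ the displayed estimate collapses to $|\phi(M^{|\bar{x}|},\bar{a})|\leq 0$, hence $\phi(M^{|\bar{x}|},\bar{a})=\emptyset$, as required.

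There is essentially no obstacle here: the only point that needs minor care is that the forward direction requires at least one $\bar{a}\in\pi^M$ to extract information about $h_\pi(M)$, which is precisely the assumption $\pi^M\neq\emptyset$ built into the lemma. The choice of $\epsilon$ is inessential — any $\epsilon<1$ suffices, since the key inequality $h_\pi(M)\leq\epsilon\, h_\pi(M)$ then forces $h_\pi(M)=0$.
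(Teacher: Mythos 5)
Your proof is correct, and it takes essentially the same route as the paper: apply the little-$o$ estimate with a fixed $\epsilon<1$ and observe that $h_\pi(M)\le\epsilon h_\pi(M)$ forces $h_\pi(M)=0$. The only stylistic difference is that you argue directly, with one choice of $\epsilon$ and one threshold $Q_\pi$ serving both directions, whereas the paper runs two separate contrapositive arguments (with $\epsilon=\tfrac12$ and $\epsilon=1$) and then takes the maximum of the resulting thresholds; the content is the same.
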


\begin{proof} Let $\pi\in\Pi$ and let $n:=|\bar{x}|$.

We first prove that the left-to-right direction of \eqref{pos-def-eq} eventually holds. For a contradiction, suppose that it never holds, i.e.\ for every $Q\in {\mathbb N}^{>0}$ there exists some $M_Q\in\mathcal{C}$ with $|M_Q|>Q$ and $\pi^{M_Q}\neq \emptyset$ such that 
$\phi(M_Q^n,\bar{a})=\emptyset$ for all $\bar{a}\in \pi^{M_Q}$ but $h_\pi(M_Q)\neq 0$. Let $\epsilon=\frac{1}{2}$. Then
\[
\left| |\phi(M_Q^n,\bar{a})|-h_\pi(M_Q)\right| = |0-h_\pi(M_Q)|
= h_\pi(M_Q)
>\epsilon h_\pi(M_Q).
\]
Since this holds for all $Q\in {\mathbb N}^{>0}$, it follows that \eqref{eq:3} does not hold for $\pi$ contradicting that $\mathcal{C}$ is a weak $R$-\mac{}. So there exists $Q_{\pi 1}\in {\mathbb N}^{>0}$ above which the left-to-right direction of \eqref{pos-def-eq} holds.

We now prove that the right-to-left direction of \eqref{pos-def-eq} eventually holds. For a contradiction, suppose that for every $Q\in {\mathbb N}^{>0}$ there exists some $M_Q\in\mathcal{C}$ with 
$|M_Q|>Q$ such that the right-to-left direction of \eqref{pos-def-eq} fails for $M_Q$. Thus $h_\pi(M_Q)=0$ but there is $\bar{a}\in \pi^{M_Q}$ such that $\phi(M,\bar{a})\neq\emptyset$. Let $\epsilon=1$. Then
\[
\left| |\phi(M_Q^n,\bar{a})|-h_\pi(M_Q)\right| = |\phi(M_Q^n,\bar{a})|
> 0
=\epsilon h_\pi(M_Q).
\]
Since this holds for all $Q\in {\mathbb N}^{>0}$, clause \eqref{eq:3} does not hold for $\pi$, again contradicting that $\mathcal{C}$ is a weak $R$-\mac{}. So there exists $Q_{\pi 2}\in {\mathbb N}^{>0}$ for which the right-to-left direction of \eqref{pos-def-eq} holds.

Taking $Q_\pi:=\max\{Q_{\pi 1},Q_{\pi 2}\}$ yields the lemma.
\end{proof}

\begin{theorem}\label{thminterpmac}
Let $\mathcal{C}$ and $\mathcal{C}'$ be, respectively, classes of $L$- and $L'$-structures.
\begin{enumerate}[(i)]

\item If $\mathcal{C}'$ is parameter-interpretable in $\mathcal{C}$ and $\mathcal{C}$ is an $R$-\mac{} (resp.\ -\mec{}) in $L$, then $\mathcal{C}'$ is a weak $\Rad$-\mac{} (resp.\ -\mec{}) in $L'$.

\item If $\mathcal{C}'$ is weakly bi-interpretable in $\mathcal{C}$ and $\mathcal{C}$ is an $R$-\mac{} (resp.\ -\mec{}) in $L$, then $\mathcal{C}'$ is a  $\Rad$-\mac{} (resp.\ -\mec{}) in $L'$.

\item If $\mathcal{C}'$ is parameter-definable in $\mathcal{C}$ and $\mathcal{C}$ is a weak $R$-\mac{} (resp.\ -\mec{}) in $L$, then $\mathcal{C}'$ is a weak $R$-\mac{} (resp.\ -\mec{}) in $L'$.

\end{enumerate}
\end{theorem}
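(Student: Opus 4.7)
The plan is to prove (i) by combining the one-variable criterion \autoref{thm:projection.lemma} with a fibering argument along the interpreting equivalence relation, then to obtain (ii) and (iii) by minor modifications. The \mec{} versions run in parallel with ``approximately'' replaced by ``exactly''.

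For (i), I would first reduce via \autoref{thm:projection.lemma} to checking the condition for $L'$-formulas $\psi(u;\bar{v})$ with $u$ a single variable. Fix the interpretation data: $L$-formulas $\phi_D(\bar{x};\bar{e})$ and $\phi_E(\bar{x},\bar{x}';\bar{e})$ defining the domain $D\subseteq M^k$ and the equivalence relation $E$, and for each $N\in\mathcal{C}'$ a parameter tuple $\bar{e}_N\in M:=\alpha(N)$. Pulling back $\psi$ yields an $L$-formula $\psi^*(\bar{x};\bar{y};\bar{e})$ that is $E$-invariant in each coordinate block. For $\bar{b}\in N^{|\bar{v}|}$ with block-representative $\bar{c}\in M^{|\bar{v}|k}$, count
$$|\psi(N;\bar{b})|=\sum_{\bar{a}\in\psi^*(M^k;\bar{c},\bar{e}_N)} \frac{1}{|[\bar{a}]_E|}.$$
Now apply the \mac{} property of $\mathcal{C}$ twice: first to $\phi_E(\bar{x};\bar{x}'\bar{e})$, producing a finite $\emptyset$-definable partition $\Pi_E$ of $(\mathcal{C},\bar{x}'\bar{e})$ with measuring functions $h_\pi\in R$ approximating $|[\bar{a}']_E|$; second, for each $\pi\in\Pi_E$, to $\psi^*(\bar{x};\bar{y}\bar{e})\wedge \phi^{(\pi)}(\bar{x}\bar{e})$ where $\phi^{(\pi)}$ defines $\pi$, producing a partition $\Pi_\pi$ of $(\mathcal{C},\bar{y}\bar{e})$ with measuring functions $g_\tau\in R$. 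Partition $(\mathcal{C}',\bar{v})$ by grouping $(N,\bar{b})$ according to the tuple $(\tau_\pi)_{\pi\in\Pi_E}$ where $\tau_\pi$ contains $(\alpha(N),\bar{c},\bar{e}_N)$, and for this part assign the measuring function
$$H_{(\tau_\pi)}(N):=\sum_{\pi\in\Pi_E}\frac{g_{\tau_\pi}(\alpha(N))}{h_\pi(\alpha(N))}\in\Rad.$$
A routine $\epsilon$-estimate, splitting the sum-of-reciprocals by $\pi$ and using that for $\bar{a}\in\pi$ the quantity $1/|[\bar{a}]_E|$ lies within a factor $(1\pm\epsilon)$ of $1/h_\pi(M)$, yields $\big||\psi(N;\bar{b})|-H_{(\tau_\pi)}(N)\big|=o(H_{(\tau_\pi)}(N))$ as $|N|\to\infty$. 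The positive-definiteness lemma \autoref{pos-def-lem} ensures $h_\pi(M)=0$ only when the $\pi$-fibre is eventually empty, so such terms contribute nothing to either side.

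For (iii), when no quotients are involved $E$ is equality, $|[\bar{a}]_E|=1$ identically and the denominators disappear, so the measuring functions remain in $R$; the argument transfers directly to the weak hypothesis with no further work. For the \mec{} versions all estimates become equalities and the corresponding sums give exact cardinalities. For (ii), the extra hypothesis is that $g^*\circ f\colon N\to N^{**}$ is $\emptyset$-definable in $N$ (where $g\colon M\to M^*$ is the reverse interpretation). The partition of $(\mathcal{C}',\bar{v})$ constructed in (i) is determined by $L$-statements about $(\bar{c},\bar{e}_N)$ in $M$; these pull back through $g$ to $L'$-formulas without parameters on the image of $\bar{b}$ in $N^{**}$, and the $\emptyset$-definable isomorphism $g^*\circ f$ transports them back to $L'$-formulas on $\bar{b}\in N^{|\bar{v}|}$, giving the required $\emptyset$-definability.

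The main obstacle is the $\epsilon$-bookkeeping in the step that sums reciprocals of $E$-class sizes over a set whose cardinality itself is only known up to a factor $(1\pm\epsilon)$; one must verify that the propagated error remains $o(\cdot)$ relative to $H_{(\tau_\pi)}(N)$ uniformly in $\pi$. This is where the positive-definiteness lemma is essential: without it, vanishing $h_\pi$ could masquerade as small-but-nonzero quantities, and the division would become ill-defined for infinitely many $M\in\mathcal{C}$.
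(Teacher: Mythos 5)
Your proposal is correct and, at its core, is the same fibering argument the paper uses: stratify the $E$-classes by their approximate size via the \mac{} property applied to the formula defining $E$, approximate the number of classes in each stratum as a ratio $g/h$ of measuring functions, and sum. The differences are mostly stylistic. The paper proves the size-count directly for arbitrary $\phi'(\bar{x};\bar{y})$ without invoking \autoref{thm:projection.lemma}, whereas you reduce to one variable first; this is a valid shortcut, but you should note that it requires the projection lemma in its weak form (not explicitly stated, though clear from the proof) and that $\langle\Rad\rangle\subseteq\Rad$, which holds when $R$ is closed under $+$ and $\cdot$ as the paper assumes. Your error analysis via the sum-of-reciprocals identity $|X/E|=\sum_{\bar{a}\in X}1/|[\bar{a}]_E|$ and the factor $(1\pm\epsilon)$ bound is actually cleaner than the paper's two-case split and ``little-o exchange,'' and avoids having to arrange that the class-size stratification is $E$-invariant.

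One small gap you gloss over: your partition of $(\mathcal{C}',\bar{v})$ is defined via a choice of block-representatives $\bar{c}=\tilde{f}(\bar{b})$, so for it to be well defined you must arrange (as the paper does explicitly) that the partitions $\Pi_\pi$ of $(\mathcal{C},\bar{y}\bar{e})$ respect $E$ in the $\bar{y}$-coordinates; without this, different choices of $\bar{c}$ could land $(N,\bar{b})$ in different parts. This is routine but should be said. You also correctly identify the role of \autoref{pos-def-lem} in handling the case $h_\pi(M)=0$, and your treatment of (ii) and (iii) matches the paper's, including the point (emphasized in \autoref{parameter_issue_rem}) that for (ii) the translation must be genuinely parameter-free so that the $L$-definable partition of $(\mathcal{C},\bar{y})$ pulls back to a parameter-free $L'$-partition via the $\emptyset$-definable isomorphism $g^*\circ f$.
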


\begin{remark}\label{parameter_issue_rem}~
\begin{enumerate}[1.]

\item There is a small abuse of notation: If $\alpha\colon \mathcal{C}'\longrightarrow \mathcal{C}$ gives an interpretation, an element $s \in R$ (a function $\mathcal{C}\longrightarrow \mathbb{R}^{\geq 0}$) is identified with the corresponding function $\mathcal{C}'\longrightarrow \mathbb{R}^{\geq 0}$, where
$s(M):=s(\alpha(M))$ for $M\in \mathcal{C}'$.

\item Many of the key ideas in the proof are due to Elwes, although our approach is slightly more direct than that in \cite{Elwes07}. Elwes \cite{Elwes07} stated a slightly stronger version of (ii), assuming only what he calls $\emptyset_{\mathcal{C}'}$-bi-interpretability, that is, he only requires that no parameters from the structures in $\mathcal{C}'$ are needed. However, it would seem that both his proof and that of (ii) require the stronger hypothesis of weak bi-interpretability over $\emptyset$. We explain this point when it arises in our proof of~(ii).

\end{enumerate}
\end{remark}

We prove \autoref{thminterpmac} only for $R$-\mac{}s; the proof for $R$-\mec{}s is just a simpler version of the proof for $R$-\mac{}s, and details can by found in \cite{WolfPhD}.

\begin{proof}[Proof of \autoref{thminterpmac}.]
We use the notation and terminology given at the beginning of this section  throughout, with $M, M^*, f, g$, etc. For each $N\in \mathcal{C}'$ we assume that the isomorphic copy $N^*$ of $N$ lives (uniformly) on a quotient by a definable equivalence relation $E_N$ of a definable subset $X_N$ of some cartesian power $\alpha(N)^r$ of $\alpha(N)$. We drop the subscript $N$ if the context is clear. 

{\em Proof of~(i).} Let $\phi'(x_1,\ldots,x_n;y_1,\ldots,y_m)$ be an arbitrary $L'$-formula. We need to show that this formula satisfies the size clause \eqref{eq:3}. The overall strategy is to move into $\mathcal{C}$ via the interpretation, make size estimates there, and pull back these estimates into $\mathcal{C}'$. Making the size estimates in $\mathcal{C}$ forms the bulk of the argument. 

We first translate the $L'$-formula $\phi'$ into an $L$-formula $\phi$. Each $L'$-symbol has a uniform $L$-translation, so we replace each $L'$-symbol in $\phi'$ with its $L$-translation. We leave all boolean connectives as is and adapt quantifiers in accordance with the new variables; for example, if a variable $x$ in $\phi'$ is in the scope of a quantifier $\forall x$ and $x$ becomes $\bar{x}$ in $\phi$, then the quantifier $\forall x$ becomes $\forall\bar{x}$ in $\phi$. The resulting $L$-formula is $\phi(\bar{x}_1,\ldots,\bar{x}_n,\bar{y}_1,\ldots,\bar{y}_m)$, where $|\bar{x}_i|=|\bar{y}_i|=r$.

Observe that $E=E_N$ induces an equivalence relation on each power of $X_N$: Two tuples are equivalent if they are $E$-equivalent in each coordinate. We abuse notation and use $E$ to denote this induced equivalence relation. Since the $L$-translation of each $L'$-symbol defines an $E$-invariant subset of a power of $X_N$, we see that $\phi(\alpha(N)^{r\cdot n+r\cdot m})\subseteq X_N^{n+m}$ is a union of $E$-equivalence classes. Hence for all $N\in\mathcal{C}'$ and for all $a_1,\ldots,a_m\in N$ we have
\begin{equation}\label{isocopy}
f(\phi'(N^n,a_1,\ldots,a_m))=\phi(\alpha(N)^{r\cdot n},\tilde{f}(a_1),\ldots,\tilde{f}(a_m))/E,
\end{equation}
where $\tilde{f}$ is a choice function on the set $\{f(a):a\in N\}$, i.e.\ $\tilde{f}(a)$ is an arbitrary element of the equivalence class $f(a)$, and where
\begin{equation*}
f(\phi'(N^n,a_1,\ldots,a_m)):=\{(f(c_1),\ldots,f(c_n)):N\models\phi'(c_1,\ldots,c_n,a_1,\ldots,a_m)\}.
\end{equation*}

Note that under the assumption of  parameter-interpretability 
the $L$-translation might require translation parameters from the structures in $\mathcal{C}$; that is, the $L$-translation of $\phi'$ might actually be of the form $\phi(\bar{x}_1,\ldots,\bar{x}_n,\bar{y}_1,\ldots,\bar{y}_m,\bar{c}_N)$, where $\bar{c}_N$ is a tuple of parameters from $\alpha(N)$. However, by \autoref{constants}(i) we can extend $L$ to include constant symbols for these translation parameters, so we can ignore this issue, although it will come up again in the proof of (ii) below.

To show that $\phi'(x_1,\ldots,x_n;y_1,\ldots,y_m)$ satisfies the size clause \eqref{eq:3} we need to calculate the approximate size of $\phi'(N^n,a_1,\ldots,a_m)$. By \eqref{isocopy} we have
\begin{equation}\label{isocopy2}
|\phi'(N^n,a_1,\ldots,a_m)|=|\phi(\alpha(N)^{r\cdot n},\tilde{f}(a_1),\ldots,\tilde{f}(a_m))/E|.
\end{equation}
It thus suffices to calculate the right-hand side of \eqref{isocopy2}, using that $\mathcal{C}$ is an $R$-\mac{}.

We may safely assume that $\alpha$ is surjective, i.e.\ that $\mathcal{C}=\{\alpha(N):N\in\mathcal{C}'\}$, since $\{\alpha(N):N\in\mathcal{C}'\}$ is a subclass of $\mathcal{C}$ and thus is also an $R$-\mac{}.

We now calculate the approximate size of $\phi(\alpha(N)^{r\cdot n},\bar{b}_1,\ldots,\bar{b}_m)/E$ for varying $\alpha(N)\in\mathcal{C}$ and $(\bar{b}_1,\ldots,\bar{b}_m)\in\alpha(N)^{r\cdot m}$. We write 
$\bar{y}:=(\bar{y}_1,\ldots,\bar{y}_m)$ and $\bar{b}:=(\bar{b}_1,\ldots,\bar{b}_m)$. The equivalence classes in $\phi(\alpha(N)^{r\cdot n},\bar{b})$ are uniformly defined by the $L$-formula
\begin{equation*}
\tilde{\phi}(\bar{x}_1,\ldots,\bar{x}_n;\bar{b},\bar{d}_1,\ldots,\bar{d}_n):\equiv
	\phi(\bar{x}_1,\ldots,\bar{x}_n,\bar{b})\,\wedge
		\bigwedge_{1\leq i\leq n}E(\bar{x}_i,\bar{d}_i)
\end{equation*}
for tuples $(\bar{d}_1,\ldots,\bar{d}_n)\in X_N^n$ varying over equivalence classes. Note that the $L$-formula $E(\bar{v}_1,\bar{v}_2)$ that defines the equivalence relation $E$ on $X_N^n$ may require parameters from each $\alpha(N)\in\mathcal{C}$, but these can be subsumed into $\bar{c}_N$.

Since $\mathcal{C}$ is an $R$-\mac{}, the formula $\tilde{\phi}(\bar{x}_1,\ldots,\bar{x}_n;\bar{y},\bar{v}_1,\ldots,\bar{v}_n)$ gives rise to a finite partition $\Pi=\{\pi_1,\ldots , \pi_\ell\}$ of 
\[(\mathcal{C}, \bar{y},\bar{v}_1,\ldots,\bar{v}_n)
=\{(\alpha(N),\bar{b},\bar{d}_1,\ldots,\bar{d}_n): N\in\mathcal{C}',\bar{b}\in\alpha(N)^{r\cdot m},\bar{d}_i\in\alpha(N)^{r}\}\]
with measuring functions $h_1,\ldots,h_\ell\in R$ and defining $L$-formulas 
$$\{\theta_j(\bar{y},\bar{v}_1,\ldots,\bar{v}_n):1\leq j\leq \ell\}.$$

As $\phi$ is $E$-invariant, we assume that the $h_1,\ldots,h_\ell$ respect the equivalence relation; that is, for every $\bar{b}\in\alpha(N)^{r\cdot m}$, if $(\bar{d}_1,\ldots,\bar{d}_n)$ and $(\bar{d}_1',\ldots,\bar{d}_n')$ lie in the same equivalence class, then $(\alpha(N),\bar{b},\bar{d}_1,\ldots,\bar{d}_n)$ and $(\alpha(N),\bar{b},\bar{d}_1',\ldots,\bar{d}_n')$ belong to the same element of the partition $\Pi$. Indeed, if they do not, then since $$\tilde{\phi}(\alpha(N)^{r\cdot n},\bar{b},\bar{d}_1,\ldots,\bar{d}_n)=\tilde{\phi}(\alpha(N)^{r\cdot n},\bar{b},\bar{d}_1',\ldots,\bar{d}_n')$$ we can modify the partition so that they do.

For $j=1,\ldots, \ell$ let $Y_j(\alpha(N),\bar{b})$ denote the union of the equivalence classes in 
$\phi(\alpha(N)^{r\cdot n},\bar{b})$ that take the function $h_j$. By the assumption in the preceding  paragraph, $Y_j(\alpha(N),\bar{b})\cap Y_{j'}(\alpha(N),\bar{b})=\emptyset$ for $j\neq j'$. For each $j$ the set $Y_j(\alpha(N),\bar{b})$ is uniformly defined by the $L$-formula
\begin{equation*}
\tilde{\phi}_j(\bar{x}_1,\ldots,\bar{x}_n;\bar{y}):\equiv \phi(\bar{x}_1,\ldots,\bar{x}_n,\bar{y})\wedge\theta_j(\bar{b},\bar{x}_1,\ldots,\bar{x}_n).
\end{equation*}

\hypertarget{pi_ji_invariance} Since $\mathcal{C}$ is an $R$-\mac{}, each formula 
$\tilde{\phi}_j(\bar{x}_1,\ldots,\bar{x}_n;\bar{y})$ gives rise to a finite partition 
$\mathrm{P}_j=\{\rho_{j1},\ldots,\rho_{je_{j}}\}$ of 
$(\mathcal{C}, \bar{y})=\{(\alpha(N),\bar{b}):N\in\mathcal{C}',\bar{b}\in\alpha(N)^{r\cdot m}\}$ with corresponding measuring functions $g_{j1},\ldots,g_{je_{j}}\in R$. Again, we may assume that this partition respects the equivalence relation, i.e.\ that if $\bar{b}$ and $\bar{b}'$ lie in the same equivalence class, then $(\alpha(N),\bar{b})$ and $(\alpha(N),\bar{b}')$ lie in the same 
$\mathrm{P}_j$-class.

We now wish to show for $(\alpha(N),\bar{b})\in\rho_{jk}$ that $|Y_j(\alpha(N),\bar{b})/E|$ is approximately equal to $\frac{g_{jk}(\alpha(N))}{h_j(\alpha(N))}$. This is to be expected, since 
$g_{jk}(\alpha(N))$ is approximately equal to $|Y_j(\alpha(N),\bar{b})|$ and $h_j(\alpha(N))$ is approximately equal to the size of each equivalence class in $Y_j(\alpha(N),\bar{b})$. 

For brevity we now write $\bar{d}:=(\bar{d}_1,\ldots,\bar{d}_n)$. Let $R_N$ consist of a choice of one $\bar{d}\in X_N^n$ from each equivalence class in $X_N^n$, that is, a transversal for the equivalence relation induced by $E$ on $X_N^n$. Thus $X_N^n/E=\{\bar{d}/E:\bar{d}\in R_N\}$ and 
$|X_N^n/E|=|R_N|$. We do not claim that $R_N$ is $L$-definable. Put $S_j(\bar{b}):=R_N\cap\theta_j(\bar{b},\alpha(N)^{r\cdot n})$. For each $(\alpha(N),\bar{b})\in(\mathcal{C}, \bar{y})$ and $j=1,\ldots,\ell$ we have
\begin{equation}\label{partitionE_N}
Y_j(\alpha(N),\bar{b})=\bigcup_{\bar{d}\in S_j(\bar{b})}\tilde{\phi}(\alpha(N)^{r\cdot n},\bar{b},\bar{d}),
\end{equation}
where the union is disjoint since equivalence classes are disjoint.

Now fix $j\in\{1,\ldots,\ell\}$ and $k\in\{1,\ldots,e_j\}$. By the definitions of $\pi_j$ and $h_j$ we have 
for all $(\alpha(N),\bar{b},\bar{d})\in\pi_j$ that 
\begin{equation}\label{eqnogamma}
\left| |\tilde{\phi}(\alpha(N)^{r\cdot n},\bar{b},\bar{d})| \vphantom{\sum}- h_j(\alpha(N))\right| = o(h_j(\alpha(N)))
\end{equation}
as $|\alpha(N)|\longrightarrow\infty$. By the definitions of $\rho_{jk}$ and $g_{jk}$ we likewise have  for all $(\alpha(N),\bar{b})\in\rho_{jk}$ that  
\begin{equation}\label{eqnYj}
\left||Y_j(\alpha(N),\bar{b})| \vphantom{\sum}- g_{jk}(\alpha(N))\right|=o(g_{jk}(\alpha(N)))
\end{equation}
as $|\alpha(N)|\longrightarrow\infty$.

Now let $\epsilon>0$. Recall the $\epsilon$-$Q$ definition of little-o notation. Let $Q_1\in {\mathbb N}$ be such that \eqref{eqnogamma} holds for $\epsilon/2$ and 
$Q_2\in\mathbb{N}$ be such that \eqref{eqnYj} holds for $\epsilon/2$. Set $Q:=\max\{Q_1,Q_2\}$ and $t:=|Y_j(\alpha(N),\bar{b})/E|$. Notice that $t$ depends on $\alpha(N)$, $\bar{b}$ and $j$, which we suppress, and that $t=|S_j(\bar{b})|$.

Let $(\alpha(N),\bar{b})\in\rho_{jk}$ be such that $|\alpha(N)|>Q$.  Then
\begin{equation}\label{key1}
\begin{alignedat}{4}
\left||Y_j(\alpha(N),\bar{b})|\vphantom{\sum}-t\cdot h_j(\alpha(N))\right| 
=\,&\left|\sum_{\bar{d}\in S_j(\bar{b})}|\tilde{\phi}(\alpha(N)^{r\cdot n},\bar{b},\bar{d})|-t\cdot  h_j(\alpha(N))\right| \text{\ \ (by \eqref{partitionE_N})} & \\
=\,&\left|\sum_{\bar{d}\in S_j(\bar{b})}\left[\vphantom{\sum}|\tilde{\phi}(\alpha(N)^{r\cdot n},\bar{b},\bar{d})|- h_j(\alpha(N))\right]\right|  \text{\ \ (as $t=|S_j(\bar{b})|$)} & \\
\leq\,&\sum_{\bar{d}\in S_j(\bar{b})}\left|\vphantom{\sum}|\tilde{\phi}(\alpha(N)^{r\cdot n},\bar{b},\bar{d})|- h_j(\alpha(N))\right| & \\
\leq\,&\underbrace{\frac{\epsilon}{2}\cdot h_j(\alpha(N)) + \cdots + \frac{\epsilon}{2}\cdot h_j(\alpha(N))}_{\text{$t$ summands}}\\
=\,&t \cdot\frac{\epsilon}{2}\cdot h_j(\alpha(N))
\end{alignedat}
\end{equation}
where the penultimate step follows by applying \eqref{eqnogamma} to $Q$ and $\epsilon/2$ for each $\bar{d}\in S_j(\bar{b})$,  since $(\alpha(N),\bar{b},\bar{d})\in\pi_j$ if $\bar{d}\in S_j(\bar{b})$. 
As $(\alpha(N),\bar{b})\in\rho_{jk}$ and $|\alpha(N)|>Q$, we similarly apply \eqref{eqnYj} to $Q$ and 
$\epsilon/2$ to yield 
\begin{equation}\label{key2}
\left||Y_j(\alpha(N),\bar{b})|-g_{jk}(\alpha(N))\right|\leq  \frac{\epsilon}{2}\cdot g_{jk}(\alpha(N)).
\end{equation}
By \eqref{key1} and \eqref{key2}, applying the triangle inequality gives
\begin{equation}\label{maxeqn}
\begin{alignedat}{2}
\left| t\cdot h_j(\alpha(N)) - g_{jk}(\alpha(N))\right|
\leq &\;\frac{\epsilon}{2}\cdot t \cdot h_j(\alpha(N))+\frac{\epsilon}{2}\cdot g_{jk}(\alpha(N))\\
\noalign{\smallskip}
\leq &\;\epsilon\cdot\max\{t\cdot h_j(\alpha(N)), g_{jk}(\alpha(N))\}. 
\end{alignedat}
\end{equation}
for all $(\alpha(N),\bar{b})\in\rho_{jk}$ such that $|\alpha(N)|>Q$. 

\medskip
Let $\frac{g_{jk}}{h_j}(\alpha(N)):=\frac{g_{jk}(\alpha(N))}{h_j(\alpha(N))}$. We now wish to show that

\begin{equation}\label{littleoyj3}
\left| |Y_j(\alpha(N),\bar{b})/E| - \frac{g_{jk}}{h_j}(\alpha(N))\right| = o\left(\frac{g_{jk}}{h_j}(\alpha(N))\right)
\end{equation}
for all $(\alpha(N),\bar{b})\in\rho_{jk}$ as $|\alpha(N)|\longrightarrow\infty$. We first assume that $h_j(\alpha(N))\not=0$. The argument now divides into two cases. The adaptation needed for the case $h_j(\alpha(N))=0$ is handled afterwards. 

\medskip

\noindent \textsc{Case 1.} $t \cdot h_j(\alpha(N))\leq g_{jk}(\alpha(N))$. 

From \eqref{maxeqn} and the fact that  $t=|Y_j(\alpha(N),\bar{b})/E|$, we obtain 

\begin{equation*}
\left| |Y_j(\alpha(N),\bar{b})/E| - \frac{g_{jk}}{h_j}(\alpha(N))\right| 
\leq\epsilon \cdot \frac{g_{jk}}{h_j}(\alpha(N))
\end{equation*}
from which \eqref{littleoyj3} follows for all 
$(\alpha(N),\bar{b})\in\rho_{jk}^g:=\{(\alpha(N),\bar{b})\in\rho_{jk}:t \cdot h_j(\alpha(N))\leq g_{jk}(\alpha(N))\}$ 
as $|\alpha(N)|\longrightarrow\infty$. 
\textsc{End of Case 1.}

\medskip

\noindent \textsc{Case 2.} $g_{jk}(\alpha(N)) < t \cdot h_j(\alpha(N))$. 

From \eqref{maxeqn} we obtain 
\begin{equation*}
\left| |Y_j(\alpha(N),\bar{b})/E| - \frac{g_{jk}(\alpha(N))}{h_j(\alpha(N))}\right| 
\leq\epsilon \cdot |Y_j(\alpha(N),\bar{b})/E|.
\end{equation*}
Therefore
\begin{equation*}
\left| |Y_j(\alpha(N),\bar{b})/E| - \frac{g_{jk}}{h_j}(\alpha(N))\right| = o\left(|Y_j(\alpha(N),\bar{b})/E|
\right)
\end{equation*}
for all $(\alpha(N),\bar{b})\in\rho_{jk}^h:=\{(\alpha(N),\bar{b})\in\rho_{jk}:g_{ji}(\alpha(N)) < t \cdot h_j(\alpha(N))\}$ as $|\alpha(N)|\longrightarrow\infty$. By a straightforward argument (`little-o-exchange') we have
\begin{equation*}
\left| |Y_j(\alpha(N),\bar{b})/E| - \frac{g_{jk}}{h_j}(\alpha(N))\right| = o\left(\frac{g_{jk}}{h_j}(\alpha(N))\right)
\end{equation*}
for all $(\alpha(N),\bar{b})\in\rho_{jk}^h$ as $|\alpha(N)|\longrightarrow\infty$. \textsc{End of Case 2.}

\medskip

As $\rho_{jk}=\rho_{jk}^g \cup\rho_{jk}^h$, the two cases together establish \eqref{littleoyj3} in the case that $h_j(\alpha(N))\not=0$.


\medskip

We briefly address the  modifications necessary if $h_j(\alpha(N))=0$, to avoid dividing by zero. Applying \autoref{pos-def-lem}, by taking a larger $Q$ if necessary we may assume that $h_j$ is positive-definite for all $\alpha(N)$ with $|\alpha(N)|>Q$. If $h_j(\alpha(N))=0$ then by definition we have  
$\frac{g_{jk}}{h_j}:=0$. 
Then \eqref{littleoyj3} still holds, since for $|\alpha(N)|>Q$ we have that $h_j(\alpha(N))=0$ implies $|Y_j(\alpha(N),\bar{b})|=0$ and hence
$|Y_j(\alpha(N),\bar{b})/E|=0$. 

We finally can proceed to calculate the approximate size of 
$\phi(\alpha(N)^{r\cdot n},\bar{b})/E$. For each $j\in\{1,\ldots,\ell\}$ we have a finite partition 
$\mathrm{P}_j:=\{\rho_{jk}:1\leq k\leq e_j\}$ of $\mathcal{C}(\bar{y})$. We use these partitions to construct a single finite partition $\Phi$ of $\mathcal{C}(\bar{y})$. 
Define
\begin{equation*}
I:=\{(k_1,\ldots,k_\ell):1\leq k_j\leq e_j,1\leq j\leq \ell\} \text{\ and\ }  \rho_{(k_1,\ldots,k_\ell)}:=\bigcap_{j=1}^\ell\rho_{jk_{j}}
\end{equation*}
 for each $(k_1,\ldots,k_\ell)\in I$.
Then $\mathrm{P}:=\{\rho_{(k_1,\ldots,k_\ell)}:(k_1,\ldots,k_\ell)\in I\}$ forms a finite partition of 
$\mathcal{C}(\bar{y})$. We now show that this partition works.

We have
\begin{equation*}
\phi(\alpha(N)^{r\cdot n},\bar{b})=\bigcup_{j=1}^\ell Y_j(\alpha(N),\bar{b}).
\end{equation*}
Since each $Y_j(\alpha(N),\bar{b})$ is a union of $E$-equivalence classes and the 
$Y_j(\alpha(N),\bar{b})$ are pairwise disjoint, they form a partition of 
$\phi(\alpha(N)^{r\cdot n},\bar{b})/E$. 
Hence
\begin{equation}\label{Yjbsum}
|\phi(\alpha(N)^{r\cdot n},\bar{b})/E|=\sum_{j=1}^\ell \left|Y_j(\alpha(N),\bar{b})/E\right|.
\end{equation}

Let $\epsilon>0$ and $(k_1,\ldots,k_\ell)\in I$. For each $j\in\{1,\ldots,\ell\}$, let 
$Q(j)\in\mathbb{N}^{>0}$ be such that \eqref{littleoyj3} holds for $\epsilon/\ell$, where we take $g_{jk}:=g_{jk_{j}}$. Set $Q':={\rm max}\{Q(j):1\leq j\leq l\}$. Then for every $(\alpha(N),\bar{b})\in\rho_{(k_1,\ldots,k_\ell)}$ with $|\alpha(N)|>Q'$, 
\begin{equation*}
\begin{alignedat}{3}
\left||\phi(\alpha(N)^{r\cdot n},\bar{b})/E|-\sum_{j=1}^\ell\frac{g_{jk_{j}}}{h_j}(\alpha(N))\right|
=\,&\left|\sum_{j=1}^\ell \left|Y_j(\alpha(N),\bar{b})/E\right|-\sum_{j=1}^\ell\frac{g_{jk_{j}}}{h_j}(\alpha(N))\right| & \text{\ \ (by \eqref{Yjbsum})}\\ 
\leq\,&\sum_{j=1}^\ell \left| |Y_j(\alpha(N),\bar{b})/E|-\frac{g_{jk_{j}}}{h_j}(\alpha(N))\right| & & 
\\
\leq\,&\sum_{j=1}^\ell\frac{\epsilon}{\ell} \cdot \frac{g_{jk_{j}}}{h_j}(\alpha(N))\\
=\,&\,\epsilon \cdot \frac{g_{jk_{j}}}{h_j}(\alpha(N))
\end{alignedat}
\end{equation*}
where the penultimate step follows by applying \eqref{littleoyj3} to $Q'$ and $\epsilon/l$ for each $j\in\{1,\ldots,\ell\}$, since $(\alpha(N),\bar{b})\in\rho_{(k_1,\ldots,k_\ell)}\subseteq\rho_{jk_{j}}$. Hence 
\begin{equation}\label{fracsumfinal}
\left| |\phi(\alpha(N)^{r\cdot n},\bar{b})/E|-\sum_{j=1}^\ell\frac{g_{jk_{j}}}{h_j}(\alpha(N))\right| = o\left(\sum_{j=1}^\ell\frac{g_{jk_{j}}}{h_j}(\alpha(N))\right)
\end{equation}
for all $(\alpha(N),\bar{b})\in\rho_{(k_1,\ldots,k_\ell)}$ as $|\alpha(N)|\longrightarrow\infty$.

We now pull everything back to $\mathcal{C}'$. Put 
\begin{equation*}
\rho'_{(k_1,\ldots,k_\ell)}:=\{(N,a_1,\ldots,a_m)\in(\mathcal{C}',y_1,\ldots, y_m):(\alpha(N),\tilde{f}(a_1),\ldots,\tilde{f}(a_m))\in\rho_{(k_1,\ldots,k_\ell)}\}.
\end{equation*} 
Then $\mathrm{P}':=\{\rho'_{(k_1,\ldots,k_\ell)}: (k_1,\ldots,k_\ell)\in I\}$ is a finite partition of 
$(\mathcal{C}',y_1,\ldots, y_m)$. By our earlier assumption that each $\rho_{jk}$ respects the equivalence relation $E$, the set $\rho'_{(k_1,\ldots,k_\ell)}$ does not depend on the choice function 
$\tilde{f}$. 
We also define 
\begin{equation*}
\frac{g_{jk}}{h_j}(N):=\frac{g_{jk}}{h_j}(\alpha(N))
\end{equation*}
for $N\in\mathcal{C}'$. 
Then \eqref{isocopy2} and \eqref{fracsumfinal} together imply for every $(k_1,\ldots,k_\ell)\in I$ that
\begin{equation*}
\left| |\phi'(N^n,a_1,\ldots,a_m)|-\sum_{j=1}^\ell\frac{g_{jk_{j}}}{h_j}(N)\right| 
= o\left(\sum_{j=1}^\ell\frac{g_{jk_{j}}}{h_j}(N)\right)
\end{equation*}
for all $(N,a_1,\ldots,a_m)\in\rho'_{(k_1,\ldots,k_\ell)}$ as $|N|\longrightarrow\infty$. Hence $\mathcal{C}'$ is a weak $\Rad$-\mac{}, completing the proof of part (i) of the theorem.

{\em Proof of~(ii).}  Following on from the proof of~(i), we need to show that the partition $\mathrm{P}'$ of $(\mathcal{C}',y_1,\ldots, y_m)$ is 
$\emptyset$-definable. To this end, let 
$\rho'_{(k_1,\ldots,k_\ell)}\in\mathrm{P}'$. 
Since $\mathcal{C}$ is an $R$-\mac{}, each $\rho_{jk}$ is 
$\emptyset$-definable and hence the intersection $\rho'_{(k_1,\ldots,k_\ell)}$ also is 
$\emptyset$-definable. 
Thus the partition $\mathrm{P}$ of $(\mathcal{C},\bar{y}_1,\ldots, \bar{y}_m)$ is 
$\emptyset$-definable. Let 
$\{\psi_{(k_1,\ldots,k_\ell)}(\bar{y}_1,\ldots, \bar{y}_m):\rho_{(k_1,\ldots,k_\ell)}\in\mathrm{P}\}$ be the set of defining $L$-formulas. Note that this is where the point of \autoref{parameter_issue_rem} comes into play. In the proof of~(i) we could ignore the translation parameters by applying \autoref{constants}. We cannot do that here, since expanding $\mathcal{C}$ by constant symbols might prevent it from being $\emptyset$-interpretable in $\mathcal{C}'$. Thus, to guarantee that each $\{\psi_{(k_1,\ldots,k_\ell)}(\bar{y}_1,\ldots, \bar{y}_m)$ is parameter-free, it appears that we need to assume that no translation parameters are required in the interpretation of 
$\mathcal{C}'$ in $\mathcal{C}$.

For brevity we now write $\rho$ and $\rho'$ for $\rho_{(k_1,\ldots,k_\ell)}$ and 
$\rho'_{(k_1,\ldots,k_\ell)}$, respectively, and $\psi$ for $\psi_{(k_1,\ldots,k_\ell)}$, since the subscript 
$(k_1,\ldots,k_\ell)$ no longer plays a role.
For all $(N,\bar{a})\in(\mathcal{C}',y_1,\ldots, y_m)$ we have
\begin{equation}\label{interpdefclause1}
\begin{alignedat}{3}
(N,\bar{a})\in\rho' & \Leftrightarrow (\alpha(N),\tilde{f}(\bar{a}))\in\rho & \text{\ \ (by the definition of $\rho'$)} \\
& \Leftrightarrow \alpha(N)\models\psi (\tilde{f}(\bar{a}))& \text{\ \ (since $\psi$ defines $\rho$)} \\
& \Leftrightarrow \alpha(N)^*\models\psi(g \circ \tilde{f}(\bar{a}))& \text{\ \ (since $g$ is an isomorphism)}.
\end{alignedat}
\end{equation}
Since $\alpha(N)^*$ is the $\emptyset$-interpretation of $\alpha(N)$ in $N$, we can find a parameter-free $L'$-translation $\psi'(\bar{y}')$ of $\psi(\bar{y})$ such that
\begin{equation}\label{interpdefclause2}
\alpha(N)^*\models\psi(g \circ \tilde{f}(\bar{a})) \Leftrightarrow N\models\psi'(\tilde{g} \circ \tilde{f}(\bar{a})),
\end{equation}
where $\tilde{g}$ is a choice function for $g$ in the same way that $\tilde{f}$ is a choice function for $f$. Since the isomorphism $g^*f\colon N \longrightarrow N^{**}$ is uniformly $\emptyset$-definable across $\mathcal{C}'$, we can find a parameter-free $L'$-formula $\psi''(y_1,\ldots,y_m)$ such that
\begin{equation}\label{interpdefclause3}
N\models\psi'(\tilde{g} \circ \tilde{f}(\bar{a})) \Leftrightarrow N\models\psi''(\bar{a}).
\end{equation}
Together \eqref{interpdefclause1}, \eqref{interpdefclause2} and \eqref{interpdefclause3} yield
\begin{equation*}
(N,\bar{a})\in\rho' \Leftrightarrow N\models\psi''(\bar{a}).
\end{equation*}
So $\rho'$ is definable, as required.

{\em Proof of~(iii).}  This is a straightforward special case of the proof of (i), with no quotienting involved, and we omit the details. 
\end{proof}

\section{Three examples of multidimensional asymptotic classes}

We discuss here three examples of (weak) \mac{}s that seem enlightening. The first, systems of vector spaces over a field, gives an example of dimensions ranging independently. It also is used in the second example, finite modules for the path algebra over a finite field for a quiver of finite representation type. The third example, vector spaces over a finite field with a bilinear form, has particular interest as ultraproducts of this class do not in general have simple theory. 

\subsection{Families of vector spaces.} The example in this subsection is a slight generalisation of the family of 2-sorted structures $(\mathbb{F},V)$ considered in Theorem 4.1 of \cite{GMS15}. We consider it partly because the extra generality seems to be needed for the quiver representations in the next subsection. Additionally, the proof in \cite{GMS15} rests on a quantifier elimination result from \cite{Granger99} which we later realised is not quite correct -- so we take the opportunity to state the correct QE result and indicate how the proof of \cite[Theorem 4.1]{GMS15} should be corrected. We do not quite see how to extract our result from \cite{GMS15} and Feferman--Vaught. 

\begin{theorem}\label{GMSgen}
Let $\mathcal{C}_d$ be the collection of $(d+1)$-sorted structures of the form $(F,V_1,\ldots, V_d)$, where $F$ is a finite field in the language $L_{{\rm rings}}$ of rings, and each $V_i$ is a  finite dimensional $F$-vector space, each carrying a distinct copy of the language of groups and with distinct function symbols for scalar multiplication $F\times V_i \longrightarrow V_i$. Then $\mathcal{C}_d$ is a polynomial $R$-\mac  and the polynomial functions in $R$ have the form $h=p(\bf{F},\bf{V_1},\ldots,\bf{V_d})$, so that if $M=(F,V_1,\ldots,V_d)\in \mathcal{C}_d$ then $h(M)=p(|F|,|V_1|,\ldots,|V_d|)$.
\end{theorem}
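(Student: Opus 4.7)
The plan is to follow the overall strategy of the proof of Theorem 4.1 of \cite{GMS15} (which is the case $d=1$), adapted to the $(d{+}1)$-sorted setting, and to avoid the flaw the authors note in \cite{Granger99}. First, by the one-variable criterion (\autoref{thm:projection.lemma}), together with \autoref{onvevarpolymac}, it suffices to verify the polynomial m.a.c. clause for formulas $\phi(x;\bar{y})$ where $x$ is a single variable ranging over one of the sorts $F, V_1, \ldots, V_d$. The balanced formulas will be $\delta_F(x):\equiv (x=x)$ on the field sort and $\delta_{V_i}(x):\equiv (x=x)$ on each vector-space sort; these are clearly exactly balanced and yield the quantities $|F|, |V_1|, \ldots, |V_d|$ in which we want the measuring functions to be polynomial.

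Next I would establish a quantifier elimination result for $\mathcal{C}_d$ in a suitable enrichment of the language, correcting and extending the relevant statement of \cite{Granger99}. Because no function symbol connects distinct $V_i$ and $V_j$ (the sorts interact only through the common scalar action of $F$), the multi-sorted QE should reduce to the single-vector-space QE applied separately in each $V_i$, together with the ring-language QE (in the sense of the Chatzidakis--van den Dries--Macintyre theorem) on $F$. Concretely, one expects to add predicates that record, for finite tuples $\bar{v}$ from $V_i$, the $F$-linear dependence type of $\bar{v}$; this is precisely the data the original \cite{Granger99} argument needed to handle cleanly.

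With QE in hand, the counting reduces to understanding quantifier-free $\phi(x,\bar{y})$. If $x$ ranges over $F$, then after pulling out constraints of the form $\alpha(x)\,v=w$ (with $v,w\in V_i$-terms) --- which either pin $x$ down uniquely or become vacuous --- the set $\phi(F,\bar{a})$ is Boolean combination of polynomial (in)equalities in $x$ over $F$, so the finite-field m.a.c. gives a cardinality asymptotic to $c|F|^k$ for $k\in\{0,1\}$ with $c\in\mathbb{Q}^{\geq 0}$, uniformly on a $\emptyset$-definable partition of $(\mathcal{C}_d,\bar{y})$. If $x$ ranges over $V_i$, then by QE the set $\phi(V_i,\bar{a})$ is a Boolean combination of cosets of $F$-subspaces of $V_i$ whose dimensions are, after a $\emptyset$-definable refinement of the partition, constant in $\bar{y}$; inclusion-exclusion then gives an exact cardinality of the form $\sum_j c_j |F|^{k_j}$ with the top term a fraction of $|V_i|$ corresponding to the top-dimensional coset.

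In every case the resulting measuring function is a polynomial in $|F|,|V_1|,\ldots,|V_d|$ with rational coefficients, because the exponents $k_j$ come as integers bounded by the dimensions of the subspaces occurring in $\phi$, and any $k_j=n_i=\dim V_i$ factor can be packaged as $|V_i|$, while any $k_j<n_i$ factor contributes $|F|^{k_j}$. Assembling these via the Projection Lemma (\autoref{onvevarpolymac}) yields the required polynomial m.a.c. structure. The main obstacle I expect is Step 2: formulating and verifying the corrected QE for $(F,V_1,\ldots,V_d)$ with enough uniformity that the partition into dimension-types is $\emptyset$-definable across $\mathcal{C}_d$. Once this is settled, the asymptotic counting is essentially the Gaussian-binomial bookkeeping familiar from the $d=1$ case.
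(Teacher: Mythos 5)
Your overall strategy (one-variable reduction via \autoref{thm:projection.lemma} and \autoref{onvevarpolymac}, then quantifier elimination in an enriched language, then counting) agrees with the paper's, and your choice of balanced formulas is fine. But there are two linked gaps in the core of the argument.

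First, the quantifier elimination you propose is the flawed one that the paper is explicitly at pains to correct. You suggest adding only predicates recording the linear-dependence type of tuples from the $V_i$, i.e.\ the relations the paper calls $\theta^i_n$, and you remark that this is ``precisely the data the original \cite{Granger99} argument needed.'' That is exactly the point at which the original argument breaks: the paper's \autoref{granger} adds, in addition to the $\theta^i_n$, the coordinate function symbols $\lambda^i_{n,j}\colon V_i^{n+1}\to F$ which return the $j$-th coordinate of a vector with respect to an independent tuple, and the preceding paragraph states outright that ``it is clear that these (or some replacement) are needed.'' Without the $\lambda$-functions there is no way to push information about how a vector sits inside the span of a parameter tuple into the field sort, and QE relative to the field sort fails. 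So your Step~2 as stated cannot be carried out.

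Second, and as a consequence, your counting in the case where $x$ ranges over $V_i$ is wrong. You assert that after QE, $\phi(V_i,\bar a)$ is a Boolean combination of cosets of $F$-subspaces and that inclusion--exclusion gives an \emph{exact} cardinality $\sum_j c_j|F|^{k_j}$. That picture (which is essentially the module/abelian-group situation of \autoref{modules}) does not apply here, precisely because of the $\lambda$-functions and the field quantifiers that QE leaves in place: once $x$ is constrained to lie in the span of the parameter tuple $\bar u$, the paper identifies that span with $F^n$ and the formula becomes a \emph{field} formula, whose number of solutions is governed by the Chatzidakis--van den Dries--Macintyre theorem and is only \emph{approximately} $\mu|F|^e$ with $\mu\in\mathbb{Q}^{>0}$, not an exact polynomial. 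Indeed, the statement is that $\mathcal{C}_d$ is a polynomial \mac{}, not a \mec{}, and your claim of exactness is a signal that the cases are not being separated correctly. The paper's proof handles this by the dichotomy ``$x$ in the span of $\bar u$'' (reduce to a field formula, invoke CvdDM) versus ``$x$ not in the span'' (the $\lambda$-terms vanish and the earlier \cite{GMS15} argument goes through, producing quantities like $|V_i|-|F|^n$). Your inclusion--exclusion step does not see the in-span case and so cannot produce the rational coefficients $\mu$ that genuinely occur.

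To repair the proposal you would need to (a) enrich the language with the $\lambda^i_{n,j}$ before claiming QE relative to the field sort, and (b) in the vector-sort case, split on whether $x$ lies in the span of the parameter vectors, using CvdDM for the in-span part rather than subspace bookkeeping. Your treatment of the field-sort case is closer to correct, but there too the reduction to polynomial (in)equalities in $x$ requires eliminating $\lambda$-terms by linearity, which again presupposes they are in the language.
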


We consider the structures of $\mathcal{C}_d$ in the language $L_{vs}^d$ as described in the theorem, but in addition for each $i\in \{1,\ldots,d\}$ and each $n\geq 1$ an  $n$-ary relation symbol $\theta_n^i(v_1,\ldots,v_n)$ in the $V_i$ sort, and for each such $i,n$ and each $j \in \{1,\ldots,n\}$ an $n+1$-place function symbol $\lambda^i_{n,j}$. 
We interpret 
$\theta^i_n$ so that $(M,V_1,\ldots,V_d)\models \theta^i_n(v_1,\ldots,v_n)$ if and only if $v_1,\ldots,v_n \in V_i$ and  $v_1,\ldots,v_n$ are linearly independent. Furthermore, if $(M,V_1,\ldots,V_d)\models \theta^i_n(v_1,\ldots,v_n)$ and $w=\sum_{k=1}^n a_kv_k$ with $a_1,\ldots,a_k\in F$, then $\lambda^i_{n,j}(v_1,\ldots,v_n,w)=a_j$, and 
we define $\lambda^i_{n,j}(v_1,\ldots,v_n,w)=0_F$ otherwise.  Observe that the $\theta_n^i(v_1,\ldots,v_n)$ and $\lambda^i_{n,j}$ are definable in the original language $L_{vs}^d$. Let $T_{vs}^d(F)$ be the theory of structures $(F,V_1,\ldots,V_d)$ in this language where each $V_i$ is an infinite-dimensional vector space over the field $F$.

\begin{proposition}\label{granger}
The theory  $T_{vs}^d(F)$ is complete and has quantifier elimination relative to the field sort; that is, any formula is equivalent modulo $T_{vs}^d(F)$ to one whose only quantifiers are in the field sort.
\end{proposition}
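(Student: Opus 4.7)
The plan is to adapt the classical quantifier-elimination argument for the theory of an infinite-dimensional vector space over a fixed field to this multi-sorted setting. I would use the standard one-existential-variable reduction: it suffices to show that every formula of the form $\exists v\,\psi(v,\bar{x})$, with $\psi$ quantifier-free apart from field-sort quantifiers and $v$ a variable in some $V_i$, is equivalent modulo $T_{vs}^d(F)$ to a formula $\psi'(\bar{x})$ of the same kind. Because the sorts $V_1,\ldots,V_d$ interact only through the field $F$ (there are no symbols coupling two distinct $V_i, V_j$), this reduction handles one sort at a time, and literals split cleanly by sort, sharing only field variables.

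First I would put atomic formulas involving $V_i$-variables into a canonical form. Using $\theta^i_n$ and $\lambda^i_{n,j}$, an equation or inequation between $V_i$-elements translates into a linear-dependence statement together with equations of the form $\lambda^i_{n,j}(\bar{u},w)=t$ for a field term $t$. I would then pass to disjunctive normal form and handle a single conjunct, uniformly extracting (by a finite case-split on values of the $\theta^i_n$) a maximal linearly independent subtuple $\bar{u}$ of the $V_i$-parameters in $\bar{x}$. Every remaining literal in $v$ then becomes a field condition on the putative coordinates $\lambda^i_{n,j}(\bar{u},v)$, together possibly with an assertion that $v$ lies outside $\mathrm{span}(\bar{u})$.

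Next I would perform the elimination. Either some literal pins $v$ to an explicit $F$-combination of $\bar{u}$, in which case I substitute and the quantifier is gone, with the surviving field conditions forming $\psi'$; or else the literals only require the coordinate tuple of $v$ to avoid a boolean combination of proper $F$-affine subvarieties of $F^{|\bar{u}|+1}$. Because each $V_i$ is infinite-dimensional, any prescribed new basis direction is available, so $\exists v\,\psi$ becomes the field-quantifier-only assertion that the coordinate constraints have a solution in $F$. Completeness then follows at once: the prime substructure of every model is $F$ together with $\{0\}$ in each $V_i$-sort, and since $F$ is fixed its quantifier-free theory is fixed, so QE collapses arbitrary sentences of $T_{vs}^d(F)$ to sentences about $F$ alone.

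The main obstacle, which is exactly what the added symbols $\theta^i_n$ and $\lambda^i_{n,j}$ are designed to smooth over, is the bookkeeping in the elimination step: correctly detecting when a literal pins $v$ relative to the linear-independence status of $\bar{u}$, uniformly translating the finite disjunction of possible ranks of the $V_i$-parameters into a single field-sort formula, and verifying that the independence side-conditions on $\bar{x}$ are themselves expressible by $\theta^i_n$-formulas. Once this is in place the argument follows the classical QE for an infinite-dimensional vector space, iterated independently across the mutually non-interacting sorts $V_1,\ldots,V_d$; the Feferman--Vaught flavour of the set-up makes the iteration essentially automatic.
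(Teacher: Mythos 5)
Your proposal is the natural route, and in outline it is the argument that the cited sources take. But note that the paper does not actually supply a proof of this proposition: after the statement it only remarks that \cite[Lemma~4.1]{GMS15}, resting on \cite{Granger99}, asserted the $d=1$ case \emph{without} the $\lambda^i_{n,j}$, that these function symbols (or a substitute) are clearly needed, and that a proof in the richer language appears in the forthcoming \cite{Chernikov-Hempel23}. So there is no in-paper argument to compare against, only this pointer; your one-existential-variable reduction, normal-forming of $V_i$-literals via $\theta^i_n$ and $\lambda^i_{n,j}$, case-split on which of the $V_i$-parameters forms a basis for their span, and then the dichotomy ``$v$ pinned in the span / $v$ outside the span, use infinite dimensionality'' is exactly the Granger-style back-and-forth that \cite{Chernikov-Hempel23} carries out carefully.

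Two small imprecisions worth flagging. First, the elimination reduces an arbitrary sentence to one whose quantifiers are all in the \emph{field} sort, not to a quantifier-free sentence; completeness then follows because $F$ is a fixed structure and so its full first-order theory (not merely its quantifier-free diagram) is determined. Second, the step ``every remaining literal in $v$ becomes a field condition on $\lambda^i_{n,j}(\bar{u},v)$'' hides a genuine amount of work: you must also case-split on whether $v$ lies in $\mathrm{span}(\bar{u})$ before $\lambda^i_{n,j}(\bar{u},v)$ returns meaningful coordinates (recall it is defined to be $0_F$ otherwise), and you must flatten nested $\lambda$-terms whose arguments themselves mention $v$, which requires normalising the $V_i$-parameters to an independent tuple first. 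These are precisely the bookkeeping obstacles you acknowledge, and they are where the error in \cite{Granger99}/\cite{GMS15} lay, so a careful write-up would have to spell them out; but the architecture of your argument is sound.
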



Note that in \cite[Lemma 4.1]{GMS15}, resting on \cite{Granger99}, this  is stated with $d=1$ without the function symbols 
$\lambda^i_{n,j}$. However, it is clear that these (or some replacement) are needed. A proof incorporating the function symbols also is given in the forthcoming \cite{Chernikov-Hempel23}. 

\medskip

{\em Proof of \autoref{GMSgen}.}
 For clarity, we just give the argument for $d=1$; the argument for $d>1$ is essentially the same. As in \autoref{onvevarpolymac}---see also the end of the proof of 
 \cite[Theorem 4.1]{GMS15}---it suffices to consider formulas with a single variable (and possibly parameters). It remains to explain how the proof of \cite[Lemma 4.2]{GMS15} needs to be adjusted to take into account that the language includes the function symbols $\lambda^i_{n,j}$. Consider a set  defined by a formula $\phi(x,\bar{b}\bar{c})$, where, in the formula 
 $\phi(x,\bar{v}\bar{y})$, $\bar{v}$ ranges over the vector space sort and $\bar{y}$ over the field sort.

Suppose first that $x$ ranges over the vector space sort. As in \cite[Lemma 4.2]{GMS15} we may suppose that $\phi$ is a conjunction of quantifier-free formulas and field formulas. Using the $\theta_n$ formulas, we may assume that $\bar{v}$ is linearly independent and reduce to two possibilitiess: where  $x$ is in the span of $\bar{v}$, and where it is not. In the first case, we may identify the span of $\bar{v}$ with $F^n$, and the original formula reduces to a field formula defining a set with cardinality approximately a polynomial in $F$, with the definability clause also holding. In the second case, the $\lambda$-functions play no role and the argument from \cite{GMS15} applies.

Now assume that $x$ ranges through the field sort (Case 2 in the proof of \cite[Lemma 4.2]{GMS15}). The proof is essentially as given there, except that polynomials $p(x,\bar{y})$ have to be replaced by terms involving also the $\lambda$-functions. In each case, such formulas can be replaced by field formulas, and the definability clause also holds.  $\Box$

\subsection{A \mac{} from representation theory.}\label{finiterep}
For background on the relevant representation theory of quivers, see for example \cite{bernstein} or \cite{ringel}. 

 A {\em quiver} is a directed graph $Q$, and we assume that $Q$ has vertex set $\{v_1,\ldots,v_n\}$. Given a field $k$, the {\em path algebra} $kQ$ is an associative algebra with basis the set of all directed paths in $Q$, including a trivial path $e_i$ at each vertex $v_i$. Multiplication is defined on the basis by composition of paths, a product taking value 0 if the composition is undefined.
 
A {\em representation} of $Q$ (with base field $k$) consists of a $k$-vector space $V_i$ for each $i=1,\ldots,n$ and a $k$-linear map 
$\rho\colon V_i \longrightarrow V_j$ for each arrow $v_i\to v_j$.  
The {\em dimension vector} of the representation is the sequence $(r_1,\ldots,r_n)$ where ${\rm dim}(V_i)=r_i$ for each $i$.
There is a natural notion of {\em direct sum} of two representations, and a representation of $Q$ is {\em indecomposable} if it cannot be written as a direct sum of two non-trivial representations.  
It is well-known that the category of representations of $Q$ over $k$ is equivalent to the category of left $kQ$-modules. Indeed, given a representation of $Q$ with $k$-vector spaces $V_i$ for each $i$, one forms the $k$-vector space $V=\oplus_{i=1}^n V_i$. Left multiplication by $e_i$ corresponds to the linear map $V\longrightarrow V$ given by projection $V\longrightarrow V_i$ composed with inclusion $V_i \longrightarrow V$, and given a directed path $\alpha\colon v_i \to v_j$ with corresponding linear map $\rho\colon V_i \longrightarrow V_j$, given by composing maps coming from arcs, left multiplication by $\alpha$ is the linear map induced by $\rho$ on $V_i$ and by the 0-map on the other $V_k$. 
 
The quiver $Q$ has {\em finite representation type} if it has just finitely many isomorphism types of indecomposable representations. This property depends only on $Q$ and not on $k$. By a famous theorem of Gabriel \cite{gabriel},  a quiver has finite representation type if and only if it is a finite disjoint union of oriented copies  of the Dynkin diagrams $A_n, D_n, E_6,E_7, E_8$. Each indecomposable is associated with a unique positive root of the `Tits form' (see \cite{gabriel}, e.g.) of the quiver.  
 
 We now suppose $Q$ is a quiver of finite representation type. Then the indecomposables of $Q$ each have a fixed dimension vector which is independent of the base field $k$. Furthermore, the linear maps $V_i \longrightarrow V_j$ are defined over $\mathbb{Z}$, so are independent of the field $k$; that is, we may identify the $V_i$ with finite powers of $k$ in such a way that the linear maps are given by matrices over $\mathbb{Z}$. This can be seen from the treatment in \cite{bernstein}: it is shown there that the indecomposable representations can be constructed from 1-dimensional simple modules (for a possibly differently oriented version of the quiver) by applying a sequence of reflection functors, and this process preserves the property of being defined over $\mathbb{Z}$. It also follows from Theorem 1 of \cite{crawley}, which says that for each positive root of the Tits form of the quiver there is a unique (up to isomorphism) representation of $Q$ by a finitely generated free $\mathbb{Z}$-module, with the property that over any field, it gives the (unique) indecomposable representation of that dimension vector over that field.\footnote {The context in \cite{crawley} is more general, but one takes $R=\mathbb{Z}$, and for a Dynkin quiver positive roots are exactly {\em real Schur roots}, and furthermore working over a field, indecomposable modules are exactly {\em exceptional modules}.}

 We view a representation of $Q$ as a 3-sorted structure $(k,kQ, V)$, where $k$ is the base field and $V$ is the corresponding $k$-vector space and $kQ$-module. We view  $(k,kQ,V)$ as a structure in a 3-sorted language $L_Q$ with 
sorts $S_F$ (for the field ${\mathbb F}_q)$, $S_A$ (for the algebra ${\mathbb F}_qQ$), and $S_V$ (for the vector space $V$).
The language $L_Q$ has disjoint copies of $L_{{\rm rings}}$ for the sorts $S_F$ and $S_A$, the language $L_{{\rm groups}}$ of groups on $S_V$, a function symbol $S_F \times S_A\longrightarrow S_A$ for scalar multiplication on the algebra, a function symbol $S_F \times S_V$ for scalar multiplication (by field elements) on $V$, and a function symbol $S_A\times S_V \longrightarrow S_V$ for the left action of the algebra sort on the vector space sort.

\begin{theorem} \label{quiver-theorem}  Let $Q$ be a quiver of finite representation type. Then the collection
\[\mathcal{C}(Q):=\{({\mathbb F}_q, {\mathbb  F}_q Q, V): \textup{$q$  prime power},\; \textup{$V$  finite-dimensional ${\mathbb F}_q Q$-module}\}\]
is a weak polynomial \mac{}.
\end{theorem}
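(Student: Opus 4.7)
The plan is to reduce \autoref{quiver-theorem} to \autoref{GMSgen} via the Krull--Schmidt theorem and a parameter-uniform interpretation. By Gabriel's theorem, since $Q$ has finite representation type it is a disjoint union of oriented Dynkin diagrams, and the indecomposable $\mathbb{F}_qQ$-modules are in bijection with the positive roots $\alpha_1,\ldots,\alpha_s$ of the associated Tits form. By the Crawley--Boevey integrality result cited just before the theorem (\cite[Theorem~1]{crawley}), each $M_i$ arises by base change from an indecomposable $\mathbb{Z}Q$-module $\tilde M_i$ that is free of rank $d_i$ (the total dimension of $\alpha_i$) as a $\mathbb{Z}$-module; so after fixing once and for all a $\mathbb{Z}$-basis of each $\tilde M_i$, every path of $Q$ acts on $M_i$ by a fixed integer matrix independent of $q$. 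Krull--Schmidt then gives a unique decomposition $V\cong\bigoplus_{i=1}^s M_i^{n_i}$ for $V\in\mathcal{C}(Q)$, with uniquely determined multiplicities $(n_1,\ldots,n_s)\in\mathbb{N}^s$.

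I would then pass to the auxiliary class $\mathcal{D}$ of $(s+1)$-sorted structures $(\mathbb{F}_q;V_1,\ldots,V_s)$ in the language $L_{vs}^s$ of \autoref{GMSgen}, associating to $(\mathbb{F}_q,\mathbb{F}_qQ,V)\in\mathcal{C}(Q)$ the tuple in which $V_i\subseteq V$ is the $\mathbb{F}_q$-vector space underlying the $M_i$-isotypic component of $V$. By \autoref{GMSgen} the ambient class is a polynomial \mac{}, so $\mathcal{D}$ inherits the weak polynomial \mac{} property as a subclass. Next, I would parameter-uniformly interpret $\mathcal{C}(Q)$ in $\mathcal{D}$: given $(\mathbb{F}_q;V_1,\ldots,V_s)$ together with parameters $\bar u=(\bar u_1,\ldots,\bar u_s)$, where each $\bar u_i$ is an ordered basis of $V_i$ realising the identification $V_i\cong M_i^{n_i}$, reconstruct $\mathbb{F}_qQ$ as $\mathbb{F}_q^t$ (with $t=\dim_{\mathbb{F}_q}\mathbb{F}_qQ$ the number of paths in $Q$) equipped with the fixed $\mathbb{Z}$-bilinear multiplication of $\mathbb{Z}Q$, reconstruct $V$ as $V_1\sqcup\cdots\sqcup V_s$ with additive and scalar structure sortwise, and define the left action of $\mathbb{F}_qQ$ on each $V_i$ via the fixed integer matrices representing the action of $\mathbb{Z}Q$ on $\tilde M_i^{\,n_i}$ in the basis $\bar u_i$. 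This interpretation is uniform across $\mathcal{D}$ in the parameter $\bar u$, so \autoref{thminterpmac}(i) yields that $\mathcal{C}(Q)$ is a weak \mac{}.

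For the polynomial structure, I would appeal to the $L^{\rm eq}$ extension of \autoref{polynomialmac} noted in the remark following that definition: because $Q$ is of finite representation type we have $\mathrm{End}_{\mathbb{F}_qQ}(M_i)\cong\mathbb{F}_q$, and the resulting central primitive idempotent decomposition of $\mathrm{End}_{\mathbb{F}_qQ}(V)$ uniformly picks out each isotypic component $V_i$ in $L_Q^{\rm eq}$. The balanced $L_{vs}^s$-formulas picking out $|\mathbb{F}_q|$ and the sort sizes $|V_i|=q^{n_id_i}$ pull back to balanced $L_Q^{\rm eq}$-formulas for $\mathcal{C}(Q)$, and the polynomial measuring functions of \autoref{GMSgen} transfer along the interpretation, giving the weak polynomial \mac{} property. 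The main obstacle is the uniform $\emptyset$-definability in $L_Q^{\rm eq}$ of the central primitive idempotents attached to each indecomposable $M_i$, so that the polynomial structure descends cleanly rather than merely giving rational measuring functions as in the literal conclusion of \autoref{thminterpmac}(i); the required rigidity is supplied by the division endomorphism rings $\mathrm{End}_{\mathbb{F}_qQ}(M_i)=\mathbb{F}_q$ and the Auslander--Reiten block structure of $\mathbb{F}_qQ\mathrm{-mod}$ for Dynkin $Q$. Everything else is routine bookkeeping through the interpretation.
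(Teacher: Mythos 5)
Your proposal takes essentially the same route as the paper's proof: both reduce to \autoref{GMSgen} via the Krull--Schmidt decomposition and then pull back through \autoref{thminterpmac}, using the fact (via Crawley--Boevey or the reflection functor construction) that the indecomposable representations are defined over $\mathbb{Z}$ so that path actions are given by fixed integer matrices once bases are named. The only structural difference is the choice of multi-sorted auxiliary structure: the paper attaches to $V=\bigoplus W_i^{l_i}$ the multiplicity spaces $U(l_i,q)$ of dimension $l_i$ and recovers $V$ via the tensor product $W_i\otimes U(l_i,q)$ (which involves a genuine quotient, hence invokes \autoref{thminterpmac}(i)), whereas you attach the homogeneous components $V_i\cong M_i^{n_i}$ directly and realize $V$ as the Cartesian product $V_1\times\cdots\times V_s$ with no quotienting (so \autoref{thminterpmac}(iii) applies and one does not even drop into $\Rad$). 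This is a cosmetic but arguably cleaner packaging; you should write $V_1\times\cdots\times V_s$ rather than $V_1\sqcup\cdots\sqcup V_s$, but that is clearly just a slip.

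There is, however, a concrete error in your last paragraph. You assert that the central primitive idempotents of $\mathrm{End}_{\mathbb{F}_qQ}(V)$ uniformly and $\emptyset$-definably pick out the homogeneous components $V_i$. This is false. The path algebra $\mathbb{F}_qQ$ is hereditary but not semisimple (unless $Q$ has no arrows), and the Krull--Schmidt decomposition $V\cong\bigoplus M_i^{n_i}$ is unique only up to isomorphism type and multiplicities, not as an internal direct sum: if $\mathrm{Hom}(M_i,M_j)\neq 0$ for $i\neq j$ (as happens for almost every pair of indecomposables over a Dynkin quiver), a copy of $M_i$ can be replaced by its graph along a nonzero map into $M_j^{n_j}$, yielding a different decomposition. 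Correspondingly, $\mathrm{End}_{\mathbb{F}_qQ}(V)$ is not a product over indecomposable types: there are nonzero off-diagonal homomorphism blocks, a central element must act by the same scalar on every pair of types linked by a nonzero $\mathrm{Hom}$, and for a connected Dynkin quiver the center typically collapses to $\mathbb{F}_q$ itself. So there are no canonical idempotents doing what you want, and the subspaces $V_i$ are genuinely not $\emptyset$-definable. The fact that $\mathrm{End}(M_i)\cong\mathbb{F}_q$ (exceptionality) is true but does not rescue the claim. This error does not break the proof, because the definition of a weak polynomial \mac{} already allows the balanced formulas $\delta_i(\bar x_i,\bar y_i)$ to carry parameters, and the cardinalities $|V_i|=q^{n_id_i}$ are determined by Krull--Schmidt even though the subspaces are not; it just means you cannot upgrade to a non-weak \mac{}. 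Indeed this is precisely the obstruction the authors record in the remark following the proof, and is the reason the theorem is stated with ``weak.''
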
 

\begin{proof}
Let $W_1(k),\ldots,W_t(k)$ denote the indecomposable representations of $Q$ determined by corresponding positive roots of the Tits form of $Q$. Suppose $W_i(k)$ has dimension vector $(r_{i1},\ldots,r_{in})$ and put $s_i:=r_{i1}+\ldots +r_{in}=\dim_k W_i(k)$. For each $i=1,\ldots,t$ and prime power $q$ let $T_i(q)=({\mathbb F}_q,{\mathbb F}_q Q, W_i({\mathbb F}_q))$. Put $\mathcal{C}_i:=\{T_i(q): q \mbox{~prime power}\}$, a family of finite $L_Q$-structures.

\medskip

{\em Claim.} (i) The structure $T_i(q)$ is interpretable in  the field ${\mathbb F}_q$, uniformly in $q$, and includes the home sort ${\mathbb F}_q$. 

\hskip 37pt (ii) The class $\mathcal{C}_i$ is an $N$-dimensional weak asymptotic class (i.e. asymptotic class without the definability clause), where $N=1+s_i+\dim_{{\mathbb F}_q}({\mathbb F}_q Q)$.

\medskip

{\em Proof of Claim.} (i) The path algebra ${\mathbb F}_qQ$ has a fixed  dimension---the number of paths of $Q$---over 
${\mathbb F_q}$. This is finite for the quivers under consideration. The algebra  multiplication is uniformly definable on the basis, and so extends by linearity.  Thus, $({\mathbb F}_q Q, {\mathbb F}_q)$ is uniformly interpretable in ${\mathbb F}_q$. The vector space $W_i({\mathbb F}_q)$ has a fixed dimension vector and so is a sum of $n$ ${\mathbb F}_q$-vector spaces and thus is uniformly interpretable in ${\mathbb F}_q$. If we name parameters to equip the vector space at each $Q$-vertex with a basis then the linear maps between them determined by arrows are given uniformly by matrices over $\mathbb{Z}$. Thus the maps $W_i({\mathbb F}_q) \longrightarrow W_i({\mathbb F}_q)$ given by basis elements of ${\mathbb F}_q Q$ are also given uniformly. It follows by linearity that multiplication by arbitrary elements of $\mathbb{F}_qQ$ is definable uniformly.

(ii) This follows immediately from (i) and \cite[Lemma 3.7]{Elwes07}.

\medskip

An ${\mathbb F}_q Q$-module $V$ has form $W_{1}({\mathbb F}_q)^{l_1} \oplus \ldots \oplus W_{t}({\mathbb  F}_q)^{l_t}$. We first claim that if $W$ denotes one of the $W_i$, of dimension $s$, then the class
\[\mathcal{D}(Q)=\{({\mathbb F}_q, {\mathbb  F}_q Q, W({\mathbb F}_q)^l): q \mbox{~a prime power}, l\in {\mathbb N}\setminus\{0\}\}\]
is a \mac{}. Now the ${\mathbb F}_q Q$-module $W({\mathbb F}_q)^l$ is $\mathbb{F}_qQ$-isomorphic to $W(q)\otimes_{{\mathbb F}_q} U(l,q)$, where $U(l,q)$ is an $l$-dimensional vector space over ${\mathbb F}_q$, and the paths in $Q$ act as the identity map on $U(l,q)$: Indeed, if $(f_1,\ldots,f_l)$ is a basis of $U(l,q)$ then the map $(w_1,\ldots,w_l)\longmapsto w_1\otimes f_1+\ldots+w_l\otimes f_l$ provides the required $\mathbb{F}_qQ$-isomorphism.
By \autoref{examples-macs}(1)(b)---see also \autoref{GMSgen}---the class of structures $({\mathbb F}_q, U(l,q))$ is a polynomial \mac{} Furthermore, each element of $W(q)\otimes U(l,q)$ can be written as a sum of at most $s$ simple tensors, i.e., elements of form $w\otimes u$ where $w\in W(q)$ and $u\in U(l,q)$. The equivalence relation which expresses that two such sums of simple tensors are equal in $W(q) \otimes U(l,q)$ is determined by bilinearity conditions and so is definable, and it follows that the vector space $W(q)\otimes U(l,q)$ is interpretable in $({\mathbb F}_q, U(l,q))$.  Since the multiplication by paths in $Q$ is $\emptyset$-definable in $({\mathbb F}_q, U(l,q))$, it follows  from \autoref{thminterpmac} that $\mathcal{D}(Q)$ is a weak polynomial \mac{}.

For the general case, observe that by \autoref{GMSgen} the class of structures $(\mathbb{F}_q, V_1,\ldots,V_t)$, where each $V_i$ is a  finite-dimensional vector space over $\mathbb{F}_q$, is a polynomial \mac{}. Since  $W_{1}({\mathbb F}_q)^{l_1} \oplus \ldots \oplus W_{t}({\mathbb  F}_q)^{l_t}$ is uniformly interpretable in $(\mathbb{F}_q, U(l_1,q),\ldots,U(l_t,q))$, the result follows from \autoref{thminterpmac}.
\end{proof}

\vspace{.1\baselineskip}

\begin{remark}~
\begin{enumerate}[(i)]
\item In \autoref{quiver-theorem} (and the Corollary below), we suspect that `weak \mac{}' can be replaced by `\mac{}'. The difficulty is that in the claim we have mutual interpretability without parameters between the $T_i(q)$ and the $\mathbb{F}_q$, but it is not clear that we have parameter-free bi-interpretability, since the vector spaces at each quiver vertex do not naturally come with a uniformly definable basis. In the claim, if we identify the vector space at each vertex with a power of $\mathbb{F}_q$, so that the corresponding maps between them are given uniformly by matrices over $\mathbb{Z}$, then the class of corresponding structures of form $(\mathbb{F}_q,\mathbb{F}_qQ, \mathbb{F}_q^{s_i})$ does form a \mac{}.

\item The conclusion of \autoref{quiver-theorem} still holds if $V$ is expanded by unary predicates interpreted by the powers of indecomposables $W_i(\mathbb{F}_q)^{l_i}$.

\end{enumerate}
\end{remark}

\begin{corollary}\label{chainsubspace} Let $k$ be a positive integer and let $\mathcal{C}$ be the collection of all structures of form $({\mathbb F}_q, W_1,\ldots,W_k)$, where $q$ is a prime power and $W_1\leq W_2 \leq \ldots \leq W_k$ are finite-dimensional ${\mathbb F}_q$-vector spaces. We view these as two-sorted structures, with a field sort and a vector space sort expanded by predicates for subspaces $W_1,\ldots,W_{k-1}$ of $W_k$. Then $\mathcal{C}$ is a weak polynomial \mac{}.
\end{corollary}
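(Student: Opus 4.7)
The plan is to realise $\mathcal{C}$ as a uniformly definable subclass of the weak polynomial \mac{} furnished by \autoref{quiver-theorem}, applied to the Dynkin quiver of type $A_k$ with linear orientation. Take $Q$ to be the quiver with vertex set $\{v_1,\ldots,v_k\}$ and arrows $v_i\to v_{i+1}$ for $i=1,\ldots,k-1$. Since $A_k$ is Dynkin, $Q$ has finite representation type, and so by \autoref{quiver-theorem} the class $\mathcal{C}(Q)$ is a weak polynomial \mac{}. A chain of subspaces is exactly the data of a representation of $Q$ by injective linear maps, which is the key observation driving the proof.

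Given $C=(\mathbb{F}_q,W_1\leq\ldots\leq W_k)\in\mathcal{C}$, associate to it the $\mathbb{F}_qQ$-module $V_C$ whose underlying $\mathbb{F}_q$-space is $W_1\oplus W_2\oplus\ldots\oplus W_k$, with the idempotent $e_i$ acting as projection onto the $i$-th summand and each arrow $v_i\to v_{i+1}$ acting as the inclusion $W_i\hookrightarrow W_{i+1}$. Setting $\alpha(C):=(\mathbb{F}_q,\mathbb{F}_qQ,V_C)$ defines an injection $\alpha\colon\mathcal{C}\to\mathcal{C}(Q)$. The next step is to recover $C$ inside $\alpha(C)$ without quotients: the field sort is identified with itself; the vector-space sort $W_k$ is the definable set $\{v\in V_C:e_k\cdot v=v\}$; and for $i<k$ the unary predicate $W_i$ on $W_k$ is $\{w\in W_k:\exists v\in V_C,\ \gamma_{ik}\cdot v=w\}$, where $\gamma_{ik}\in\mathbb{F}_qQ$ denotes the unique path from $v_i$ to $v_k$. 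Since $Q$ is a fixed finite quiver, the idempotents $e_i$ and paths $\gamma_{ij}$ are picked out in the path-algebra sort by formulas independent of $q$; alternatively, they may be named as constants via \autoref{constants}(i).

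To conclude, apply \autoref{thminterpmac}(iii) to this uniform parameter-free definable interpretation, obtaining that $\mathcal{C}$ is a weak polynomial \mac{}. Part (iii) is exactly the right tool because the interpretation involves no quotients and because \autoref{quiver-theorem} only yields a \emph{weak} \mac{}; the polynomial nature is preserved because the balanced formulas on the $\mathcal{C}(Q)$-side (essentially the idempotent-cut-outs $e_iV$) translate back to the balanced formulas $W_i(x)$ on the $\mathcal{C}$-side. The main bookkeeping obstacle is the uniform definability of the combinatorial data of $Q$ inside $\mathbb{F}_qQ$, which is straightforward once $Q$ is fixed and is in any case bypassed by naming constants.
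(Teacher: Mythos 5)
Your proof is correct and takes essentially the same approach as the paper: both realise the chain of subspaces as an $\mathbb{F}_qQ$-module for the linearly oriented $A_k$ quiver, recover $W_k$ as the $v_k$-component and each $W_i$ as the image of the composite path $\gamma_{ik}$, and conclude via \autoref{quiver-theorem} together with \autoref{thminterpmac}. The paper phrases the recovery as identifying $W_i$ with $\alpha_{k-1}\circ\cdots\circ\alpha_i(V_i)$ and invokes uniform interpretability; your version makes the definable-subset (no-quotient) nature explicit and correctly handles the parameters for the idempotents and paths, but the argument is the same.
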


\begin{proof} Let $Q$ be the quiver obtained by orienting the Dynkin diagram $A_k$ as a directed path $v_1\to \cdots \to v_k$, and let $\alpha_i$ be the arc $v_i\to v_{i+1}$. For each structure $({\mathbb F}_q, W_1,\ldots,W_k)$ as above, let $r_i:=\dim(W_i)$, and let $V$ be a representation of $Q$ over ${\mathbb F}_q$ with an $r_i$-dimensional vector space $V_i$ at vertex $v_i$ for each $i$ and with $\alpha_i$ corresponding to an embedding $V_i\longrightarrow V_{i+1}$. We view $V$ as an ${\mathbb F}_q Q$-module, a member of $\mathcal{C}(Q)$ as described in the proof of  \autoref{quiver-theorem}. Then $({\mathbb F}_q, W_1,\ldots,W_k)$ is uniformly interpretable in the structure $({\mathbb F}_q, {\mathbb F}_q Q, V)$ since we may identify $W_k$ with $V_k$ and $W_i$ with $\alpha_{k-1}\circ \cdots \circ \alpha_i(V_i)$. The result follows.
\end{proof}

\begin{remark} \label{quiverpoly} \rm An ultraproduct of members of $\mathcal{C}$ in \autoref{chainsubspace} has SU-rank at most $\omega^k$.
 \end{remark}

\subsection{Vector spaces with bilinear forms}
Let $V$ be a vector space over a field $F$, equipped with a bilinear form $\beta:V\times V \to F$. For $v\in V$ we put  $v^{\perp}=\{w\in V: \beta(v,w)=0\}$. The form $\beta$ is said to be
 {\em non-degenerate} if $(\forall v\in V)\ v^{\perp}=V\, \longrightarrow\, v=0$, and
 to be {\em alternating} if 
$\forall v\in V(\beta(v,v)=0)$.  We view vector spaces with a bilinear form as 2-sorted structures $(V,F)$ in a language $L_{\bil}$ which has the language of groups on the sort for $V$, the language of rings on the sort for $F$, a function symbol $F\times V \to V$ for scalar multiplication, and a function symbol $\beta:V\times V \to F$ for the bilinear form. The expanded language $L_{\bil,\qe}$ contains also---as in Section 3.1---for each $n\geq 1$, an $n$-ary relation symbol $\theta_n$, and, for each 
$n\geq 1$ and $j=1,\ldots, n$, function symbols $\lambda_{n,j}$. In structures $(V,F)$, the relation 
 $\theta_ n(v_1,\ldots, v_n)$ holds just of linearly independent $n$-tuples in the $V$-sort, and if $v_1,\ldots,v_n$ are linearly independent and $w=\sum_{i=1}^n a_iv_i$ for some $a_1,\ldots,a_n\in F$, then
$\lambda_{n,j}(v_1,\ldots,v_n,w)=a_j$, with the $\lambda_{n,j}$ taking value $0_F$ otherwise.

\begin{proposition} The theory of non-degenerate alternating bilinear forms has quantifier elimination in the vector space sort in the language $L_{\bil,\qe}$; that is, any formula is uniformly (in all such structures) equivalent to one with no vector space quantifiers.
\end{proposition}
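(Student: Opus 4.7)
The approach is the standard quantifier elimination by induction on formula complexity; by Boolean normalisation it suffices to show that every formula of the form $\exists w\, \phi(w,\bar{v},\bar{x})$---with $w,\bar{v}$ in the vector-space sort, $\bar{x}$ in the field sort, and $\phi$ a conjunction of literals in $L_{\bil,\qe}$---is equivalent, modulo the theory, to a formula with no vector-space quantifiers. The strategy follows the pattern of the proof of \autoref{granger}: the functions $\lambda_{n,j}$ and relations $\theta_n$ handle those occurrences of $w$ that fall in $\operatorname{span}(\bar{v})$, while non-degeneracy of $\beta$ handles the complementary case.

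First I would split on the finitely many $\theta_n$-patterns of the parameter tuple $\bar{v}$ so as to fix a maximal linearly independent subtuple $v_1,\ldots,v_n$ of $\bar{v}$, and then branch on whether $w\in\operatorname{span}(\bar{v})$ or not. In the first branch $w = \sum_{j=1}^{n} \lambda_{n,j}(v_1,\ldots,v_n,w)\cdot v_j$, so $\exists w\,\phi$ reduces to an existential over coefficients $\bar{c}=(c_1,\ldots,c_n)$ in the field sort, which is permitted. In the second branch every $\lambda$-term involving $w$ evaluates to $0_F$; using the alternating identities $\beta(w,v_i)=-\beta(v_i,w)$ and $\beta(w,w)=0$, the non-trivial conditions on $w$ then reduce to a system $\beta(v_i,w)=t_i(\bar{v},\bar{x})$ for $i=1,\ldots,n$, with the $t_i$ field terms not involving $w$.

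The core lemma is that the $F$-linear map $\Phi\colon V\longrightarrow F^n$, $\Phi(w)=(\beta(v_i,w))_i$, is surjective: if $(b_1,\ldots,b_n)\in F^n$ annihilated $\operatorname{im}(\Phi)$ then $\beta(\sum_i b_iv_i,\cdot)\equiv 0$, forcing $\sum_i b_iv_i=0$ by non-degeneracy, contradicting linear independence. Hence some $w\in V$ realises $\Phi(w)=(t_1,\ldots,t_n)$; the extra condition $w\notin\operatorname{span}(\bar{v})$ holds for some such $w$ precisely when the affine fiber $\Phi^{-1}(\bar{t})$, a coset of $\ker\Phi=\operatorname{span}(\bar{v})^{\perp}$, is not contained in $\operatorname{span}(\bar{v})$. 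This last condition decomposes into the disjunction of two field-sort conditions on the Gram matrix $G=(\beta(v_i,v_j))_{i,j}$ and $\bar{t}$: either the affine system $G\bar{a}=\bar{t}$ is unsolvable in $F^n$ (so no element of $\operatorname{span}(\bar{v})$ lies in the fiber), or the subspace $\operatorname{span}(\bar{v})^{\perp}$ is not contained in $\operatorname{span}(\bar{v})$ (so the fiber sticks out regardless).

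The main obstacle I anticipate is uniformity of the coisotropy clause $\operatorname{span}(\bar{v})^{\perp}\not\subseteq\operatorname{span}(\bar{v})$ across models of all (even) dimensions. This clause holds automatically once $\dim_F V > 2n$, but may fail in small finite-dimensional models where $\operatorname{span}(\bar{v})$ is Lagrangian or coisotropic. Since non-degenerate alternating forms over $F$ are classified up to isomorphism by $\dim_F V$, the plan is to express this clause entirely in terms of $\theta_n$- and $\lambda_{n,j}$-applications to the existing parameters $\bar{v}$ (which already encode a lower bound on $\dim V$) together with field-sort conditions on the Gram matrix $G$; the finitely many small-dimensional configurations can then be enumerated as a finite disjunction. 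This bookkeeping is where the proof becomes delicate, and the cleanest presentation likely follows the treatment in the forthcoming \cite{Chernikov-Hempel23} referenced in the discussion of \autoref{granger}.
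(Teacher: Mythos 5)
The paper does not prove this proposition itself; it points to \cite{Granger99} (noting the statement there must be corrected to include the $\lambda_{n,j}$), \cite{Chernikov-Hempel23}, and \cite{snowden}. Your outline is a genuine attempt at the standard single-quantifier-elimination argument, and the core mechanism --- split on whether $w\in\operatorname{span}(\bar v)$, handle the first branch via the $\lambda$-functions, handle the second via surjectivity of $\Phi\colon V\to F^n$ deduced from non-degeneracy --- is the right one. But there are two problems.

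First, a local gap: the claim that in the second branch ``every $\lambda$-term involving $w$ evaluates to $0_F$'' is false as stated. For instance $\lambda_{2,1}(v_1,w,v_1+w)=1$ whenever $v_1,w$ are independent, and similar issues arise with $\theta_m$-atoms whose arguments are vector-space terms involving $w$. You need an explicit pre-processing step that rewrites each $\lambda$-term and $\theta$-atom so that its reference tuple is a fixed independent tuple drawn from $\bar v$ not involving $w$; this is an induction on arity, and you gloss over it.

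Second, and more serious: your proposed resolution of the coisotropy clause cannot work. Since $\operatorname{span}(\bar v)^{\perp}=\ker\Phi$ has codimension $n$ and $\operatorname{span}(\bar v)$ has dimension $n$, one computes that $\operatorname{span}(\bar v)^{\perp}\subseteq\operatorname{span}(\bar v)$ iff $\dim_F V = 2n-\operatorname{rank}(G)$. This is a condition on $\dim_F V$ that is \emph{not} determined by the vs-quantifier-free type of $\bar v$: two models over the same field with the same Gram data on $\bar v$ but different dimensions (say a $2n$-dimensional model where $\bar v$ is Lagrangian, and a $(2n+2)$-dimensional one) give opposite truth values for $\exists w\,\big(\bigwedge_i\beta(v_i,w)=0\wedge\theta_{n+1}(\bar v,w)\big)$. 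Hence no formula in $\theta$, $\lambda$, and the field sort applied to $\bar v$ can express it, and quantifier elimination genuinely fails if finite-dimensional models are admitted. The resolution is not the ``delicate bookkeeping'' you envision but a restriction of the class of models: the proposition must be read modulo a theory that makes $V$ infinite-dimensional (exactly as in \autoref{granger}, where one works modulo $T^d_{vs}(F)$), under which the coisotropy clause holds automatically and simply disappears.
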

\begin{proof}
 A version of this is claimed in Theorem 9.2.3 of \cite{Granger99} but is not quite correct in the form there, since the functions $\lambda_{n,j}$ are omitted but are clearly needed. With these functions, it is straightforward to adapt Granger's proof to obtain the result; a detailed proof is given in \cite{Chernikov-Hempel23}. See also Theorem 5.18 of \cite{snowden}, though some care is needed in interpreting results from that paper since, though the language is treated as 2-sorted, the authors do not think of the field as varying. 
\end{proof}

\begin{lemma}\label{beta} Let $V$ be a finite-dimensional vector space over the finite field $\mathbb{F}_q$, equipped with a non-degenerate bilinear form $\beta:V\times V \to \mathbb{F}_q$, and let $v_1,\ldots,v_m$ be linearly independent and $a_1,\ldots,a_m\in \mathbb{F}_q$. Then the set $\{v\in V:\bigwedge_{i=1}^m \beta(x,v_i)=a_i\}$ has cardinality $|V|/q^m$.
\end{lemma}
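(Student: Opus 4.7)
The approach is to view the condition $\bigwedge_{i=1}^m \beta(v,v_i)=a_i$ as specifying a fibre of an $\mathbb{F}_q$-linear map. Define
\[
T\colon V\longrightarrow \mathbb{F}_q^m, \qquad T(v):=(\beta(v,v_1),\ldots,\beta(v,v_m)).
\]
By the bilinearity of $\beta$, this map is $\mathbb{F}_q$-linear. The set in the statement of the lemma is precisely $T^{-1}(a_1,\ldots,a_m)$, so once we know $T$ is surjective, rank–nullity (or direct counting via cosets of $\ker T$) immediately gives that every nonempty fibre has size $|V|/q^m$, and surjectivity guarantees the specified fibre is nonempty.

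The one thing to check is therefore surjectivity of $T$. I would argue this by duality. The image $T(V)$ is an $\mathbb{F}_q$-subspace of $\mathbb{F}_q^m$; if it were proper, then there would exist a nonzero tuple $(c_1,\ldots,c_m)\in\mathbb{F}_q^m$ annihilating it, meaning
\[
\sum_{i=1}^m c_i\,\beta(v,v_i)=0 \quad\text{for every }v\in V.
\]
By bilinearity this rearranges to $\beta\bigl(v,\sum_{i=1}^m c_iv_i\bigr)=0$ for all $v\in V$. Since $\beta$ is alternating, left- and right-non-degeneracy coincide, so we deduce $\sum c_iv_i=0$, contradicting the linear independence of $v_1,\ldots,v_m$. (Even without the alternating hypothesis, the same argument works because non-degeneracy on a finite-dimensional space is equivalent on the two sides.) Hence $T$ is surjective.

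There is no real obstacle here—the whole content is the standard observation that non-degeneracy plus linear independence forces the map $v\mapsto(\beta(v,v_i))_i$ to hit every tuple in $\mathbb{F}_q^m$. The lemma will then be used downstream to compute sizes of sets cut out by bilinear conditions and to feed the one-variable criterion of \autoref{thm:projection.lemma} when verifying the \mac{} property for $\mathcal{C}_{\bil}$.
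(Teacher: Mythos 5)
Your proof is correct. Both you and the paper reduce the lemma to the surjectivity of the linear (or, in the paper, group-homomorphic) map $v\mapsto(\beta(v,v_1),\ldots,\beta(v,v_m))$, after which coset-counting gives fibres of size $|V|/q^m$. The difference is in how surjectivity is established. The paper extends $v_1,\ldots,v_m$ to a basis $v_1,\ldots,v_l$ of $V$, observes that the full map $\beta^*\colon V\to\mathbb{F}_q^l$ is injective by non-degeneracy and hence bijective by a dimension count, and then projects to the first $m$ coordinates. You instead run a duality/annihilator argument directly on the $m$-coordinate map: a proper image would admit a nonzero linear functional vanishing on it, which unwinds via bilinearity to a nonzero radical element $\sum c_iv_i$, contradicting non-degeneracy plus linear independence. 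Both arguments are sound; yours avoids extending to a basis, while the paper's avoids invoking the dual space explicitly. Your parenthetical about left versus right non-degeneracy coinciding in finite dimensions is also correct and worth flagging, since the paper's definition of non-degeneracy is one-sided while your argument invokes the other side.
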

\begin{proof} The map $\bar{\beta}:V \to \mathbb{F}_q^m$ given by $\bar{\beta}(x)=(\beta(x,v_1),\ldots,\beta(x,v_m))$ is a group homomorphism, so it suffices to show that it is surjective. Extend $v_1,\ldots,v_m$ to a basis $v_1,\ldots,v_l$ of $V$.
The homomorphism $\beta^*:V\to \mathbb{F}_q^l$ given by $\beta^*(x)=(\beta(x,v_1),\ldots,\beta(x,v_l))$ is injective (by non-degeneracy) and so is surjective. It follows that $\bar{\beta}$ is also surjective, as required.

\end{proof}

\begin{theorem} \label{bilinear-granger} Let $\mathcal{C}_{\bil}$ be the collection of all $L_{\bil}$ structures $(V,F)$ where $V$ is a finite-dimensional vector space over the finite field $F$, equipped with a non-degenerate alternating bilinear form $\beta$. Let $R=\mathbb{Q}(\mathbf{F})[\mathbf{V}]$.  Then $\mathcal{C}_{\bil}$ is an $R$-m.a.c..
\end{theorem}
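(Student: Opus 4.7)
The plan is to combine the Projection Lemma \autoref{thm:projection.lemma} with quantifier elimination in $L_{\bil,\qe}$ and the fibre-counting of \autoref{beta}, together with the Chatzidakis--van den Dries--Macintyre theorem (CDM) for the pure finite field. By \autoref{thm:projection.lemma} it suffices to verify the \mac{} condition for formulas $\phi(x;\bar{y})$ with $x$ a single variable. Applying QE relative to the field sort, we may assume $\phi$ has no vector-sort quantifiers, so that its atomic subformulas involving $x$ are, up to Boolean combination, of three shapes: linear (affine) statements about $x$ coming from the symbols $\theta_n$ and $\lambda_{n,j}$; equations $\beta(x,t)=s$ for a vector term $t$ in $\bar{y}$ and a field term $s$; and formulas not involving $x$ at all.

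If $x$ is a field variable, the vector parameters $\bar{b}$ enter $\phi$ only through a finite tuple of $\emptyset$-definable scalar terms $\bar{s}(\bar{b})=(\beta(b_i,b_j),\lambda_{n,j}(\bar{b},b_k),\dots)\in F^N$. The family $\{\phi(F;\bar{b},\bar{c})\}$ is then a sub-family of a uniformly definable family of subsets of $F$ in the pure finite field, and CDM supplies a $\emptyset$-definable partition with measuring functions of the form $\mu|F|^d\in\mathbb{Q}[\mathbf{F}]\subseteq R$.

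The main case is when $x$ is a vector variable. Partition $(\mathcal{C}_{\bil},\bar{y})$ according to: (i) the rank $r$ of the vector sub-tuple of $\bar{b}$, together with a fixed choice of maximal linearly independent sub-tuple $b_{i_1},\ldots,b_{i_r}$ (using $\theta_n$); (ii) the expressions of the remaining $b_j$ as linear combinations in this sub-tuple (using $\lambda_{n,j}$); (iii) a CDM-partition on the field parameters $\bar{c}$ and on the field values $\beta(b_i,b_j)$; and (iv) whether $\dim V>r+m$ or $\dim V=k$ for some $k\leq r+m$, where $m$ is a constant (depending only on $\phi$) bounding the number of $\beta$-conditions imposed on $x$. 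All four clauses are $\emptyset$-definable through $\theta_n$ and $\lambda_{n,j}$, and the partition is finite. On a part with $\dim V>r+m$, the $\beta$-conditions on $x$ row-reduce uniformly on the part to an independent subsystem $\beta(x,b_{i_j})=c'_j$ for $j=1,\ldots,r'\leq m$ (consistency of the discarded equations holds throughout the part, by choice of (iii)); the solution set in $V$ then decomposes as the disjoint union of the solutions in $\operatorname{span}(\bar{b})$ (a set of size at most $|F|^{r}$) and $\{v\in V:\beta(v,b_{i_j})=c'_j,\ 1\leq j\leq r'\}\setminus\operatorname{span}(\bar{b})$, whose cardinality by \autoref{beta} is $|V|/|F|^{r'}-O(|F|^{r})$. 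Setting $h(V,F):=|V|/|F|^{r'}\in\mathbb{Q}(\mathbf{F})[\mathbf{V}]=R$, the bound $\dim V>r+m\geq r+r'$ forces $|V|/|F|^{r+r'}\geq|F|$, so the error $O(|F|^{r})$ is $o(h)$ as $|M|=|V|+|F|\longrightarrow\infty$, whether $|F|\longrightarrow\infty$ or $|F|$ stays bounded while $\dim V\longrightarrow\infty$.

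On the remaining finitely many parts, $\dim V=k$ is fixed with $k\leq r+m$; identifying $V$ with $F^k$ via the chosen basis inside $\bar{b}$ reduces the counting problem to a uniformly definable family of subsets of $F^{k}$ in the pure finite field $F$, again handled by CDM, and the measuring function lies in $\mathbb{Q}(\mathbf{F})\subseteq R$. The main obstacle is the vector-sort bookkeeping: arranging the partition so that on each part (a) the row-reduction of the $\beta$-conditions on $x$ to an independent sub-system of size $r'$ is performed uniformly, and (b) the solution set cleanly splits into its span-interior and span-exterior pieces with the claimed cardinalities. Once this is in place, \autoref{beta} together with the constant bound $r+r'\leq r+m$ (depending only on $\phi$) delivers the $o(h)$ asymptotic and completes the verification that $\mathcal{C}_{\bil}$ is an $R$-\mac{}.
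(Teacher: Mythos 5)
Your setup is aligned with the paper: reduce to $x$ a single variable via \autoref{thm:projection.lemma}, pass to $L_{\bil,\qe}$ and use quantifier elimination in the vector sort, treat field-variable $x$ by pushing everything into the pure finite field and invoking Chatzidakis--van den Dries--Macintyre, and use \autoref{beta} to count fibres of the linear map $v\mapsto(\beta(v,b_{i_1}),\ldots,\beta(v,b_{i_r}))$. The reduction to linearly independent $\bar b$ and the separate treatment of the low-dimensional parts $\dim V\leq r+m$ are also correct.

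The gap is in the vector-variable case when $x\notin\operatorname{span}(\bar b)$: you assume the relevant part of $\phi$ row-reduces to a conjunction $\bigwedge_j\beta(x,b_{i_j})=c'_j$ with \emph{fixed} right-hand sides, so that its solution set off the span is a single coset of size $(|V|-|F|^{\mathrm{rk}\,\bar b})/|F|^{r'}$ and $h=|V|/|F|^{r'}$ works. But after QE a formula $\phi(x,\bar y,\bar\xi)$ can impose arbitrary (quantified) field conditions on the tuple $(\beta(x,y_{i_1}),\ldots,\beta(x,y_{i_r}))$ --- e.g.\ $\beta(x,y_1)\neq 0$, $\beta(x,y_1)=\beta(x,y_2)^2$, or $\exists w(\beta(x,y_1)=w^2)$ --- and these do not pin down specific values. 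The paper handles this by introducing new field variables $\bar\rho$ and $\bar\tau$ for the terms $\beta(x,y_{i_j})$ and $\beta(y_i,y_j)$, so that the residual formula $\theta(x,\bar y,\bar\xi,\bar\rho,\bar\tau)$ involves $x,\bar y$ only through their linear-independence type; by the transitivity of $\operatorname{Aut}(V,F)$ (fixing $F$ pointwise) on linearly independent $(m+1)$-tuples, $\theta$ then collapses to a pure field formula $\theta^*(\bar\xi,\bar\rho,\bar\tau)$. Applying CDM to $\theta^*$ yields $\mu|F|^{d}$ admissible tuples $\bar b\in F^{r}$, and summing over them gives the measuring function $\frac{(|V|-|F|^{m})\,\mu|F|^{d}}{|F|^{r}}$. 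Your proposal omits both the homogeneity step (which is what lets one discard the residual dependence on the vector variable) and the resulting CDM summation, so your $h=|V|/|F|^{r'}$ is only correct in the degenerate case $\mu|F|^{d}=1$. To repair it you would need precisely these two ideas.
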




\begin{proof} We apply Theorem~\ref{thm:projection.lemma}(i), working in the richer language $L_{\bil,\qe}$ with quantifier elimination in the vector space sort. Consider first a formula $\phi(x,\bar{y}\bar{\xi})$, where $x$ and $\bar{y}=(y_1,\ldots,y_m)$ range through the $V$-sort and $\bar{\xi}=(\xi_1,\ldots,\xi_n)$ through the $F$-sort. We aim to compute the approximate cardinality of $\phi(V,u_1,\ldots,u_m,a_1,\ldots,a_n)$  in $(V,F)\in \mathcal{C}_{\bil}$, for $u_1,\ldots,u_m\in V$ and $a_1,\ldots,a_n\in F$. 
By adjusting $\phi$, we may suppose that  $\phi(x,y_1,\ldots,y_m,\xi_1,\ldots,\xi_n)$ determines whether or not $x$ lies in the linear span $\langle y_1,\ldots,y_m\rangle$. We may also assume that $\phi(x,y_1,\ldots,y_m,\xi_1,\ldots,\xi_n)$ implies that $y_1,\ldots,y_m$ are linearly independent; indeed, if say $y_1,\ldots,y_r$ span $\langle y_1,\ldots,y_m\rangle$ with $r<m$, we may replace $y_{r+1},\ldots,y_m$ each by a linear combination of the form  $\sum_{i=1}^r  \nu_i y_i$, where the $\nu_i$ for $i=1,\ldots , r$ are new field variables.   

Suppose first that $\phi(x,y_1,\ldots,y_m,\xi_1,\ldots,\xi_n) \to x\in \langle y_1,\ldots,y_m\rangle$. We may, definably over $u_1,\ldots,u_m$, identify $\langle u_1,\ldots,u_m\rangle$ with $F^m$, and define $\beta$ uniformly on $F^m$, putting
$\beta(\bar{c},\bar{d})=\sum_{i=1}^m\sum_{j=1}^m c_id_j\beta(u_i,u_j)$. Thus, using quantifier elimination in the sort $V$, the formula $\phi$ can in this case be replaced by an $L_{\rings}$-formula, and the result follows in this case from the fact that finite fields form a m.a.c. with functions taking values of form $\mu |F|^d$ as in \cite{CvdDM92}.

Suppose next that $\phi(x,y_1,\ldots,y_m,\xi_1,\ldots,\xi_n) \to x\notin \langle y_1,\ldots,y_m\rangle$. 
Using bilinearity of $\beta$, we may introduce a new field variable $\rho_i$ for each term 
$\beta(x,y_i)$ in $\phi$, and replace 
$\phi$ by a formula $\phi^*(x,\bar{y},\bar{\xi}, \bar{\rho})$ of the form
$\beta(x,y_{i_1})=\rho_1\wedge\ldots \wedge \beta(x,y_{i_r})=\rho_r \wedge \chi(x,\bar{y},\bar{\xi},\bar{\rho})$, where $\chi$ involves no terms of the form $\beta(x,y_i)$. Working in $(V,F)$, let $\bar{u}\in V^m$ be linearly independent, $\bar{a}\in F^n$ and  $\bar{b}\in F^r$. Since we have assumed that $\varphi$ implies that $x \in V\setminus \langle u_1,\dots ,u_m\rangle$, applying \autoref{beta} we see in this case that there are 
$(|V|-|F|^m)/|F|^r$ elements $x\in V$ satisfying $\beta(x,u_{i_1})=b_1\wedge\ldots \wedge \beta(x,u_{i_r})=b_r$. Moreover, as 
$\bar{b}$ ranges over  $F^r$ the sets of $x\in V$ satisfying $\beta(x,u_{i_1})=b_1\wedge\ldots \wedge \beta(x,u_{i_r})=b_r$ are pairwise disjoint. If we further replace each $\beta(y_i,y_j)$ for $i<j\leq m$ by a new variable $\tau_{ij}$, and each $\beta(y_i,y_i)$ by $0$ in $\chi(x,\bar{y},\bar{\xi},\bar{\rho})$, then our formula $\phi^*$ becomes $\phi^{**}(x, \bar{y},\bar{\xi},\bar{\rho},\bar{\tau})$ of the form
$$\beta(x,y_{i_1})=\rho_1\wedge\ldots \wedge \beta(x,y_{i_r})=\rho_r \wedge \bigwedge_{i<j} \beta(y_i,y_j)=\tau_{ij} \wedge \theta(x,\bar{y},\bar{\xi},\bar{\rho},\bar{\tau})$$ 
where $\theta$ is a formula in the 2-sorted vector space language for $(V,F)$. Now let $\bar{c}\in F^k$, where 
$k={m \choose 2}$, be such that $\bigwedge_{i<j} \beta(u_i,u_j)=c_{ij}$. Since for any two linearly independent $m+1$ tuples 
$\bar {w}_1, \bar{w}_2\in V^{m+1}$  there is an automorphism of 
$(V, F)$ that fixes $F$ pointwise taking $\bar{w}_1$ to $\bar{w}_2$, it follows that for each $\bar b\in F^m$, either 
$\theta(x,\bar{y},\bar{a},\bar{b},\bar{c})$ is satisfied by all such $\bar w$ (substituted for $(x,\bar{y})$) or it is satisfied by no such 
$\bar w$. By identifying the span of linearly independent $m+1$ tuples with $F^{m+1}$, e.g., it follows that there is a field formula $\theta^* (\bar{\xi}, \bar{\rho}, \bar{\tau})$ such that for all $\bar b\in F^m$, there exist linearly independent 
$v, u_1, \ldots, u_m\in V$ such that $\theta(v,\bar{u},\bar{a},\bar{b},\bar{c})$ holds in $(V,F)$ if and only if 
$\theta^* (\bar{a}, \bar{b}, \bar{c})$ is satisfied in $F$. Hence, there are $\mu$ and $d$ such that there are $\mu |F|^d$ tuples 
$\bar{b}\in F^r$ satisfying $\theta^* (\bar{a}, \bar{b}, \bar c)$, as in \cite{CvdDM92}. (Note that there are finitely many possible pairs $(d,\mu)$ depending definably on $\bar{c}$---and thus $\bar{u}$---and $\bar{a}$.) 
It follows that the number of realisations of our original formula $\phi(x,\bar{u},\bar{a})$ in this case is $\frac{(|V|-|F|^m)\mu |F|^d}{|F|^r}$. This has the required form.

Lastly, we consider formulas of the form $\phi(\rho,\bar{y},\bar{\xi})$, where $\rho$ ranges through the field sort. As before, we may reduce to the case when $\phi(\rho,\bar{y},\bar{\xi})$ implies that the entries $y_1,\ldots,y_m$ of $\bar{v}$  are linearly independent. This is handled as in the last paragraph but one, since, working over $\bar{v}$ and using quantifier elimination in the $V$-sort, we may identify $\langle y_1,\ldots,y_m\rangle$ with $\mathbb{F}_q^m$, and replace $\phi$ by an $L_{\rings}$-formula. 

It can be checked that the definability condition in the definition of m.a.c. holds in each step in the above argument, so holds for $\mathcal{C}_{\bil,\qe}$.

\end{proof}

\section{Multidimensional exact classes}\label{multiexactsection}

In this section we focus on  multidimensional {\em exact} classes. We give a range of examples, partial results, and conjectures. The examples suggest that there is a  connection between being a \mec{} and ultraproducts being one-based (in an appropriate sense).

\subsection{Smooth approximation and \mec{}s of homogeneous structures}

We first recall the notion of {\em smooth approximation} introduced by Lachlan and developed in \cite{KLM89} and 
\cite{Cherlin-Hrushovski03}. We say that the countably infinite structure $M$ is {\em smoothly approximable} if $M$ is $\omega$-categorical and $M=\bigcup _{i\in \omega} M_i$ is a union of a chain  of finite substructures $M_0\leq M_1 \leq \ldots \leq M$, where the $M_i$ are {\em homogeneous substructures} of $M$ in the sense that two tuples of $M_i$ lie in the same orbit of $\Aut(M)$ if and only if they lie in the same orbit of $\Aut(M)_{\{M_i\}}$, the subgroup of $\Aut(M)$ stabilising $M_i$ setwise.

The following result follows rather directly from the definition of smooth approximation.

\begin{proposition} \cite[Proposition 3.2.1]{Wolf}\label{Liesoft} Let $M$ be a structure smoothly approximated by a chain $(M_i)_{i\in \omega}$ of finite substructures. Then there is $R$ such that $\{M_i:i<\omega\}$ is an $R$-\mec{}.
\end{proposition}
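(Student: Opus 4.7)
The plan is to exploit the $\omega$-categoricity of $M$ (implicit in smooth approximation) together with the homogeneity of each $M_i$ in $M$ to extract, for any $L$-formula $\phi(\bar{x},\bar{y})$, a finite $\emptyset$-definable partition of $(\mathcal{C},\bar{y})$ on which the $\phi$-fibre cardinality is exactly determined.

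First, by Ryll--Nardzewski, $\Aut(M)$ has only finitely many orbits $\pi_1,\ldots,\pi_k$ on $M^{|\bar{y}|}$, each isolated in $M$ by some $L$-formula $\theta_j(\bar{y})$. Second, the key cardinality invariance comes from homogeneity: if $\bar{b},\bar{b}'\in M_i^{|\bar{y}|}$ lie in a common $\Aut(M)$-orbit, the homogeneity hypothesis produces $\sigma\in\Aut(M)_{\{M_i\}}$ with $\sigma\bar{b}=\bar{b}'$. Since $M_i\leq M$ is a substructure preserved setwise by $\sigma$, the restriction $\sigma|_{M_i}$ is an automorphism of the $L$-structure $M_i$ and induces a bijection $\phi(M_i^{|\bar{x}|},\bar{b})\to\phi(M_i^{|\bar{x}|},\bar{b}')$, so $|\phi(M_i^{|\bar{x}|},\bar{b})|=|\phi(M_i^{|\bar{x}|},\bar{b}')|$.

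Accordingly, I would declare the partition $\Pi_\phi$ of $(\mathcal{C},\bar{y})$ to consist of the parts $\pi'_j:=\{(M_i,\bar{b}): M_i\models\theta_j(\bar{b})\}$, and define the measuring function $h_j(M_i)$ as the common value $|\phi(M_i^{|\bar{x}|},\bar{b})|$ for $(M_i,\bar{b})\in\pi'_j$ (or $0$ if $\pi'_j$ has no witness in $M_i$). Collecting these $h_j$ over all $\phi$ and $j\leq k$ would give the desired set $R$ of measuring functions, with values in $\mathbb{Z}^{\geq 0}$ as \autoref{def:mec} requires.

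The main obstacle is checking the $\emptyset$-definability clause: one must verify that $M_i\models\theta_j(\bar{b})$ if and only if the $\Aut(M)$-orbit of $\bar{b}$ in $M$ is $\pi_j$. Since $\theta_j$ may involve quantifiers whose witnesses lie outside $M_i$, truth in the substructure need not a priori agree with truth in $M$. The smoothly approximable hypothesis is what rescues this: by the Kantor--Liebeck--Macpherson coordinatisation theory for smoothly approximable structures, one can choose orbit-isolating formulas whose satisfaction in each $M_i$ matches satisfaction in $M$ for tuples from $M_i$. Once this absoluteness is secured, $\Pi_\phi$ is genuinely $\emptyset$-definable, the functions $h_j$ are well defined, and the construction delivers an $R$-\mec{}.
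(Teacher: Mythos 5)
Your Steps 1–3 are exactly right: the finitely many $\Aut(M)$-orbits on $M^{|\bar y|}$ give the candidate partition, and your homogeneity argument for the cardinality invariance is correct (a $\sigma\in\Aut(M)_{\{M_i\}}$ taking $\bar b$ to $\bar b'$ restricts to an automorphism of $M_i$, hence $|\phi(M_i^{|\bar x|},\bar b)|=|\phi(M_i^{|\bar x|},\bar b')|$). This is indeed the heart of the proof.

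However, the last step is where there is a genuine gap. You correctly flag that the orbit-isolating formulas $\theta_j$ may evaluate differently in $M_i$ than in $M$, but your proposed fix — citing KLM coordinatisation to ``choose orbit-isolating formulas whose satisfaction in each $M_i$ matches satisfaction in $M$'' — is not justified and in fact misdirects: the coordinatisation machinery is not what does the work here. What actually makes the argument go through is a more elementary fact, which must itself be proved using the homogeneity hypothesis. Namely, by induction on formula complexity one shows that for each $L$-formula $\psi(\bar y)$ there exists $N=N_\psi$ such that for all $i\geq N$ and all $\bar b\in M_i^{|\bar y|}$, $M_i\models\psi(\bar b)$ if and only if $M\models\psi(\bar b)$. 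The inductive step for a quantifier $\exists z\,\chi(\bar y,z)$ uses: pick orbit representatives $\bar b_1,\dots,\bar b_s$ for the finitely many orbits on which $\exists z\,\chi$ holds in $M$, together with witnesses $c_1,\dots,c_s$, and take $N$ large enough that all $\bar b_l,c_l$ lie in $M_N$ and the inductive bound for $\chi$ is met; then, given $\bar b\in M_i^{|\bar y|}$ ($i\ge N$) in orbit $O_l$, the homogeneity supplies $\sigma\in\Aut(M)_{\{M_i\}}$ with $\sigma\bar b_l=\bar b$, and $\sigma(c_l)\in M_i$ witnesses the existential in $M_i$. The forward direction and the universal case are similar. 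Applying this with $\psi=\theta_j$, the $\theta_j$ become a genuinely $\emptyset$-definable partition of $(\mathcal{C},\bar y)$ when restricted to structures of index $\geq N$; the finitely many exceptional $M_i$ with $i<N$ can be absorbed by refining the partition with formulas isolating the $\Aut(M_i)$-orbits in each of those finitely many finite structures. So your outline is correct and essentially the one the paper has in mind, but the absoluteness claim is a nontrivial lemma needing proof from the homogeneity hypothesis, not a citation — and as written your proof does not close this.
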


Using further material from \cite{Cherlin-Hrushovski03}, \autoref{Liesoft} was  extended by Wolf (\cite{WolfPhD}, also \cite[Theorem 4.6.4]{Wolf}) to \autoref{wolf-thesis} below. Note that the statement and proof in \cite{Wolf} contain an inaccuracy, as one needs to allow the expansion to $L'$. For example, let $L$ have a single binary relation $E$, and let $\mathcal{C}$ be the collection of finite $L$-structures in which $E$ is interpreted by an equivalence relation with at most two class sizes; then the number of orbits on quadruples is bounded, but the definability clause in the \mec{} definition is not satisfied. Essentially, one needs to work in the language $\mathcal{M}^D$ of \cite[Definition 8.3.1]{Cherlin-Hrushovski03} with dimension quantifiers expressing that one Lie geometry has smaller dimension than another (and with Witt index quantifiers), and this can be done in a finite extension $L'$ of $L$ as above. In Wolf's proof, the error occurs in the second use of \cite[Proposition 4.4.3]{Cherlin-Hrushovski03} in the penultimate paragraph of the proof of \cite[Theorem 4.4.1]{Wolf}: \cite[Proposition 4.4.3]{Cherlin-Hrushovski03} requires that the models have `true dimensions' relative to the skeletal language, and this is ensured by the expansion to $\mathcal{M}^D$. In the statement of \cite[Theorem 4.4.1]{Wolf} one first expands the members of $\mathcal{C}(\mathcal{L},d)$ to $\mathcal{M}^D$ before finding the finite partition. See the corresponding discussion in the proof of \cite[Proposition 8.3.2]{Cherlin-Hrushovski03}.  

\begin{theorem} \label{wolf-thesis} Let $L$ be a finite first-order language, $d$ a natural number, and $\mathcal{C}=\mathcal{C}(L,d)$ the collection of all finite $L$-structures $M$ with at most $d$ 4-types (so $\Aut(M)$ has at most $d$ orbits on $M^4$). Then 
there is a finite language $L'\supseteq L$ and for each $M\in \mathcal{C}$ an $L'$-expansion $M'$ with the same automorphism group as $M$, such that
$\mathcal{C}':=\{M':M\in \mathcal{C}\}$ is a polynomial \mec{}.
\end{theorem}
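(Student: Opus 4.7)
The plan is to invoke the Cherlin--Hrushovski structure theory for $\omega$-categorical structures with bounded number of $k$-types, which implies that any $M \in \mathcal{C}(L,d)$ is (up to interpretation) an ``envelope'' of a smoothly approximable structure and therefore admits a Lie coordinatisation in which each definable set is controlled by a finite family of Lie geometries (linear, unitary, symplectic, orthogonal, quadratic, or pure set geometries). Since the number of 4-types is bounded by $d$, the Cherlin--Hrushovski classification forces the skeletal type of the coordinatisation to lie among finitely many possibilities as $M$ ranges over $\mathcal{C}(L,d)$, so only finitely many ``shapes'' of coordinatising geometry occur. The key consequence we will exploit is that for each such shape, the cardinality of any definable set in $M$ is given exactly by a polynomial in the orders of the base fields and the dimensions of the coordinatising geometries.

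Next I would carry out the expansion to $L'$. Starting from $L$, I would adjoin finitely many symbols encoding: (a) predicates for the coordinatising Lie geometries and their ambient field/vector space sorts; (b) function/relation symbols realising bases, scalar actions, and forms; and most importantly (c) the dimension-comparison and Witt-index quantifiers of the Cherlin--Hrushovski language $\mathcal{M}^D$, translated into finitely many new predicates $P_{i,j}$ whose $M$-interpretation compares the dimensions (or Witt indices) of the $i$-th and $j$-th coordinatising geometries. Because each such predicate is $\emptyset$-invariant under $\Aut(M)$ in each $M \in \mathcal{C}(L,d)$, we have $\Aut(M') = \Aut(M)$ for the resulting $L'$-expansion $M'$, giving us the first conclusion of the theorem.

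To verify that $\mathcal{C}' = \{M' : M \in \mathcal{C}\}$ is a polynomial \mec{}, I would apply the projection lemma (\autoref{thm:projection.lemma}(ii)) and consider only formulas $\phi(x;\bar{y})$ with $x$ a single variable. For such a $\phi$ and a parameter tuple $\bar{b}$ in $M'$, decomposing $\phi(M;\bar{b})$ via the coordinatisation expresses $|\phi(M;\bar{b})|$ as a polynomial $P(|F_1|,\ldots,|F_k|,q_1^{d_1},\ldots,q_s^{d_s})$ in field cardinalities and geometry cardinalities; equivalently, as a polynomial in the balanced formulas $\delta_i$ picking out the fields and (exactly balanced) formulas enumerating bases or one-dimensional subspaces of the geometries, in the sense of \autoref{polynomialmac}. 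The polynomial $P$ depends only on the quantifier-free $L'$-type of $\bar{b}$ (together with finitely many dimension comparisons), and only finitely many such polynomials occur across $\mathcal{C}'$.

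The main obstacle, and the reason the expansion to $L'$ cannot be avoided, is the $\emptyset$-definability of the partition of $(\mathcal{C}',\bar{y})$ into classes on which a single polynomial $P$ gives the cardinality exactly. In the original language $L$ the partition need not be parameter-free definable, since which polynomial occurs can depend on dimension comparisons between coordinatising geometries which are not $L$-expressible. This is exactly the point flagged before the theorem statement: Cherlin--Hrushovski Proposition 4.4.3 requires that the models carry their ``true dimensions'' relative to the skeletal language, and this is precisely what $\mathcal{M}^D$ (and hence our finite $L'$) supplies. Once one works in $L'$, the partition becomes a Boolean combination of the new dimension-comparison predicates together with finitely many $L$-formulas governing the quantifier-free type of $\bar{b}$, and the required exact cardinality clause of \autoref{def:mec} follows. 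The final check is that the polynomial format of the cardinalities is preserved through the bookkeeping, yielding a polynomial \mec{} as desired.
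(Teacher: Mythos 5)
Your sketch tracks the paper's account closely; indeed the paper does not give a self-contained proof but defers to Wolf's thesis (\cite{WolfPhD}, \cite[Theorem 4.6.4]{Wolf}) together with the Cherlin--Hrushovski coordinatisation machinery, and your outline captures exactly the pieces that are cited: Lie coordinatisation, finitely many skeletal types because of the bound on $4$-types, cardinalities given by polynomials in the cardinalities of the coordinatising geometries, and---crucially---the expansion to an $L'$ carrying the $\mathcal{M}^D$ dimension-comparison and Witt-index quantifiers in order to render the partition $\emptyset$-definable. You have also correctly pinpointed where the uncorrected version of Wolf's argument breaks (the second use of \cite[Proposition 4.4.3]{Cherlin-Hrushovski03}, which needs ``true dimensions''). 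So this is the same approach, organised the same way.

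One small imprecision: you describe the measuring polynomials as being in the ``orders of base fields and the dimensions (or $q_i^{d_i}$) of the coordinatising geometries.'' The paper is more careful here, taking variables of the form $(-\sqrt{q})^{d_E(J)}$ (so that the signed square-root bookkeeping for orthogonal and unitary geometries works out, and so that the cardinality formulas close under the operations actually performed), with a separate convention for pure-set geometries. This matters for seeing the class as a \emph{polynomial} \mec{} in the precise sense of \autoref{polynomialmac}, where the balanced $\delta_i$ and the exponents $N_i$ must be chosen to produce exactly these quantities. It is a bookkeeping point rather than a gap, but it is worth stating explicitly since ``polynomial in $q$ and $d$'' is not literally what comes out of the coordinatisation.

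A second, cosmetic, remark: Theorem~\ref{thm:projection.lemma}(ii) reduces checking the definition to formulas $\phi(x;\bar y)$ with a single object variable, which is a legitimate and helpful reduction, but it is not where the real content lies here; the heart of the argument is the Cherlin--Hrushovski analysis of definable sets in envelopes of Lie geometries (cf.\ \cite[Lemma 5.2.2]{Cherlin-Hrushovski03} and the discussion around \cite[Proposition 8.3.2]{Cherlin-Hrushovski03}), which you rightly treat as the engine.
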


The polynomials here are essentially polynomials in the cardinalities of certain coordinatising Lie geometries (formally, in numbers of the form  $(-\sqrt{q})^{d_E(J)}$, where $E$ is a finite `envelope' and $J$ is a (finite-dimensional approximation of a) Lie geometry of $E$ of dimension $d_E(J)$ over a field $\mathbb{F}_q$, or in the case of a geometry that is a pure set, $d_E(J)$ is the cardinality of the set). The geometries arising belong to a `standard system of geometries' in the sense of \cite[Definition 2.5.6]{Cherlin-Hrushovski03}. Wolf's results say in particular that if $M$ is smoothly approximated then the collection of all finite envelopes of $M$ is a polynomial \mec{}, after extension of the language as above.

See also Lemma 5.2.2 and the preceding pages of \cite{Cherlin-Hrushovski03}.

\begin{remark} In the last theorem the exponent 4 cannot be reduced to 3. For consider the collection of all finite Desarguesian projective planes, each viewed as a structure whose universe is the set of points, equipped with a ternary collinearity relation. By the Veblen--Young theorem, the Desarguesian plane over $\mathbb{F}_q$  has automorphism group P$\Gamma$L${}_3(q)$ in its natural action on projective space, which has a bounded number of orbits on triples (and two orbits on triples of distinct elements). However, by a classical fact from projective geometry, the Desarguesian plane over $\mathbb{F}_q$ uniformly {\em defines} the field $\mathbb{F}_q$, and so no infinite collection of such planes can be even a weak \mec{}, by Theorem~\ref{thminterpmac}(iii) in combination with \autoref{weakmec-fields} below.
\end{remark}

The above results of Wolf, and those which follow, make the following conjecture very natural. A general proof might require revisiting Lachlan's work on finite homogeneous structures, but under weaker assumptions than full homogeneity. Below, a countable structure over a relational language is {\it homogeneous} if every isomorphism between finite substructures extends to an automorphism. Part of the motivation is that the conjecture leads to natural questions about families of finite structures with arbitrarily high levels of combinatorial regularity but no assumptions on the automorphism group (rather in the way that distance-regularity for graphs is a combinatorial generalisation of distance-transitivity); see Section 7. 

\begin{conjecture}\label{mec-conj}~
\begin{enumerate}[(i)]
\item Let $M$ be a homogeneous structure over a finite relational language $L$. Then there is an m.e.c. with ultraproduct elementarily equivalent to $M$ if and only if $M$ is stable.

\item Let $M$ be an unstable homogeneous structure over a finite relational language. Then $M$ is not elementarily equivalent to any structure interpretable in an ultraproduct of a \mec{}.
\end{enumerate}
\end{conjecture}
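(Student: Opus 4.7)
Plan. For the $(\Leftarrow)$ direction of (i), combine Lachlan's structure theorem -- every countable stable $\omega$-categorical structure homogeneous over a finite relational language is smoothly approximable -- with our \autoref{wolf-thesis}: after expansion to a suitable finite language $L'\supseteq L$, the collection of finite envelopes of $M$ forms a polynomial \mec{}, every non-principal ultraproduct of which is elementarily equivalent to $M$. This is the direction already flagged in the statement.

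The substance lies in the $(\Rightarrow)$ direction of (i), which, together with (ii), reduces to the single claim:
$(*)$ if $M$ is an unstable homogeneous $L$-structure with $L$ finite relational, then $M$ is not ring-measurable. The reduction uses that ultraproducts of \mec{}s are ring-measurable (\autoref{ring}), that generalised measurability is preserved under elementary equivalence (\autoref{theory}), and that it descends to $M^{\mathrm{eq}}$ (\autoref{meq}), hence to any structure interpretable in an ultraproduct of a \mec{}. To attack $(*)$, first dispose of the case where $\mathrm{Th}(M)$ has the strict order property via \autoref{NSOP}. The remaining case -- unstable NSOP, covering the random graph, Henson graphs and their analogues -- requires a finer argument. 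The idea is to use the homogeneity of $M$ to write down \emph{splitting identities} in the putative measuring integral domain $S$: for every $k$, every quantifier-free $k$-type $p(\bar y)$ realised in $M$, every $I\subseteq\{1,\dots,k\}$ and every quantifier-free extension $q(\bar y,z)$ of $p$ realised in $M$, the element $\mu(\phi_I(M;\bar y))\in S$ splits as $\mu(\phi_I(M;\bar y,z)) + \mu(\phi_{I\cup\{k+1\}}(M;\bar y,z))$, where $\phi_I$ expresses the ``$I$-th extension type'' in the chosen relational signature. The generic extension property provided by homogeneity forces all such ``extension sets'' to have strictly positive measure. Iterating and symmetrising -- e.g.\ in the random graph, where the edge/non-edge flip is an isomorphism -- one obtains identities of the shape $|M|_S = 2^k d_k + c_k$ with $d_k>0$ in $S$ and bounded error $c_k$. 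In an ordered integral domain this forces the positive element $|M|_S$ to be divisible by arbitrarily large powers of $2$, which is incompatible with the ordered ring structure.

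The principal obstacle is performing this splitting/divisibility argument uniformly for every unstable NSOP homogeneous $M$ in a finite relational language. For homogeneous graphs, the proof of \autoref{homog2} (cf. \autoref{conj-graphs}) can appeal to the Lachlan--Woodrow classification to enumerate cases and exhibit the divisibility obstruction explicitly. In the general relational setting no such classification is available. Two natural routes present themselves: (a) extract a ``random-like'' subconfiguration from any indiscernible witness to instability by Ne\v{s}et\v{r}il--R\"odl-style Ramsey arguments combined with homogeneity and $\omega$-categoricity, thereby reducing to a finite set of base obstructions; or (b) develop a structural theorem for unstable homogeneous structures in finite relational languages in the spirit of Lachlan's work in the stable case, identifying canonical ``generic coordinates'' on which the splitting identities cannot be simultaneously satisfied in any ordered ring. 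Either route requires identifying the right uniform combinatorial witness and verifying that the resulting divisibility constraint is genuinely unsatisfiable in $S$; this is where the bulk of the work will lie.
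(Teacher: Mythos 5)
You are offering a proof of what the paper records only as \autoref{mec-conj}, an open \emph{conjecture} -- there is no paper proof to compare against. What the paper does prove is partial: \autoref{homog1} establishes the right-to-left direction of (i) in general, and \autoref{homog2} establishes the left-to-right direction only for homogeneous graphs, homogeneous tournaments, the digraphs $Q_n$, and the generic bipartite graph. Anything you claim for the left-to-right direction (or for (ii)) over an arbitrary finite relational language therefore necessarily goes beyond the paper.

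Your easy direction is correct and coincides with the proof of \autoref{homog1}: stability plus homogeneity over a finite relational language gives $\omega$-stability, hence smooth approximability by Cherlin--Harrington--Lachlan, and \autoref{wolf-thesis} then supplies the polynomial \mec{}. Your reduction of the left-to-right direction and (ii) to the claim $(*)$, that no unstable homogeneous $M$ over a finite relational language is ring-measurable, is sound via \autoref{ring}, \autoref{theory}, and the ring analogue of \autoref{meq} noted after \autoref{injsur}, and disposing of the strict-order case through \autoref{NSOP} is exactly what \autoref{homog2}(i) does. But notice that $(*)$ overshoots: a ring-measure on $M$ need not come from a \mec{}, so $(*)$ is strictly stronger than the conjecture, and it is itself open -- for instance it is not known whether the random graph is ring-measurable.

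The genuine divergence, and the gap, is in the NSOP-unstable cases. The paper's proofs of \autoref{homog2}(ii)--(v) make no use of the measuring (semi)ring at all. They exploit the \emph{definability} clause of the \mec{} definition directly: after thinning the class, quantifier elimination in $\mathrm{Th}(M)$ forces sufficiently large finite members to be $k$-tuple-regular, and classification results for such highly regular finite structures (Cameron for $5$-regular graphs, a parity argument for regular tournaments, Heinrich--Schneider--Schweitzer for vertex-coloured graphs) rule out the appropriate extension axioms. Your ``splitting identities'' with divisibility by $2^k$ is an entirely different route, and as sketched it fails already in the random graph. The $2^k$ extension sets $\phi_I(M;\bar a)$ are \emph{distinct} $\emptyset$-definable families, and the axioms of a generalised measure impose only additivity across them, not equality; nothing in the definition forces $h(M)=2^k d_k + c_k$. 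The symmetry you invoke -- the edge/non-edge flip -- is an isomorphism $M\to M^c$, not an $L$-automorphism of $M$, so a measure on $\mathrm{Def}(M)$ has no reason to be invariant under it; indeed one can easily write down a measure (say on a polynomial semiring in one variable) assigning neighbourhoods and co-neighbourhoods unequal values while satisfying all the axioms. You flag the incompleteness yourself, but the concrete point is that your proposed obstruction does not yet recover even the cases the paper actually proves, whereas the paper's combinatorial-regularity method does settle those cases and is the more promising template for generalisation.
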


The right-to-left direction of \autoref{mec-conj}(i) is true:

\begin{proposition} \label{homog1}
Let $M$ be a stable homogeneous structure over a finite relational language $L$. Then there is an \mec{} $\mathcal{C}$ with an infinite ultraproduct which is elementarily equivalent to $M$.
\end{proposition}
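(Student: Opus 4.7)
The plan is to reduce to the smoothly approximable case and then apply \autoref{Liesoft}.

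First, since $M$ is homogeneous over a finite relational language, it is $\omega$-categorical, and in particular $\Aut(M)$ has only finitely many orbits on each $M^n$. Appealing to the classification of stable homogeneous structures over finite relational languages due to Lachlan and collaborators (organised in the smooth approximation framework in \cite{Cherlin-Hrushovski03}), any such $M$ is smoothly approximable. Fix a chain $M_0 \leq M_1 \leq \cdots$ of finite homogeneous substructures witnessing this, so that $M = \bigcup_{i<\omega} M_i$.

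Next, \autoref{Liesoft} applies directly to give that $\mathcal{C} := \{M_i : i < \omega\}$ is an $R$-\mec{} for some set $R$ of measuring functions. To exhibit an infinite ultraproduct of $\mathcal{C}$ that is elementarily equivalent to $M$, fix a non-principal ultrafilter $\mathcal{U}$ on $\omega$ and set $N := \prod_\mathcal{U} M_i$. The map $M \to N$ sending $a \in M$ to the class of a sequence that is eventually constantly $a$ (which makes sense since each $a$ eventually lies in $M_i$) is a well-defined elementary embedding, by the standard ultraproduct argument for smooth approximation, which is essentially built into the orbit-matching condition defining smooth approximation (cf.\ \cite{KLM89} and the discussion in \cite{Cherlin-Hrushovski03}). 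Since $M$ is infinite, so is $N$, and $N \equiv M$, as required.

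The main obstacle is the very first step: the invocation of the Lachlan-style classification to conclude that stability together with homogeneity over a finite relational language forces smooth approximation. This uses substantial structural input from the literature (and is the reason the paper acknowledges ``the work of Lachlan'' as a hypothesis for the right-to-left direction of \autoref{mec-conj}(i)). Once smooth approximation is in hand, the rest of the argument is essentially a direct combination of \autoref{Liesoft} with the pseudofiniteness of smoothly approximable structures; in particular no new analysis of the specific measuring functions is required.
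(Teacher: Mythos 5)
Your proof is correct and follows essentially the same route as the paper's: reduce to smooth approximability, invoke \autoref{Liesoft}, and then pass to a non-principal ultraproduct. The one place you flagged as the main obstacle --- getting from stability plus homogeneity to smooth approximability --- is handled in the paper more lightly than by citing the full Lachlan classification: homogeneity (so quantifier elimination in a finite relational language) and stability (so definability of types) give only countably many types over a countable model, hence $\omega$-stability; combined with $\omega$-categoricity, \cite[Corollary 7.4]{chl} then gives smooth approximability directly. Your concluding step via an elementary embedding $M \to N$ is also fine, though the paper gets $N \equiv M$ more quickly by just noting that each sentence of $\Th(M)$ holds in cofinitely many of the $M_i$.
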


\begin{proof} It is well-known that any stable structure which is homogeneous over a finite relational language is $\omega$-stable
-- for example, definability of types and quantifier-elimination yield that there are countably many types over any countable model.
Hence $M$ is smoothly approximable by \cite[Corollary 7.4]{chl}. It follows that there is an \mec{} of envelopes of $M$, by the results of Wolf (essentially \autoref{Liesoft}). We may take this m.e.c. to consist of the members of a smoothly approximating sequence of homogeneous substructures $M_i$ of $M$, and every sentence in $\Th(M)$ holds in cofinitely many of the $M_i$. Hence any non-principal ultraproduct of $\mathcal{C}$ is elementarily equivalent to $M$, and the result follows.
\end{proof}

For the other direction of \autoref{mec-conj}(i), we have the limited evidence in the following result. Let $I_n$ (for $n\geq 3$) denote the 
digraph consisting of $n$ vertices with no arcs between them. The classification of homogeneous digraphs by Cherlin \cite{cherlin_98}  includes for each $n\geq 3$ the universal homogeneous $I_n$-free digraph $Q_n$. The `generic bipartite graph' is a homogeneous structure in a language with two relations, an equivalence relation $E$ for the bipartition, and a symmetric irreflexive graph relation $R$ which only holds between $E$-inequivalent pairs; it is the unique countably infinite such structure such that for any two finite disjoint subsets $A,B$ of one part of the bipartition, there is a vertex in the other part adjacent to all vertices of $A$ and to no vertices of $B$.

\begin{theorem}\label{homog2} Let $M$ be any of the following homogeneous structures. Then there is no \mec{} with an ultraproduct elementarily equivalent to $M$.
\begin{enumerate}[(i)]
\item Any homogeneous structure whose theory has the strict order property. 

\item Any unstable homogeneous  graph.

\item Any homogeneous tournament.

\item The digraph $Q_n$ for each $n\geq 3$.

\item The generic bipartite graph, with the two parts named by unary predicates.

\end{enumerate}
\end{theorem}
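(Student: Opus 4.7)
Part~(i) is immediate: every \mec{} is a \mac{}, so Remark~\ref{nonex} (via \autoref{mac-nsop}) rules out any ultraproduct of $\mathcal{C}$ having the strict order property, and any homogeneous structure whose theory has SOP is thereby excluded.

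For parts~(ii)--(v) I would use a uniform first step followed by case-specific combinatorics. Suppose $\mathcal{C}$ is a \mec{} with infinite ultraproduct $M^{*}=\prod_{\mathcal U}\mathcal{C}$ elementarily equivalent to $M$. Since $M$ is homogeneous over a finite relational language $L$, $\Th(M^{*})$ has quantifier elimination in $L$. For each formula $\phi(x;\bar y)$ let $\Pi_\phi$ be the $\emptyset$-definable partition of $(\mathcal{C},\bar y)$ with defining formulas $\{\phi_\pi(\bar y):\pi\in\Pi_\phi\}$ coming from the \mec{} hypothesis. By QE each $\phi_\pi$ is equivalent, modulo $\Th(M^{*})$, to some quantifier-free $\psi_\pi$, and by \L{}o\'s's theorem this equivalence holds in $\mathcal U$-almost all $N\in\mathcal{C}$. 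For such $N$ the exact cardinality $|\phi(N,\bar b)|$ depends only on the isomorphism type of the induced $L$-substructure on $\bar b$. Letting $\phi$ range over all formulas $\phi(x;\bar y)$ and $|\bar y|$ vary, one concludes that a cofinal set of $N\in\mathcal{C}$ is \emph{$k$-tuple regular for every $k$}: for every finite $L$-structure $H$ on $k$ vertices and every one-point extension $H'$ of $H$, the number of extensions in $N$ of a given embedded copy of $H$ to a copy of $H'$ depends only on the isomorphism class of the embedded copy.

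From here I would argue case by case. For~(ii), Cameron's theorem that graphs $5$-tuple regular are ultrahomogeneous together with Gardiner's classification of finite ultrahomogeneous graphs ($K_{m\times n}$, $L(K_{3,3})$, $C_5$, complete and empty graphs, and their complements) imply that every non-principal ultraproduct of such finite graphs is stable, contradicting the unstability of~$M$. For~(iii), the analogous classification of finite ultrahomogeneous tournaments ($C_3$ and finite linear orders) leaves only ultraproducts that are either trivial or have the strict order property, the latter already ruled out by~(i) and the former unable to match the three unstable homogeneous tournaments (generic, local order $S(2)$). For~(iv), I would combine the $I_n$-freeness of each $N\in\mathcal{C}$ with $k$-tuple regularity via a double-counting argument: for growing~$k$, counting the vertices of $N$ that complete a given embedded near-$I_{n-1}$ configuration to various extensions yields positivity and divisibility constraints inconsistent with the generic embedding pattern of $Q_n$. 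For~(v), the two-sorted regularity forces each $N$ to be an incidence structure realising a $t$-design between $P_1(N)$ and $P_2(N)$ for every $t\leq\min(|P_1(N)|,|P_2(N)|)$; iterating Fisher-type inequalities across $t$ shows that the only infinite such families are degenerate (complete or empty bipartite), and hence cannot yield the generic bipartite graph as ultraproduct.

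The main obstacle I anticipate is the combinatorial classification step in~(iv) and~(v), where the ``$k$-tuple regular for every $k$'' condition is less well understood than for undirected graphs. In~(iv), Cherlin's classification of countable homogeneous digraphs does not directly translate into a classification of their finite $k$-tuple-regular approximations, so a tailored double-counting argument is needed. By contrast, the first step---deriving $k$-tuple regularity from the \mec{} condition via QE and \L{}o\'s---is routine, and case~(ii) reduces cleanly to the Cameron--Gardiner framework.
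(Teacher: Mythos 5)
Your parts (i) and (ii) match the paper: (i) is exactly the reduction to \autoref{mac-nsop}, and (ii) uses the same strategy of deriving $5$-regularity from the definability clause plus quantifier elimination and \L{}o\'s, then invoking the classification of finite $5$-regular graphs from Cameron's note. The opening step of deriving, for each fixed $k$, $k$-tuple regularity on a cofinal set of structures is also the paper's mechanism.

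The gaps are in (iii)--(v). For (iii) you appeal to a ``classification of finite ultrahomogeneous tournaments'' combined with an implicit assumption that sufficient $k$-tuple regularity forces ultrahomogeneity for tournaments, i.e.\ an analogue of Cameron's $5$-regularity theorem for the directed case. No such theorem is cited, and I am not aware of one. The paper avoids this by a direct elementary parity argument: once members $D$ of $\mathcal{C}$ are regular, they have odd order; fixing an arc $a\to b$ and partitioning $D\setminus\{a,b\}$ by adjacency to $a,b$ yields four non-empty regular sub-tournaments, each of odd order, so $|D|$ is even, a contradiction. For (iv), as you acknowledge your ``double-counting argument'' is not written down and it is unclear it exists; the paper instead notes that the non-neighbour digraph $Q_n^{||}(x)$ is $Q_{n-1}$ (or the random tournament when $n=3$), so passing to $\{D^{||}(x):D\in\mathcal{C}\}$ gives a \mec{} with ultraproduct elementarily equivalent to $Q_{n-1}$, reducing inductively to case~(iii). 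For (v), the $t$-design argument does not work as stated: from the \mec{} hypothesis you get, for each \emph{fixed} $k$, that all sufficiently large $N\in\mathcal{C}$ are $k$-tuple regular; you do \emph{not} get a single $N$ that is $t$-regular for all $t$ up to $\min(|P_1(N)|,|P_2(N)|)$, which is what iterating Fisher-type inequalities would require. The paper instead invokes a finite-combinatorics result of Heinrich, Schneider and Schweitzer (their Lemma~4.8) classifying $3$-tuple regular vertex-coloured bipartite graphs, which rules out the generic bipartite graph as an ultraproduct.
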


\begin{proof}
(i) This follows immediately from \autoref{mac-nsop}.

(ii) If $M$ is an unstable homogeneous graph, then by the Lachlan--Woodrow classification \cite{lw}  $M$ is the random graph $H$, the universal homogeneous $K_n$-free graph $H_n$ for some $n\geq 3$, or the complement of $H_n$. We shall suppose that $M$ is the random graph, as the same argument eliminates the other cases. Let $T=\Th(M)$, and suppose that $\mathcal{C}$ is a \mec{} with an ultraproduct $N\models T$.
After thinning out $\mathcal{C}$ we may suppose that all non-principal ultraproducts of $\mathcal{C}$ are elementarily equivalent -- that is, each element of $T$ holds of cofinitely many $P\in \mathcal{C}$.

For any formula $\phi(x,\bar{y})$ there is a finite set $E$ of functions $h\colon\mathcal{C}\longrightarrow {\mathbb R}$ and some formula $\psi_h(\bar{y})$ for each $h\in E$, such that for any $P\in \mathcal{C}$ and $h\in E$, if $\bar{a}\in P^{|\bar{y}|}$ then
\[
P\models \psi_h(\bar{a}) \Longrightarrow |\phi(P,\bar{a})|=h(P).
\]
Since $T$ has quantifier-elimination, there is a quantifier-free formula $\chi_h(\bar{y})$ and $\sigma \in T$ such that 
$\sigma \models \forall \bar{y} \, (\psi_h(\bar{y})\leftrightarrow \chi_h(\bar{y}))$. As $\sigma$ holds on cofinitely many members of  $\mathcal{C}$, provided we work in sufficiently large $P\models T$, we may assume that $\psi_h(\bar{y})$ is quantifier-free. Taking $\bar{y}=(y_1,\ldots,y_5)$ and $\phi(x,\bar{y})$ to say that $x$ is adjacent to each $y_i$, it follows that any sufficiently large $P\in \mathcal{C}$ is {\em 5-regular}: For any $A\subset P$ of size at most 5, the number of common neighbours of $A$ depends only on the isomorphism type of $A$.

By the note added in  proof at the end of \cite{cameron}, any finite 5-regular graph occurs in the list in \cite[Theorem 3.2]{cameron}: the pentagon, the line graph of $K_{3,3}$, a disjoint union of complete graphs all of the same size, or the complement of the latter. (We remark that these are exactly the finite homogeneous graphs, so include  the finite graphs smoothly approximating the stable homogeneous graphs). However, if $P$ is chosen sufficiently large, then  $P$ will satisfy appropriate extension axioms true of the random graph but not satisfied by these, a contradiction.

(iii) By \cite{lachlan}, there are three infinite homogeneous tournaments, namely a dense total order, the `local order', and the random tournament. The first two have the strict order property, so by (i) cannot be elementarily equivalent to an ultraproduct of an \mec{}. So let $M$ be the random tournament, and suppose for a contradiction that $M$ is elementarily equivalent to an ultraproduct of the m.e.c $\mathcal{C}$ of finite tournaments. 

A finite tournament $D$ is said to be {\em regular} if any two vertices have the same out-degree. By counting in two ways the set 
$S:=\{(x,y): x \to y\}$, we see that if $D$ is a finite regular tournament then its in-degree equals its out-degree, so $D$ has an odd number of vertices.  

Arguing as in (i), if $D\in \mathcal{C}$ is sufficiently large then the out-degree of a vertex depends just on the isomorphism type of the vertex, so $D$ is regular so has odd size. Fix distinct vertices $a,b\in D$ with $a \to b$. Define the sets
$E_1:=\{x\in D: a\to x \wedge b \to x\}$, $E_2:=\{x: a \to x \wedge x \to b\}$, $E_3:=\{x: b \to x \wedge x \to a\}$ and $E_4:=\{x: x \to a \wedge x \to b\}$. Arguing as above, provided $D$ is sufficiently large each of $E_1, E_2,E_3,E_4$ is non-empty and regular, so has odd size.
Since the vertex set of $D$ is $E_1 \cup E_2 \cup E_3 \cup E_4 \cup\{a,b\}$, $D$ has an even number of vertices, a contradiction.

(iv) For any digraph $D$ let $D^{||}(x)$ be the set of vertices $y$ distinct from $x$ and non-adjacent to $x$. 
Observe that for each vertex $x$ of $Q_n$, if $n>3$ we have $Q_n^{||}(x)\cong Q_{n-1}$, and  $Q_3^{||}(x)$ is isomorphic to the generic tournament. Now if $\mathcal{C}$ is an m.e.c with ultraproduct elementarily equivalent to $Q_n$, then the set of digraphs
\[\{D^{||}(x): D\in \mathcal{C}, x\mbox{~a vertex of~} D\}\]
is a \mec{} with ultraproduct elementarily equivalent to $Q_n^{||}(y)\cong Q_{n-1}$ for $y$ a vertex of $Q_n$. The result now follows by (iii) and induction.

(v) Let $B$ be the random bipartite graph, and let $\mathcal{C}$ be a \mec{} of finite graphs with an ultraproduct elementarily equivalent to $B$; we work in a language with a binary relation symbol for adjacency and two unary predicates giving a bipartition in $B$ and in members of $\mathcal{C}$. 
Then $\mathcal{C}$ can be thinned out so that, in the language of \cite{heinrich}, its members are  {\em 3-tuple regular}: If $\Gamma\in \mathcal{C}$ and $S$ and $S'$ are sets of size at most 3 of vertices of $\Gamma$ that induce isomorphic vertex-coloured subgraphs, then the number of vertices of $\Gamma$ which are adjacent to every vertex of $S$ is equal to the number adjacent to all vertices of $S'$. It follows from \cite[Lemma 4.8]{heinrich} that either there is a matching between the two parts, or there are no edges between the parts, or the bipartite complement of one of these situations occurs. In particular, such graphs cannot have ultraproduct elementarily equivalent to $B$. 
\end{proof}

\begin{remark} \rm 
\begin{enumerate}[1.]
\item The above result makes essential use of the definability clause in the definition of a \mec{}. Also, the argument in (iii) and (iv) makes use of a parity argument which appears to be unavailable for the universal homogeneous digraph, and for the families of digraphs determined by a collection of forbidden finite tournaments. These observations suggest \autoref{randomstr} from the final section. Since many random homogeneous structures are interpretable in pseudofinite fields, the following observation is relevant. 
\item A small adjustment of the proof of (iii) shows that if $D$ is a finite tournament which is {\em 3-regular} in the sense that the number of realisations of a quantifier-free 1-type over a tuple $\bar{a}$ of length at most 3 depends just on the quantifier-free type of $\bar{a}$, then $D$ is a single vertex or a directed 3-cycle. 
\end{enumerate}
\end{remark} 

\begin{proposition} \label{weakmec-fields}
There is no weak \mec{} consisting of finite fields.
\end{proposition}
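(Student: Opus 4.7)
The plan is proof by contradiction. Suppose $\mathcal C$ is a weak \mec{} containing $\mathbb F_q$ for arbitrarily large $q$. The structural content of the \mec{} definition is that to each $L_{\rings}$-formula $\phi(\bar x;\bar y)$ is associated a fixed finite number $k=k_\phi$ of measuring functions, so that in any single $M\in\mathcal C$ the cardinality $|\phi(M^{|\bar x|},\bar b)|$ can take at most $k$ distinct values as $\bar b$ ranges over $M^{|\bar y|}$. A contradiction will therefore follow if I can produce one $L_{\rings}$-formula whose instance-cardinalities in $\mathbb F_q$ attain unboundedly many distinct values as $q\to\infty$.

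The natural candidate is the Weierstrass formula
\[\phi(x,z;a,b)\;\equiv\;z^2=x^3+ax+b,\]
whose realisation in $\mathbb F_q^2$ is the set of affine points of the (possibly singular) plane cubic. For $4a^3+27b^2\neq 0$ this is a smooth elliptic curve $E_{a,b}/\mathbb F_q$, and Hasse's theorem gives $|\phi(\mathbb F_q^2,a,b)|=q-t(a,b)$ for an integer trace of Frobenius with $|t(a,b)|\leq 2\sqrt q$. The crucial ingredient is a classical theorem of Deuring (extended by Waterhouse) according to which essentially every integer $t$ in $[-2\sqrt q,2\sqrt q]$ actually occurs as the trace of some elliptic curve over $\mathbb F_q$. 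Consequently the map $(a,b)\mapsto|\phi(\mathbb F_q^2,a,b)|$ attains $\Omega(\sqrt q)$ distinct values on $\mathbb F_q^2$. Choosing $\mathbb F_q\in\mathcal C$ with $q$ large enough that $\Omega(\sqrt q)>k_\phi$, which is possible because $\mathcal C$ contains arbitrarily large finite fields, contradicts the uniform bound from the first paragraph. In characteristic $2$ or $3$ the short Weierstrass form does not parametrise every elliptic curve, but the argument goes through unchanged upon replacing it with the general Weierstrass form in $a_1,\dots,a_6$ and invoking the same Deuring--Waterhouse input.

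The principal obstacle is precisely the claim of $\Omega(\sqrt q)$ distinct elliptic curve orders: Hasse's bound alone only constrains the range of traces, so a Deuring-type existence statement for traces is genuinely needed to force a spread. A more self-contained alternative would be a direct second-moment/variance estimate for $\#E_{a,b}(\mathbb F_q)$ averaged over $(a,b)\in\mathbb F_q^2$, which by Cauchy--Schwarz still forces $\Omega(q^{1/2-\varepsilon})$ distinct values and suffices for the argument; but citing Deuring--Waterhouse is by far the cleanest route.
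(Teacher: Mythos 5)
Your proposal is correct and follows essentially the same strategy as the paper: both invoke the uniform definability of elliptic curves over finite fields via the Weierstrass formula, together with the Deuring--Waterhouse theorem that (essentially) every integer trace in the Hasse interval $[-2\sqrt q,\,2\sqrt q]$ is realised by some elliptic curve over $\mathbb{F}_q$, so that the number of distinct point-counts grows like $\sqrt q$ and eventually exceeds any fixed bound $k_\phi$. The paper cites the same fact from Sury's survey (with the coprimality condition $(m,q)=1$ made explicit), and your remark about using the general Weierstrass form to cover characteristics 2 and 3 is a reasonable and correct precaution that the paper leaves implicit.
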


\begin{proof} See the proposition on p.44 of \cite{sury}. It is noted that given a prime power $q$, for every $m$ with $(m,q)=1$ and
$|m|\leq 2\sqrt{q}$, there is an elliptic curve $E$ over $\mathbb{F}_q$ with isogeny class corresponding to $m$, and with $q+1-m$ rational points in ${\mathbb F}_q$. In particular, the number of distinct possible sizes of the number of rational points of an elliptic curve over ${\mathbb F}_q$ increases with $q$. Since elliptic curves are uniformly definable, the result follows.
\end{proof}

\subsection{\titlecap{\mecDisplayText{}s} of groups}

We first show that there is a strong structural restriction on \mec{}s of groups. If $G$ is a finite group, let $R(G)$ denote the soluble radical of $G$ (the largest soluble normal subgroup of $G$) and $F(G)$ the Fitting subgroup of $G$ (the largest nilpotent normal subgroup of $G$ -- so $F(G)\leq R(G)$). 

\begin{proposition} \label{prpn:groups-sol} Let $\mathcal{C}$ be a \mec{} of finite groups. Then:
\begin{enumerate}[(i)]
\item There is $d\in {\mathbb N}$ such that each $G\in \mathcal{C}$ has a uniformly $\emptyset$-definable soluble normal subgroup of index at most $d$.

\item There is $e\in {\mathbb N}$ such that for each $G\in \mathcal{C}$, the quotient $R(G)/F(G)$ has derived length at most $e$. 
\end{enumerate}
\end{proposition}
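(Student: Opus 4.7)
The plan is to derive both (i) and (ii) by contradiction, using the ring-measurability of ultraproducts of $\mathcal{C}$ (\autoref{ring}), the uniform first-order definability of the soluble radical and the Fitting subgroup in arbitrary finite groups, the interpretability transfer results (\autoref{thminterpmac}), and the non-existence of a weak \mec{} of finite fields (\autoref{weakmec-fields}). The general strategy is: if the invariant in question is unbounded on $\mathcal{C}$, pass to an ultraproduct and exhibit either a uniform interpretation of arbitrarily large finite fields or a single formula $\phi(x,\bar{y})$ whose definable-family sizes take unboundedly many values.

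For (i), I would invoke the theorem of Thompson (in Flavell's form) that $R(G) = \{x \in G : \langle x,y \rangle \text{ is soluble for every } y \in G\}$ in any finite $G$; since `$\langle x,y \rangle$ is soluble' is uniformly first-order across all finite groups (via Thompson's bound on the derived length of a soluble $2$-generated group), this gives a parameter-free formula $\sigma_R(x)$ defining $R(G)$ uniformly on $\mathcal{C}$. Supposing $[G:R(G)]$ unbounded, pass to an ultraproduct $G^* = \prod_{\mathcal{U}} G_i$ in which $\sigma_R(G^*)$ has infinite index. Then $\overline{G}^* := G^*/\sigma_R(G^*)$ is interpretable in $G^*$; by \autoref{ring} and preservation of generalised measurability under $M^{\rm eq}$ (\autoref{meq}) it is ring-measurable, and it has trivial soluble radical, so embeds in $\mathrm{Aut}(E(\overline{G}^*))$. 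Refining $\mathcal{U}$ via CFSG, either some simple composition factor of $\overline{G_i}$ has unbounded order, or the number of simple direct factors tends to infinity. In the first subcase, $\mathrm{Alt}_n$ with $n\to\infty$ is ruled out by the observation that centraliser sizes in $\mathrm{Alt}_n$ (defined via the single formula $xy=yx$) range over unboundedly many values, violating the finite partition clause of \mec{}; a Lie-type composition factor $G(\mathbb{F}_q)$ of fixed type with $q\to\infty$ is ruled out because Ryten's theorem \cite[Theorem 3.5.8]{ryten} uniformly parameter-interprets a family of finite fields in the $G_i$, so by \autoref{thminterpmac}(i) the class of finite fields would be a weak \mec{}, contradicting \autoref{weakmec-fields}. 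The second subcase is handled similarly: a projection onto one simple factor along a single formula produces unboundedly many definable-set sizes.

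For (ii), the Fitting subgroup $F(G)$ is uniformly definable in every finite group (for instance by $x \in F(G)$ iff $\langle x^G \rangle$ is nilpotent, first-order uniformly once the bound on $[G:R(G)]$ from (i) is in hand), so $R(G)/F(G)$ is uniformly $\emptyset$-interpretable on $\mathcal{C}$. By Fitting's theorem it embeds faithfully in $\prod_p \mathrm{GL}(V_p)$, where $V_p := F(G)_p/\Phi(F(G)_p)$ is an elementary abelian $\mathbb{F}_p$-vector space. Zassenhaus' theorem bounds the derived length of a soluble subgroup of $\mathrm{GL}_n(k)$ by an explicit function of $n$ alone, so unbounded derived length of $R(G)/F(G)$ forces either unboundedly many primes $p$ with $V_p \neq 0$, or $\dim V_{p_0} \to \infty$ for a fixed $p_0$. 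The first case parameter-interprets finite fields of unbounded order in $\mathcal{C}$ via the natural $\mathbb{F}_p$-structure on each $V_p$, again contradicting \autoref{weakmec-fields} through \autoref{thminterpmac}(i).

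The main obstacle is the residual case---fixed $p_0$, $\dim V_{p_0}\to\infty$---where no field of unbounded order is directly interpreted and one must genuinely exploit the exactness in \mec{} (as opposed to the asymptotic counts in a \mac{}). My plan here is to analyse the uniformly interpreted lifted derived series $R(G)^{(k)}F(G)/F(G)$, each term defined by a fixed formula $\psi_k$, and to package the sections $R(G)^{(k)}/R(G)^{(k+1)}F(G)$ (whose sizes are \mec{}-measuring functions) into a single formula $\phi(x,\bar{y})$ by treating the derived step $k$ as a parameter ranging over an auxiliary $\emptyset$-definable set inside $G$ (such as a chain of commutator subsets). Unbounded derived length then forces $\phi$ to have unboundedly many definable-family sizes, violating the finite-partition clause of \mec{}. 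Executing this single-formula reduction cleanly---using functional unimodularity (\autoref{unimod}) and the injective-equals-surjective property for definable self-maps (\autoref{injsur}) to rule out collapse between different derived-series levels---is the main technical hurdle of the proof.
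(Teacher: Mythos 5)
Your proof of part (i) is essentially correct and follows the same broad route as the paper: reduce modulo the uniformly definable soluble radical, analyse the semisimple part via CFSG, rule out the large Lie-rank and alternating cases by exhibiting a formula with unboundedly many definable-set sizes (or strict-order-type centraliser chains), and rule out fixed Lie type with $q\to\infty$ via Ryten's interpretation of finite fields combined with \autoref{thminterpmac}(i) and \autoref{weakmec-fields}. The paper uses Wilson's formula for $R(G)$ rather than Thompson--Flavell (either works), defines the socle via Liebeck--Shalev products of involution conjugates, and bounds the number of simple factors $t$ and the Lie rank via explicit centraliser chains yielding the strict order property and a contradiction with \autoref{mac-nsop}. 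These are minor differences of execution; your (i) is sound.

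Part (ii) is where your proposal has a genuine gap, and you have correctly located it yourself: the residual case with a single prime $p_0$ fixed and $\dim V_{p_0}\to\infty$ does not interpret a growing family of finite fields, and your proposed remedy cannot be made to work. The difficulty is not merely technical. The $k$-th derived subgroup $R(G)^{(k)}$ is defined by a formula whose quantifier complexity grows with $k$; there is no single first-order formula $\phi(x,\bar y)$ and no $\emptyset$-definable ``index set'' inside $G$ that uniformly recovers $R(G)^{(k)}$ with $k$ as a genuine parameter ranging inside $G$, because $k$ lives in $\mathbb{N}$ and is not coded by any element or tuple of the group in a model-theoretically visible way. Appealing to \autoref{unimod} and \autoref{injsur} does not repair this, since those results speak about definable maps and do not conjure up a definable enumeration of the derived series. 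So the single-formula reduction you sketch is not a construction, and without it the residual case stands open.

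The paper sidesteps the entire Fitting--Zassenhaus analysis. Observe that if $\mathcal{C}$ is a \mec{}, then the formula $\phi(x;y)$ asserting that $x$ is conjugate to $y$ admits a finite $\emptyset$-definable partition with measuring functions, so the conjugacy rank $\mathrm{crk}(G)$ (the number of distinct conjugacy class sizes $>1$) is uniformly bounded on $\mathcal{C}$. Keller's Theorem A then bounds both the Fitting height of $R(G)$ and, crucially, the derived length of $R(G)/F(G)$ purely in terms of $\mathrm{crk}(G)$. This two-line argument gives (ii) outright, with no case analysis on primes or dimensions and no need to interpret anything. The moral is that the \mec{} hypothesis is most efficiently cashed out not through interpretations but through the brute observation that a single formula like conjugacy already forces a bounded combinatorial invariant, to which a strong group-theoretic theorem can be applied.
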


\begin{proof} (i) By a theorem of Wilson \cite{wilson}, there is a formula $\psi(x)$ which defines the soluble radical $R(G)$  in each finite group $G$. Our purpose is to show that $|G:R(G)|$ is bounded. For convenience of notation, we assume $R(G)=1$ for each $G\in \mathcal{C}$, though the argument could be given directly in $G$. 

We argue as in the proof of Theorem 4.15 (Claim 1) in \cite{mpseudo}. For $G\in \mathcal{C}$, let $S=S(G)$ be the socle of $G$, the direct product of the minimal normal subgroups of $G$. Then $S=S_1 \times \ldots \times S_t$, where the $S_i$ are non-abelian simple groups. By Theorem 1.5 of \cite{ls} (together with the Feit--Thompson theorem), there is a constant $c$ such that  each finite non-abelian simple group $H$ has an involution $h$ such that each element of $H$ is a product of exactly $c$ conjugates of $h$. It follows that $S$ is parameter-definable in $G$, uniformly as $G$ ranges over $\mathcal{C}$. Indeed, if $h_i$ is such an involution chosen in $S_i$, then $S$ is exactly the set of  products of $c$ conjugates (by elements of $G$) of $(h_1,\ldots,h_t)$. Furthermore, each $S_i$ is itself uniformly definable, as the set of products of $c$ conjugates (by elements of the definable group $S$) of $h_i$. 

The number $t$ is bounded as $G$ ranges through $\mathcal{C}$. Indeed, there is a definable partial order on $S$, where $g_1<g_2$ if and only if $C_S(g_1)<C_S(g_2)$. It is easily seen that with the $h_i$ as above,
\[C_S((h_1,1,\ldots,1))>C_S((h_1,h_2,1,\ldots,1))> C_S((h_1,h_2,h_3,1,\ldots,1) >\ldots,\]
so if $t$ is unbounded then $\mathcal{C}$ has an ultraproduct with the strict order property, contrary to \autoref{mac-nsop}. 

We shall view the alternating group $\Alt_n$ as having Lie rank $n$. There is a uniform bound on the Lie rank of $S$ as $G$ ranges through $\mathcal{C}$. This can be seen in many ways, and we do not give full details. For example, consider finite alternating groups. For each $n$, there is an element $g\in \Alt_n$ which consists of a single  cycle of length  $n$ or $n-1$ (according to parity) and at 
most one fixed point, and it can be seen that $C_{\Alt_n}(g)<C_{\Alt_n}(g^2)<\ldots$, so any infinite collection of distinct alternating groups has an ultraproduct with the strict order property, again contradicting \autoref{mac-nsop}. In the case of ${\rm PSL}_n(q)$, we may replace this $g$ by the image in ${\rm PSL}_n(q)$ of an element of ${\rm SL}_n(q)$ with a single large cycle on a basis. 

Thus, each $S_i$ is of bounded size or is a simple group of Lie type $G({\mathbb F}_q)$ of bounded Lie rank (bounded as $G$ ranges through $\mathcal{C}$). By results of Ryten (Theorems 5.2.4 and 5.3.3 and Proposition 5.4.6 of \cite{ryten}), the finite field ${\mathbb F}_q$ is uniformly parameter-interpretable in $G({\mathbb F}_q)$. By \autoref{weakmec-fields}, an infinite collection of distinct finite fields cannot form a weak \mec{}. It follows by \autoref{thminterpmac}(i) that the $S_i$ have bounded size, so $|S|$ is bounded as $G$ ranges through $\mathcal{C}$. 

Finally, for $G\in \mathcal{C}$, $G$ acts on $S$ by conjugation, and since $R(G)=1$, the kernel of the action is trivial; that is, $C_G(S)=1$. It follows that $|G|\leq |\Aut(S)|$, so $|G|$ is bounded, as required.

(ii) Following \cite[Definition 3]{camina} we define ${\rm crk}(G)$ to be the conjugacy rank of $G$; that is, the number of distinct sizes greater than 1 of conjugacy classes of $G$. Clearly in a \mec{} of groups there is a uniform bound on ${\rm crk}(G)$ for $G\in \mathcal{C}$. The result now follows immediately from \cite[Theorem A]{keller}.
\end{proof}

We conjecture that in the last proposition `soluble' can be replaced by `nilpotent of bounded class'. Note that by \cite[Theorem A]{keller}, there is a bound on the Fitting height of $R(G)$ for $G\in \mathcal{C}$.

\begin{remark}\rm 
In fact \autoref{prpn:groups-sol}  only requires that $\mathcal{C}$ be a {\em weak} \mec{} of finite groups. The only point in the above proof that needs attention is the use of \autoref{thminterpmac}(i) near the end of the proof of (i). However, Ryten in fact shows that given any family of finite simple groups of fixed Lie type, the corresponding finite fields are uniformly {\em definable} (in 1-space). The result then follows from \autoref{thminterpmac}(iii) and
\autoref{weakmec-fields} -- note that in our proof above, the move to the quotient modulo $R(G)$ was unnecessary. 
\end{remark}

For the theorem below, given a ring $R$ let $L_R=(+,-,0,(f_r)_{r\in R})$ be  the usual language for left $R$-modules, and $T_R$ be the theory of left $R$-modules. A {\em positive-primitive} (p.p.) formula is one of the form
$\exists \bar{w} \bigwedge_{i=1}^k \psi_i(\bar{x},\bar{w})$, where the $\psi_i$ are atomic. If $M$ is an $R$-module and
$\phi(\bar{x},\bar{y})$ is a p.p.\ formula without parameters, then $\phi(\bar{x},\bar{0})$ defines a subgroup of $M^n$ where $|\bar{x}|=n$, and any formula $\phi(\bar{x},\bar{a})$ defines a coset of it; such a subgroup is called a {\em p.p. definable subgroup}. Given p.p.\ formulas $\phi_1({x})$ and $\phi_2({x})$, defining subgroups $G_1$ and $G_2$ respectively of $M$, an {\em invariant sentence} is one expressing, for some $t\in {\mathbb N}$, that $|G_1:G_1 \cap G_2|\leq t$. The Baur quantifier-elimination theorem for modules asserts that 
given any $L_R$-formula $\phi(\bar{x})$ there is an $L_R$-formula $\psi(\bar{x})$ that is a boolean combination of p.p.\ formulas and invariant sentences such that $T_R \models \forall \bar{x}(\phi(\bar{x}) \leftrightarrow \psi(\bar{x}))$.

We thank Charlotte Kestner for a useful discussion leading to the next proof.

\begin{theorem}\label{modules}~
\begin{enumerate}[(i)]
\item Let $R$ be a ring and $\mathcal{C}$ be the set of all finite $R$-modules, in the language of $R$-modules. Then $\mathcal{C}$ is a \mec{}.

\item The collection of all finite abelian groups is a \mec{}.
\end{enumerate}
\end{theorem}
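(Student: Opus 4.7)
The plan is to invoke the Projection Lemma \autoref{thm:projection.lemma}(ii) to reduce to formulas $\phi(x;\bar{y})$ with $x$ a single variable, and then apply Baur's quantifier-elimination theorem (recalled immediately before the statement). Write $\phi(x,\bar{y})$ as a boolean combination of p.p.\ formulas $\phi_1(x,\bar{y}),\ldots,\phi_k(x,\bar{y})$ and invariant sentences $\sigma_1,\ldots,\sigma_\ell$. First I would partition $\mathcal{C}$ by the boolean type of $\{\sigma_1,\ldots,\sigma_\ell\}$; since each $\sigma_j$ is a parameter-free $L_R$-sentence, this yields a finite $\emptyset$-definable partition of $\mathcal{C}$, which pulls back to a finite $\emptyset$-definable partition $\Pi_0$ of $(\mathcal{C},\bar{y})$. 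On any cell of $\Pi_0$ the formula $\phi(x,\bar{y})$ is equivalent to a fixed boolean combination $B$ of the $\phi_i$.

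Next, I would exploit the coset structure of p.p.\ definable sets. For each $I\subseteq\{1,\ldots,k\}$ set $\phi_I(x,\bar{y}):=\bigwedge_{i\in I}\phi_i(x,\bar{y})$ (still p.p.) and $H_I(M):=\{x\in M:M\models\phi_I(x,\bar{0})\}$, a p.p.\ definable subgroup of $M$; for every $\bar{a}\in M^{|\bar{y}|}$, the set $\phi_I(M,\bar{a})$ is either empty or a coset of $H_I(M)$. Whether $\phi_I(M,\bar{a})$ is non-empty is detected by the parameter-free p.p.\ formula $\exists x\,\phi_I(x,\bar{y})$. Refining $\Pi_0$ by recording, for each $I\subseteq\{1,\ldots,k\}$, the non-emptiness of $\phi_I(M,\bar{a})$ produces a finite $\emptyset$-definable partition $\Pi$ of $(\mathcal{C},\bar{y})$, the candidate for the \mec{} partition.

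On each cell $\pi\in\Pi$, inclusion-exclusion applied to the boolean combination $B$ produces integer coefficients $c_I$, depending only on the boolean shape of $\phi$, such that
\[
|\phi(M,\bar{a})|=\sum_{I\subseteq\{1,\ldots,k\}}c_I\,|\phi_I(M,\bar{a})|
\]
for all $(M,\bar{a})\in\pi$. Since on $\pi$ each $|\phi_I(M,\bar{a})|$ equals $|H_I(M)|$ or $0$ according to a criterion fixed by $\pi$, this becomes
\[
|\phi(M,\bar{a})|=\sum_{I\in G(\pi)}c_I\,|H_I(M)|,
\]
where $G(\pi)$ is the set of $I$ deemed ``good'' on $\pi$. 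Defining $h_\pi(M):=\max\bigl\{0,\sum_{I\in G(\pi)}c_I|H_I(M)|\bigr\}$ gives a function $\mathcal{C}\longrightarrow\mathbb{Z}^{\geq0}$ which agrees with $|\phi(M,\bar{a})|$ on $\pi$; with $R$ the set of all such functions, $\mathcal{C}$ is an $R$-\mec{}. Part~(ii) is immediate from~(i) applied to the ring $\mathbb{Z}$, since the language of abelian groups is interdefinable with $L_{\mathbb{Z}}$ (multiplication by $n$ is $n$-fold addition or subtraction).

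The main obstacle is the bookkeeping of the partition $\Pi$, chiefly certifying that it is both finite and $\emptyset$-definable---which rests on the parameter-free nature of the invariant sentences and the nonemptiness formulas $\exists x\,\phi_I(x,\bar{y})$. A subsidiary subtlety, handled by the truncation above, is ensuring that the measuring functions genuinely take values in $\mathbb{Z}^{\geq0}$.
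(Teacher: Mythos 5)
Your proposal is correct and follows essentially the same route as the paper: reduce to $\phi(x,\bar{y})$ by the Projection Lemma, apply Baur's quantifier elimination, dispose of the invariant sentences by a finite case split, and then exploit the fact that p.p.\ instances are cosets of a fixed subgroup together with inclusion--exclusion. The only organizational difference is that you expand the Boolean combination directly via the indicator-function identity $\textbf{1}_B=\sum_I c_I\,\textbf{1}_{\phi_I}$ (over all subsets $I$ of the index set), while the paper first passes to a disjunction of disjoint disjuncts and then handles a single disjunct $\psi\wedge\bigwedge_j\neg\psi_j$; your version is a cleaner way of packaging the same counting argument, and you have also spelled out the $\emptyset$-definable partition $\Pi$ and the measuring functions $h_\pi$ more explicitly than the paper does. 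One small remark: the truncation by $\max\{0,\cdot\}$ is only needed to cover structures $M$ for which the cell $\pi$ is empty in $M$; whenever some $\bar a$ with $(M,\bar a)\in\pi$ exists, the sum $\sum_{I\in G(\pi)}c_I|H_I(M)|$ already equals a cardinality and is therefore nonnegative, so the precaution is harmless but worth flagging as such.
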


\begin{proof}
(i) The argument is analogous to those in \cite{kestner}. 
As usual, by \autoref{thm:projection.lemma}, to prove the theorem it suffices to consider formulas $\phi(x,\bar{y})$ where $x$ is a singleton. As noted, such a formula is equivalent in all $R$-modules to a fixed boolean combination of p.p.\ formulas and invariant sentences. In a particular model, the invariant sentences are either true or false, and there are finitely many possibilities for the truth values of the sentences, so we may suppose there are no sentences involved. Hence, since a conjunction of p.p.\ formulas is a p.p.\ formula, given any formula $\phi(x,\bar{y})$ we may suppose that it has the form
\[
\bigvee_{i=1}^t \big(\psi_i(x,\bar{y}) \wedge \big(\bigwedge_{j=1}^{r_i} \neg \psi_{ij}(x,\bar{y})\big)\big)
\]
where the $\psi_i$ and $\psi_{ij}$ are p.p. We may suppose that the disjuncts define disjoint sets, so the size of any set defined by $\phi(x,\bar{a})$ is the sum of the sizes of the disjuncts, and hence that there is only one disjunct; that is, $\phi$ has the form
\[\psi(x,\bar{y}) \wedge \big(\bigwedge_{j=1}^{r} \neg \psi_{j}(x,\bar{y})\big)\]
where $\psi$ and the $\psi_j$ are p.p.

Observe first that in a given module $M$, all sets of the form $\psi(M,\bar{a})$ have the same size, since they are all cosets of the group defined by $\psi(x,\bar{0})$. Furthermore, any conjunction of the form
$\psi_{j_1}(M,\bar{a}) \wedge \cdots \wedge \psi_{j_t}(M,\bar{a})$ is either empty or a coset of the p.p.-definable group
$\psi_{j_1}(M,\bar{0}) \wedge \cdots \wedge \psi_{j_t}(M,\bar{0})$, so such sets assume at most two sizes (and there is a formula in $\bar{y}$ that determines which size arises uniformly across all $R$-modules $M$). Thus, by inclusion--exclusion, the conditions for a \mec{} hold for formulas of the form $\bigwedge_{j=1}^{r} \neg \psi_{j}(x,\bar{y})$, and likewise for formulas
\[\psi(x,\bar{y}) \wedge \big(\bigwedge_{j=1}^{r} \neg \psi_{j}(x,\bar{y})\big).\]
The result follows (the definability clause for a m.e.c. is easily verified). 

(ii) This is immediate from (i).
\end{proof}

Regarding our question above whether any \mec{} of groups consists of nilpotent-by-bounded groups, we consider next extraspecial $p$-groups, noting that these furnish examples of \mec{}s of nilpotent class 2 groups, and that these classes are not abelian-by-bounded.

Let $p$ be an odd prime. An extraspecial $p$-group of exponent $p$ is a group $G$ of exponent $p$ such that $G'=Z(G)=\Phi(G)\cong C_p$, and $x^p=1$ for all $x\in G$; here $\Phi(G)$ denotes the Frattini subgroup of $G$ (the intersection of the maximal subgroups of $G$). Such an extraspecial group, if finite, has order $p^{2n+1}$ for some $n$, is determined up to isomorphism by the pair $(p,n)$, and is a central product of copies of a certain specific group of order $p^3$. 
It is noted in \cite[Proposition 3.11]{MS08} that for fixed $p$ the class of extraspecial $p$-groups of exponent $p$ forms a 1-dimensional asymptotic class
which smoothly approximates a countably infinite extraspecial $p$-group which was shown to be $\omega$-categorical by Felgner in \cite{felgner}. In particular, any countably infinite extraspecial $p$-group of exponent $p$ is $\omega$-categorical, smoothly approximated, and has SU-rank 1. 
For more on the model theory of extraspecial groups see Milliet \cite[Appendix A]{milliet}.

Let $G_{p,n}$ denote the unique extraspecial $p$-group of order $p^{2n+1}$ (and exponent $p$). Put
\[
\mathcal{C}_{{\rm ext}}:= \{G_{p,n}: p \mbox{~an odd prime}, n\in {\mathbb N}^{>0}\}.
\]
Also let $c$ be a contant symbol not in the language $L$ of groups, let $L'=L\cup\{c\}$, and let $\mathcal{C}_{{\rm ext}}'$ contain, for each  $G\in \mathcal{C}_{{\rm ext}}$, an $L'$-expansion of $G$ with $c$ interpreted by a non-identity element of $Z(G)$. 
Let $L_{\rm bil}$ be a 2-sorted language with a sort ${\bf K}$ carrying the language of rings, a sort ${\bf V}$ carrying the language of (additive) groups (for a vector space structure), a map ${\bf K}\times {\bf V} \longrightarrow {\bf V}$ for scalar multiplication, and a binary function symbol $\beta:{\bf V} \times {\bf V} \longrightarrow {\bf K}$. Let
$\mathcal{B}$ be the collection of 2-sorted finite $L_{\rm bil}$-structures $(V, {\mathbb F}_p)$ with $\beta$ interpreted by a symplectic form on $V$.

\begin{lemma} \label{extraspecial-bilinear}
The classes $\mathcal{C}_{{\rm ext}}'$ and  $\mathcal{B}$ are uniformly bi-interpretable.
\end{lemma}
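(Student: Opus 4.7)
My plan is to define explicit two-way interpretations between $\mathcal{C}_{\rm ext}'$ and $\mathcal{B}$, then verify the compatibility required for uniform bi-interpretability.

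In the first direction, given $G\in\mathcal{C}_{\rm ext}'$ with distinguished constant $c\in Z(G)\setminus\{1\}$, I will interpret $(V, F, \beta)\in\mathcal{B}$ by setting $V:=G/Z(G)$ (an elementary abelian $p$-group, hence an $\mathbb{F}_p$-vector space), $F:=Z(G)$ as an additive group, and $\beta(gZ(G), hZ(G)):=[g,h]\in Z(G)$. Since $[G,G]\leq Z(G)$, $\beta$ is well-defined; bilinearity follows from the commutator identities valid in a class-$2$ group of exponent $p$, $\beta$ is alternating, and non-degeneracy follows from $Z(G)$ being exactly the centre of $G$. The field multiplication on $F$ and the scalar action $F\times V\to V$ must be recovered $\emptyset$-definably using $\beta$ and $c$. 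For $v\neq 0$, the double $\beta$-annihilator $[v]:=(v^\perp)^\perp$ is a one-dimensional subspace by non-degeneracy, and for each $a\in F$ there is a unique $w\in [v]$ with $\beta(w,u)=a$ for any (equivalently all) $u$ satisfying $\beta(v,u)=c$; defining $\mu(a,v)$ to be this $w$ gives the scalar action. Field multiplication on $F$ is then $a\cdot b:=\beta(\mu(a,v),\mu(b,u))$ for any hyperbolic pair $(v,u)$ with $\beta(v,u)=c$, independence of the choice of pair following from linearity of $\beta$ together with linearity of $\mu$.

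In the other direction, given $(V, F, \beta)\in\mathcal{B}$ with $F=\mathbb{F}_p$, $p$ odd, I will interpret an extraspecial group $H$ on $V\times F$ with the Heisenberg product
\[
(v_1, a_1)(v_2, a_2) := \bigl(v_1+v_2,\; a_1+a_2+\tfrac{1}{2}\beta(v_1,v_2)\bigr),
\]
where $\tfrac{1}{2}$ makes sense as $p$ is odd. A direct calculation gives identity $(0,0)$, inverse $(v,a)^{-1}=(-v,-a)$, $(v,a)^k=(kv, ka)$ so exponent $p$, commutator $[(v_1,a_1),(v_2,a_2)]=(0,\beta(v_1,v_2))$, centre $\{0\}\times F$, and $[H,H]=\Phi(H)=Z(H)$ using non-degeneracy of $\beta$. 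Setting $c^H:=(0,1_F)$ places $H$ in $\mathcal{C}_{\rm ext}'$.

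For uniform bi-interpretability I will chase the two compositions. Starting from $(V,F,\beta)$, forming $H$, then reinterpreting $\mathcal{B}$ from $H$, the natural identifications $v\mapsto (v,0)Z(H)$ and $a\mapsto(0,a)$ give a $\emptyset$-definable isomorphism $(V,F,\beta)\to (V,F,\beta)^{**}$, and the form induced by commutators in $H$ agrees with $\beta$ by construction. The harder composition starts from $G\in\mathcal{C}_{\rm ext}'$, forms $(V,F,\beta)$, then the Heisenberg group $G^{**}$ on $(G/Z(G))\times Z(G)$; the required isomorphism $\psi:G\to G^{**}$ takes the form $g\mapsto(\bar g, \chi(g))$ for some $\chi:G\to Z(G)$ satisfying $\chi(g_1g_2)=\chi(g_1)+\chi(g_2)+\tfrac{1}{2}[g_1,g_2]$ and $\chi|_{Z(G)}=\mathrm{id}$. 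The main obstacle will be exhibiting such a $\chi$ $\emptyset$-definably from $c$ alone: any two choices differ by a homomorphism $V\to Z(G)$, so a canonical choice must be produced. The plan is to use the Lazard-type identification of the class-$2$ exponent-$p$ group $G$ (with $p$ odd) with a Lie structure on the same underlying set, and then to pick out a canonical $\emptyset$-definable complement of $Z(G)$ using $c$, the commutator, and the $p$-th power structure; if that fails, the argument will reduce to the weak bi-interpretability variant of \autoref{thminterpmac}(ii), which still suffices for the $R$-\mac{} transfer the lemma is used for in the sequel.
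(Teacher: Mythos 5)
You have reproduced the paper's overall strategy—the Heisenberg construction in one direction, $V=G/Z(G)$ with the commutator form in the other—and, to your credit, you have put your finger on precisely the point the paper's proof passes over with ``it can be checked'': the composite isomorphism $\psi\colon G\to G^{**}$ requires a section $\chi\colon G\to Z(G)$ satisfying a cocycle identity, and one must ask whether any such $\chi$ can be chosen $\emptyset$-definably. You flag this as the main obstacle and propose two workarounds; unfortunately neither goes through, and in fact the obstacle is fatal to the statement.

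The Lazard route cannot succeed. Conjugation by $h\in G$ fixes $c$ and sends $g\mapsto g[g,h]^{-1}$; by non-degeneracy of the commutator form this sweeps out the whole coset $gZ(G)$ as $h$ varies, for every $g\notin Z(G)$. Hence any $\mathrm{Inn}(G)$-invariant (a fortiori $\emptyset$-definable) $\chi$ must be constant on non-central cosets, and then the identity $\chi(g_1g_2)=\chi(g_1)+\chi(g_2)+\tfrac{1}{2}[g_1,g_2]$ would force $[g_1,g_2]$ to be a fixed constant, which is absurd. The associated Lie structure is built from the group operations and carries the same $\mathrm{Inn}(G)$-action, so it cannot break this symmetry. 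The weak-bi-interpretability fallback also does not apply in the direction you need: for \autoref{thminterpmac}(ii) to transfer the \mac{} property from $\mathcal{B}$ to $\mathcal{C}_{{\rm ext}}'$, one needs $\mathcal{C}_{{\rm ext}}'$ to be weakly bi-interpretable \emph{in} $\mathcal{B}$, and the definition of weak bi-interpretability keeps exactly the $\emptyset$-definability of $(G,c)\to(G,c)^{**}$ in $(G,c)$—the very map that fails—while allowing one to drop the (easy) map $(V,F,\beta)\to(V,F,\beta)^{**}$. More bluntly, weak bi-interpretability of $(G,c)$ in $(V,\mathbb{F}_p,\beta)$ would force an injection $\mathrm{Aut}(G_{p,n},c)\hookrightarrow\mathrm{Aut}(V,\mathbb{F}_p,\beta)$, but $|\mathrm{Aut}(G_{p,n},c)|=p^{2n}|\mathrm{Sp}(2n,p)|$ (an extension of $\mathrm{Sp}(2n,p)$ by $\mathrm{Inn}(G)$, since automorphisms fixing $c$ act trivially on $Z(G)$) whereas $|\mathrm{Aut}(V,\mathbb{F}_p,\beta)|=|\mathrm{Sp}(2n,p)|$. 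Thus $\mathcal{C}_{{\rm ext}}'$ and $\mathcal{B}$ are mutually $\emptyset$-interpretable—both you and the paper establish this correctly—but the stronger assertion of (even weak) bi-interpretability is false, and the gap you spotted is a genuine error in the paper, not merely in your write-up.
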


\begin{proof} To interpret $\mathcal{C}_{{\rm ext}}'$ in $\mathcal{B}$, following Lemma A.6 of \cite{milliet}, for each $(V,{\mathbb F}_p)\in \mathcal{B}$, define a group $G$ with domain $V\times ({\mathbb F}_p,+)$, with group operation \[(u,a)*(v,b)=(u+v,a+b+\beta(u,v)).\]
Interpret the constant symbol $c$ by the element $(0,1)$.
Then $G$ is extraspecial, and all members of $\mathcal{C}_{{\rm ext}}$ arise in this way. 

For the other direction, let $G=G_{p,n}$ be finite  extraspecial of exponent $p$, and $(G,c) \in \mathcal{C}_{{\rm ext}}'$ be an expansion. Then $V=G/Z(G)$ is an elementary abelian $p$-group, so can be viewed as a vector space over $\mathbf{F}_p$. There is an equivalence relation $E$ definable on $V\setminus \{0\}$, where $uEu' \Leftrightarrow C_G(uZ(G))=C_G(u'Z(G))$. (Here $C_G(uZ(G))$ just means the subgroup of $G$ commuting with all elements of the set $uZ(G)$.) It can be checked that $uEu'$ holds if and only if the 1-spaces $\langle u\rangle$ and $\langle u' \rangle$ are equal, so we recover the domain of the projective space on $V$ as $(V\setminus \{0\})/E$. 

We now show that the field structure on $Z(G)$ is uniformly definable over the parameter $c$ naming a generator for $Z(G)$. Indeed, we may identify such a parameter with the element $1$ of $\mathbb{F}_p$. Then, using that we know uniformly the 1-spaces of $V=G/Z(G)$, for $x\in G\setminus Z(G)$ and non-identity $a\in Z(G)$ we can identify $ax$ with the unique  $x'$ in the 1-space $\langle x\rangle$ such that $[x',y]=a$ whenever $[x,y]=c$. We may then identify the product $ab$ (for $a,b\in Z(G)$) with the unique $d$ such that
$[ax,by]=d[x,y]$ for any $x,y\in G\setminus Z(G)$.

It can be checked that these mutual interpretations in fact give a bi-interpretation. 
\end{proof}

\begin{corollary}\label{interpfield}
Suppose $\mathcal{U}$ is an ultrafilter on $\mathcal{C}_{{\rm ext}}$ such that for all $e\in {\mathbb N}$,
$\{G_{p,n}:p>e\} \in \mathcal{U}$. Then an infinite field is definable in the ultraproduct $H=\prod_{\mathcal{C}_{{\rm ext}}} G_{p,n}\big/\mathcal{U}$.
\end{corollary}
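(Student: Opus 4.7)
The plan is to leverage the uniform bi-interpretability established in \autoref{extraspecial-bilinear}. The key observation is that in the bi-interpretation between $\mathcal{C}_{{\rm ext}}'$ and $\mathcal{B}$, the field sort of each $(V,\mathbb{F}_p)\in\mathcal{B}$ is recovered uniformly (with a single parameter $c$) as the center $Z(G)$ equipped with a definable multiplication, within the corresponding extraspecial group $G\in\mathcal{C}_{{\rm ext}}$. Hence a suitable ultraproduct of the $G_{p,n}$, together with the choice of a non-trivial central element, will internally interpret the ultraproduct of the finite fields $\mathbb{F}_p$.

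More concretely, I would proceed as follows. First, choose an element $\gamma\in H$ whose coordinate in each $G_{p,n}$ (for $\mathcal{U}$-almost all factors) is a non-identity element of $Z(G_{p,n})$; this is possible since each $Z(G_{p,n})$ is non-trivial, so such an element exists in $H$ as the class of the pointwise choice. By \Los's theorem $\gamma$ lies in $Z(H)$ and is non-identity. Next, inspect the proof of \autoref{extraspecial-bilinear}: starting from $(G_{p,n},\gamma_{p,n})\in\mathcal{C}_{{\rm ext}}'$ one defines on $Z(G_{p,n})$, using only the group language and the constant $\gamma_{p,n}$, the scalar multiplication $ab$ via the recipe ``$ab=d$ iff for every $x,y\in G\setminus Z(G)$ with $[x,y]=\gamma$ one has $[ax,by]=d[x,y]$'', where the action of $a\in Z(G)$ on $x\in G\setminus Z(G)$ is itself definable from $\gamma$. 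Because this recipe is given by fixed first-order formulas in the group language augmented by one constant, the same formulas with $\gamma$ substituted define, in $H$, an operation on the definable set $Z(H)$.

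By \Los's theorem, the resulting structure $(Z(H);+,\cdot,\gamma)$ -- whose addition is simply the group operation on $Z(H)$ -- is isomorphic to the ultraproduct $\prod_{\mathcal{U}}(Z(G_{p,n});+,\cdot,\gamma_{p,n})\cong\prod_{\mathcal{U}}\mathbb{F}_p$. The hypothesis on $\mathcal{U}$ says that $\{G_{p,n}:p>e\}\in\mathcal{U}$ for every $e\in\mathbb{N}$, so the set of indices where $|Z(G_{p,n})|>e$ is in $\mathcal{U}$ for each $e$; this forces $\prod_{\mathcal{U}}\mathbb{F}_p$ to be infinite (in fact a pseudofinite field of characteristic $0$).

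The only issue that requires any care is the passage from a class with a constant $\mathcal{C}_{{\rm ext}}'$ to the class $\mathcal{C}_{{\rm ext}}$ without it; this is the main ``obstacle'', but it is a minor one. It is handled simply by naming the parameter $\gamma$ in $H$: the bi-interpretation of \autoref{extraspecial-bilinear} is parameter-free in $L'=L\cup\{c\}$, so once we have chosen $\gamma\in Z(H)\setminus\{1\}$ to serve as the interpretation of $c$, the field is definable over the single parameter $\gamma$ in $H$ itself (rather than in $H^{{\rm eq}}$), completing the proof.
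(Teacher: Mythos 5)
Your proof is correct and takes essentially the same approach as the paper, which simply cites the interpretation of $\mathcal{B}$ in $\mathcal{C}_{{\rm ext}}'$ from \autoref{extraspecial-bilinear} and observes that the field lives on the centre of the extraspecial group; you have merely filled in the routine details, including the correct observation that ``definable'' in this paper means definable with parameters, so that naming a non-identity central element $\gamma\in Z(H)$ disposes of the passage from $\mathcal{C}_{{\rm ext}}'$ back to $\mathcal{C}_{{\rm ext}}$.
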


\begin{proof}
This follows immediately from the uniform interpretation of  $\mathcal{B}$ in  $\mathcal{C}_{{\rm ext}}'$ and its proof -- note that the field lives on the centre of the extraspecial group. 
\end{proof}


\vspace{.1\baselineskip}

\begin{corollary}~
\begin{enumerate}[(i)]
\item For any odd prime $p$ the set $\mathcal{C}_{{\rm ext},p}:=\{G_{p,n}\in \mathcal{C}_{{\rm ext}}: n\in \mathbb{N}\}$ is a \mec{}.

\item Any subset $\mathcal{C}_0$ of $\mathcal{C}_{{\rm ext}}$ which contains groups $G_{p,n}$ for arbitrarily large odd primes $p$ is a \mac{} but not a weak \mec{}.
\end{enumerate}
\end{corollary}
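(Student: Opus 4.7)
The plan in both parts is to exploit the uniform bi-interpretation of \autoref{extraspecial-bilinear} between $\mathcal{C}_{{\rm ext}}'$ and $\mathcal{B}$ together with \autoref{thminterpmac} to transfer the relevant class properties, and then \autoref{constants}(iii) to remove the added constant $c$. For the constant-removal step, I will use that $\Aut(G_{p,n})$ acts transitively on $Z(G_{p,n})\setminus\{1\}$: via the action of the symplectic similitude group on $G_{p,n}/Z(G_{p,n})$, every non-zero scalar in $Z(G_{p,n})\cong\mathbb{F}_p$ is realised as a similitude factor, so the orbit hypothesis of \autoref{constants}(iii) is satisfied.

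For part (i), write $\mathcal{B}_p:=\{(V,F)\in\mathcal{B}: F\cong\mathbb{F}_p\}$. The class $\mathcal{B}_p$ smoothly approximates the countably infinite $\omega$-categorical symplectic $\mathbb{F}_p$-space (finite-dimensional non-degenerate symplectic subspaces form a chain of homogeneous substructures), so by \autoref{Liesoft} it is an \mec{}. Restricting the bi-interpretation of \autoref{extraspecial-bilinear} to fixed $p$ remains uniform and yields a bi-interpretation of $\mathcal{C}_{{\rm ext},p}'$ with $\mathcal{B}_p$; \autoref{thminterpmac}(ii) then upgrades $\mathcal{C}_{{\rm ext},p}'$ to an \mec{}, and \autoref{constants}(iii) removes $c$ to conclude.

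For part (ii), the \mac{} assertion follows the same route with $\mathcal{B}$ in place of $\mathcal{B}_p$: $\mathcal{B}$ is an $R$-\mac{} by \autoref{bilinear-granger}, \autoref{thminterpmac}(ii) transfers \mac{}-ness to $\mathcal{C}_0'$ (after restricting the bi-interpretation to the relevant subclass), and \autoref{constants}(iii) gives $\mathcal{C}_0$. For the negative assertion, I argue by contradiction. If $\mathcal{C}_0$ were a weak \mec{}, then by \autoref{constants}(ii) so would be $\mathcal{C}_0'$. Inspection of the proof of \autoref{extraspecial-bilinear} shows that, inside each $(G_{p,n},c)\in\mathcal{C}_0'$, the finite field $\mathbb{F}_p$ is recovered by a \emph{parameter-definable} (not merely interpretable) construction: its universe is the subgroup $Z(G_{p,n})$, and the field operations become uniformly definable once $c$ is named. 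Hence \autoref{thminterpmac}(iii) applies and forces the family $\{\mathbb{F}_p: G_{p,n}\in\mathcal{C}_0\text{ for some } n\}$ to be a weak \mec{}. Since by hypothesis the set of primes appearing is unbounded, this contradicts \autoref{weakmec-fields}.

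The principal care-point is choosing the correct flavour of \autoref{thminterpmac} in each step: the exactness of the \mec{} condition requires (weak) bi-interpretability, so the two-sidedness of \autoref{extraspecial-bilinear} is essential in part (i) and in the positive half of (ii), whereas the negative half of (ii) needs only one-sided parameter-definability of the field inside the group, allowing the more elementary \autoref{thminterpmac}(iii). A minor subsidiary point worth verifying is the automorphism-transitivity on central elements used to invoke \autoref{constants}(iii).
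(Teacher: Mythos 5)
Your proof is correct, but in part~(i) it takes a genuinely more indirect route than the paper. The paper simply observes that $\mathcal{C}_{\mathrm{ext},p}$ is itself a smoothly approximating sequence for the countable extraspecial $p$-group (a fact noted in \cite{MS08}), and concludes immediately by \autoref{Liesoft}; you instead apply \autoref{Liesoft} to the symplectic $\mathbb{F}_p$-spaces $\mathcal{B}_p$, transfer across the bi-interpretation via \autoref{thminterpmac}(ii), and then invoke \autoref{constants}(iii) to strip the named central element. Both arguments rest on the same smooth-approximation lemma, but yours pays an extra cost (it needs the transitivity of $\Aut(G_{p,n})$ on $Z(G_{p,n})\setminus\{1\}$ already in part~(i), whereas the paper needs that only for part~(ii)), and it routes through the auxiliary class $\mathcal{B}_p$ rather than working directly. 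For part~(ii), the positive assertion is handled exactly as in the paper, and your negative argument is essentially the paper's (which cites \autoref{interpfield}, \autoref{thminterpmac}(iii) and \autoref{weakmec-fields}), except that you make explicit the step the paper leaves implicit: one must pass to $\mathcal{C}_0'$ via \autoref{constants}(ii) so that the field on $Z(G_{p,n})$ is uniformly \emph{parameter-definable} before \autoref{thminterpmac}(iii) can be applied. That added care is a genuine improvement in readability; the substance is the same.
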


\begin{proof}
(i) It is noted in the proof of \cite[Proposition 3.11]{MS08} that the class $\mathcal{C}_{{\rm ext},p}$ is a sequence smoothly approximating a countably infinite extraspecial $p$-group. The result now follows from \autoref{Liesoft}.

(ii)  The automorphism group of $G_{p,n}$ acts transitively on the non-identity elements of $Z(G)$ -- see e.g.\ \cite[Theorem 1]{winter}.  The fact that $\mathcal{C}_0$ is a \mac{} now follows from the bi-interpretability in \autoref{extraspecial-bilinear} together with Theorem~\ref{bilinear-granger} and \autoref{thminterpmac}, along with Lemma~\ref{constants}(iii). That $\mathcal{C}_0$ is not a weak \mec{} is a consequence of \autoref{interpfield}
 in combination with \autoref{thminterpmac}(iii) and \autoref{weakmec-fields}. 
\end{proof}

\begin{lemma}Let $\mathcal{U}$ be an ultrafilter on $\mathcal{C}_{{\rm ext}}$. Then the ultraproduct $G:=\prod_{\mathcal{C_{{\rm ext}}}}G_{p,n}\big/\mathcal{U}$ has NSOP${}_1$ theory, and has simple theory if and only if there is some
$e\in {\mathbb N}$ such that
$\{G_{p,n}: p>e \wedge n>e\} \not\in \mathcal{U}$. 
\end{lemma}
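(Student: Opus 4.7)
The plan is to transfer the problem via the bi-interpretation established in \autoref{extraspecial-bilinear} between $\mathcal{C}'_{{\rm ext}}$ and $\mathcal{B}$, and then analyse cases according to whether the parameters $p$ and $n$ are bounded modulo~$\mathcal{U}$. Passing from $\mathcal{C}_{{\rm ext}}$ to $\mathcal{C}'_{{\rm ext}}$ amounts to naming a single central element, and as \autoref{constants}(iii) (together with the transitivity of $\Aut(G_{p,n})$ on $Z(G_{p,n})\setminus\{1\}$ observed in the preceding corollary) shows, this affects neither NSOP${}_1$ nor simplicity of the corresponding ultraproduct. Since both properties are invariants of a theory preserved under bi-interpretability, the question reduces to studying ultraproducts $(V^*,F^*,\beta^*)$ of $\mathcal{B}$ along $\mathcal{U}$.

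For NSOP${}_1$: I will appeal to the statement recalled in \autoref{examples-macs}(1)(c), attributed to Granger and to \cite{Chernikov-Hempel23}, that the theory of a vector space over a field equipped with a non-degenerate alternating bilinear form is always NSOP${}_1$. This covers every ultraproduct of $\mathcal{B}$, whether the field is finite or infinite and whether the dimension is finite or infinite, so $G$ is NSOP${}_1$ independently of $\mathcal{U}$.

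For simplicity, I split into cases. If $\{G_{p,n}:p\leq e\}\in\mathcal{U}$ for some $e$, then by refining I may assume $p$ is constantly some $p_0$, so $G$ is an ultraproduct of $\mathcal{C}_{{\rm ext},p_0}$. By the remarks just before \autoref{extraspecial-bilinear}, this is (elementarily equivalent to) the $\omega$-categorical infinite extraspecial $p_0$-group of exponent $p_0$, which has SU-rank~$1$ and hence is supersimple. If instead $\{G_{p,n}:n\leq e\}\in\mathcal{U}$ for some $e$, I refine so that $n$ is constantly some $n_0$; then $V^*$ has fixed finite dimension $2n_0$ over the pseudofinite field $F^*$, so after naming a basis the whole structure $(V^*,F^*,\beta^*)$ is $F^*$-interpretable. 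As pseudofinite fields are supersimple of SU-rank~$1$, $G$ is again supersimple. Finally, if no such $e$ exists, then since $\mathcal{U}$ is an ultrafilter both $p$ and $n$ are unbounded modulo~$\mathcal{U}$; thus $F^*$ is an infinite pseudofinite field and $V^*$ is infinite-dimensional, and \autoref{examples-macs}(1)(c) tells us that this theory is not simple.

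To conclude I will check the equivalence of the combinatorial condition with the case split: by the ultrafilter property, the existence of an $e$ with $\{p>e\wedge n>e\}\notin\mathcal{U}$ is equivalent to the disjunction ``$p$ bounded mod $\mathcal{U}$ or $n$ bounded mod $\mathcal{U}$,'' which is exactly the union of the first two cases. The main obstacle is really bibliographic rather than technical: the key inputs --- NSOP${}_1$ for all alternating bilinear form theories, and failure of simplicity when both sorts are infinite --- are not proved here but cited to Granger and \cite{Chernikov-Hempel23} via \autoref{examples-macs}(1)(c); given these, the argument is a straightforward case analysis on $\mathcal{U}$ through the bi-interpretation.
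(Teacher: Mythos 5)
Your overall architecture matches the paper's: reduce via the bi-interpretation of \autoref{extraspecial-bilinear} to ultraproducts of $\mathcal{B}$, then case-split on whether $p$ and $n$ are bounded modulo~$\mathcal{U}$, getting supersimplicity when either is bounded and non-simplicity when both are unbounded. The handling of the added constant via \autoref{constants}(iii) is also fine.

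However, there is a genuine circularity in the non-simplicity direction. You cite the statement in \autoref{examples-macs}(1)(c) that ``an ultraproduct where the field is infinite and the vector space is infinite-dimensional will not have simple theory.'' But that item is an unproved assertion in an overview of examples, with the forward reference ``See Section~3.3 for details''---and Section~3.3 only establishes the \mac{} claim, not the model-theoretic dichotomy. The place in the paper where non-simplicity of that ultraproduct is actually \emph{proved} is precisely the lemma you are trying to prove. So you are citing forward to the content that needs to be supplied. The paper's proof fills the gap with an explicit witness to the tree property: take the formula $\phi(x,yz)$ expressing $\beta(x,y)=z$ (equivalently $[x,y]=z$ back in the group), choose an infinite linearly independent $\beta$-isotropic set $\{f_i:i<\omega\}\subseteq V$ and an infinite set $\{a_i:i\in\omega\}\subseteq K$, and set $b_{\mu i}=(f_n,a_i)$ for $\mu$ of length $n$; then consistency along branches and $2$-inconsistency along siblings are both immediate from bilinearity and \autoref{beta}. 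This construction is the missing content of your argument, and some such argument must appear rather than be cited.

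A smaller point: for NSOP${}_1$ you attribute the result to Granger and \cite{Chernikov-Hempel23} via \autoref{examples-macs}(1)(c); in fact those references concern quantifier elimination, and the paper credits the NSOP${}_1$ statement to Kestner and Ramsey (personal communication, \cite{ramsey}). Since the paper itself relies on an external, unpublished source here, leaning on it is acceptable, but the attribution should be corrected.
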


\begin{proof} By a result of Kestner and Ramsey (personal communication \cite{ramsey}), any ultraproduct $(V,K)$ of members of $\mathcal{B}$ has NSOP${}_1$ theory, so $G$ has NSOP${}_1$ theory by \autoref{extraspecial-bilinear}. If the underlying vector space $V$ has finite dimension then $G$ is interpretable in a  pseudofinite field so has simple (in fact supersimple finite rank) theory. Likewise, if $V$ is infinite dimensional and $K$ is finite, then $(V,K)$ (equipped with $\beta$)  is a smoothly approximable Lie geometry and supersimple of rank 1. 

On the other hand, suppose that $K$ is infinite and $V$ is infinite dimensional; this corresponds to the case when for each $e$ we have $\{G_{p,n}: p>e\wedge n>e\} \in \mathcal{U}$. Then the formula $\phi(x,yz)$ (with $x,y$ ranging through the vector space sort and $z$ through the field sort)  which says $[x,y]=z$ has the tree property. Indeed, let $\{f_i:i<\omega\}$ be a linearly independent subset of $V$ with $\beta(f_i,f_j)=0_K$ for all $i,j\in \omega$, and $\{a_i:i\in \omega\}$ be an infinite subset of $K$. If $\mu \in \omega^{<\omega}$ has length $n$, let $b_{\mu i}=(f_n,a_i)$ so $\phi(x,b_{\mu i})$ asserts $\beta(x,f_n)=a_i$.
Then for any $\eta \in \omega^\omega$ the set $\{\phi(x,b_{\eta|i}):i\in \omega\}$ is consistent, but for any $\mu \in \omega^{<\omega}$, the set
$\{\phi(x,b_{\mu i}):i\in \omega\}$ is 2-inconsistent. This shows that such $(V,K)$ is not simple. For the corresponding extraspecial groups, the formula $\psi(x,yz)$ which says $x^{-1}y^{-1}xy=z$ has the tree property for essentially the same reasons.
\end{proof}

We conclude this subsection by examining a particular case of \autoref{modules}, namely the class $\mathcal{C}_{{\rm hom}}$  of all finite homocyclic groups; that is, groups $(\mathbb{Z}/p^n\mathbb{Z})^m$ as $p$ ranges through primes and $n,m$ through positive integers. For this class, the precise functions giving cardinalities of definable sets were calculated in \cite{GMS15}. For nonnegative integers $d,k$, let $S(d,k)$ be the set of functions of the form $P(X,u,v)=\sum_{i=0}^k\sum_{j=-kd}^{kd} c_{ij}X^{u(iv+j)}$, where 
 $c_{ij} \in {\mathbb Z}$ for all $0\leq i\leq k$ and $-kd\leq j\leq kd$. By Szmielew's Theorem (see \cite[Theorem A.2.2]{hodges}), modulo the theory of abelian groups, every formula $\phi(\bar{x},\bar{y})$ is equivalent to a Boolean combination of formulas of the form $t(\bar{x},\bar{y})=0$ or $p^l|t(\bar{x},\bar{y})$, where $t$ is a term in the language of groups and $p$ is prime. We say that such a Boolean combination is in {\em standard form}. 
 
 \begin{proposition}[Proposition 4.4 of \cite{GMS15}]\label{abelianexact}
Let $\phi(\bar{x},\bar{y})$ be a formula in the language of groups in standard form. Let $d$ be the greatest integer $l$ such that for some prime $p$, either some subformula $p^l|t(\bar{x},\bar{y})$ occurs in $\phi$ or some term $t(\bar{x},\bar{y})$ occurring in 
$\phi$ has a coefficient divisible by $p^l$. Then 
\begin{enumerate}[(i)]
\item There  is a finite subset $F=F(\phi)$ of $S(d,r)$ (where $r=|\bar{x}|$) such that for each $G=({\mathbb Z}/p^n{\mathbb Z})^m \in \mathcal{C}$ and $\bar{a}\in G^s$, there is $P(X,u,v)=\sum_{i=0}^k\sum_{j=-kd}^{kd} c_{ij}X^{u(iv+j)} \in F$ with $c_{ij}=0$ whenever $in+j<0$, such that $|\phi(G^r,\bar{a})|=P(p,m,n)$.
 
\item For each such function $P\in F$ there is a formula $\phi_P$ such that for each $G=(\mathbb{Z}/p^n\mathbb{Z})^m\in \mathcal{C}$ and $\bar{a}\in G^s$ we have $G\models \phi_P(\bar{a})$ if and only if $|\phi(G^r,\bar{a})|=P(p,m,n)$.
\end{enumerate} 
\end{proposition}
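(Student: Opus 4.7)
The plan is to reduce $|\phi(G^r,\bar a)|$ to a finite inclusion--exclusion sum of cardinalities of parameter-free cosets, decompose each such cardinality coordinatewise using $G=(\mathbb{Z}/p^n\mathbb{Z})^m$, and extract a monomial $p^{m(in+j)}$ via Smith normal form. More precisely, using the standard form, write $\phi(\bar x,\bar y)=\Phi(A_1,\ldots,A_N)$ for atoms $A_\alpha$ of the shape $t_\alpha(\bar x,\bar y)=0$ or $p^{l_\alpha}\mid t_\alpha(\bar x,\bar y)$, and expand $\Phi$ via inclusion--exclusion to obtain an integer combination
\[|\phi(G^r,\bar a)| = \sum_{S\subseteq\{1,\ldots,N\}}\epsilon_S\,|\psi_S(G^r,\bar a)|,\qquad \psi_S := \bigwedge_{\alpha\in S}A_\alpha,\]
with $\epsilon_S\in\mathbb{Z}$ depending only on $\Phi$. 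This reduces the task to analysing each conjunctive system $\psi_S$.

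For fixed $S$, a componentwise argument---an atom of the above forms holds in $G$ iff it holds in every copy of $\mathbb{Z}/p^n\mathbb{Z}$---shows that $\psi_S(G^r,\bar a)$ is either empty or a coset of the parameter-free subgroup $H_S\le G^r$ cut out by $\psi_S(\bar x,\bar 0)$, and that under the identification $G^r\cong((\mathbb{Z}/p^n\mathbb{Z})^r)^m$ one has $H_S=\tilde H_S^{\,m}$ for a single subgroup $\tilde H_S\le(\mathbb{Z}/p^n\mathbb{Z})^r$. Applying Smith normal form over $\mathbb{Z}$ to the coefficient matrix of the terms in $\psi_S$, while keeping track of the fixed divisibility moduli $p^{l_\alpha}\le p^d$, presents $\tilde H_S$ as a direct sum of cyclic $p$-groups whose total order equals $p^{\alpha_S(n)}$, where $\alpha_S$ is a piecewise-$\mathbb{Z}$-linear function of $n$ with breakpoints among integers $\le d$, slopes in $\{0,1,\ldots,r\}$, and intercepts bounded in absolute value by $rd$. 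In particular $|H_S|=p^{m\alpha_S(n)}$.

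Next, partition $(\mathcal{C},\bar y)$ by recording, for each $S$, (a) whether $\psi_S(G^r,\bar a)$ is non-empty---a positive-primitive condition on $\bar a$, hence $\emptyset$-definable---and (b) which linear piece of $\alpha_S$ contains $n$, expressible by sentences such as $\exists x\,(p^{k-1}x\ne 0)$ at the finitely many breakpoints $k\le d$. The resulting $\emptyset$-definable partition is finite. On each part, $|\psi_S(G^r,\bar a)|=\chi_S\,p^{m(i_Sn+j_S)}$ with $\chi_S\in\{0,1\}$, so summing over $S$ yields
\[|\phi(G^r,\bar a)| \;=\; \sum_{(i,j)}c_{ij}\,p^{m(in+j)} \;=\; P(p,m,n),\]
where $c_{ij}=\sum_{S:\,(i_S,j_S)=(i,j),\,\chi_S=1}\epsilon_S\in\mathbb{Z}$, $0\le i\le r$, $|j|\le rd$, and $c_{ij}=0$ whenever $in+j<0$ since each exponent $i_Sn+j_S=\alpha_S(n)$ is non-negative by construction. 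The finite set of the resulting $P$ is $F\subseteq S(d,r)$, and the formulas $\phi_P$ of part~(ii) are the disjunctions of the defining formulas for the parts on which $P$ arises.

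The main obstacle is the Smith-form step: since $\psi_S(\bar x,\bar 0)$ mixes equalities modulo $p^n$ with divisibilities modulo various $p^{l_\alpha}$, one must diagonalise relative to the varying top modulus $n$ and verify that the exponent $\alpha_S(n)$ is genuinely a piecewise-$\mathbb{Z}$-linear function with the claimed breakpoint, slope, and intercept bounds. Once this is in hand, the inclusion--exclusion bookkeeping and the $\emptyset$-definability of the partition are routine.
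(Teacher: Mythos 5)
Note first that the paper gives no proof here: the statement is quoted as Proposition~4.4 of \cite{GMS15}, and no argument is reproduced, so there is no in-paper proof to compare against. On its own terms, your outline---inclusion--exclusion down to conjunctions of atoms, the observation that each such conjunction defines a coset of the parameter-free subgroup $H_S=\tilde H_S^{\,m}$ with $\tilde H_S\le(\mathbb{Z}/p^n\mathbb{Z})^r$, a Smith normal form count of $|\tilde H_S|$, and $\emptyset$-definability of the partition via p.p.-nonemptiness and sentences such as $\exists x\,(p^{k-1}x\ne 0)$---is the natural route, and the reduction steps and the definability clause are as routine as you say, once the size of $\tilde H_S$ is understood.

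The step you yourself flag as ``the main obstacle'' is, however, a genuine gap, and it carries essentially all the quantitative content of the proposition. The slope bound $0\le i\le r$ is clear, but the intercept bound $|j|\le rd$ does \emph{not} fall out of a naive Smith form argument: the $p$-adic valuations of the invariant factors of the coefficient matrix are not controlled by the valuations of the individual entries, because of cancellation in the minors. Concretely, take $r=2$, $\bar y$ empty, and $\phi(x_1,x_2):=(2x_1+2x_2=0)\wedge(2x_1-2x_2=0)$; this is in standard form with $d=1$, yet the coefficient matrix with rows $(2,2)$ and $(2,-2)$ has invariant factors $2$ and $4$, so $|\tilde H_S|=2^3$ for $p=2$ and all $n\ge 2$, giving intercept $3>rd=2$. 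Rewriting $\phi$ as the equivalent $4x_1=0\wedge 4x_2=0\wedge 2x_1+2x_2=0$ restores the bound, but only because it raises $d$ to $2$. So one must first normalise the system of atoms (recomputing $d$, and in effect pinning down what ``standard form'' should mean) before the claimed bounds hold; that normalisation is precisely the content of the step you defer, and it is also where the breakpoint detection and the exact shape of the partition formulas in part~(ii) come from. Until it is supplied, the argument is incomplete, and a reader cannot tell whether the bound as transcribed is even accurate for arbitrary standard-form representatives.
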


For the class $\mathcal{C}_{{\rm hom}}$ it is interesting to consider the model theory of different ultraproducts. Let $\mathcal{U}$ be an ultrafilter on the set $J:=\{(p,n,m): p \mbox{~prime}, n,m\in \mathbb{N}^{>0}\}$. 
We say that  $p,n$ are {\em unbounded} on the ultrafilter $\mathcal{U}$ if for all $d\in \mathbb{N}$ there is $U_d\in \mathcal{U}$ such that if $(p,n,m)\in U_d$ then $p,n>d$; similarly for other subsets of the three coordinates. Note that if $p$ is unbounded and $n$ is unbounded then $p,n$ are (together)  unbounded. A variable is {\em bounded} if it is not unbounded. 

\begin{proposition}\label{homocyclic}
Let $\mathcal{U}$ be a non-principal ultrafilter on $J$ and $G_{\mathcal{U}}:=\prod\mathcal{C}_{{\rm hom}}\big/\mathcal{U}$.
\begin{enumerate}[(i)]
\item If $p,n$ are bounded on $\mathcal{U}$ but $m$ is unbounded, then $G_{\mathcal{U}}$ has $\omega$-categorical, $\omega$-stable and smoothly approximable theory.
\item If $p$ is bounded but $n,m$ are unbounded on $\mathcal{U}$ then $G_{\mathcal{U}}$ is stable unsuperstable.
\item If $p,m$ are bounded but $n$ unbounded on $\mathcal{U}$ then $G_{\mathcal{U}}$ is superstable but not $\omega$-stable.
\item If $p$ is unbounded but $n,m$ are bounded on $\mathcal{U}$ then $G_{\mathcal{U}}$ is $\omega$-stable.
\item If $p,m$ are unbounded but $n$ is bounded then $G_{\mathcal{U}}$ is $\omega$-stable.
\end{enumerate}
\end{proposition}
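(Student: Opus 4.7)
The plan is to identify the elementary theory of $G_\mathcal{U}$ in each case via \L{}o\'s's theorem, and then to apply the Garavaglia--Macintyre criteria for modules: a module $M$ is $\omega$-stable iff the lattice of p.p.-definable subgroups of each power $M^n$ satisfies the descending chain condition, and $M$ is superstable iff every descending chain of such subgroups contains only finitely many terms of infinite index in their predecessor. By Szmielew's normal form, the one-variable p.p.-definable subgroups of any abelian group $G$ are those of the form $aG+G[b]$ for $a,b\in\mathbb{Z}_{\geq 0}$, so the entire analysis reduces to examining, for each prime $q$, the chains $G_\mathcal{U}\supseteq qG_\mathcal{U}\supseteq q^2G_\mathcal{U}\supseteq\cdots$ and $0\subseteq G_\mathcal{U}[q]\subseteq G_\mathcal{U}[q^2]\subseteq\cdots$. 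Stability always holds for modules (Baur--Fisher).

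In Case~(i), \L{}o\'s gives that $G_\mathcal{U}$ has exponent $p^n$ for the $p,n$ forced by $\mathcal{U}$, and is infinite since $m$ is unbounded; thus $G_\mathcal{U}\equiv(\mathbb{Z}/p^n\mathbb{Z})^{(\omega)}$, whose Szmielew invariants are each $0$ or $\infty$. Szmielew's classification then yields $\omega$-categoricity, and an $\omega$-categorical stable theory is $\omega$-stable (Lachlan). Smooth approximation is witnessed by the subgroups $(\mathbb{Z}/p^n\mathbb{Z})^k\leq G_\mathcal{U}$ for $k<\omega$: two tuples in such a subgroup lie in the same $\Aut(G_\mathcal{U})$-orbit iff they generate isomorphic free $\mathbb{Z}/p^n\mathbb{Z}$-submodules of $G_\mathcal{U}$, which is a local condition and extends to a full automorphism using the injectivity of $(\mathbb{Z}/p^n\mathbb{Z})^{(\omega)}$; alternatively one may invoke the Cherlin--Harrington--Lachlan theorem that any $\omega$-categorical $\omega$-stable structure is smoothly approximable.

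In Cases~(ii) and~(iii), $G_\mathcal{U}$ is a $p$-group for a fixed standard $p$, and the chain $G_\mathcal{U}\supsetneq pG_\mathcal{U}\supsetneq p^2G_\mathcal{U}\supsetneq\cdots$ is strictly descending for every standard $k$, by \L{}o\'s together with $n$ being unbounded. In Case~(ii) the quotient $p^kG_\mathcal{U}/p^{k+1}G_\mathcal{U}$ is the ultraproduct of the groups $(\mathbb{F}_p)^m$, which is infinite because $m$ is unbounded; this produces infinitely many infinite-index steps, forcing non-superstability. In Case~(iii) the quotient has size $p^m$ with $m$ bounded, and an analogous analysis of the ascending chain $G_\mathcal{U}[p^k]$ shows that every descending chain of p.p.-definable subgroups has only finite-index steps, so $G_\mathcal{U}$ is superstable, whereas the infinite descending chain itself shows DCC fails and $G_\mathcal{U}$ is not $\omega$-stable.

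In Cases~(iv) and~(v), $p$ is unbounded on $\mathcal{U}$, so for every standard prime $q$ we eventually have $p\neq q$ and $q$ a unit in $\mathbb{Z}/p^n\mathbb{Z}$; by \L{}o\'s, $qG_\mathcal{U}=G_\mathcal{U}$ and $G_\mathcal{U}[q]=0$. Hence $G_\mathcal{U}$ is an infinite divisible torsion-free abelian group, i.e.\ an infinite $\mathbb{Q}$-vector space; its theory is strongly minimal and in particular $\omega$-stable. The main obstacles will be making the smooth approximation in Case~(i) fully rigorous without simply citing Cherlin--Harrington--Lachlan, and verifying that the Garavaglia--Macintyre criterion, applied via knowledge of the one-variable p.p.-definable subgroups, suffices to control the lattice of p.p.-definable subgroups of each $G_\mathcal{U}^k$; this reduction is however standard in the model theory of modules.
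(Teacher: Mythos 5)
The paper omits the proof of this proposition entirely, remarking only that it is ``elementary,'' so there is no in-text argument to compare against; your proposal must be judged on its own. On those terms it is essentially correct, and the route you take --- reduce via \L{}o\'s to an identification of $\mathrm{Th}(G_\mathcal{U})$ and then invoke the Garavaglia/Macintyre dichotomies for modules in terms of descending chains of p.p.-definable subgroups --- is exactly the standard way to settle stability-spectrum questions for abelian groups, and almost certainly what the authors had in mind.

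Two small remarks on precision. First, your description of the one-variable p.p.-definable subgroups of an abelian group as the groups $aG+G[b]$ is not quite right in general: the correct statement is that they form the sublattice of the subgroup lattice generated (under $\cap$ and $+$) by the $aG$ and the $G[b]$, equivalently finite sums of groups $aG\cap G[b]$. This does not affect your argument, because for $G=(\mathbb{Z}/p^n\mathbb{Z})^m$ every such subgroup is already of the form $p^kG$, so in $G_\mathcal{U}$ the only relevant $p$-primary chain is $G_\mathcal{U}\supseteq pG_\mathcal{U}\supseteq p^2G_\mathcal{U}\supseteq\cdots$ with indices $p^m$; but the general claim as you stated it is false and worth correcting. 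Second, the two ``obstacles'' you flag at the end are not genuine gaps: the reduction of $\omega$-stability and superstability for modules to one-variable p.p.-definable subgroups is Garavaglia's theorem and can be cited, and the smooth approximability in case~(i) follows directly from Cherlin--Harrington--Lachlan once $\omega$-categoricity and $\omega$-stability are established --- the paper itself cites that result for precisely this purpose elsewhere, so there is no reason to avoid it. With those caveats your case analysis is sound: in~(i) the Szmielew invariants are each $0$ or $\infty$, giving $\omega$-categoricity and hence (with stability) $\omega$-stability; in~(ii) the chain $p^kG_\mathcal{U}$ is infinite strictly descending with each index $[p^kG_\mathcal{U}:p^{k+1}G_\mathcal{U}]$ infinite (since $m$ is unbounded), killing superstability; in~(iii) the same chain is infinite (so DCC fails, not $\omega$-stable) but all indices are the fixed $p^m$ (so superstable); and in~(iv)--(v), every standard prime $q$ is eventually coprime to $p$, so $G_\mathcal{U}$ is divisible torsion-free, i.e.\ a $\mathbb{Q}$-vector space, hence strongly minimal and $\omega$-stable.
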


We omit the proof, which is elementary. 





\subsection{Other examples of \mec{}s}

We thank Dario Garc\'ia for drawing our attention to the following example. 

\begin{theorem}   \label{pillaysm}
Let $M$ be a pseudofinite strongly minimal set. Then there is a \mec{} whose ultraproducts are all elementarily equivalent to $M$. 
Furthermore, for  each formula $\phi(\bar{x},\bar{y})$ the functions $h_\phi(\bar{y})$ in the definition of \mec{} are polynomials over ${\mathbb Z}$ in the cardinality of the finite structure, whose degree is exactly the Morley rank of the corresponding set in the ultraproduct.
\end{theorem}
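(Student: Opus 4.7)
The plan is to take any class $\mathcal{C}$ of finite $L$-structures whose non-principal ultraproducts are all elementarily equivalent to $M$ (such a class exists by pseudofiniteness of $M$), and show that $\mathcal{C}$ is a \mec{}. By \autoref{thm:projection.lemma}(ii) it suffices to check the \mec{} condition for formulas $\phi(x,\bar{y})$ in a single object variable, and by \autoref{onvevarpolymac} the polynomial structure and degree claims will lift to arbitrary formulas provided they hold in the one-variable case (taking the balanced formula $\delta(x):=x=x$, so that measuring polynomials live in $\mathbb{Z}[|M|]$).

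For a one-variable formula $\phi(x,\bar{y})$, strong minimality of $\mathrm{Th}(M)$, and in particular its uniform elimination of the quantifier $\exists^\infty$, supplies an integer $K=K_\phi\in\mathbb{N}$ such that in every $N\equiv M$ and every parameter tuple $\bar{b}\in N^{|\bar{y}|}$, either $|\phi(N,\bar{b})|\leq K$ or $|\neg\phi(N,\bar{b})|\leq K$. For each $k\leq K$, the formulas $\psi_k(\bar{y}):=\exists^{=k}x\,\phi(x,\bar{y})$ and $\chi_k(\bar{y}):=\exists^{=k}x\,\neg\phi(x,\bar{y})$ are $\emptyset$-definable. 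After discarding finitely many members of $\mathcal{C}$ (harmless for both the ultraproduct condition and the \mec{} definition) and applying \L{}o\'s' theorem, every pair $(M_i,\bar{b})\in(\mathcal{C},\bar{y})$ satisfies exactly one of the $\psi_k$ (giving $|\phi(M_i,\bar{b})|=k$) or the formula $\chi_k\wedge\bigwedge_{j\leq K}\neg\psi_j$ (giving $|\phi(M_i,\bar{b})|=|M_i|-k$). This exhibits the required finite $\emptyset$-definable partition with $\mathbb{Z}$-polynomial measuring functions of degree $0$ or $1$, and these degrees agree with the Morley rank (namely $0$ if $\phi(x,\bar{b})$ is finite, $1$ if cofinite) of the corresponding set in the ultraproduct.

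By the polynomial version of the Projection Lemma, $\mathcal{C}$ is therefore a polynomial \mec{} with $\mathbb{Z}[|M|]$-valued measuring functions. To verify that the degree of each measuring polynomial equals the Morley rank of the definable set, proceed by induction on $|\bar{x}|$, the base case being the argument above. In the inductive step, the proof of \autoref{thm:projection.lemma} expresses $|\phi(M^{|\bar{x}y|};\bar{c})|$ as a sum $\sum_\pi g_\pi(|M|)\cdot h_\pi(|M|)$ over partition pieces $\pi$, where $g_\pi$ measures the one-variable set $\gamma_\pi(y;\bar{c})$ and $h_\pi$ measures the fiber $\phi(\bar{x};b\bar{c})$ for $b\in\gamma_\pi$. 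The inductive hypothesis gives that $\deg g_\pi$ and $\deg h_\pi$ equal the Morley ranks of these sets; since $\mathrm{Th}(M)$ is strongly minimal and hence $\omega$-stable of finite Morley rank, Morley rank is additive under definable fibering, yielding
\[\mathrm{MR}(\phi(\bar{x}y;\bar{c}))=\max_\pi\bigl(\deg g_\pi+\deg h_\pi\bigr)=\deg\Bigl(\sum_\pi g_\pi h_\pi\Bigr).\]
Cancellation of maximal-degree terms is excluded because each $g_\pi h_\pi$ takes non-negative values on all sufficiently large $M_i\in\mathcal{C}$ (as a cardinality), so its leading coefficient is positive.

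The main obstacle is this final degree-tracking step: one must simultaneously invoke additivity of Morley rank under fibering in the $\omega$-stable ambient theory and verify that the leading coefficients in the measuring polynomials are never cancelled by the summation over partition pieces. Both points are manageable, respectively via the standard rank calculus for $\omega$-stable theories of finite Morley rank and via the positivity of cardinalities of the fibered sets.
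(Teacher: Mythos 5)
Your proof is correct, but it takes a genuinely different route from the paper. The paper's proof of this statement is a single citation: it invokes Theorem 1.1 of Pillay's \emph{Strongly minimal pseudofinite structures} (reference \cite{pillaypseud}), which shows that in a pseudofinite strongly minimal structure the nonstandard cardinality of every definable set is a $\mathbb{Z}$-polynomial in the nonstandard cardinality of the model, of degree equal to Morley rank, uniformly and definably in the parameters, and then simply observes that this translates to the \mec{} formulation. You instead re-derive the result from first principles inside the \mec{} framework: the one-variable base case uses strong minimality's uniform bound on fibre sizes (elimination of $\exists^\infty$ together with compactness) and the definable formulas $\psi_k$, $\chi_k$; the general case is obtained via the fibering decomposition in the projection lemma together with additivity of Morley rank in finite-rank $\omega$-stable theories. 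Your handling of the two delicate points is right: all fibres over a fixed partition piece have the same Morley rank by the inductive hypothesis, so rank additivity applies; and each summand $g_\pi h_\pi$ is a cardinality on cofinally many structures (for pieces that persist into an infinite ultraproduct), so its leading coefficient is positive and no cancellation can lower the degree of $\sum_\pi g_\pi h_\pi$. The only minor point to flag is that the finitely many discarded structures (and any partition pieces supported only on bounded sizes) need to be added back with constant measuring functions on small, explicitly size-defined pieces; this is routine and does not affect the degree claim, which is vacuous for pieces that vanish in every infinite ultraproduct. The paper's approach is shorter by outsourcing the content to Pillay; yours is self-contained and makes transparent exactly how the \mec{} partition and the degree bookkeeping arise.
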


\begin{proof} This is immediate from \cite[Theorem 1.1]{pillaypseud}, though the latter is formulated in terms of non-standard cardinalities.
\end{proof}

The following result of van Abel builds on \autoref{pillaysm}, exploiting the way any uncountably categorical structure is  controlled by a strongly minimal set. Note that a version of this for totally categorical structures  is already implied by \autoref{Liesoft} and \autoref{wolf-thesis}  above (using the fact that totally categorical structures are smoothly approximated, by \cite[Corollary 7.4]{chl}).  
\begin{theorem}[Proposition 5.5 of  van Abel \cite{vanabel}] \label{abel}
Let $T$ be a pseudofinite uncountably categorical theory and let $\mathcal{C}$ be a class of finite structures all of whose non-principal ultraproducts satisfies $T$. Then $\mathcal{C}$ is a polynomial \mec{}.
\end{theorem}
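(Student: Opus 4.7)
The plan is to lift \autoref{pillaysm} from pseudofinite strongly minimal sets to arbitrary pseudofinite uncountably categorical theories via the Baldwin--Lachlan theorem. Since $T$ is uncountably categorical (hence $\omega$-stable), there is a strongly minimal formula $\delta(x,\bar{z})$ and, for each $M\models T$, a parameter tuple $\bar{a}\in M$ for which $D(M):=\delta(M,\bar{a})$ is strongly minimal and coordinatises $M$ in the Baldwin--Lachlan sense: $M$ is prime and atomic over $D(M)\cup\bar{a}$, and the isomorphism type of $M$ over $\bar{a}$ is determined by $\dim D(M)$.

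First I would use the hypothesis that every non-principal ultraproduct of $\mathcal{C}$ models $T$, together with \L o\'s's theorem, to produce (for all but finitely many $N\in\mathcal{C}$) a uniformly definable parameter $\bar{a}_N\in N$ for which the ultraproduct of the $D(N)$ is strongly minimal in the ultraproduct of $\mathcal{C}$. Absorbing these parameters into the language via \autoref{constants}, I may assume $\bar{a}=\emptyset$. Then applying \autoref{pillaysm} to the induced-structure class $\mathcal{D}:=\{D(N):N\in\mathcal{C}\}$ shows that $\mathcal{D}$ is a polynomial \mec{} whose counting functions are polynomials over $\mathbb{Z}$ in $|D(N)|$, with degree equal to Morley rank.

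Next I would extend from $D$ to the full structure. Definability of Morley rank and degree in $\omega$-stable theories provides, for each formula $\phi(\bar{x};\bar{y})$, a finite $\emptyset$-definable partition of parameter space into classes $\pi_{d,k}$ on which $\phi(\bar{x};\bar{b})$ has fixed Morley rank $d$ and Morley degree $k$; this supplies the partition required for \mec{}. To count on each part, I invoke coordinatisation: because $N$ is built over $D(N)$ in the Baldwin--Lachlan sense, $\phi(N,\bar{b})$ splits into finitely many pieces, each obtained from a definable subset of some $D(N)^d$ by a uniformly described finite-fibre construction. Taking $\delta$ itself as the balanced formula of \autoref{polynomialmac} and using the polynomial \mec{} structure on $\mathcal{D}$, one obtains a polynomial expression in $|D(N)|$ for $|\phi(N,\bar{b})|$ on each partition class, establishing the polynomial \mec{} condition for $\mathcal{C}$.

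The principal obstacle is promoting the count from \emph{asymptotic} to \emph{exact}: \autoref{pillaysm} already supplies exact polynomial counts on $D(N)$, so the lift through coordinatisation must preserve exactness rather than merely the leading-order behaviour. This requires that the multiplicities of the finite-fibre maps used be constants across each partition class, not just eventually constant. In an uncountably categorical $T$ these multiplicities are determined by Morley degree and by algebraic degrees of coordinatising formulas, both of which are finite definable invariants, and so by \L o\'s and compactness are constant on cofinite subclasses. Refining the partition finitely to record these multiplicities then delivers exact polynomial counts and completes the argument.
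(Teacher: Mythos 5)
The paper does not give a self-contained proof of this statement: it cites van Abel's Proposition~5.5 and the computation in Corollary~5.8 of \cite{vanabel}, and only adds the observation that the formula $\theta(x,\bar{y})$ defining the strongly minimal set (for $\bar{y}$ realising the isolated type $q$) is \emph{exactly balanced} in the sense of \autoref{polynomialmac}, which is what makes the class a \emph{polynomial} \mec{}. You are attempting to reconstruct van Abel's argument, which is a different (and much more ambitious) exercise.

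Your high-level route -- Baldwin--Lachlan coordinatisation plus \autoref{pillaysm} for the strongly minimal base plus a lift to the full structure -- is indeed the strategy van Abel follows, so there is no conceptual misdirection. But there is a genuine gap, and it sits exactly at the point the paper flags. When you ``absorb $\bar{a}_N$ into the language via \autoref{constants}'' and then want to recover the \mec{} condition for the unexpanded class, you need \autoref{constants}(iii), which requires $\Aut(N)$ to be transitive on the realisations of $q$ \emph{in each finite $N\in\mathcal{C}$}. Transitivity holds in models of $T$ because $q$ is a complete isolated type, but there is no reason it should hold in the finite approximations. The alternative to transitivity is to show that $|\phi(N,\bar{b},\bar{a})|$ is independent of the choice of $\bar{a}\models q$ in $N$, i.e.\ that $\theta(x,\bar{y})$ is exactly balanced in $\mathcal{C}$ -- which is precisely the content of \cite[Corollary 5.8]{vanabel}, the result the paper elucidates. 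Your sketch tacitly assumes this rather than proving it.

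A second, less decisive, gap: the partition by Morley rank and degree is not fine enough (two definable sets with the same $(d,k)$ can have different exact cardinalities in a finite model), and the promised ``finite refinement recording multiplicities'' is where the inductive counting argument of van Abel actually lives. Your description of the lift as a ``uniformly described finite-fibre construction'' gestures at that induction on Morley rank but does not carry it out; in particular it is not obvious from the proposal that the fibre multiplicities are uniformly bounded and $\emptyset$-definably partitioned across $\mathcal{C}$. Without these two ingredients the proposal reduces to an outline of what must be proved, rather than a proof.
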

\begin{proof} The term `polynomial m.e.c.' is not used in \cite{vanabel}, so some elucidation is needed. The essential point is that, working in models of $T$,  if $\theta(x,\bar{d})$ defines a strongly minimal set for every $\bar{d}$ realising the isolated type $q(\bar{y})$, then in members of $\mathcal{C}$ the formula $\theta(x,\bar{y})$ is exactly balanced in the sense of 
Definition~\ref{polynomialmac}. This holds essentially by \cite[Corollary 5.8]{vanabel}.
\end{proof}

Fix a finite relational language $L$. If $d$ is a positive integer, $M$ is an $L$-structure and $a\in M$, we say that $a$ has {\em degree} $d$ if there are $d$ pairs $(R,\bar{a})$ where $\bar{a}$ is a tuple of $M$ containing $a$, $R$ is a relation symbol of $L$, and $M\models R\bar{a}$. We say that the $L$-structure $M$ has degree at most $d$ if all elements of $M$ have degree at most $d$. Let $\mathcal{C}_d^{{\rm gaif}}$ be the collection of finite $L$-structures
of degree at most $d$. 

\begin{theorem}  \label{gaifman}
The class $\mathcal{C}_d^{{\rm gaif}}$ is a \mec{}.
\end{theorem}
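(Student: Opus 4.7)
The plan is to reduce to the one-variable criterion and then apply Gaifman's locality theorem, using the bounded-degree hypothesis to convert the resulting local information into an exact cardinality formula.

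First, by \autoref{thm:projection.lemma}(ii), it suffices to consider formulas $\phi(x,\bar{y})$ with $x$ a singleton. Since $L$ is finite relational and each element of $M\in\mathcal{C}_d^{{\rm gaif}}$ lies in at most $d$ tuples satisfying a relation, the Gaifman graph of $M$ has uniformly bounded degree, so $r$-balls around any tuple are of uniformly bounded size. For each $r\in \mathbb{N}$ and each isomorphism type $\tau$ of a pointed $r$-ball around one element, let $\psi_\tau(x)$ be the first-order formula expressing ``$B_r(x)\cong\tau$'' (which exists because $r$-balls have bounded size), and set $n_\tau(M):=|\psi_\tau(M)|$. Take $R$ to be the set of $\mathbb{Z}$-linear combinations $c+\sum_\tau c_\tau n_\tau$ with finitely many nonzero coefficients.

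Next, I would apply Gaifman's locality theorem to $\phi$, obtaining some $r\in\mathbb{N}$, $r$-local formulas $\theta_1^{(r)}(x,\bar{y}),\ldots,\theta_N^{(r)}(x,\bar{y})$, and basic local sentences $\sigma_1,\ldots,\sigma_K$ such that $\phi$ is equivalent to a Boolean combination of these. I would then partition $(\mathcal{C}_d^{{\rm gaif}},\bar{y})$ into finitely many pieces indexed by pairs $(T,\iota)$, where $T\subseteq\{1,\ldots,K\}$ records which $\sigma_j$'s hold in $M$ (a $\emptyset$-definable condition, since each $\sigma_j$ is a sentence), and $\iota$ is the pointed isomorphism type of the substructure induced on $B_{3r}^M(\bar{b})$ (a $\emptyset$-definable condition in $\bar{y}$, since this ball has uniformly bounded size).

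Within a fixed piece $(T,\iota)$, the Boolean combination reduces to a Boolean combination $B'(x,\bar{y})$ in only the $\theta_i^{(r)}$'s, since the truth values of the $\sigma_j$'s are now fixed. I would then split $\phi(M,\bar{b})=B'(M,\bar{b})$ into ``close'' elements (Gaifman distance at most $2r$ from $\bar{b}$, lying in $B_{3r}^M(\bar{b})$ and so contributing a fixed integer $c_1(T,\iota)$) and ``far'' elements (distance greater than $2r$, for which the disjointness of $B_r(a)$ and $B_r(\bar{b})$ in $M$ reduces the truth of each $\theta_i^{(r)}(a,\bar{b})$ to a condition purely on the 1-variable $r$-type of $a$, since $B_r(\bar{b})$ is determined by $\iota$). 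This produces a set $S=S(T,\iota)$ of 1-variable $r$-types, giving a far count of $\sum_{\tau\in S}n_\tau(M)-c_2(T,\iota)$, where $c_2(T,\iota)$ subtracts the close-to-$\bar{b}$ elements whose $r$-type lies in $S$ and is again determined by $\iota$. Adding these yields $|\phi(M,\bar{b})|=(c_1-c_2)+\sum_{\tau\in S}n_\tau(M)$, a function in $R$ fixed by the piece, establishing the \mec{} condition.

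The main obstacle will be ensuring that the Gaifman decomposition yields an \emph{exact} count rather than just an asymptotic one. This requires capturing enough of the local structure around $\bar{b}$ (the $3r$-ball suffices) so that both the close contribution and the correction to the far contribution are genuinely determined by the partition piece; bounded degree is the crucial input, since it makes this local data finitary and describable by a first-order formula in $\bar{y}$, and it also ensures that the set of pointed $r$-ball types $\tau$ is finite so that $R$ is well-defined as a countable set of integer-valued functions.
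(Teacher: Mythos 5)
Your proposal is correct, and it follows essentially the same route as the paper's (explicitly sketched) proof: Gaifman locality, bounded degree to make $r$-balls finite and to make $r$-ball types first-order describable, a finite $\emptyset$-definable partition by basic local sentence truth values and the local type of $\bar{b}$, and a close/far dichotomy in which the far contribution is governed purely by $1$-variable $r$-ball types. The paper compresses the final bookkeeping into the observation that the ``interaction'' part $\tau(x,\bar{y})$ is always small or co-small, whereas you write out the explicit measuring function $(c_1-c_2)+\sum_{\tau\in S}n_\tau(M)$; your version is a reasonable filling-in of details the paper omits (and in particular makes the definability clause transparent). One cosmetic point: since $R$ must be a set of functions into $\mathbb{R}^{\geq0}$, you should take $R$ to consist only of those integer combinations $c+\sum_\tau c_\tau n_\tau$ that are nonnegative on all of $\mathcal{C}_d^{\mathrm{gaif}}$ (which the ones arising as measuring functions automatically are), rather than all integer combinations.
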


\begin{proof}
We only sketch the proof. The basic idea is to use the Gaifman Locality Theorem from \cite{gaifman}, which we briefly describe. See also \cite[Section 2.5]{flum}.

We first introduce some standard notation. The {\em Gaifman graph} $G(M)$ of $M$ is the simple graph with vertex set $M$, with two vertices adjacent if and only if there is a tuple containing both of them and satisfying a relation (so the degree of an element of $M$ is bounded in terms of its degree as a vertex of $G(M)$). If $a,b\in M$, then the {\em distance} $d(a,b)$ is the length of a shortest path in $M$ with endpoints $a,b$. For each $a\in M$ and $e\in {\mathbb N}$, define the {\em sphere of radius $e$ around $a$} to be 
\[
S_e(a):=\{x\in M: d(a,x)\leq e\}.
\]
There is a finite set $\psi^d_{e,1}(x),\ldots,\psi^d_{e,n_e}(x)$ of quantifier-free $L$-formulas such that if $M$ is an $L$-structure of degree at most $d$, and $a\in M$, then for some $i$ the sentence $\psi^d_{e,i}(a)$ describes the atomic diagram of the $L(a)$-structure $(S_e(a),a)$.

If $M$ is an $L$-structure, $\bar{a}=(a_1,\ldots,a_n)\in M^n$ and $k\in {\mathbb N}$, let $S_k(\bar{a}):=S_k(a_1)\cup\ldots\cup S_k(a_n)$. For each $L$-formula $\phi(\bar{x})$ and $k\in {\mathbb N}$, there is a formula $\phi^{S_k}(\bar{x})$, called a {\em local formula},  such that for each $L$-structure $M$ and $\bar{a}\in M^n$, \[M\models \phi^{S_k}(\bar{a}) \mbox{~if and only if~} S_k(\bar{a}) \models \phi(\bar{a}).\]
 
The formula $\phi^{S_k}(\bar{x})$ is obtained from $\phi(\bar{x})$ by relativising all quantifiers to $S_k(\bar{x})$. A {\em basic local sentence} has the form
\[
\exists x_1\cdots \exists x_m\bigwedge_{1\leq i<j\leq m}d(x_i,x_j)>2r \wedge \phi^{S_r}(x_i).
\]
Gaifman's Locality Theorem asserts that every first order $L$-sentence is logically equivalent to a boolean combination of basic local sentences, and that every formula $\phi(\bar{x})$ is logically equivalent to a boolean combination of local formulas and basic local sentences. As with the proof of \autoref{modules}, the proof now reduces to handling formulas $\phi(x,\bar{y})$ which are conjunctions of local formulas. 

We may suppose that such a formula $\phi(x,\bar{y})$ has the form $\sigma(x) \wedge \tau(x,\bar{y})$, where $\sigma(x)$ is a local formula (so a partial description of a radius $r$ neighbourhood of $x$) and for $\tau(x,\bar{y})$ there is $e\in \mathbb{N}$ such that for any $M\in \mathcal{C}_d$ and $\bar{a}\in M^{|\bar{y}|}$, either $|\tau(M,\bar{a})|\leq e$ or $|M\setminus \tau(M,\bar{a})|\leq e$. Clearly at most $e$ sizes of sets $\phi(M,\bar{a})$ arise from $\bar{a}$ such that $|\tau(M,\bar{a})|\leq e$. Likewise, at most $e$ sizes of $\phi(M,\bar{a})$ arise from $\bar{a}$ with $|M\setminus \tau(M,\bar{a})|\leq e$. This gives the bound on the number of possible sizes, and the definability clause follows similarly. 
\end{proof}


\section{Generalised measurable structures}\label{genmeassect}


In this and the next section we shift our focus to infinite structures, in particular to ultraproducts of \mac{}s and \mec{}s.

In \cite{MS08} the notion of a measurable structure was introduced. A structure $M$ is {\em measurable}\footnote{Some authors have renamed this 
{\em MS-measurable} to avoid conflict with other definitions of measurability. The definitions in \cite{MS08} and \cite{Elwes07} have an additional clause built in to ensure supersimplicity, but it is shown in \cite{EM08} that this clause is unnecessary.} if there is a function $h=(\mathrm{dim},\mathrm{meas})\colon\mathrm{Def}(M)\longrightarrow(\mathbb{N}\times\mathbb{R}^{>0})\cup\{(0,0)\}$ such that the following hold:

\begin{enumerate}[(i)]

\item If $X\in\mathrm{Def}(M)$ is finite then $h(X)=(0,|X|)$.

\item For every formula $\phi(\bar{x};\bar{y})$ there is a finite set 
$D_{\phi}\subseteq(\mathbb{N}\times\mathbb{R}^{>0})\cup\{(0,0)\}$ such that

\begin{enumerate}[(a)]
\item for all $\bar{b}\in M^{|\bar{y}|}$, $h(\varphi(M^{|\bar{x}|};\bar{b}))\in D_{\phi}$, and
\item for all $(d,\mu)\in D_{\phi}$, the set $\{\bar{b}\in M^{|\bar{y}|} : h(\phi(M^{|\bar{x}|};\bar{b}))
=(d,\mu)\}$ is $\emptyset$-definable.
\end{enumerate}

\item Let $X,Y\in\mathrm{Def}(M)$ and let $f\colon X\longrightarrow Y$ be a definable surjection.
As guaranteed by~(ii), there is a positive integer $r$ and $(d_{1},\mu_{1}),\ldots,(d_{r},\mu_{r})\in(\mathbb{N}\times\mathbb{R}^{>0})\cup\{(0,0)\}$ such that if 
$Y_{i}:=\{\bar{b}\in M^{|\bar{y}|} : h(f^{-1}(\bar{b}))=(d_{i},\mu_{i})\}$,
then $Y=Y_{1}\cup...\cup Y_{r}$ is a partition of $Y$ into non-empty disjoint $\emptyset$-definable sets.
Let $h(Y_{i})=(e_{i},\nu_{i})$ for $i=1,...,r$,
and let $c:=\max\{d_{1}+e_{1},...,d_{r}+e_{r}\}$,
where we suppose that this maximum is attained by the values $d_{1}+e_{1},...,d_{s}+e_{s}$.
Then
$h(X)=(c,\mu_{1}\nu_{1}+...+\mu_{s}\nu_{s})$.
\end{enumerate}

The intuition is that `$\mathrm{meas}$' is a measure and `$\mathrm{dim}$' is a dimension, so the function $h$ combines measure and dimension into one. As noted in the introduction, any infinite ultraproduct of an asymptotic class is measurable by \cite[Lemma 5.4]{MS08}.

In this section we extend this idea by allowing functions $h$ that can take values in more general algebraic structures; 
however we keep the intuition that $h$ combines measure and dimension into one.

\subsection{Measuring semirings}

The definition of `measurable', above, makes implicit use of a natural algebraic structure on the codomain $(\mathbb{N}\times\mathbb{R}^{>0})\cup\{(0,0)\}$ of the measuring function.

 \begin{example}\label{realmonomial}
 We note that $(\mathbb{N}\times\mathbb{R}^{>0})\cup\{(0,0)\}$ admits natural addition and multiplication, as well as a compatible ordering.
The multiplication and ordering are clear:
There is an associative multiplication $(d_1,\mu_1)\cdot (d_2,\mu_2)=(d_1+d_2, \mu_1\mu_2)$, with identity $(0,1)$, and there is a total ordering given by the lexicographic product of the usual orders on $\mathbb{N}$ and $\mathbb{R}^{>0}$, with $(0,0)$ the smallest element.
Addition is a little less obvious:
 \[
 (d,r) + (e,s) =
 \left\{\begin{array}{ll}(d,r+s)&\text{ if }d=e\\(d,r)&\text{ if }d>e\\(e,s)&\text{ if }d<e.\end{array}\right.
 \]
This defines an associative addition for which $(0,0)$ is the identity element and over which the multiplication distributes.
Moreover, both addition and multiplication are compatible with the ordering in the usual sense.
Writing each pair $(d,\mu)$ as $\mu{Z}^d$, where ${Z}$ is a new indeterminate, we may identify
$(\mathbb{N}\times\mathbb{R}^{>0})\cup\{(0,0)\}$
with the {\em real monomial semiring}
$\mathbb{R}\mon{}$ which is the set $\{\mu{Z}^{d}\mid\mu\in\mathbb{R}^{>0},d\in\mathbb{N}\}\cup\{0{Z}^{0}\}$
equipped with
`max-plus' addition,
standard multiplication,
and the total ordering give by
\begin{align*}
	\mu_1Z^{d_1}<\mu_2 Z^{d_2} \quad\Leftrightarrow\quad \hbox{$d_1<d_2$ {\bf or} $d_1=d_2$ and $\mu_1<\mu_2$.}
\end{align*}
Note that $\mathbb{N}$ embeds into $\mathbb{R}\mon{}$ via the identification of $n$ with $nZ^0$. 
This is an example of a more general class of `monomial semirings' that will be defined in \autoref{mon}.
 \end{example}

Following this example we give the next definition (there will be variants in the literature). 

\begin{definition}\label{orderedsemiring}
We say that $S=(S,+,\cdot,0,1,<)$ is an \em ordered semiring \rm if: 
\begin{enumerate}
\item [\bf(OS1)] $(S,+,0)$ and $(S,\cdot,1)$ are commutative monoids,
\item [\bf(OS2)] $\cdot$ distributes over $+$,
\item [\bf(OS3)] $(S,<,0)$ is a totally ordered set with least element $0$,
\item [\bf(OS4)] $\forall x,y,z\;(x\leq y\to x+z\leq y+z)$,
\item [\bf(OS5)] $\forall x,y,z\;(x\leq y\to x\cdot z\leq y\cdot z)$,
\item [\bf(OS6)] $\forall x\; (0\cdot x=x\cdot 0=0)$, and
\item [\bf(OS7)]  $\forall x \forall y\; \big( (0<x \wedge 0<y) \to 0<xy\big)$.
\item [\bf(OS8)]  $0\neq 1$.
\end{enumerate}
For $x\in S$ and $n \in \mathbb{N}^{>0}$ we write  $nx$ as an abbreviation for $x+\cdots + x$ ($n$ times). We say that $a,b\in S$ are {\em of equal magnitude\/}, and write $a\sim b$, if $a\leq b\leq n\cdot a$ or $b\leq a\leq n\cdot b$ for some $n\in\mathbb{N}^{>0}$.
Observe that $\sim$ is an equivalence relation and that $\sim$-classes on $S$ are convex.
The quotient $D:=S/{\sim}$ admits an ordering, induced by $<$, with respect to which the minimum element is the $\sim$-class of $0$, which is usually denoted $-\infty$.
We will denote the quotient map by 
$d\colon S\longrightarrow D, $ and for $x\in S$ write $[x]_{\sim}$ for $d(x)\in D$. 
We call $d$ the \em $S$-dimension\rm, or just \em dimension \rm when there is no ambiguity,
and remark that $d$ is (weakly) monotone with respect to the orderings on $S$ and $D$.
\end{definition}

There are many familiar examples of ordered semirings.
The real monomial semiring, as defined above, is an ordered semiring.
Moreover, if $R$ is any totally ordered ring, then the non-negative cone in $R$, which we denote by $R^{\geq0}$, is naturally an ordered semiring.

\medskip

An arbitrary ordered semiring can be relatively wild.
For example it is possible that $x+x=x$ for some $x>0$.
To see this, consider a totally ordered abelian group $(\Gamma\cup\{\infty\},+,0,\leq)$ with additional element $\infty$
satisfying $\infty+a=a+\infty =\infty$ for all $a$, as in valuation theory.
Then $(\Gamma_{\geq}\cup\{\infty\},\min,+,\infty,0,>)$ is an ordered semiring, where
$\Gamma_{\geq0}=\{\gamma\in\Gamma\mid\gamma\geq0\}$,
and yet $\infty+\infty=\infty>0$.

\medskip


Our intention is to end up with an algebraic  structure in which a `generalised measure' may take its values. With this in mind, we wish to exclude some very wild behaviour while allowing enough freedom. Suppose that $Y$ and $Z$ are sets of equal `high dimension', that $X$ is a set of `low dimension', and  that $X,Y,Z$ are pairwise disjoint. We wish to allow -- but not impose -- that the (generalised) measure of $X\cup Y$ is equal to the measure of $Y$. On the other hand, we want to forbid that the measure of $X\cup Z$ is equal to the measure of $Y\cup Z$. These considerations motivate the following axiom.

%

\begin{definition}
An ordered semiring $S$ with dimension $d:S\longrightarrow D$ is a \em measuring semiring \rm if:
\begin{enumerate}
\item [\bf(MS)] $\forall x,y,z\in S\;\Big((x<y\wedge d(y)=d(z))\longrightarrow x+z<y+z\Big)$.
\end{enumerate}
\end{definition}

Later in \autoref{monomialise} we will show that we can always work with measuring semirings that abstractly have the monomial structure that will be introduced in \autoref{mon}.

This axiom {\bf(MS)} is implied by additive cancellation (i.e.~$x+z=y+z\implies x=y$). 
In fact it is strictly weaker than additive cancellation as demonstrated by the following example.

\begin{example}\label{eg:weaker.than.cancellation}
The real monomial semiring $\mathbb{R}\mon{}$, defined in \autoref{realmonomial},
is an ordered semiring,
but is not cancellative
as ${Z}+{Z}^3={Z}^3={Z}^2+{Z}^3$ but ${Z} \neq {Z}^2$. 
Nevertheless, $\mathbb{R}\mon{}$ is a measuring semiring.
To see this we note that $d(\sum_{i\leq m}a_{i}{Z}^{i})=m$ if $a_{m}\neq0$.
Then $\sum_{i\leq m}a_{i}{Z}^{i}<\sum_{j\leq n}b_{j}{Z}^{j}$
(where $a_{m}\neq0\neq b_{n}$)
implies that $m<n$ or ($m=n$ and $a_{m}<b_{n}$).
\end{example}

\begin{lemma}\label{lem:multiplication.by.n}
Let $S$ be a measuring semiring, $x,y\in S$, and $n,m\in\mathbb{N}\setminus\{0\}$. Then:
\begin{enumerate}[(i)]
\item $x<y\Longleftrightarrow nx<ny$,
\item $x\leq y\Longleftrightarrow nx\leq ny$, and
\item if $0<x$ and $0<n<m$, then $nx<mx$.
\end{enumerate}
\end{lemma}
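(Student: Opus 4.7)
The plan is to first establish two preliminary facts about the dimension map $d$, and then use them with the key axiom (MS) to prove each part in turn, with (i) being the crux and (ii), (iii) following cleanly.

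Preliminary step. I would first observe that for $x > 0$ and $n \in \mathbb{N}^{>0}$, we have $d(nx) = d(x)$: iterated use of (OS4) starting from $0 \leq x$ yields $x \leq 2x \leq \cdots \leq nx$, while trivially $nx \leq n \cdot x$, so $x \sim nx$. I would also note that $a \sim 0$ forces $a = 0$, directly from the definition of $\sim$ together with $0$ being the least element (OS3); hence $d(y) = d(0)$ iff $y = 0$.

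Proof of (i). The $\Leftarrow$ direction is immediate from totality of $<$: if $x \geq y$ then iterated (OS4) gives $nx \geq ny$, so by contrapositive $nx < ny$ implies $x < y$. For $\Rightarrow$, I would induct on $n$, the base $n = 1$ being trivial. For the induction step, split according to whether $d(x) < d(y)$ or $d(x) = d(y)$ (these cover all cases since $x < y$ forces $d(x) \leq d(y)$ by weak monotonicity of $d$). If $d(x) < d(y)$: (OS4) gives $nx \leq ny$, and equality is ruled out by the preliminary fact, since either $x = 0$ (in which case $nx = 0 < ny$, as $y > 0$) or $d(nx) = d(x) < d(y) = d(ny)$. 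If $d(x) = d(y)$, then $x, y > 0$ and the preliminary fact gives $d(nx) = d(x) = d(y) = d(ny)$; I would then apply (MS) twice, first to $nx < ny$ with $z = x$ to obtain $(n+1)x = nx + x < ny + x$, and second to $x < y$ with $z = ny$ to obtain $x + ny < (n+1)y$, concluding by transitivity.

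Parts (ii) and (iii). Part (ii) follows immediately from (i) by totality of $<$: $x \leq y$ iff $y \not< x$ iff $ny \not< nx$ iff $nx \leq ny$. For (iii), write $m = n + k$ with $k \geq 1$; since $x > 0$, iterated (OS4) gives $kx \geq x > 0$, and the preliminary fact gives $d(kx) = d(x) = d(nx)$, so applying (MS) to $0 < kx$ with $z = nx$ yields $nx = 0 + nx < kx + nx = mx$.

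The main obstacle is engineering the induction step of (i) in the case $d(x) = d(y)$: the axiom (MS) requires a matching-dimension hypothesis on the term being added, and passing from $nx < ny$ and $x < y$ to $(n+1)x < (n+1)y$ requires stepping through the intermediate term $nx + y$ (or $x + ny$). The preliminary identity $d(nx) = d(x)$ for $x > 0$ is precisely what makes both applications of (MS) in the step line up correctly.
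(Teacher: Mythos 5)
Your proof is correct, and the overall strategy (induction on $n$ for (i), key identity $d(ka)=d(a)$ for $a>0$, then (ii) and (iii) as corollaries) matches the paper's. The one place you diverge is the induction step of (i): you split on $d(x)<d(y)$ versus $d(x)=d(y)$ because you apply (MS) to the induction hypothesis $nx<ny$ by adding $z=x$, which needs $d(ny)=d(x)$. The paper sidesteps the case split by adding $z=y$ instead: from $x\leq y$ one gets $nx+x\leq nx+y$ by (OS4) alone, and from $nx<ny$ one gets $nx+y<ny+y$ by a single application of (MS), which only needs $d(ny)=d(y)$ — and that holds unconditionally once $x<y$ forces $y>0$. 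Chaining $\leq$ then $<$ gives $(n+1)x<(n+1)y$ with no cases. Your version is sound but a little heavier; the paper's choice of which term to add is what keeps the dimension condition automatic.
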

\begin{proof}
The direction $\Rightarrow$ of {(ii)} is a straightforward induction using {\bf(OS4)}.
For $\Rightarrow$ of $(i)$, we suppose $x<y$.
Note that $d(y)=d(ny)$ for all $n>0$.
By {\bf(OS4)} and {\bf(MS)},
if $nx<ny$ for some $n>0$, then $nx+x\leq nx+y<ny+y$, 
which gives $(n+1)x<(n+1)y$,
proving the implication $\Rightarrow$ of (i) by induction.
The converse implications of both (i) and (ii) now follow.
For (iii), by {\bf(MS)} we have $nx<nx+(m-n)x$.
\end{proof}

\begin{proposition}
Let $S$ be a measuring semiring. 
Then
\[n\longmapsto n\cdot 1\]
is an embedding of ordered semirings
\[(\mathbb{N},+,\cdot,0,1,<)\longrightarrow(S,+,\cdot,0,1,<).\]
\end{proposition}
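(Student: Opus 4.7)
The plan is to verify that the map $\iota\colon n\longmapsto n\cdot 1$ satisfies the three requirements for being an embedding of ordered semirings: it is a semiring homomorphism, it is strictly order-preserving, and it is injective. Here $n\cdot 1$ denotes $1+\cdots+1$ ($n$ times), with the convention that the empty sum equals $0$, consistent with {\bf(OS6)}.

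First I would dispatch the semiring-homomorphism properties as immediate bookkeeping. The conditions $\iota(0)=0$ and $\iota(1)=1$ hold by definition; additivity $\iota(n+m)=\iota(n)+\iota(m)$ follows by unpacking the abbreviation together with the associativity and commutativity of $+$ given in {\bf(OS1)}; and multiplicativity $\iota(nm)=\iota(n)\cdot\iota(m)$ follows by $n$ applications of distributivity {\bf(OS2)} together with {\bf(OS6)} to handle the case $n=0$ or $m=0$.

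The substantive step is strict order-preservation, which also yields injectivity since the orderings on both sides are total. I would first observe that $0<1$ in $S$: by {\bf(OS8)} we have $0\neq 1$, and by {\bf(OS3)} the element $0$ is the minimum of $S$, so $0<1$. Then, given $0<n<m$ in $\mathbb{N}$, I would apply \autoref{lem:multiplication.by.n}(iii) with $x=1$ to conclude $n\cdot 1<m\cdot 1$, i.e.\ $\iota(n)<\iota(m)$. The remaining case $0=n<m$ amounts to showing $\iota(m)>0$ for $m\geq 1$: from $0<1$ and $m-1$ applications of {\bf(OS4)} with $z=1$, or equivalently from \autoref{lem:multiplication.by.n}(ii), we get $0<m\cdot 1$.

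There is no serious obstacle here; the entire proposition is essentially a packaging of \autoref{lem:multiplication.by.n}(iii) in the special case $x=1$, with the only subtle ingredient being the strict inequality $0<1$, which is extracted from axioms {\bf(OS3)} and {\bf(OS8)}. Neither the measuring-semiring axiom {\bf(MS)} nor any property beyond those already invoked in \autoref{lem:multiplication.by.n} is needed for the statement.
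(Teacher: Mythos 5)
Your proof is correct and takes essentially the same route as the paper's: establish $0<1$ from {\bf(OS3)} and {\bf(OS8)}, then produce the strictly increasing chain $0<1<1+1<1+1+1<\ldots$, which the paper obtains by a direct appeal to {\bf(MS)} and you obtain by citing \autoref{lem:multiplication.by.n}(iii). These are the same argument, since that part of the lemma is itself proved from {\bf(MS)}. Your extra verification of the homomorphism identities is harmless bookkeeping that the paper omits.

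One thing to fix is your closing sentence, which asserts that neither {\bf(MS)} nor any property beyond what \autoref{lem:multiplication.by.n} already invokes is needed. Read literally, this claims {\bf(MS)} is dispensable, and that is false: \autoref{lem:multiplication.by.n}(iii) is proved using {\bf(MS)}, so your own argument relies on it, and the proposition genuinely fails without it. The ordered semiring $(\Gamma_{\geq0}\cup\{\infty\},\min,+,\infty,0,>)$ exhibited in the text after \autoref{orderedsemiring} satisfies {\bf(OS1)}--{\bf(OS8)} but not {\bf(MS)}, and there $n\cdot 1 = \min(0,\ldots,0) = 1$ for every $n\geq 1$, so the map $n\longmapsto n\cdot 1$ is not injective. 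You should either delete the sentence or replace it with the accurate statement that {\bf(MS)} enters only through \autoref{lem:multiplication.by.n}(iii).
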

\begin{proof}
All that is required to prove is that the map is injective.
First note that $0<1$ by {\bf(OS3)} and {\bf(OS8)},
and then {\bf(MS)} yields $0<1<1+1<1+1+1<\ldots$. 
\end{proof}

Let $S=(S,+,\cdot,0,1,<)$ be a measuring semiring.
As noted above, $\sim$ is an equivalence relation with convex equivalence classes. Moreover, $\sim$ respects addition and multiplication:
\begin{align*}
x\sim y &\Longrightarrow x+z\sim y+z\\[.5em]
x\sim y &\Longrightarrow x\cdot z\sim y\cdot z,
\end{align*}
for all $x,y,z\in S$.

\begin{lemma}[`Rough cancellation'] \label{rough}
Let $(S,+,\cdot,0,1,<)$ be a measuring semiring and $x,y,z\in S$ with $0<x$. If $x\cdot y\sim x\cdot z$, then $y\sim z$. 
\end{lemma}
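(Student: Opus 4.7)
The plan is to argue by contradiction, exploiting Lemma~\ref{lem:multiplication.by.n}(iii). Since both the hypothesis $xy\sim xz$ and the conclusion $y\sim z$ are symmetric in $y$ and $z$, and $<$ is a total order by \textbf{(OS3)}, one reduces to the case $y\leq z$; the subcase $y=z$ is trivial, so the work lies in assuming $y<z$ together with $y\not\sim z$ and deriving a contradiction.

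Unpacking $y\not\sim z$ in the presence of $y<z$: by the definition of $\sim$, there is no $n\in\mathbb{N}^{>0}$ with $z\leq ny$, so $ny<z$ for every $n\geq 1$. Applying \textbf{(OS5)} (and commutativity of multiplication) together with distributivity, this transports to
\[n(xy)\;=\;x(ny)\;\leq\;xz\qquad\text{for every }n\geq 1.\]
On the other hand, $y<z$ and \textbf{(OS5)} yield $xy\leq xz$, so the assumption $xy\sim xz$ supplies some $m\geq 1$ with $xz\leq m(xy)$. Specialising the previous display to $n=m+1$ and combining gives the sandwich
\[(m+1)(xy)\;\leq\;xz\;\leq\;m(xy).\]

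If $xy>0$, Lemma~\ref{lem:multiplication.by.n}(iii) yields $m(xy)<(m+1)(xy)$, which is the desired contradiction. Hence $xy=0$; but $x>0$, so \textbf{(OS7)} forces $y=0$. Then $xz\sim xy=0$, and from $0\leq xz\leq n\cdot 0=0$ one concludes $xz=0$; applying \textbf{(OS7)} once more, $z=0$. But $y=z=0$ contradicts $y<z$, completing the contradiction.

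The main obstacle is conceptual rather than technical: the crucial observation is that $y\not\sim z$ together with $y<z$ is \emph{quantitatively} much stronger than $y<z$ alone, yielding arbitrarily many copies of $y$ that still lie below $z$. Once this is transported through multiplication by $x$ via \textbf{(OS5)} to produce the sandwich $(m+1)(xy)\leq xz\leq m(xy)$, Lemma~\ref{lem:multiplication.by.n}(iii) — which is the point at which the measuring-semiring axiom \textbf{(MS)} actually does its work — closes the argument. No genuine cancellation in $S$ is needed.
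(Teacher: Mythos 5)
Your proof is correct and follows essentially the same route as the paper: reduce to the case where one of $y,z$ is unboundedly larger than the other, multiply the family of inequalities $ny\leq z$ through by $x$ to get $n(xy)\leq xz$ for all $n$, and then invoke Lemma~\ref{lem:multiplication.by.n}(iii) (i.e.\ the measuring axiom \textbf{(MS)}) to contradict $xy\sim xz$. The only cosmetic differences are that the paper phrases the argument as a contrapositive and dismisses the case $y=0$ or $z=0$ at the outset, whereas you argue by contradiction and fold the degenerate case in at the end via the $xy=0$ branch; the substance is the same.
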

\begin{proof} We may assume $y,z\neq 0$. Indeed, if both are $0$, then $y\sim z$, while the assumptions $0<x$ and $x\cdot y\sim x\cdot z$ are incompatible with just one of $y,z$ being $0$. Now, for the contrapositive we suppose without loss that $ny\leq z$ for all $n$. Then $n(x\cdot y)\leq x\cdot z$ for all $n$. It cannot happen that $n(x\cdot y)=x\cdot z$ for some $n$; indeed, as $0<x \cdot y$ (by {\bf (OS7)}, since $0<x$ and $0<y$) we have $2n(x\cdot y)>n(x \cdot y)$ by \autoref{lem:multiplication.by.n}(iii), so $n(x\cdot y)=x\cdot z$ would imply $2n(x\cdot y) >x\cdot z$, a contradiction. Thus $n(x \cdot y)<x \cdot z$ for all $n$, so $x \cdot y \nsim x \cdot z$. 
\end{proof}

The quotient $D=S/{\sim}$ admits induced addition and multiplication. The induced addition is in fact $\max$: If $x\leq y$, then
\[
y\leq x+y\leq 2y
\] 
and so $y\sim x+y$. Thus the quotient has the form $$D=(D,\max,\oplus,-\infty,0_D,<),$$ where $0_D=[1]_{\sim}$. Note that $-\infty\oplus[x]_{\sim}=-\infty$ and $0_D\oplus[x]_{\sim}=[x]_{\sim}$ for all $x\in S$.

\begin{lemma}\label{cancel_groupembedding}
\begin{enumerate}[(i)]
\item The operation $\oplus$ in $D$ is cancellative.
\item The monoid $(D,\oplus,<)$ embeds in an ordered abelian group\\ $(\hat{D},\oplus,<)$.
\end{enumerate}
\end{lemma}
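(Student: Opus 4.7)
The plan is to exploit the rough cancellation property, Lemma~\ref{rough}, together with the standard Grothendieck-group construction. First, recall that the operation $\oplus$ on $D$ is inherited from the multiplication $\cdot$ in $S$, i.e.\ $[x]_{\sim}\oplus[y]_{\sim}=[x\cdot y]_{\sim}$, and that $[x]_{\sim}=-\infty$ if and only if $x=0$: since $0$ is the least element of $S$ by {\bf(OS3)}, $x\sim 0$ forces $x\leq 0$, hence $x=0$.

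For part~(i), I would take $[x]_{\sim}\oplus[y]_{\sim}=[x]_{\sim}\oplus[z]_{\sim}$ with $[x]_{\sim}\neq -\infty$, which translates to $x\cdot y\sim x\cdot z$ with $0<x$. Lemma~\ref{rough} immediately yields $y\sim z$, i.e.\ $[y]_{\sim}=[z]_{\sim}$. This is what `$\oplus$ is cancellative' means for a monoid whose only absorbing element is $-\infty$.

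For part~(ii), I would first verify that $(D\setminus\{-\infty\},\oplus,<)$ is a totally ordered cancellative commutative monoid with neutral element $0_{D}=[1]_{\sim}$, and that $<$ is strictly compatible with $\oplus$: if $[x]_{\sim}<[y]_{\sim}$ and $[z]_{\sim}\neq-\infty$, then $xz\leq yz$ by {\bf(OS5)}, while $xz\sim yz$ would give $x\sim y$ by Lemma~\ref{rough}, a contradiction. I would then apply the Grothendieck construction: set
\[
\hat{D}^{*}:=\big((D\setminus\{-\infty\})\times(D\setminus\{-\infty\})\big)/{\sim'},
\]
where $(a,b)\sim'(c,d)$ iff $a\oplus d=b\oplus c$, with coordinatewise addition, neutral element $[0_{D},0_{D}]$ and inverse $-[a,b]=[b,a]$, giving an abelian group; and order it by $[a,b]<[c,d]$ iff $a\oplus d<b\oplus c$ in $D$, which is well-defined and strictly compatible with the group law thanks to~(i). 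The assignment $a\mapsto[a,0_{D}]$ is then an order-preserving monoid embedding of $D\setminus\{-\infty\}$ into $\hat{D}^{*}$.

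The argument is essentially routine once Lemma~\ref{rough} is in hand; no step poses a serious obstacle. The only real subtlety is how to accommodate the absorbing element $-\infty$, which formally cannot live in a genuine abelian group. I would handle this in the standard valuation-theoretic way, setting $\hat{D}:=\hat{D}^{*}\cup\{-\infty\}$ with $-\infty\oplus x:=-\infty$ for all $x\in\hat{D}$ and declaring $-\infty$ the least element, so that the embedding extends to all of $D$ — mirroring the convention $\Gamma\cup\{\infty\}$ used for value groups.
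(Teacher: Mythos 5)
Your proof is correct, and it follows essentially the same Grothendieck-group route as the paper: part~(i) is a direct appeal to Lemma~\ref{rough}, and part~(ii) forms $\hat{D}$ as pairs modulo $(a,b)\equiv(a',b')\iff a\oplus b'=a'\oplus b$. The one place where you go beyond what the paper actually writes down is the treatment of $-\infty$, and this is a genuine improvement in precision rather than a stylistic difference. The paper simply puts $\hat{D}=D\times D/{\equiv}$ and says the extension of $\oplus$ and $<$ is routine, but $-\infty$ is an absorbing element for $\oplus$, and if one includes pairs with a $-\infty$ coordinate in the quotient, the class of $(-\infty,0_D)$ would be absorbing in $\hat{D}$, which is incompatible with $\hat{D}$ being a nontrivial group. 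Your construction — running the Grothendieck construction on $D\setminus\{-\infty\}$ (which you correctly observe is a cancellative, strictly ordered commutative monoid, using Lemma~\ref{rough} to get strict compatibility of $<$ with $\oplus$), and only afterwards adjoining $-\infty$ as a formal least absorbing element in the style of valuation theory — is what makes the statement literally true. So: same strategy as the paper, with a necessary bit of care supplied where the paper waves its hands.
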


\vspace{.25\baselineskip}

\begin{proof}~
\begin{enumerate}[(i)]

\item This follows from the `rough cancellativity' of multiplication in $S$. If $[x]_{\sim}\oplus[y]_{\sim}=[x]_{\sim}\oplus[z]_{\sim}$ and $[x]_{\sim}\neq -\infty$, then $x\cdot y\sim x\cdot z$ and $y\sim z$ by \autoref{rough}, i.e.\ $[y]_{\sim}=[z]_{\sim}$.

\item Note that $\oplus$ is commutative since $+$ is. We put $\hat{D}=D\times D\big/{\equiv}$, where $(a,b)\equiv (a',b')$ if and only 
if $a\oplus b'=a'\oplus b$. It is routine to extend  $\oplus$ and $<$ to $\hat{D}$.
\end{enumerate}
\vspace{-\baselineskip}
\end{proof}


\subsubsection{The Divisible hull}

Here we construct the divisible hull, which will be something like the ``ordered semiring tensor product with $\mathbb{Q}^{\geq0}$\,''.

\begin{definition}\label{divhull}
Let $S^{\mathrm{div}}$ be the
quotient of $\mathbb{Q}^{\geq0}\times S$ by the equivalence relation $=^{\mathrm{div}}$ defined by
\[
\bigg(\frac{a}{b},r\bigg)=^{\mathrm{div}}\bigg(\frac{c}{d},s\bigg)\text{ iff }dar=bcs.
\]
By an abuse of notation we let $\left(\frac{a}{b},r\right)$ denote its $=^{\mathrm{div}}$-equivalence class. We define addition $+^{\mathrm{div}}$ by
\[
\bigg(\frac{a}{b},r\bigg)+^{\mathrm{div}}\bigg(\frac{c}{d},s\bigg):=\bigg(\frac{1}{bd},dar+bcs\bigg),
\]
multiplication $\cdot^{\mathrm{div}}$ by
\[
\bigg(\frac{a}{b},r\bigg)\cdot^{\mathrm{div}}\bigg(\frac{c}{d},s\bigg):=\bigg(\frac{ac}{bd},r\cdot s\bigg),
\]
and the ordering $\leq^{\mathrm{div}}$ by
\[
\bigg(\frac{a}{b},r\bigg)\leq^{\mathrm{div}}\bigg(\frac{c}{d},s\bigg)\text{ iff }dar\leq bcs.
\]
Finally, let $0^{\mathrm{div}}:=(0,0)$ and $1^{\mathrm{div}}:=(1,1)$.
By another abuse of notation we write $S^{\mathrm{div}}:=(S^{\mathrm{div}},+^{\mathrm{div}},\cdot^{\mathrm{div}},0^{\mathrm{div}},1^{\mathrm{div}},\leq^{\mathrm{div}})$.
\end{definition}
Observe that, by Lemma~\ref{lem:multiplication.by.n}(i),  for all $c,d\in S$, $c<d$ implies $(1/n, c)<(1/n, d)$. 

\begin{lemma}
Let $S$ be a measuring semiring. Then the operations on $S^{\mathrm{div}}$ given above in \autoref{divhull} are well-defined, and $S^{\mathrm{div}}$ is a measuring semiring. The map $s\longmapsto (1,s)$ gives an embedding of ordered semirings $S \longrightarrow S^{\mathrm{div}}$.
\end{lemma}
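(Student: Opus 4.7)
The plan is to verify in turn: (a) that the definitions make sense on equivalence classes; (b) that the resulting structure satisfies the measuring-semiring axioms; (c) that $\iota\colon s\longmapsto(1,s)$ is an embedding of ordered semirings.

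For (a), I would first check that $=^{\mathrm{div}}$ is an equivalence relation. Reflexivity and symmetry follow from commutativity of $\cdot$; for transitivity, from $dar=bcs$ and $fcs=det$ I multiply the first equation by $f$ and the second by $b$ to obtain $d(far)=bf(cs)=b(det)=d(bet)$ in $S$, and then invoke \autoref{lem:multiplication.by.n}(ii) (applied in both directions, so as equality) on the positive integer $d$ to conclude $far=bet$. Well-definedness of $+^{\mathrm{div}}$, $\cdot^{\mathrm{div}}$ and $\leq^{\mathrm{div}}$ on equivalence classes is analogous: in each case the required identity or inequality in $S$ follows from one obtained by multiplying both sides by a positive natural number, and \autoref{lem:multiplication.by.n} cancels the factor.

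For (b), the ring-like axioms (OS1)--(OS2) and (OS6)--(OS8) follow by direct computation, combining the corresponding axioms in $S$ with the arithmetic of $\mathbb{Q}^{\geq 0}$; note in particular that $(a/b,r)=^{\mathrm{div}}(0,0)$ iff $abr=0$ iff $a=0$ or $r=0$, so the ``zero class'' is the expected one. For the order axioms (OS3)--(OS5), totality follows from totality of $\leq$ in $S$ after clearing denominators, and (OS4)--(OS5) reduce to their counterparts in $S$ in the same way. The crucial axiom is (MS). The key observation here is that $\sim^{\mathrm{div}}$ on the non-zero classes of $S^{\mathrm{div}}$ is controlled by $\sim$ on $S$: for $c,e>0$, $(c/d,s)\sim^{\mathrm{div}}(e/f,t)$ unwinds to $fcs\sim det$ in $S$, which by \autoref{rough} is equivalent to $s\sim t$. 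Given the hypothesis $(a/b,r)<^{\mathrm{div}}(c/d,s)$ and $d^{\mathrm{div}}((c/d,s))=d^{\mathrm{div}}((e/f,t))$, expanding the sum formulas in \autoref{divhull} reduces the required strict inequality to $f^{2}dar+bfdet<bf^{2}cs+bfdet$ in $S$, the cross-terms $dfbet$ and $bfdet$ matching by commutativity in $S$. From $dar<bcs$ and \autoref{lem:multiplication.by.n}(i) we get $f^{2}dar<bf^{2}cs$ strictly; and since $d(bf^{2}cs)=d(s)=d(t)=d(bfdet)$, axiom (MS) for $S$ (applied with $x=f^{2}dar$, $y=bf^{2}cs$, $z=bfdet$) delivers the conclusion. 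The degenerate cases in which $a$, $c$ or $e$ is zero either make $0^{\mathrm{div}}$ appear on the ``wrong side'' of the strict inequality (and so are vacuous), or reduce to a trivial application of (MS) for $S$.

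For (c), injectivity of $\iota$ is immediate from $(1,s)=^{\mathrm{div}}(1,s')\Leftrightarrow s=s'$, preservation of $0$, $1$, $+$ and $\cdot$ is by direct substitution in the formulas of \autoref{divhull}, and preservation of the order is $(1,s)<^{\mathrm{div}}(1,s')\Leftrightarrow s<s'$. The main obstacle throughout the argument will be bookkeeping: because every relation in $S^{\mathrm{div}}$ must be unpacked into a relation in $S$ obtained by multiplying both sides by positive integers, there is a risk of implicitly using ``cancellation'' that is not available in the semiring. I would therefore phrase each step as a multiplication by a positive integer and invoke \autoref{lem:multiplication.by.n} and \autoref{rough} explicitly, and handle the cases where denominators or numerators vanish separately rather than trying to subsume them under the generic calculation.
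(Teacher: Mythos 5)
Your proposal is correct and takes essentially the same approach as the paper: the paper's proof leaves well-definedness "as an exercise" (giving only the $\leq^{\mathrm{div}}$ case) and asserts without detail that (MS) follows by unpacking definitions, and your filling-in of those details — clearing denominators, cancelling positive integer multiples via \autoref{lem:multiplication.by.n}, and applying (MS) in $S$ after matching the cross-term $bfdet$ — is exactly what the paper intends. One small misattribution: to pass from $fcs\sim det$ to $s\sim t$ you invoke \autoref{rough}, but the lemma as stated cancels a \emph{common} factor, which $fcs$ and $det$ need not share; the cleaner justification is simply that positive integer multiples preserve $\sim$-classes (so $fcs\sim s$ and $det\sim t$), which is immediate from the definition of $\sim$ and does not need rough cancellation.
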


\begin{proof}

The proof that the operations are well-defined is straightforward and is left as an exercise. As one case, to see that $\leq^{\mathrm{div}}$ is well-defined, suppose that  $dar\leq bcs$  and 
$\big(\frac{a}{b},r\big)=^{\mathrm{div}}\big(\frac{a'}{b'},r'\big)$. We must check that $da'r'\leq b'cs$, so suppose $da'r'>b'cs$. Multiplying by $ab$ using Lemma~\ref{lem:multiplication.by.n} (i) we obtain $da'r'ab>b'csab$. Hence, as $b'ar=ba'r'$ we have
$darab'>b'csab$, so $dar>bcs$ by \ref{lem:multiplication.by.n}(i), a contradiction. 
To show that $S^{\mathrm{div}}$ satisfies {\bf(MS)}, one simply unpacks the definitions and applies that $S$ satisfies {\bf(MS)}. Details are left to the reader. 
\end{proof}

\noindent {\bf Convention.}\enspace 
In view of this lemma, we shall from now on replace $S$ by its divisible hull $S^{\mathrm{div}}$; that is, we assume that $S$ is divisible in the sense that for all $s\in S$ and $q\in \mathbb{Q}^{>0}$ there is a well-defined element $qs$ of $S$. Observe that if $s\in S$ and $q\in \mathbb{Q}^{>0}$, then $s\sim qs$. 

\subsubsection{The standard part image of $S$}

If two elements of a measuring semiring $S$ have the same $S$-dimension, then we can compare them more finely.

\begin{definition}
Let $S$ be a measuring semiring, and let $x,y\in S$. We write $x\approx y$ if $x\sim y$ and for all $q\in\mathbb{Q}^{>1}$ we have $x\leq qy$ and $y\leq qx$.
\end{definition}

It is apparent that $x\approx y$ is an equivalence relation refining $\sim$, and we denote the quotient of $S$ by $\approx$ by $S^{\rm st}$. 

\begin{lemma}\label{cancel}
Let $S$ be a measuring semiring and $x,y,z\in S$. If $z\neq 0$ and  $xz\approx yz$ then $x\approx y$.
\end{lemma}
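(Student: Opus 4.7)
The plan is to treat the two clauses of the definition of $\approx$ in turn. Since $xz \approx yz$ implies $xz \sim yz$, and $z > 0$ (as $z \neq 0$ and $0$ is the least element by {\bf(OS3)}), the ``rough cancellation'' Lemma \autoref{rough} immediately yields $x \sim y$. Before handling the finer rational comparisons, I dispose of a degenerate case: if $y = 0$ then $yz = 0$, so $xz \approx 0$ forces $xz = 0$ (the $\sim$-class of $0$ is $\{0\}$, hence so is its $\approx$-class), and {\bf(OS7)} applied to $z > 0$ then forces $x = 0$, giving $x \approx y$ trivially. The case $x = 0$ is symmetric, so we may assume $x, y > 0$, and hence $xz, yz > 0$ by {\bf(OS7)}.

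Now suppose toward contradiction that $x \not\approx y$. Since $x \sim y$, there must exist $q' \in \mathbb{Q}^{>1}$ such that, without loss of generality, $x > q'y$ (the other possibility, $y > q'x$, is handled by swapping the roles of $x$ and $y$). Fix a rational $q$ with $1 < q < q'$. On one hand, multiplying the inequality $x > q'y$ on the right by $z$ and invoking {\bf(OS5)} yields $xz \geq (q'y)z = q' \cdot yz$. On the other hand, the relation $xz \approx yz$ applied at the rational $q$ gives $xz \leq q \cdot yz$. Provided the strict inequality $q \cdot yz < q' \cdot yz$ holds, these combine into $xz \leq q\cdot yz < q'\cdot yz \leq xz$, the desired contradiction, whence $x \approx y$.

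The only step of substance is therefore verifying strict monotonicity of scalar multiplication by positive rationals on positive elements. Writing $q = a/b$ and $q' = c/d$ with $a,b,c,d \in \mathbb{N}^{>0}$, the rational inequality $q < q'$ unpacks to the integer inequality $ad < bc$; \autoref{lem:multiplication.by.n}(iii) applied to $yz > 0$ then gives $(ad)(yz) < (bc)(yz)$, which in the divisible hull is exactly $q \cdot yz < q' \cdot yz$. The main delicacy of the argument is just this propagation of positivity: one uses $z > 0$ both to invoke \autoref{rough} and, via {\bf(OS7)}, to secure $yz > 0$ so that the strict scalar-monotonicity applies in the crucial step.
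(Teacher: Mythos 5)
Your proof is correct and rests on the same essential ingredients as the paper's argument: both proceed by contradiction, multiply the offending rational inequality through by $z$ using {\bf(OS5)}, and appeal to \autoref{lem:multiplication.by.n}(iii) (hence to {\bf(MS)}) via the positivity of $xz$ or $yz$ supplied by {\bf(OS7)} to manufacture a strict inequality that violates $xz \approx yz$. The only differences are cosmetic: you pick a second rational $q$ strictly between $1$ and $q'$ and use the definition of $xz\approx yz$ at $q$ to close the squeeze, whereas the paper writes $q'=m/n$ and replaces it by the explicit $r=(2m-1)/(2n)>1$; and your preliminary appeal to \autoref{rough} to establish $x\sim y$ is harmless but redundant, since the rational comparison clause of $\approx$ (at $q=2$, say) already implies $\sim$, so the contradiction you derive covers both clauses at once.
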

\begin{proof}
We may suppose $x\neq 0$, since otherwise $yz=0$, hence $y=0$ by \autoref{orderedsemiring}{\bf (OS7)}, and thus $x \approx y$. Suppose for a contradiction that  there is $q=\frac{m}{n}>1$ with $qx<y$. 
Then $qxz\leq yz$. Since $0<xz$ by {\bf (OS7)}, we have $(2m-1)xz<2mxz$ by \autoref{lem:multiplication.by.n}(iii) --- note that this uses the Measuring Axiom {\bf(MS)}. Then $(2m-1)xz<2mxz\leq 2nyz$. Write $r:=\frac{2m-1}{2n}$. By \autoref{lem:multiplication.by.n} we obtain $rxz<yz$. As $r> 1$, this contradicts $xz\approx yz$.
\end{proof}


Let $a\in S$. We define the function
\[\rho_{a}\colon S\longrightarrow\mathbb{R}^{\geq0}\cup\{\infty\}\]
by
\[
x\longmapsto\left\{\begin{array}{ll}0&\text{ if }d(x)<d(a)\\r&\text{ if }d(x)=d(a)\text{, where }r:=\sup\{q\in \mathbb{Q}:qa\leq x\}\\\infty&\text{ if }d(x)>d(a).\end{array}\right.
\]

\begin{example}\label{mon} 
Using the functions $\rho_a$ and  sacrificing some information, we can work with a measuring semiring in a canonical form.
With $S$ and its quotient $D=S/{\sim}$ as above, and ${Z}$ as an indeterminate, we form another measuring semiring $P=(\mathbb{R}\mon{D},\boxplus,\boxdot,0,1,<)$,
where $\mathbb{R}\mon{D}=\{r{Z}^{d}\mid r\in\mathbb{R}^{>0},d\in D\}\cup\{0{Z}^{0}\}$
and $n:=n{Z}^{0}$ for all $n\in\mathbb{N}$,
which we
equip with `max-plus' addition and the standard multiplication:
\[
    r_{1}{Z}^{d_{1}}\boxplus r_{2}{Z}^{d_{2}}:=\left\{\begin{array}{ll}
    (r_{1}+r_{2}){Z}^{d_{1}}&d_{1}=d_{2}\\
    r_{1}{Z}^{d_{1}}&d_{1}>d_{2}\\
    r_{2}{Z}^{d_{2}}&d_{1}<d_{2}
    \end{array}\right.
\]
and
\begin{align*}
    r_{1}{Z}^{d_{1}}\boxdot r_{2}{Z}^{d_{2}}&:=r_{1}r_{2}{Z}^{d_{1}\oplus d_{2}}.
\end{align*}
We put $r_1{Z}^{d_1}< r_2{Z}^{d_2} \Leftrightarrow (d_1,r_1)< (d_2,r_2)$ lexicographically. The measuring semiring axiom {\bf (MS)} is easily verified.
Again, there is a quotient map $d_{P}\colon P\longrightarrow D$, with $d_P(r{Z}^{d})=d$.
We call a measuring semiring of the form $P$ a {\em monomial semiring}.
\end{example}

\begin{theorem} \label{monomialise}

There is a semiring homomorphism $\phi\colon S \longrightarrow P$ such that $d=d_P \circ \phi$, with the property that for $x,y\in S$, $x\approx y$ if and only if $\phi(x)=\phi(y)$.

\end{theorem}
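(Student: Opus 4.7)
My overall strategy is to encode each $\approx$-class $[x]_{\approx}$ as the monomial $\pi_1(x) Z^{d(x)}$, where $\pi_1\colon S^{\times}\to\mathbb{R}^{>0}$ is a multiplicative function that restricts to $\rho_1$ on the ``scalar'' elements of dimension $0_D$. A naive approach, choosing a representative $a_d$ in each $\sim$-class and setting $\phi(x):=\rho_{a_{d(x)}}(x) Z^{d(x)}$, fails because multiplicativity demands $a_{d_1}a_{d_2}\approx a_{d_1\oplus d_2}$, which amounts to a semigroup section of the dimension map and need not exist. Instead I first pass to the quotient $M:=S^{\times}/{\approx}$ (verifying that $\approx$ is a multiplicative congruence on $S^{\times}$ via a rational approximation argument) and then to its group completion $G$, where the obstruction dissolves.

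By \autoref{cancel}, $M$ is a cancellative totally ordered commutative monoid, so $M$ embeds into its group completion $G$, a totally ordered abelian group. The dimension map extends to an order-preserving surjection $\hat d\colon G\to\hat D$ with convex kernel $K$. I would verify that $\rho_1$ restricts to an injective order-preserving group homomorphism $K\to\mathbb{R}^{>0}$: for $k=u/v\in K$ with $u,v\in M$ one has $\rho_1(u/v)=\rho_v(u)$; multiplicativity reduces to the fact that in a totally ordered abelian group, $q_i\leq k_i$ with all terms positive implies $q_1q_2\leq k_1k_2$; and injectivity follows because $\rho_1(u/v)=1$ forces $u\approx v$, i.e.\ $u=v$ in $M$. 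Since $(\mathbb{R}^{>0},\cdot)$ is divisible, hence injective as an abelian group, $\rho_1$ extends to a group homomorphism $\pi_1\colon G\to\mathbb{R}^{>0}$. I then set $\phi(0):=0$ and $\phi(x):=\pi_1([x]_{\approx}) Z^{d(x)}$ for $x\neq 0$.

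The identities $\phi(1)=1$, $d_P\circ\phi=d$, and multiplicativity of $\phi$ are immediate from the construction. For additivity, if $d(x)>d(y)$ and $y\neq 0$, then divisibility of $S$ together with \textbf{(OS4)} yields $x+y\approx x$ from $ny<x$ for all $n$, so $\phi(x+y)=\phi(x)=\phi(x)\boxplus\phi(y)$. The technical heart is $d(x)=d(y)=d$: choosing $a:=x$, the identity $\pi_1(z)=\pi_1(a)\rho_a(z)$ for $z\in[a]_{\sim}$ (which holds since $z/a\in K$, $\pi_1|_K=\rho_1$, and $\rho_1(z/a)=\rho_a(z)$) reduces the claim to within-class additivity $\rho_a(x+y)=\rho_a(x)+\rho_a(y)$; I would prove this by squeezing with rationals $q_1<\rho_a(x)$, $q_2<\rho_a(y)$ from below and $q_1'>\rho_a(x)$, $q_2'>\rho_a(y)$ from above, crucially invoking \textbf{(MS)} to pass from the strict inequalities $q_1'a>x$ and $q_2'a>y$ to $(q_1'+q_2')a>x+y$. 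Finally $\phi(x)=\phi(y)$ forces $d(x)=d(y)$ and $\pi_1([x]_{\approx})=\pi_1([y]_{\approx})$, hence $[x/y]_{\approx}\in\ker(\pi_1|_K)=\ker\rho_1$, and injectivity of $\rho_1$ gives $x\approx y$. The main subtlety is that the arbitrary extension $\pi_1$ is not order-preserving in general, but only its restriction $\rho_1$ to $K$ (which is order-preserving) enters the critical steps; across $\sim$-classes the $Z$-exponent carries the order, while within a $\sim$-class $\rho_a$ does.
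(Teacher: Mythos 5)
Your proof is correct and reaches the result by a genuinely different route. The paper argues by transfinite induction, extending a partial homomorphism from a subsemiring $A$ of $S$ one generator at a time; the bulk of the work there is a direct verification that the real constraints on the new coefficient $m(x)$ coming from polynomial identities over $A$ are simultaneously solvable. You extract the algebra up front: quotienting $S^{\times}$ by $\approx$ gives a cancellative totally ordered commutative monoid (via \autoref{cancel}); its group completion $G$ carries an order-preserving surjection $\hat d$ onto $\hat D$ with convex kernel $K$; the map $\rho_1|_K$ is multiplicative and injective into $\mathbb{R}^{>0}$; and the extension $\pi_1\colon G\to\mathbb{R}^{>0}$ comes in one stroke from the divisibility -- equivalently, injectivity as a $\mathbb{Z}$-module -- of $(\mathbb{R}^{>0},\cdot)$. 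This delegates the consistency check that dominates the paper's proof to a standard algebraic fact, and cleanly isolates where each axiom enters: \autoref{cancel} powers the passage to $M$ and $G$, divisibility of $\mathbb{R}^{>0}$ powers the extension, and the essential use of \textbf{(MS)} is confined to the squeeze establishing $\rho_a(x+y)=\rho_a(x)+\rho_a(y)$, exactly as you identify. Your remark that the arbitrary extension $\pi_1$ need not be order-preserving, while only the order-preserving restriction $\rho_1|_K$ is ever invoked in the additivity argument, is the right thing to flag and matches the statement of \autoref{monomialise}, which requires only a semiring (not an ordered semiring) homomorphism. One bookkeeping point to make explicit in a polished write-up: well-definedness of $\rho_1|_K$ (independence of the chosen representative $u/v$ for $k\in K$) is not automatic and should be checked; it follows from the multiplicativity identity $\rho_{vv'}(uv')=\rho_v(u)\rho_{v'}(v')$ together with the observation that $\rho_a(\cdot)$ factors through $\approx$, applied to the relation $uv'\approx u'v$.
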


\begin{proof} 

In the argument below we sometimes write $ra\approx r'a'$, where $r,r'\in \mathbb{R}$ and $a,a'\in S$. This is an abbreviation for the statement
\[(\forall q,q'\in \mathbb{Q}^{>0})[((q<r\wedge r'<q') \rightarrow qa<q'a')\wedge ((q'<r' \wedge r<q)\rightarrow q'a'<qa)].\]

 We define $\phi$ by transfinite induction. For a subsemiring $A$ of $S$ write $d_A$ for the restriction of $d$ to $A$. Suppose $A$ is a subsemiring of $S$
and $\phi_A\colon A \longrightarrow P$ is a homomorphism satisfying  $d_A=d_P \circ \phi_A$, with the property that  for all $a,a'\in A$, $a\approx a'$ if and only if $\phi(a)=\phi(a')$. We write 
$\phi_A(a)=m(a)Z^{d(a)}$ for each $a\in A$. We may extend $\phi_A$ to the divisible hull of $A$ in $S$ by putting  $\phi_A(qa)=q\phi_A(a)$ for all $q\in \mathbb{Q}^{>0}$ and $a\in A$; that is, we may suppose that $A$ is divisible. 

Let $x\in S\setminus A$, and let
$B$ be the subsemiring generated by $A \cup \{x\}$. For the inductive step we must show that $\phi_A$ extends to $\phi_B\colon B \longrightarrow P$, with $d_B=d_P\circ \phi_B$ and the $\approx$-condition preserved. 
Elements of $B$ have the form $p(x)$, where $p(X)\in A[X]$. Let $d_0=d(x)$. We will put $\phi_B(x)=rZ^{d_0}$ for some $r\in \mathbb{R}^{>0}$. The map $\phi_B$ will then extend to the required homomorphism on $B$, provided $r$ can be found satisfying certain constraints. These constraints arise through equations of the form
$p(x)=q(x)$ holding in $B$, where $p(X),q(X)\in A[X]$.

Let $p(X)=\sum_{i=1}^n a_i X^i$ and $q(X)=\sum_{i=1}^l b_iX^i$, and let $p(x)=q(x)$  be such a constraint.  By dropping lower dimension terms, we may replace $p(x)=q(x)$ by $p(x)\approx q(x)$ and assume that all monomials in $p(x)$ and $q(x)$ are either 0 or of the same dimension $e$. Furthermore, by cancelling powers of $x$ using \autoref{cancel}, we may assume that the constant term of $p$ or $q$, say the constant term $a_0$ of $p$, is nonzero. Thus $\rho_{a_0}(c)$ is a positive real number for each $c\in S$ with 
$d(c)=d(a_0)=e$. Hence, for each non-zero term of form $a_ix^i$ of $p(x)$ 
we have an equivalence  $a_ix^i \approx \mu_i a_0$ for some $\mu_i\in \mathbb{R}$, where  
$\rho_{a_0}(a_ix^i)=\mu_i$.  Similarly for each non-zero term 
$b_jx^j$ we obtain the equivalence $b_jx^j\approx \nu_j a_0$. 

An equivalence such as $a_ix^i \approx \mu_i a_0$ will force  $m(x)$ to be defined so that
$m(a_i)m(x)^i=\mu_i m(a_0)$. We have to show that two such equivalences, possibly coming from polynomial equations in different dimensions, do not impose conflicting constraints on $m(x)$. So suppose we have two such constraints
\begin{equation}a_ix^i \approx \mu_i a_0 \label{21}
\end{equation}
\begin{equation}d_jx^j \approx \xi_j c_0 \label{22}
\end{equation}
where $a_i,d_j,a_0,c_0\in A$ and $\mu_i,\xi_j \in \mathbb{R}^{>0}$.
Taking the $j^{{\rm th}}$ power of both sides of \eqref{21} and the $i^{{\rm th}}$ power of both sides of \eqref{22}, and multiplying 
the first equation by $d_j^i$ and the second  by $a_i^j$, we obtain
\begin{equation}
d_j^i a_i^j x^{ij}\approx \mu_i^ja_0^jd_j^i
\end{equation}
and
\begin{equation}a_i^jd_j^ix^{ij}\approx \xi_j^ic_0^ia_i^j.
\end{equation}
Thus
\begin{equation}\mu_i^j a_0^jd_j^i\approx \xi_j^i c_0^ia_i^j.\end{equation}
Since $a_0,a_i,c_0,d_j\in A$, it follows by the inductive hypothesis that
\begin{equation}\label{26}\mu_i^j m(a_0)^jm(d_j)^i=\xi_j^im(c_0)^im(a_i)^j.
\end{equation}
We aim to extend $m$ to $B$ so that $m(x)$ is a solution to all equations of the form 
$m(a_i)X^i=\mu_im(a_0)$.
Since the coefficients are positive real numbers, each such equation has a unique positive real solution.
The problem is to check that two such equations are consistent, and this reduces to showing that the equations
 $m(a_i)X^i=\mu_i m(a_0)$ and $m(d_j)X^j=\xi_j m(c_0)$ have a simultaneous positive real solution. This is  equivalent to the 
following equation in $\mathbb{R}$ holding, which is itself equivalent to  \eqref{26}:
\[\left(\frac{\mu_i \,m(a_0)}{m(a_i)}\right)^{1/i}=\left(\frac{\xi_j\, m(c_0)}{m(d_j)}\right)^{1/j}.\]
Thus, there is no inconsistency.
The final assertion follows from the construction of $\phi$ given above. 
\end{proof}

\subsection{{\rm\em S}-measurable structures}

\begin{definition}\label{def:T-measurable}
Let $S$ be a measuring semiring and let $M$ be an $L$-structure. We say that $M$ is \em $S$-measurable \rm if there is a function $h:\mathrm{Def}(M)\longrightarrow S$ such that
\begin{enumerate}[(i)]
\item (\tt finite sets\rm) $h(X)=|X|$ for finite $X$;
\item (\tt finite additivity\rm) $h$ is finitely additive, that is, if $X,Y\in \mathrm{Def}(M)$ are disjoint then $h(X \cup Y)=h(X)+h(Y)$;
\item (\tt \mac{}~condition\rm) for each $\emptyset$-definable family $\mathcal{X}$ there exists a finite set $F\subseteq S$ such that $h(\mathcal{X})=F$ and for each $f\in F$, $h^{-1}(\{f\})$ is a $\emptyset$-definable family; and
\item (\tt Fubini\rm) suppose that $p\colon X\longrightarrow Y$ is a definable function for which there exists $f\in S$ such that for all $a\in Y$, $h(p^{-1}(\{a\}))=f$; then we have $h(X)=f\cdot h(Y)$.
\end{enumerate}

We refer to $h$ as a \emph{generalised measure}. Also, when $S$ is clear from the context, we just say that $M$ is \em generalised measurable\rm. By an abuse of notation, we also use $d$ to denote the composition $d\circ h:\mathrm{Def}(M)\longrightarrow D$. 

As with MS-measurability, the above definition also has content  if, in~(iii), we drop the definability assumption that each $h^{-1}(\{f\})$ is an $\emptyset$-definable family; without this assumption we say that $M$ is {\em weakly $S$-measurable}.

We say that $M$ is {\em $S$-ring-measurable}, or just {\em ring-measurable}, if there is a commutative ordered unital ring $S$ such that $M$ is $S^{\geq 0}$-measurable. 
\end{definition}

We assemble some simple facts about generalized measurability.

\begin{proposition}\label{easy-facts}
Let $M$ be $S$-measurable with set $D$ of dimensions.
\begin{enumerate}[(i)]
\item Any expansion of $M$ by constants is $S$-measurable. 
\item The generalised measure is monotonic, i.e.\ for all definable $A\subseteq B$ we have $h(A)\leq h(B)$.
\item Definable bijections preserve $h$.
\item Let $A:= \prod_{i=1}^n B_i$ be a cartesian product of definable sets. Then $h(A) = \prod_{i=1}^n h(B_i)$.
\item $M$ is $S'$-measurable, where $S'$ is the monomial semiring $\mathbb{R}\mon{D}$.
\item If $D=\mathbb{N}$,  then $M$ is measurable, {\em i.e.} is MS-measurable.
\end{enumerate}
\end{proposition}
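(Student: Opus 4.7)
The plan is to dispatch (i)--(iv) as direct consequences of the axioms in Definition~\ref{def:T-measurable}, then use Theorem~\ref{monomialise} for (v), and finally match definitions for (vi).

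For (i), since ``definable'' already means ``definable with parameters'', expanding by constants does not alter the collection $\mathrm{Def}(M)$, so the same function $h$ satisfies (i), (ii), (iv) of Definition~\ref{def:T-measurable} verbatim. The only point needing comment is (iii): a $\emptyset$-definable family in the expansion $(M,\bar c)$ is a $\bar c$-definable family in $M$, and the corresponding $\bar c$-definable finite partition witnessing the \mac{} clause in $M$ is automatically $\emptyset$-definable in $(M,\bar c)$. For (ii), write $B = A \sqcup (B\setminus A)$; by finite additivity $h(B) = h(A) + h(B\setminus A)$, and since $h(B\setminus A)\geq 0$, axiom \textbf{(OS4)} gives $h(A) = h(A) + 0 \leq h(A) + h(B\setminus A) = h(B)$. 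For (iii), a definable bijection $f\colon X\longrightarrow Y$ has each fibre a singleton, so $h(f^{-1}(\{a\})) = 1$ for all $a\in Y$, and Fubini then yields $h(X) = 1\cdot h(Y) = h(Y)$. Part (iv) follows by induction on $n$, the $n=2$ case being immediate from Fubini applied to the projection $B_1\times B_2 \longrightarrow B_2$, whose fibres all have measure $h(B_1)$.

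For (v), Theorem~\ref{monomialise} provides a semiring homomorphism $\phi\colon S\longrightarrow P = \mathbb{R}\mon{D}$ with $d = d_P\circ\phi$. I would set $h' := \phi\circ h\colon \mathrm{Def}(M)\longrightarrow P$ and verify the four clauses of Definition~\ref{def:T-measurable}: for finite sets, $h'(X) = \phi(|X|) = |X|\cdot \phi(1) = |X|\cdot 1 = |X|\cdot Z^0$, identifying $\mathbb{N}$ with its image in $P$; finite additivity and Fubini are immediate since $\phi$ is a semiring homomorphism; and the \mac{} clause transfers since the image of a finite set under $\phi$ is finite and the preimage under $h'$ of $\{p\}\subseteq P$ is a finite union of preimages under $h$ of $\phi^{-1}(\{p\})\cap h(\mathrm{Def}(M))$, each of which is $\emptyset$-definable by assumption.

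For (vi), if $D=\mathbb N$ then $S' = \mathbb{R}\mon{\mathbb N}$ is exactly the real monomial semiring of Example~\ref{realmonomial}, which, under the identification $r Z^d \longleftrightarrow (d,r)$ (with $0Z^0\longleftrightarrow (0,0)$), is literally $(\mathbb N\times\mathbb R^{>0})\cup\{(0,0)\}$ equipped with the operations and ordering spelled out there. Under this identification, clauses (i)--(iii) of the MS-measurable definition recalled at the start of Section~\ref{genmeassect} are precisely clauses (i)--(iii) of Definition~\ref{def:T-measurable} applied to $h'$ from part~(v); clause (iii) of MS-measurability (the Fubini computation involving the maximum dimension) is exactly how multiplication is defined in the monomial semiring (the addition $\boxplus$ takes the ``max-plus'' form, so summing the fibre measures corresponds to adding the measures of the fibres of maximal dimension), so Fubini for $h'$ in $P$ coincides with the MS-measurable Fubini axiom. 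The main obstacle among these is reconciling the notation in (vi); everything else is a routine unpacking.
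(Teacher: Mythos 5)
Your argument is correct and follows essentially the same route as the paper's proof, which simply cites Lemma~\ref{constants}, Definition~\ref{def:T-measurable}(iv), and Theorem~\ref{monomialise} for the same decomposition; you have supplied the detail the paper omits. The only place where you are a bit loose is part~(i): strictly, the \mac{} clause in $M$ only gives finite $\emptyset$-definable partitions for $\emptyset$-definable families, so to handle the $\bar{c}$-definable family one should first pass to the $\emptyset$-definable family obtained by freeing the parameters $\bar{c}$ into new variables $\bar{z}$, obtain the partition there, and then specialize $\bar{z}=\bar{c}$ -- but this is the routine step Lemma~\ref{constants} refers to.
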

\begin{proof}
Part (i) is an easy analogue of Lemma~\ref{constants}.
For (ii), let $C:=B\setminus A$. Then $h(B)=h(A)+h(C)\geq h(A)$.
Assertion (iii) is trivial using \autoref{def:T-measurable}(iv), and (iv) follows from \autoref{def:T-measurable}(iv) by induction.
Part (v) is a consequence of \autoref{monomialise}. Finally, to see (vi), observe that if $D=\mathbb{N}$ then by Theorem~\ref{monomialise} we may suppose that $S=\mathbb{R}\langle Z \rangle$, and the result then follows from the definitions. 
\end{proof}

\begin{proposition}\label{theory}
Suppose that $S$ is a measuring semiring, $M$ is $S$-measurable with generalised measure $h^{M}$, and $N\equiv M$. Then $N$ is $S$-measurable.
\end{proposition}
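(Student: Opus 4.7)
The plan is to transfer the generalised measure from $M$ to $N$ by exploiting the fact that, for each formula, the \mac{}-style condition~(iii) packages the measure into finitely many $\emptyset$-definable pieces. So for every formula $\phi(\bar{x},\bar{y})$, fix the finite set $F_\phi \subseteq S$ and the family $\{\psi_{\phi,f}(\bar{y}) : f\in F_\phi\}$ of $\emptyset$-definable formulas such that $h^M(\phi(M^{|\bar{x}|};\bar{b}))=f$ if and only if $M \models \psi_{\phi,f}(\bar{b})$. I would then define $h^N\colon \mathrm{Def}(N) \to S$ by setting $h^N(\phi(N^{|\bar{x}|};\bar{b})) := f$, where $f$ is the unique element of $F_\phi$ with $N \models \psi_{\phi,f}(\bar{b})$.

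The first technical step is verifying that $h^N$ is well-defined. Existence and uniqueness of such an $f$ follow from the fact that the sentences
\[
\forall \bar{y}\bigvee_{f\in F_\phi}\psi_{\phi,f}(\bar{y}) \quad\text{and}\quad \forall \bar{y}\,\neg\bigl(\psi_{\phi,f}(\bar{y})\wedge \psi_{\phi,f'}(\bar{y})\bigr) \text{ for } f\neq f'
\]
hold in $M$, hence in $N$ by elementary equivalence. For independence of the choice of defining formula, if $\phi(\bar{x},\bar{b})$ and $\phi'(\bar{x},\bar{b}')$ define the same set in $N$, then for each pair $(f,f')\in F_\phi\times F_{\phi'}$ with $f\neq f'$ the sentence
\[
\forall \bar{y}\bar{y}'\,\neg\bigl(\forall \bar{x}(\phi(\bar{x},\bar{y})\leftrightarrow\phi'(\bar{x},\bar{y}')) \wedge \psi_{\phi,f}(\bar{y}) \wedge \psi_{\phi',f'}(\bar{y}')\bigr)
\]
holds in $M$ (since $h^M$ is a function on $\mathrm{Def}(M)$) and so also in $N$, giving $f=f'$.

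The remaining step is to verify that $h^N$ satisfies conditions (i)--(iv) of \autoref{def:T-measurable}. Each axiom is, when one unpacks it using the formulas $\psi_{\phi,f}$ together with the fact that $F_\phi$ is finite, a schema of first-order $L$-sentences that hold in $M$ and therefore transfer to $N$. For instance, for condition (i), the assertion that $\phi(M^{|\bar{x}|};\bar{b})$ has exactly $k$ elements whenever $\psi_{\phi,k}(\bar{b})$ holds is expressible by a single $L$-sentence for each $k\in\mathbb{N}\cap F_\phi$. Finite additivity reduces to sentences of the shape: whenever $\psi_{\phi_1,f_1}(\bar{b}_1)\wedge \psi_{\phi_2,f_2}(\bar{b}_2)$ holds and the two sets are disjoint, then $\psi_{\phi_1\vee\phi_2,f_3}(\bar{b}_1,\bar{b}_2)$ holds precisely when $f_3=f_1+f_2$ in $S$, and this can be written as finitely many $L$-sentences because the relevant $f_i$ range over finite sets $F_{\phi_i}$ while the equation $f_3=f_1+f_2$ is settled in $S$.

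The main obstacle I anticipate is the Fubini clause (iv), because its hypothesis contains the quantifier ``for all $a\in Y$, $h(p^{-1}(a)) = f$.'' To handle this, I would fix formulas $\theta(\bar{x},\bar{z},\bar{w})$ defining the graph of the function $p_{\bar{c}}$ depending on parameters $\bar{w}$, together with a formula $\sigma(\bar{z},\bar{w})$ defining its image; the hypothesis then becomes the first-order formula
\[
\chi_f(\bar{w}) \;:\equiv\; \forall \bar{z}\bigl(\sigma(\bar{z},\bar{w}) \to \psi_{\theta,f}(\bar{z},\bar{w})\bigr),
\]
and the Fubini equation $h(X) = f\cdot h(Y)$ translates into the sentence
\[
\forall \bar{w}\bigl(\chi_f(\bar{w}) \wedge \psi_{X,g_1}(\bar{w}) \wedge \psi_{Y,g_2}(\bar{w}) \to \bot\bigr)
\]
for each pair $(g_1,g_2)\in F_X\times F_Y$ with $g_1 \neq f\cdot g_2$ in $S$. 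Since $F_X,F_Y,F_\theta$ are finite, this is a finite schema of $L$-sentences that holds in $M$ by assumption, and so transfers to $N$. Together with \autoref{easy-facts}(v) one then concludes that $N$ is $S$-measurable.
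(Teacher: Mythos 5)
Your proof is correct and takes the same approach as the paper: define $h^N$ by pushing the $\emptyset$-definable partition from condition~(iii) across the elementary equivalence, and then verify each axiom by observing that it amounts to a finite schema of $L$-sentences that hold in $M$ and hence in $N$. The paper's own proof stops after defining $h^N$, declaring well-definedness and axioms~(i)--(iv) ``easily checked''; you have simply supplied those details, including the somewhat delicate Fubini clause. (One small quibble: the appeal to \autoref{easy-facts}(v) in your final sentence is unnecessary, since the claim is $S$-measurability of $N$ for the same $S$, which follows directly once (i)--(iv) are verified.)
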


\begin{proof} We define a generalised measure $h^{N}$ for $N$. Let $\chi^{N}$ be a definable family of sets in $N$ determined by the formula $\phi(\bar{x},\bar{y})$, and let $\chi^{M}$ be the corresponding definable family in $M$. Let 
$h^{M}(\chi^{M})=F\subset S$, and for each $f\in F$ let $d\phi_f(\bar{y})$ be the formula defining those $\bar{a}$ such that
$h^{M}(\phi(\bar{x},\bar{a}))=f$, as guaranteed by \autoref{def:T-measurable}\,(iii). Since $F$ is finite, for each $\bar{a}\in N^{|\bar{y}|}$ it follows that 
 $d\phi_f(\bar{a})$ holds for exactly one $f\in F$, and for this $f$ we may put 
 $h^{N}(\phi(\bar{x},\bar{a}))=f$. It is easily checked that this is well-defined -- i.e.\ independent of the choice of formula defining $\chi$ -- and that $h^{N}$ satisfies \autoref{def:T-measurable}.
\end{proof}

\begin{remark}
The use of the finiteness of $F$ in \autoref{def:T-measurable}(iii) in the proof of \autoref{theory} is, by compactness, essential. The result justifies the following definition.
\end{remark}

\begin{definition}
If $T$ is a complete theory and $S$ is a measuring semiring, we say that $T$ is {\em $S$-measurable} if some model of $T$ is $S$-measurable.
\end{definition}


\begin{proposition} Let $M$ be $S$-measurable, let $D=d(S)$ and let $\hat{D}$ be the corresponding ordered abelian group as in \autoref{cancel_groupembedding}.
Then ${\rm Th}(M)$ is dimensional in the sense of \cite{wagner}, dimensions taking values in $\hat{D}$.
\end{proposition}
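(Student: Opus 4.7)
The plan is to produce, from the generalised measure $h$, a dimension function $\delta$ on $\mathrm{Def}(M)$ taking values in $\hat{D}$, and then check Wagner's axioms. Define $\delta(X) := d(h(X))$ for $X \in \mathrm{Def}(M)$, using the quotient map $d\colon S \to D$ followed by the embedding $D \hookrightarrow \hat{D}$ from \autoref{cancel_groupembedding}. Recall that the induced operations on $D$ are $\oplus$ (from multiplication) and $\max$ (from addition, as noted just before \autoref{cancel_groupembedding}); thus $d$ is an ordered semiring homomorphism from $(S,+,\cdot)$ to $(D,\max,\oplus)$, and its composition with the inclusion into $\hat{D}$ lands in an ordered abelian group (with $-\infty$ appended for the empty set).

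Most of the axioms are now routine. Invariance of $\delta$ under definable bijections is \autoref{easy-facts}(iii), and monotonicity $\delta(A) \leq \delta(B)$ for $A \subseteq B$ follows from \autoref{easy-facts}(ii) together with monotonicity of $d$. For the union axiom $\delta(A \cup B) = \max(\delta(A),\delta(B))$, decompose $A \cup B = A \sqcup (B \setminus A)$ and apply finite additivity: $h(A \cup B) = h(A) + h(B \setminus A)$. Taking $d$ gives $\delta(A \cup B) = \max(\delta(A),\delta(B \setminus A))$, and squeezing between $\max(\delta(A),\delta(B))$ (a lower bound by monotonicity applied to both $A$ and $B$) and itself (an upper bound since $B \setminus A \subseteq B$) yields the desired equality. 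For the algebraic base case: any nonempty finite set has $\delta$-value $d(n) = [1]_\sim = 0_D$ since all positive integers are $\sim$-equivalent, and the empty set receives $-\infty$.

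The substantive step is Lagrange/Fubini: if $p \colon X \to Y$ is a definable surjection whose fibers $p^{-1}(a)$ all share the same $D$-dimension $e$, then $\delta(X) = \delta(Y) \oplus e$. The subtlety is that \autoref{def:T-measurable}(iv) requires fibers of constant $S$-measure, not merely constant dimension. The plan is to apply the \mac{} condition \autoref{def:T-measurable}(iii) to the formula parameterising the fibers to obtain a finite $\emptyset$-definable partition $Y = \bigsqcup_{i=1}^n Y_i$ on which $h(p^{-1}(a)) = s_i$ is constant for $a \in Y_i$. Applying \autoref{def:T-measurable}(iv) piecewise gives $h(p^{-1}(Y_i)) = s_i \cdot h(Y_i)$, whence finite additivity yields $h(X) = \sum_i s_i \cdot h(Y_i)$. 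Since $d(s_i) = e$ for each $i$ and $d$ is a homomorphism, $\delta(X) = \max_i (e \oplus \delta(Y_i)) = e \oplus \max_i \delta(Y_i) = e \oplus \delta(Y)$, the final equality by the union axiom applied to the partition.

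To recast this in the type-based formulation of Wagner, set $\delta(\bar{a}/A) := \min\{\delta(X) : X \text{ is } A\text{-definable},\ \bar{a} \in X\}$; the minimum is attained because each formula $\phi(\bar{x},\bar{y})$ admits only finitely many values of $d \circ h$ by \autoref{def:T-measurable}(iii), and the $A$-definable neighbourhoods of $\bar{a}$ are closed under finite intersection. Invariance under elementary maps then comes from \autoref{theory}. The principal obstacle is the Fubini step: the rest is essentially bookkeeping on the semiring homomorphism $d$, but moving from measure-constant fibers to merely dimension-constant fibers requires precisely the finite $\emptyset$-definable refinement of $Y$ furnished by the \mac{} clause (iii), and this is the only place where the uniformity in the definition of $S$-measurability is used in an essential way.
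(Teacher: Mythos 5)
Your proof handles the routine axioms on $\mathrm{Def}(M)$, but it misses the actual content of the paper's argument: extending the dimension function to \emph{interpretable} sets. The paper opens its proof with ``We must extend the dimension function on $\mathrm{Def}(M)$ to the family $\mathrm{Int}(M)$ of interpretable sets'', and that extension is the entire substance of the argument --- everything you write (bijection-invariance, monotonicity, additivity over unions, the Fubini step) stays within the definable category, which the paper treats as already established or routine.

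A concrete sign of the gap: your proof never uses the group structure of $\hat{D}$. You embed $D$ into $\hat{D}$ formally, but your dimension function $d\circ h$ takes values in $D\cup\{-\infty\}$ and never subtracts anything. Yet the whole reason \autoref{cancel_groupembedding} constructs $\hat{D}$ as the ordered abelian group envelope of the cancellative monoid $(D,\oplus)$ is precisely so that one can form differences. In the paper's proof, given a definable set $X$ with a definable equivalence relation $E$, one partitions $X$ into the $\emptyset$-definable pieces $S_i$ on which the $E$-classes have constant dimension $f_i$, and then assigns the quotient $S_i/E$ the dimension $g_i - f_i \in \hat{D}$, where $g_i = d(S_i)$; the dimension of $X/E$ is the maximum of these. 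That subtraction is where $\hat{D}$ earns its keep, and it is the step you would need to supply. Wagner's Definition 1.1 is a condition on imaginary sorts, not just on real ones, so verifying axioms on $\mathrm{Def}(M)$ alone does not yield the conclusion. (Compare \autoref{meq}, which makes the same division-by-fibre-size move to extend the measure itself to $M^{\mathrm{eq}}$.)

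The parts you did write are fine as far as they go --- the refinement of $Y$ via clause (iii) of \autoref{def:T-measurable} to make Fubini applicable is a correct observation --- but you should reorganize around the extension to $\mathrm{Int}(M)$ and verify Wagner's axioms there, not on $\mathrm{Def}(M)$.
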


\begin{proof}
We must extend the dimension function on ${\rm Def}(M)$ to the family ${\rm Int}(M)$ of interpretable sets. Suppose $X$ is a definable set with $d(X)=e$, and $E$ is a definable equivalence relation on $X$. Since the $E$-classes are uniformly definable via the formula $Exy$ they take finitely many dimensions $f_1,\ldots, f_r$, with the union $S_i$ of the $E$-classes of dimension $f_i$ having dimension $g_i$, say. We extend $h$ to ${\rm Int}(M)$ by putting $h(S_i/E)=g_i-f_i\in \hat{D}$ and
$h(S/E)={\rm Max}\{g_i-f_i:1\leq i\leq r\}$. It is routine to check that the dimension so defined satisfies Definition 1.1 of \cite{wagner}. 
\end{proof}

In fact, the above proof, in combination with Theorem~\ref{monomialise},  yields that generalised measurability extends to $M^{{\rm eq}}$.
\begin{proposition}\label{meq}
Let $M$ be generalised measurable. Then $M^{{\rm eq}}$ is generalised measurable. 
\end{proposition}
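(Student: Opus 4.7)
The plan is to mimic the dimension-extension argument given in the preceding proposition, but tracking the full measure rather than just the dimension, and to carry it out in an enlarged monomial semiring which is ``group-completed'' in the dimension variable so that the relevant division becomes legal.

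First I would apply Theorem~\ref{monomialise} together with \autoref{easy-facts}(v) to reduce to the case where $S$ is itself monomial: $S=\mathbb{R}\mon{D}$ for the ordered commutative monoid $(D,\oplus,-\infty,0_D)$ of $S$-dimensions. By \autoref{cancel_groupembedding}(ii), $(D,\oplus)$ embeds in an ordered abelian group $\hat{D}$, and I would then form the monomial semiring $S^*:=\mathbb{R}\mon{\hat{D}}$ in exactly the same way as $\mathbb{R}\mon{D}$, with max-plus addition and coordinatewise multiplication. A short case analysis on the three possibilities $d(x)<d(y)$, $d(x)=d(y)$, $d(x)>d(y)$ shows that $S^*$ satisfies {\bf(MS)}, so it is a measuring semiring extending $S$; crucially, every non-zero monomial $rZ^d$ is now invertible in $S^*$, with inverse $r^{-1}Z^{\ominus d}$.

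Next I would define the generalised measure on $M^{\rm eq}$. Any set $A\in\mathrm{Def}(M^{\rm eq})$ may be presented as $\tilde{A}/E$, where $\tilde{A}\subseteq M^n$ is definable and $E$ is a definable equivalence relation on $\tilde{A}$ (or a finite product of such, handled by the same recipe). By \autoref{def:T-measurable}(iii) applied in $M$ to the uniformly definable family of $E$-classes $\{[x]_E:x\in\tilde{A}\}$, there is a finite partition
\[
\tilde{A}=\tilde{A}_1\sqcup\cdots\sqcup\tilde{A}_r
\]
into $\emptyset$-definable pieces such that $h([x]_E)=f_i\in S$ is constant on $\tilde{A}_i$, with the $f_i$ distinct. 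Each $f_i>0$ since classes are non-empty, hence each $f_i$ is invertible in $S^*$. I then set
\[
h^{\rm eq}(A) := \sum_{i=1}^{r} h(\tilde{A}_i)\cdot f_i^{-1}\ \in\ S^*.
\]
I would verify that this agrees with $h$ on $\mathrm{Def}(M)$ (take $E$ to be equality, so $r=1$, $f_1=1$), that it gives $|A|$ on finite imaginaries (partition by class size), and that it is independent of the presentation $\tilde{A}/E$: a definable bijection between two presentations lifts to a map which, together with Fubini in $M$, forces the sums to agree.

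Finally I would verify the four clauses of \autoref{def:T-measurable} for $h^{\rm eq}$. Clause~(i) on finite sets is a direct computation as above, and clause~(ii) (finite additivity) follows because disjoint unions of imaginary sets lift to disjoint unions of their representatives, after which the partition refines compatibly and additivity in $M$ gives the result. Clause~(iii), the \mac{}-condition, reduces to the \mac{}-condition for $M$ by unpacking an $L^{\rm eq}$-formula $\phi(\bar{x},\bar{y})$ to an $L$-formula on the representatives together with the defining formulas of the relevant equivalence relations, and noting that each $f_i^{-1}$ and the defining formula of $\tilde{A}_i$ are uniformly controlled. The step I expect to be the main obstacle is clause~(iv), the Fubini condition, in the full imaginary generality: given a definable $p\colon X\to Y$ in $M^{\rm eq}$ with constant fibre-measure $f\in S^*$, one must lift $p$ to a correspondence between representatives, partition both sides simultaneously according to the fibre measures and class measures on each side, and combine several applications of Fubini in $M$ together with the cancellation in $\hat{D}$ to obtain $h^{\rm eq}(X)=f\cdot h^{\rm eq}(Y)$. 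This is essentially a bookkeeping argument, but because the fibre-constancy hypothesis is formulated after passing to imaginary quotients while $h$ only sees representatives, one has to commute the ``quotient'' and ``fibre'' operations carefully, which is where the invertibility of monomials in $S^*$ (and hence the passage to $\hat{D}$) is used decisively.
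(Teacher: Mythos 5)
Your proposal is correct and takes essentially the same route as the paper: monomialise $S$, pass to the group completion $\hat{D}$ of the dimension monoid so that monomials become invertible, and define the measure of a quotient $\tilde{A}/E$ by dividing by the (constant) measure of the $E$-classes, handling the non-constant case via the finite $\emptyset$-definable partition from clause~(iii). The paper's proof is stated even more briefly (it simply records the key formula $h(X/E)=\tfrac{s}{r}Z^{e-d}$ and leaves the verifications to the reader), so your write-up just makes explicit the bookkeeping that the paper omits.
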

\begin{proof} Using \autoref{monomialise} and \autoref{cancel} we may suppose that $M$ is $S$-measurable, where $S$ is a monomial semiring $S=\mathbb{R}\mon{D}$ with $D$ an ordered abelian group.
We extend the map $h$ in \autoref{def:T-measurable}
to interpretable sets. The key idea is that if $X$ is a definable set and $E$ is a definable equivalence relation on $X$ such that $h(X)=sZ^e$ and $h(C)=rZ^d$ for each $E$-class $C$, then we put $h(X/E)=\frac{s}{r}Z^{e-d}$. We leave the reader to verify the details. 
\end{proof}

The following observation connects to Theorem~\ref{pillaysm} above.

\begin{proposition} \label{smin-MS}
Let $M$ be a generalised measurable strongly minimal set. Then $M$ is measurable, {\em i.e.} is MS-measurable. 
\end{proposition}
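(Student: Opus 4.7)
The plan is to reduce to Proposition~\ref{easy-facts}(vi), which already says that an $S$-measurable structure whose dimension set $D$ equals $\mathbb{N}$ is MS-measurable. So the goal is to show that on a strongly minimal $M$ only the `layers' $\{-\infty\}\cup\{0_D,e,2e,\ldots\}$ of $D$ are used, for $e:=d(M)$. First I would invoke Proposition~\ref{easy-facts}(v) to pass to the monomial form $S=\mathbb{R}\mon{D}$, so that the dimension map $d\colon S\longrightarrow D$ is a semiring map in the sense that $d(xy)=d(x)\oplus d(y)$ and $d(x+y)=\max(d(x),d(y))$. Next I would verify $e>0_D$: since $M$ is infinite and $h(X)=|X|$ for finite $X$, monotonicity (Proposition~\ref{easy-facts}(ii)) forces $h(M)\geq n\cdot 1$ for every $n$, so $h(M)\not\sim 1$ and hence $d(M)=e>0_D$.

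The core technical step is an induction on $n$ showing that every parameter-definable $X\subseteq M^n$ has $d(X)\in\{-\infty\}\cup\{ke:0\leq k\leq n\}$ (with the convention $0\cdot e=0_D$). The base case $n=1$ follows directly from strong minimality: a finite nonempty $X\subseteq M$ satisfies $h(X)=|X|\in\mathbb{N}^{>0}$ and so $d(X)=0_D$; a cofinite $X$ satisfies $h(M)=h(X)+m$ with $m=|M\setminus X|\in\mathbb{N}$, and since $d(h(X)+m)=\max(d(X),0_D)$ must equal $e>0_D$ this forces $d(X)=e$.

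For the inductive step I would project $X\subseteq M^{n+1}$ onto its first $n$ coordinates via $p$, noting that each fibre $X_{\bar a}\subseteq M$ is parameter-definable and so by strong minimality is finite or cofinite. I would then use clause~(iii) of Definition~\ref{def:T-measurable} to partition $M^n$ into finitely many $\emptyset$-definable pieces $Y_0,Y_1,\ldots,Y_r$, where $X_{\bar a}=\emptyset$ for $\bar a\in Y_0$ and $h(X_{\bar a})=f_j$ is constant on $Y_j$ for $j\geq 1$. Fubini~(iv), applied to the restriction $p\colon p^{-1}(Y_j)\longrightarrow Y_j$, yields $h(p^{-1}(Y_j))=f_j\cdot h(Y_j)$. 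The base case bounds $d(f_j)\in\{0_D,e\}$, while the inductive hypothesis bounds $d(Y_j)\in\{-\infty\}\cup\{k_j e:0\leq k_j\leq n\}$; their $\oplus$-sum gives $d(p^{-1}(Y_j))\in\{-\infty\}\cup\{0_D,e,\ldots,(n+1)e\}$, and max-plus addition over $j$ propagates the bound to $d(X)$.

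Once the induction is complete, the image $d(h(\mathrm{Def}(M)))\subseteq D$ lies in the ordered submonoid generated by $e$, which under $ke\mapsto k$ is isomorphic to $\mathbb{N}\cup\{-\infty\}$ as an ordered monoid. Rescaling $D$ in this way identifies the subsemiring of $\mathbb{R}\mon{D}$ generated by $h(\mathrm{Def}(M))$ with a copy of $\mathbb{R}\mon{}$, exhibiting $M$ as generalised measurable with dimension set $\mathbb{N}$, and Proposition~\ref{easy-facts}(vi) then delivers MS-measurability. The main obstacle is the inductive step: Fubini demands that \emph{all} fibres of the projection have exactly the same value in $S$, and it is precisely the finiteness of the set of possible fibre measures from clause~(iii) of Definition~\ref{def:T-measurable} that allows one to refine the projection into finitely many Fubini-compatible subprojections. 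Everything else reduces to tracking max-plus arithmetic in the monomial measuring semiring.
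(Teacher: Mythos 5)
Your proof is correct and reaches the result by a genuinely different, self-contained route. The paper's proof is terse: after passing to the monomial semiring $\mathbb{R}\langle Z^D\rangle$ it simply cites the proof of Lemma 2.3 of \cite{kestner-pillay}, which shows that $d(X)=t\cdot\mathrm{RM}(X)$ with $t=d(M)$, and then rescales. You instead prove directly, by induction on the arity using the projection $p\colon M^{n+1}\to M^n$ together with strong minimality of fibres, clause~(iii) to finitely partition the base, and Fubini, that $d(X)$ always lies in the submonoid $\mathbb{N}\cdot e\cup\{-\infty\}$ of $D$ (a slightly weaker statement than proportionality to Morley rank, but sufficient). What this buys you is a self-contained argument that does not rely on the reader looking up Kestner--Pillay and verifying that their proof ``works equally well for generalised measurability''; what it costs is length, and a very small amount of care that the paper's citation hides -- in particular the verification that $e>0_D$ (handled correctly via monotonicity and $h(X)=|X|$ on finite sets) and that $ke\mapsto k$ is a well-defined isomorphism onto $\mathbb{N}$ (which needs cancellativity of $\oplus$ in $D$, i.e.\ Lemma~\ref{cancel_groupembedding}(i), to see that the $ke$ are pairwise distinct -- you gesture at this but could make it explicit). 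One minor wording slip: the pieces $Y_0,\ldots,Y_r$ of your fibre-wise partition are definable over the parameters of $X$, not over $\emptyset$; this is harmless because your inductive hypothesis is stated for all parameter-definable sets, but it should be stated correctly.
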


\begin{proof}
We may suppose that $M$ is $S$-measurable with generalised measure $h$, where $S=\mathbb{R}\langle Z^D\rangle$ for some divisible ordered abelian group $D$. We now apply Lemma 2.3 of \cite{kestner-pillay} and its proof. Indeed, by the proof of that lemma which works equally well for generalised measurability, if $d(M)=t$, then  for any definable $X\subset M^n$ we have $d(X)=t.{\rm RM}(X)$. It follows that we may replace the measuring function $h$ by $h'$ taking values in $\mathbb{R}\langle Z\rangle$, putting $h'(X)=\mu Z^{{\rm RM}(X)}$ whenever $h(X)=\mu Z^{t{\rm RM}(X)}$ -- the axioms of generalised measurability still apply. Now apply Proposition~\ref{easy-facts}(vi). 
\end{proof}

\subsection{Ultraproducts of \mac{}s}\label{macsmeas}

We show that ultraproducts of \mac{}s are generalized measurable. 

\begin{theorem}\label{macgm}

 Let $\mathcal{C}=\{M_{i}: i\in I\}$ be an $R$-\mac{}, where $R$ is a set of functions 
$\mathcal{C}\longrightarrow\mathbb{R}^{\geq0}$. Assume that $\mathcal{C}$ contains only finitely many structures of any given finite size. Let $\mathcal{U}$ be a non-principal ultrafilter on $I$. Then there is a measuring semiring $S$ such that $M:=\prod_{i\in I}M_{i}\big/\mathcal{U}$ is $S$-measurable.
\end{theorem}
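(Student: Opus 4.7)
The plan is to build the measuring semiring $S$ as a quotient of the set of measuring functions by an equivalence relation coming from $\mathcal{U}$, and to use the \mac{}-partitions to assign to each definable set in $M$ its $S$-value.

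First I would enlarge $R$ to a set $R'$ of functions $\mathcal{C}\longrightarrow \mathbb{R}^{\geq 0}$ closed under addition, multiplication, and containing each constant $n\in\mathbb{N}$. Since the hypothesis that $\mathcal{C}$ has only finitely many structures of any given finite size together with $\mathcal{U}$ being non-principal implies that $|M_i|\longrightarrow\infty$ along $\mathcal{U}$, the little-$o$ estimates in \autoref{def:mac} become usable. On $R'$ I define the equivalence relation
\[
h\sim_\mathcal{U} h' \iff \forall \epsilon>0\; \{i\in I : |h(M_i)-h'(M_i)|\leq \epsilon(h(M_i)+h'(M_i))\}\in\mathcal{U},
\]
(treating the case $h(M_i)=h'(M_i)=0$ as satisfying the inequality). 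Routine checks using the triangle inequality show that $\sim_\mathcal{U}$ respects $+$ and $\cdot$; that the ordering $[h]\leq[h']\iff \{i:h(M_i)\leq h'(M_i)\}\in\mathcal{U}$ is well-defined and total on $S:=R'/{\sim_\mathcal{U}}$; and that $[0]$ is the least element and $[1]\neq [0]$. These give the ordered semiring axioms \textbf{(OS1)}--\textbf{(OS8)}.

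Next I would verify the Measuring Axiom \textbf{(MS)}. Unwinding the definitions, $[x]<[y]$ means there is $\delta>0$ with $x(M_i)\leq(1-\delta)y(M_i)$ on a set in $\mathcal{U}$, while $d([y])=d([z])$ gives an integer $n$ with $z(M_i)\leq n\cdot y(M_i)$ on a set in $\mathcal{U}$. Then
\[
x(M_i)+z(M_i)\leq (1-\delta)y(M_i)+z(M_i)\leq \bigl(1-\tfrac{\delta}{n+1}\bigr)(y(M_i)+z(M_i))
\]
on a $\mathcal{U}$-large set, so $[x]+[z]<[y]+[z]$. Hence $S$ is a measuring semiring.

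Now I would define $h\colon\mathrm{Def}(M)\longrightarrow S$. Given a definable $X=\phi(M^{|\bar{x}|};\bar{b})$ with $\bar{b}=[\bar{b}_i]_\mathcal{U}$, the \mac{} condition yields a finite $\emptyset$-definable partition $\Pi_\phi$ of $(\mathcal{C},\bar{y})$ with measuring functions $h_\pi\in R$. Since $\Pi_\phi$ is finite, exactly one $\pi\in\Pi_\phi$ satisfies $\{i:(M_i,\bar{b}_i)\in\pi\}\in\mathcal{U}$; I set $h(X):=[h_\pi]_{\sim_\mathcal{U}}$. The approximation clause $\bigl||\phi(M_i^{|\bar{x}|};\bar{b}_i)|-h_\pi(M_i)\bigr|=o(h_\pi(M_i))$ makes this independent of the defining formula (two formulas defining $X$ in $M$ define sets of the same cardinality in $M_i$ for $\mathcal{U}$-almost all $i$).

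For the four axioms of $S$-measurability: clause (iii) (the \mac{} condition) is immediate from the definition of $h$, using that the defining formulas of the partition are $\emptyset$-definable. For (i) (finite sets), if $|X|=n$ finite, then $|\phi(M_i^{|\bar{x}|};\bar{b}_i)|=n$ on a set in $\mathcal{U}$ and the approximation clause forces $h_\pi(M_i)\sim_\mathcal{U} n$, so $h(X)=[n]$. For (ii) (finite additivity), disjointness of $X,Y$ passes to $\mathcal{U}$-almost all coordinates and $|X\cup Y|=|X|+|Y|$ holds pointwise there.

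The main obstacle is the Fubini clause (iv). Given a definable surjection $p\colon X\longrightarrow Y$ with all fibers having the same $S$-value $f$, I would argue as in \autoref{thm:projection.lemma}: applying the \mac{} condition to the formula defining the fibers of $p$ gives a finite $\emptyset$-definable partition of $Y$ according to the measuring function of the fiber, and the hypothesis forces all but one class to be of strictly smaller $S$-value than $Y$. On the dominant piece the fibering identity $|X_i|=\sum_{a\in Y_i}|p^{-1}(a)|$ together with the little-$o$ estimates gives, for every $\epsilon>0$, $\bigl||X_i|-h_\pi(M_i)\cdot h_\sigma(M_i)\bigr|<\epsilon\cdot h_\pi(M_i)h_\sigma(M_i)$ on a set in $\mathcal{U}$, where $h_\pi$ measures $p^{-1}(a)$ and $h_\sigma$ measures $Y$; the contributions from the smaller pieces are absorbed since they are $\sim_\mathcal{U}$-negligible. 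This yields $h(X)=f\cdot h(Y)$ and completes the verification that $M$ is $S$-measurable.
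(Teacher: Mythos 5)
Your argument is correct and follows essentially the same route as the paper's proof: construct $S$ as a quotient of the semiring generated by $R$ by a $\mathcal{U}$-indexed equivalence relation encoding asymptotic ratio $1$, verify the ordered-semiring and \textbf{(MS)} axioms, then assign $h(X)=[h_\pi]$ via the \mac{}-partition and check Definition~\ref{def:T-measurable}(i)--(iv) with Fubini handled by a fibering argument as in Theorem~\ref{thm:projection.lemma}. One minor formal improvement in your write-up is that the relation $|h(M_i)-h'(M_i)|\leq\epsilon\big(h(M_i)+h'(M_i)\big)$ is symmetric on its face, whereas the paper's $\approxeq$ (defined via $\lvert f/g-1\rvert\to 0$) requires a short separate argument for symmetry in the proof of Claim~\ref{defineS2}.
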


\begin{proof}
 
 
The proof consists of three main steps. We first define $S$ and show that it is a measuring semiring; this is done in Claims~\ref{defineS1}--\ref{defineS3}. Next, we define the generalised measure on definable sets $X=\phi (M; \bar b)$ in the ultraproduct $M$, show that this definition is well-defined, and in \autoref{defineS4} confirm that this definition is independent of the formula $\phi (\bar x; \bar b)$ used to define $X$. We then complete the proof of the theorem in \autoref{defineS5} by showing that $M$ is $S$-measurable.
 
We construct $S$ from $R$ as a quotient. We assume without loss of generality that $R$ is the semiring generated by the functions used to measure definable sets in the models in $\mathcal{C}$. Moreover, we can assume for every formula $\phi(\bar{x};\bar{y})$ that no member $\pi$ of the $\emptyset\/$-definable partition for $\phi(\bar{x};\bar{y})$ satisfies for some $n\in \mathbb N$ that if $(M,\bar{b})\in\pi$ then $|M|\leq n$. For $f,g\in R$, write $f\leq g$ if and only if $\{i: f(M_{i})\leq g(M_{i})\}\in\mathcal{U}$.
 
\begin{claim}\label{defineS1}
$\leq$ is a (weak) total pre-order on $R$, i.e.\ it is reflexive, transitive, and total.
\end{claim}
\begin{proof}[Proof of claim]
Reflexivity and transitivity follow from the fact that $\mathcal{U}$ is a filter. Since $\mathcal{U}$ is an ultrafilter, $\leq$ is total.
\end{proof}



Put 
\[R_{=0}:=\{f\in R : (\forall\epsilon >0)(\exists U\in\mathcal{U})
(\forall i\in U)\, 0\leq |f(M_{i})|<\epsilon\},\]
and 
$R_{>0}:=R\setminus R_{=0}$.  

Define $\approxeq$ on $R_{>0}$ by 
\[f\approxeq g\> \Leftrightarrow\> 
\left|\frac{f}{g}-1\right|\longrightarrow 0 \mbox{\ as $i\longrightarrow \infty$ on\ } { \mathcal U},\]
where the right-hand side means 
\[(\forall\epsilon >0)(\exists U\in\mathcal{U})(\forall i\in U)\bigg|\frac{f(M_{i})}{g(M_{i})}-1\bigg|<\epsilon,\]
that is, 
\[\left|f(M_{i})-g(M_{i})\right|=o(g(M_{i})) \mbox{\ as $i\longrightarrow \infty$ on\ } { \mathcal U}.\]

\begin{claim}\label{defineS2} The relation $\approxeq$ is an equivalence relation 
and $\approxeq$-equivalence classes are $\leq$-convex in $R$. Also, $R_{=0}$ is $\leq$-convex in $R$. 
\end{claim}
\begin{proof} 
That $R_{=0}$ is $\leq$-convex is evident. 

Reflexivity of $\approxeq$ is immediate. To see that $\approxeq$ is symmetric, suppose $f\approxeq g$ and let $\epsilon>0$, with $\epsilon<1$.  Let $U\in \mathcal{U}$ such that $(\forall i\in U)\bigg|\frac{f(M_{i})}{g(M_{i})}-1\bigg|<\epsilon.$ There is $\delta>0$ such that
$\delta <\frac{f(M_{i})}{g(M_{i})}$ for all $i\in U$.  There is $V\in \mathcal{U}$ with $V\subset U$  such that $(\forall i\in V)\bigg|\frac{f(M_{i})}{g(M_{i})}-1\bigg|<\epsilon\delta.$ Then for all $i\in V$ we have
$\bigg|\frac{g(M_{i})}{f(M_{i})}-1\bigg|<\epsilon.$ A similar argument proves transitivity.

To see that $\approxeq$-classes are convex, suppose that $f,g,h\in R_{>0}$ and $f\approxeq h$ and $f<g<h$. Let $\epsilon>0$ and $U\in \mathcal{U}$ satisfy $(\forall i\in U)\bigg|\frac{f(M_{i})}{h(M_{i})}-1\bigg|<\epsilon.$ We may suppose that
$f(M_i)<g(M_i)<h(M_i)$ for all $i\in U$, so $\bigg|\frac{f(M_{i})}{g(M_{i})}-1\bigg| <\bigg|\frac{f(M_{i})}{h(M_{i})}-1\bigg|$ for all $i\in U$. It follows that $\bigg|\frac{f(M_{i})}{g(M_{i})}-1\bigg|<\epsilon,$ as required. 
\end{proof} 

By the claim, $R_{>0}/{\approxeq}$ inherits a total ordering from $\leq$. We treat $R_{=0}$ as 
an additional equivalence class that we denote by $[0]$. Likewise we write $[f]$ for the $\approxeq$-class of $f\in R_{>0}$.
Let $S:=(R_{>0}/{\approxeq})\, \cup \,\{[0]\}$. It is evident that every function in $R$  is $\approxeq$-equivalent to a $\{+,\times\}$-term in functions in $R$ that measure a definable set in the models in $\mathcal{C}$. There are now several assertions to be established. 

\begin{claim}\label{defineS3} The operations $+$ and $\times$ are well-defined on $S$, and with these operations $S$ is a measuring semiring. 
\end{claim}

\begin{proof} 
First we handle the assertion that $+$ and $\times$ are well-defined on $S$, omitting many details. 
To see that $+$ is well-defined, suppose $f,f',g,g'\in R_{>0}$ with $f\approxeq f'$ and $g\approxeq g'$, and let 
$\epsilon>0$. There are $U,V\in \mathcal{U}$ such that for all $i\in U$ we have $\bigg|\frac{f(M_{i})}{f'(M_{i})}-1\bigg| <\epsilon$, that is, $|f(M_i)-f'(M_i)|<\epsilon|f'(M_i)|$), and likewise for all $i\in V$ we have 
$\bigg|\frac{g(M_{i})}{g'(M_{i})}-1\bigg| <\epsilon$,  
so $|g(M_i)-g'(M_i)|<\epsilon |g'(M_i)|$. 
Let $W=U\cap V\in \mathcal{U}$. 
For all $i\in W$ we have
$$|(f(M_i)+g(M_i))-(f'(M_i)+g'(M_i))|\\ \leq |f(M_i)-f'(M_i)|+|g(M_i)-g'(M_i)| $$
 $$ <  \epsilon(|f'(M_i)|+|g'(M_i)|)=\epsilon|f'(M_i)+g'(M_i)|. $$ 
Similar arguments show that $\times$ is well-defined and $S$ is an ordered semiring. 

To see that $S$ is a measuring semiring, define $\sim$ and the dimension function $d\colon S\longrightarrow D$ as usual.  Suppose that $f, g, h, \in S$ are such that $[f]< [g]$ and $d([g])=d([h])$.  We further suppose that $[g]\leq [h]\leq n [g]$, the other case being similar. There is $\epsilon>0$ and $U\in \mathcal{U}$ such that for all $i\in U$ we have $f(M_i)<g(M_i)$ and $1-\frac{f(M_i)}{g(M_i)}>\epsilon$. We may assume additionally  that for all $i\in U$ we have
$g(M_i)<h(M_i)<ng(M_i)$. Let $\epsilon':=\frac{\epsilon}{n+1}$. 
Then for $i\in U$ we have 
\[1-\frac{f(M_i)+h(M_i)}{g(M_i)+h(M_i)}\geq  1-\frac{f(M_i)+ng(M_i)}{(n+1)g(M_i)}\geq \epsilon' ,\]
hence $[f]+[h]<[g]+[h]$, as required. 
\end{proof}

We now define the generalised measure in the ultraproduct $M$ and show that this definition is well-defined. Let $X$ be definable in $M$ with parameters via the formula $\phi(\bar{x};\bar{b})$. Let 
$\Pi$ and $H_\Pi$ be as given in \autoref{def:mac} of an $R$-\mac{} applied to the formula $\phi(\bar{x};\bar{y})$. Write $\bar{b}=[(\bar{b}_{i})_{i\in I}]_{\mathcal{U}}$. Let $\pi\in\Pi$ be such that $\{i\in I : (M_{i},\bar{b}_{i})\in\pi\}\in\mathcal{U}$. Since $\mathcal{U}$ is an ultrafilter and $\Pi$ is finite, this choice is well-defined and unique. Define $h(X):=[M_{i}\longmapsto h_{\pi}(M_{i})]_\approxeq$,  where we abuse notation slightly by viewing 
$[0]$ as an $\approxeq$-class. Note that $h(X)\in S$. 

This assignment uses the definition of $X$ by an instance of a particular formula $\phi$. We must show the following:
\begin{claim}\label{defineS4}
The definition of $h(X)$ above is independent of the formula defining $X$. 
\end{claim} 
\begin{proof} 
To this end, suppose that $\phi(\bar{x};\bar{b})$ is as above and 
$\psi(\bar{x};\bar{c})$ also defines $X$. Let the partition and measuring functions associated with  
$\psi(\bar{x};\bar{y})$ be $\Lambda$ and $H_{\Lambda}=\{h_\lambda:\lambda\in \Lambda\}$, and 
$\lambda\in \Lambda$ be such that 
\[\left||\psi(M_{i}^{|\bar{x}|},\bar{c}_{i})|-h_{\lambda}(M_{i})\right|=o(|h_{\lambda}(M_{i})|)\]
as $i\longrightarrow\infty$ on $\mathcal{U}$. Let $U\in\mathcal{U}$ be such that 
$M_{i}\models\forall \bar{x} \, (\psi(\bar{x},\bar{c}_{i})\leftrightarrow\phi(\bar{x},\bar{b}_{i}))$ for all $i\in U$. 
Modulo the ultrafilter, we may assume without loss of generality that $h_{\pi}\leq h_{\lambda}$. 
By the triangle inequality 
\[\left|h_\pi(M_{i})-h_\lambda(M_{i})\right|=o(h_{\lambda}(M_{i}))\]
as $i\longrightarrow\infty$ on $\mathcal{U}$. Therefore $h_{\pi}\approxeq h_{\lambda}$, and 
thus the definition of $h(X)$ does not depend on the defining formula, as desired. Note that the argument must be modified slightly if $X=\emptyset$, since 
$X$ is measured by $[0]$. 
\end{proof}

For $n\in\mathbb{N}_{>0}$ we define the $\approxeq$-equivalence class $[n]$ as follows. Let 
$\varphi_n(x; y_1,\ldots ,y_n)$ be the formula stating that $y_1,\ldots ,y_n$ are distinct and $x$ is equal to one of them. Let $h_n$ be the measuring function for $\varphi_n$ as defined above, and put 
$[n]:=[h_n]_{\approxeq}$. 
The mapping $n\longmapsto [n]$ is an embedding of $(\mathbb{N}, +, \cdot)$ into $S$. We note, but do not use, that it is easy to show that for each $n\in\mathbb{N}$ there is no $\approxeq$-equivalence class between $[n]$ and 
$[n+1]$. 

With $0$ interpreted by $[0]$ and $1$ interpreted by $[1]$ in $S$, we come to our last claim:

\begin{claim}\label{defineS5}
The axioms in \autoref{def:T-measurable} are satisfied.  
\end{claim}
\begin{proof}
To see that \autoref{def:T-measurable}(i) holds, let $X$ be a finite set in $M$ of size $n$. Then $X$ is defined via a formula $\varphi_n$ as above, and it follows that $h(X)=[n]$. 

For \autoref{def:T-measurable}(ii), suppose that $X$ and $Y$ are disjoint definable sets, defined by formulas $\phi(\bar{x},\bar{a})$ and $\psi(\bar{x},\bar{b})$, say. Let $\chi(\bar{x},\bar{y}\bar{z})$ be the formula $\phi(\bar{x},\bar{y})\vee \psi(\bar{x},\bar{z})$ (so $|\bar{y}|=|\bar{a}|$ and $|\bar{z}|=|\bar{b}|$). Thus $\chi(\bar{x},\bar{a}\bar{b})$ defines $X$. There is $U\in \mathcal{U}$ such that for $i\in U$ the sets defined by $\phi(\bar{x},\bar{a}_i)$ and $\psi(\bar{x},\bar{b}_i)$ are disjoint. Let $\Pi$ be the partition associated with $\phi(\bar{x},\bar{y})$ with $H_\Pi \subset R$, and likewise let $\Theta$ and $H_\Theta$ be associated with $\psi(\bar{x},\bar{z})$, and $\Lambda$ and $H_\Lambda$ with $\chi(\bar{x},\bar{y}\bar{z})$.  We may further suppose there are $\pi \in \Pi$, $\theta \in \Theta$ and $\lambda \in \Lambda$ such that $(M_i,\bar{a}_i)\in \pi$, $(M_i,\bar{b}_i)\in \theta$ and $(M_i,\bar{a}_i\bar{b}_i)\in \lambda$ for all $i\in U$. Let $h_\pi$, $h_\theta$ and $h_\lambda$ be the corresponding elements of $R$.  

Suppose first that there is $V \in \mathcal{U}$ such that  for $i\in V$ we have $h_\lambda(M_i)\geq 3(h_\pi(M_i)+h_\theta(M_i))$. Replacing $U$ by a suitable subset of $U \cap V$, we may then arrange that for all $i\in U$ we have
\begin{align*}
\bigg|h_\pi(M_i)-|\phi(M_i,\bar{a}_i)|\bigg|&<\frac{1}{2}h_\pi(M_i),\\
\bigg|h_\theta(M_i)-|\psi(M_i,\bar{b}_i)|\bigg|&<\frac{1}{2}h_\theta(M_i),\\
\text{and}\quad\bigg|h_\lambda(M_i)-|\chi(M_i,\bar{a}_i\bar{b}_i)|\bigg|&<\frac{1}{2}h_\lambda(M_i).
\end{align*}
Since $|\chi(M_i,\bar{a}_i\bar{b}_i)|=|\phi(M,\bar{a}_i)|+|\psi(M,\bar{b}_i)|$ for $i\in U$, it follows by the triangle inequality that
$h_\lambda(M_i)-(h_\pi(M_i)+h_\theta(M_i))<\frac{1}{2}(h_\lambda(M_i)+h_\theta(M_i)+h_\pi(M_i))$, and it is easily checked that this is incompatible with $h_\lambda(M_i)\geq 3(h_\pi(M_i)+h_\theta(M_i))$.

Thus, we now assume for all $i\in U$ that $h_\lambda(M_i)<3(h_\pi(M_i)+h_\theta(M_i))$. Let $\epsilon>0$, and put $\epsilon'=\frac{\epsilon}{4}$. Adjusting $U$, we may suppose for all $i\in U$ that
\begin{align*}
\bigg|h_\pi(M_i)-|\phi(M_i,\bar{a}_i)|\bigg|&<\epsilon' h_\pi(M_i),\\
\bigg|h_\theta(M_i)-|\psi(M_i,\bar{b}_i)|\bigg|&<\epsilon' h_\theta(M_i),\\
\text{and}\quad\bigg|h_\lambda(M_i)-|\chi(M_i,\bar{a}_i\bar{b}_i)|\bigg|&<\epsilon' h_\lambda(M_i).
\end{align*}
Thus $|h_\lambda(M_i)-(h_\pi(M_i)+h_\theta(M_i))|<\epsilon' (h_\lambda(M_i)+h_\pi(M_i)+h_\theta(M_i)),$ so
\[\bigg|\frac{h_\lambda(M_i)}{h_\pi(M_i)+h_\theta(M_i)}-1\bigg|<\epsilon'\, \frac{h_\lambda(M_i)+h_\pi(M_i)+h_\theta(M_i)}{h_\pi(M_i)+h_\theta(M_i)}<\epsilon.\] It follows that
$h_\lambda \approxeq h_\pi+h_\theta$, as required for \autoref{def:T-measurable}(ii). 

\autoref{def:T-measurable}(iii) follows almost immediately from how we assigned values in $S$ to definable sets. \autoref{def:T-measurable}(iv) is proved by similar arguments; we omit the details. 
\end{proof}

With this, the proof of \autoref{macgm} is complete. \end{proof}

\begin{remark} \label{mac-nsop} 
The above argument shows that if $\mathcal{C}$ is a weak \mac{} then any ultraproduct is weakly generalised measurable, and hence, by \autoref{NSOP} and \autoref{unimod} below, does not have the strict order property and has functionally unimodular theory. 
\end{remark}

The construction of $S$ is much simpler if $\mathcal{C}$ is a \mec{}, the essential point being that because of exactness, \autoref{def:mac} holds with values taken in a ring.  


\begin{theorem} \label{ring} Suppose that $\mathcal{C}$ is a \mec{} and let $\mathcal{U}$ be an ultrafilter on $\mathcal{C}$ and $M$ the corresponding ultraproduct of $\mathcal{C}$. 
 Then there is an ordered commutative ring $S$ (an integral domain) such that $M$ is $S^{\geq 0}$-measurable.
\end{theorem}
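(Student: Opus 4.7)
My plan is to exploit the fact that a \mec{} gives \emph{exact} cardinalities, so that the ``asymptotic'' equivalence $\approxeq$ of the \mac{} proof collapses to ordinary equality modulo the ultrafilter, placing us inside an ultrapower of~$\mathbb{Z}$, which is automatically an ordered integral domain.

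First, assume (as in \autoref{macgm}) that $R$ is generated under addition and multiplication by the measuring functions $h_\pi\colon\mathcal{C}\longrightarrow\mathbb{Z}^{\geq0}$ arising from the \mec{}; extend by taking differences so that $R$ is in fact a subring of $\mathbb{Z}^{\mathcal{C}}$. Let $\sim$ be the congruence on $R$ defined by
\[
f\sim g\iff\{M\in\mathcal{C}:f(M)=g(M)\}\in\mathcal{U},
\]
and let $S:=R/{\sim}$. Because $R\subseteq\mathbb{Z}^{\mathcal{C}}$, the quotient $S$ embeds into the ordered integral domain $\mathbb{Z}^{\mathcal{C}}/\mathcal{U}$; hence $S$ is itself an ordered commutative ring that is an integral domain, with the total ordering $[f]\leq[g]\iff\{M:f(M)\leq g(M)\}\in\mathcal{U}$. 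The positive cone $S^{\geq0}$ is an ordered semiring; since addition in $S$ is cancellative, the measuring-semiring axiom \textbf{(MS)} holds trivially in $S^{\geq0}$, so $S^{\geq0}$ is a measuring semiring.

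Next, I define $h\colon\mathrm{Def}(M)\longrightarrow S^{\geq0}$. For a definable set $X=\phi(M^{|\bar x|};\bar b)$ with $\bar b=[(\bar b_i)]_{\mathcal{U}}$, let $\Pi_\phi$ and $H_{\Pi_\phi}$ be the partition and measuring functions associated with $\phi(\bar x;\bar y)$ by the \mec{} condition. Since $\Pi_\phi$ is finite and $\mathcal{U}$ is an ultrafilter, exactly one $\pi\in\Pi_\phi$ satisfies $\{i:(M_i,\bar b_i)\in\pi\}\in\mathcal{U}$, and we set $h(X):=[h_\pi]_\sim$. Well-definedness is where exactness does the heavy lifting: if $\psi(\bar x;\bar c)$ also defines $X$, with corresponding $\lambda\in\Pi_\psi$ and measuring function $h_\lambda$, then on a set $U\in\mathcal{U}$ we have $M_i\models\forall\bar x(\phi(\bar x,\bar b_i)\leftrightarrow\psi(\bar x,\bar c_i))$, and by the \mec{} condition $h_\pi(M_i)=|\phi(M_i^{|\bar x|},\bar b_i)|=|\psi(M_i^{|\bar x|},\bar c_i)|=h_\lambda(M_i)$ for $i\in U$; so $h_\pi\sim h_\lambda$.

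Finally I verify axioms (i)--(iv) of \autoref{def:T-measurable}. Clause (i) is immediate by taking $\phi(x;\bar y)$ enumerating $X$. Clause (ii) follows because, for disjoint $X,Y$ defined by $\phi(\bar x,\bar a)$ and $\psi(\bar x,\bar b)$, the formula $\chi(\bar x,\bar y\bar z):=\phi(\bar x,\bar y)\vee\psi(\bar x,\bar z)$ has measuring function $h_\lambda$ that satisfies $h_\lambda(M_i)=h_\pi(M_i)+h_\theta(M_i)$ on a set in $\mathcal{U}$ (by exactness of cardinalities of disjoint unions), so $[h_\lambda]=[h_\pi]+[h_\theta]$. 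Clause (iii) is built in: the \mec{} guarantees finitely many measuring functions per formula, each corresponding to an $\emptyset$-definable subfamily. For (iv), given $p\colon X\longrightarrow Y$ with every fibre of measure $f$, one partitions $Y$ according to which measuring function of the formula defining the graph of $p$ applies; by hypothesis all surviving classes yield measuring functions $\sim$-equivalent to $f$, and the \mec{} condition gives the exact fibre-counting identity $|X_i|=f(M_i)\cdot|Y_i|$ on a set in $\mathcal{U}$, whence $h(X)=f\cdot h(Y)$.

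The only genuine subtlety -- and what I expect to be the most delicate bookkeeping -- is the Fubini step, where one must line up the partition of the graph formula of $p$ with the partition governing $Y$ to conclude that only measuring functions representing $f$ actually occur. This is handled exactly as in the proof of \autoref{macgm}, but is simplified here because ``$\approxeq$'' has been replaced by equality on a set in $\mathcal{U}$, so no error terms intervene.
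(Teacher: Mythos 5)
Your proposal is correct and follows essentially the same route as the paper: you build the ring of $\mathbb{Z}$-valued measuring functions, quotient by equality modulo $\mathcal{U}$ (the paper's convex ideal $J$), define $h$ via the $\emptyset$-definable partition picked out by the ultrafilter, and observe that exactness makes the verification of (i)--(iv) a matter of pointwise identities rather than asymptotic estimates. Your framing of the quotient as embedding into $\mathbb{Z}^{\mathcal{C}}/\mathcal{U}$ is a slightly tidier way of seeing that $S$ is an ordered integral domain, but it is the same construction as the paper's $\bar{S}/J$.
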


\begin{proof} Suppose that $\mathcal{C}$ is an $R$-\mec{} and let $\bar{S}$ be the ring of functions $\mathcal{C} \longrightarrow \mathbb{Z}$ generated by $R$. We adopt the notation $h_{\pi}$, etc.\ from \autoref{def:mac}. For each definable family $\Phi$ given by the formula
$\phi(\bar{x},\bar{y})$ there is a corresponding finite set $H_{\Phi}\subset R$ and for each $r\in H_{\Phi}$ a corresponding formula
$d\phi_r(\bar{y})$ defining in each $N\in \mathcal{C}$ the set of $\bar{a}$ such that $|\phi(\bar{x},\bar{a})|=r(N)$.

Now define $\tilde{h}\colon{\rm Def}(M)\longrightarrow \bar{S}$ in the obvious way: If $X:=\phi(\bar{x},\bar{a})$, then put $\tilde{h}(X)=r$ if and only if 
$M\models d\phi_r(\bar{a})$. It can be checked that this is well-defined and that conditions (i)--(iv) of \autoref{def:T-measurable} hold for $M$, but with the ring $\bar{S}$ in place of a semiring. There is a natural pre-total order $\preceq$ on $\bar{S}$, where
$s_1\preceq s_2$ if and only if
\[\{N\in \mathcal{C}: s_1(N)\leq s_2(N)\}\in \mathcal{U}.\]
Define
\[J:=\{s\in \bar{S}: \{N\in \mathcal{C}: s(N)=0\}\in \mathcal{U}\}.\]
Then $J$ is a convex ideal of $\bar{S}$ and the pre-total order $\preceq$ on $\bar{S}$ induces a total order on $S:=\bar{S}/J$. If $\eta$ is the natural map $\bar{S} \longrightarrow S$ and $h=\eta \circ \tilde{h}$, then $h$ witnesses the statement of the theorem; since $S$ is an ordered ring, $S^{\geq 0}$ certainly satisfies the measuring semiring axiom. As $S$ is an ordered ring it is immediate that it is an integral domain. 
\end{proof}

We next explore the implications of \autoref{macgm} and \autoref{ring} for {\em polynomial} m.a.c.s and m.e.c.s.
In the context of \autoref{macgm} and its proof, we may suppose that if $f\in R$ and $r\in \mathbb{R}^{>0}$, then $rf\in R$, where $(rf)(M)=r(f(M))$ for each $M\in \mathcal{C}$. Given an ultrafilter $\mathcal{U}$ on $I$, we extend  to $R$ our earlier definition of $\sim$ on a measuring semiring. We define $\sim$ on $R$ by  putting $f\sim g$ if there is $V\in \mathcal{U}$ and $n\in \mathbb{N}^{>0}$ such that
\[\big(f(M_k)\leq g(M_k)\leq nf(M_k)\big) \vee \big( g(M_k)\leq f(M_k)\leq ng(M_k)\big) \mbox{~~for all~} k \in V.\]
Then $\sim$ is an equivalence relation on $R$. Also, for $f,g\in R$, write $f\ll g$ if $f\not\sim g$ and there is $V\in \mathcal{U}$ such that $f(M_i)<g(M_i)$ for all $i\in V$. 

\begin{lemma} \label{structureofS}
With the assumptions and notation  of \autoref{macgm}, the following hold.
\begin{enumerate}[(i)]
\item If $f,g\in R\setminus R_{=0}$ with $f \sim g$, then there is $r\in \mathbb{R}^{>0}$ such that $f \approxeq rg$.
\item If $f, g\in R$ with $f\ll g$, then $f+g \approxeq g$.
\item If $f,g\in R$ with $f \sim g$ and $f \approxeq rg$, then $f+g \approxeq (r+1)g$.
\end{enumerate}
\end{lemma}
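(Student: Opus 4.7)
The plan for all three parts is to translate the statements into assertions about ultralimits of ratios $f(M_k)/g(M_k)$ with respect to $\mathcal{U}$, and use that bounded sequences have ultralimits in $\mathbb{R}$. Throughout, I would first reduce to the case $g\in R\setminus R_{=0}$ (so $g(M_k)>0$ on a set in $\mathcal{U}$ and the ratio is well-defined modulo $\mathcal{U}$); the case $g\in R_{=0}$ will force $f\in R_{=0}$ as well for (ii) and (iii), and both sides land in the class $[0]$.

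For (i), the hypothesis $f\sim g$ with $f,g\notin R_{=0}$ gives $V\in\mathcal{U}$ and $n\in\mathbb{N}^{>0}$ such that on $V$ we have $1/n \leq f(M_k)/g(M_k) \leq n$ (using symmetry of $\sim$). Since the ratio takes values in the compact interval $[1/n,n]$ on $V$, the ultralimit $r := \lim_{\mathcal{U}} f(M_k)/g(M_k)$ exists and lies in $[1/n,n]$; in particular $r>0$. Then by definition of $\approxeq$, the statement $f\approxeq rg$ is exactly $\lim_{\mathcal{U}} f(M_k)/(rg(M_k))=1$, which holds.

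For (ii), the key step is to show that $\lim_{\mathcal{U}} f(M_k)/g(M_k) = 0$. Pick $V_0\in\mathcal{U}$ on which $f<g$. For each $n$, the set $A_n := \{k : f(M_k)\leq g(M_k)\leq nf(M_k) \text{ or } g(M_k)\leq f(M_k)\leq ng(M_k)\}$ is not in $\mathcal{U}$ (since $f\not\sim g$), so its complement is; intersecting with $V_0$ (on which the second disjunct never holds) yields $\{k\in V_0 : g(M_k) > nf(M_k)\}\in\mathcal{U}$. Hence for every $\epsilon>0$, $\{k : f(M_k)/g(M_k)<\epsilon\}\in\mathcal{U}$, so $(f+g)/g\longrightarrow 1$ on $\mathcal{U}$ and $f+g\approxeq g$.

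For (iii), $f\approxeq rg$ means $\lim_{\mathcal{U}} f(M_k)/g(M_k)=r$, whence
\[
\frac{(f+g)(M_k)}{(r+1)g(M_k)} = \frac{1}{r+1}\Big(\frac{f(M_k)}{g(M_k)}+1\Big) \longrightarrow \frac{r+1}{r+1}=1
\]
on $\mathcal{U}$, giving $f+g\approxeq (r+1)g$. The only potential obstacle in the whole argument is part (i)'s need to see that the limit $r$ is strictly positive, which follows precisely from requiring $f\notin R_{=0}$ together with the two-sided bound coming from $f\sim g$; once this is in hand, (ii) and (iii) are direct ultralimit computations.
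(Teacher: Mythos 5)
Your proposal is correct and takes essentially the same route as the paper. The paper constructs $r$ in part~(i) as $r=\inf\{q\in\mathbb{Q}:(\exists V\in\mathcal{U})(\forall k\in V)\,f(M_k)\leq q g(M_k)\}$, which is precisely the standard construction of the ultralimit $\lim_{\mathcal{U}}f(M_k)/g(M_k)$ that you invoke directly; the two formulations carry identical content. Your argument is, if anything, slightly more careful at two points the paper elides: you explicitly observe that the two-sided bound from $f\sim g$ forces $r\in[1/n,n]$ and hence $r>0$ (the paper asserts the $\epsilon$-estimates without flagging this), and in part~(ii) you spell out why $\{k: g(M_k)>nf(M_k)\}\in\mathcal{U}$ by unwinding the negation of $\sim$ on the set where $f<g$, where the paper simply asserts the existence of such a $V\in\mathcal{U}$. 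Part~(iii) is left to the reader in the paper; your computation is the expected one.
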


\begin{proof}
(i) Let $X=\{q\in \mathbb{Q}: (\exists V\in \mathcal{U})( \forall k\in V)\, f(M_k)\leq qg(M_k)\}$. Then $X$ is non-empty and bounded below, so we may put $r=\inf(X)$. For every $\epsilon\in \mathbb{R}^{>0}$ there is $W \in \mathcal{U}$ such that 
for all $k\in W$ we have
\[r(1-\epsilon)g(M_k)\leq f(M_k)\leq r(1+\epsilon)g(M_k).\]
Thus $\big|\frac{f(M_k)}{rg(M_k)}-1\big|\leq \epsilon$ for all $k\in W$, so $f \approxeq rg$ as required.

(ii) Let $\epsilon>0$ and choose $n \in \mathbb{N}$ with $n>1/\epsilon$. There is $V \in \mathcal{U}$ with $nf(M_k)<g(M_k)$ for all $k \in V$. Then $|(f+g)(M_k)-g(M_k)|\leq f(M_k)\leq \epsilon g(M_k)$, as required.

(iii) The proof is similar and left to the reader.
\end{proof}

We next consider strengthenings of Theorems~\ref{macgm} and \ref{ring} for polynomial \mac{}s and \mec{}s. The main purpose of the next proposition is to support Lemmas~\ref{polymacwellorder} and \ref{polymacsimple}, which give supersimplicity of ultraproducts of polynomial m.a.c.s. 
\begin{proposition} \label{ultrapoly} Suppose that $\mathcal{C}=\{M_i:i\in I\}$ is a polynomial \mac{} containing just finitely many members of each finite size, and let $M$ be an  ultraproduct of $\mathcal{C}$ with respect to a non-principal ultrafilter $\mathcal{U}$ on $I$. Let $R$, $S$  and $\approxeq$ be as in \autoref{macgm}. Then there is a set $P$ of monomials in $R$ of form $rZ_1^{d_1}\ldots Z_k^{d_k}$, where $r\in \mathbb{R}^{>0}, d_1,\ldots,d_k\in \mathbb{N}$ and $Z_1,\ldots,Z_k$ are indeterminates, such that: 
\begin{enumerate}[(i)]
\item each element of $R\setminus R_{=0}$ lies in the $\approxeq$-class of a unique element of $P$; and
\item the ordered semiring structure on $S$ induces a total order $<$ on $P \cup \{0\}$ with least element $0$ and ordered semiring operations $\boxplus$ and $\boxdot$ on $P$ as follows, where $f,g\in P$:
\begin{align*}
    f\boxplus g&=\left\{\begin{array}{ll}
    g & \text{\rm if $f\ll g$}\\
    f & \text{\rm if $g\ll f$}\\
    \text{\rm the unique $h\in P$ with $h\approxeq (r+1)g$}& \text{\rm if $f\sim g$ and $f \approxeq rg$}
    \end{array}\right.
\end{align*}
\[
rZ_1^{d_1}\cdots Z_k^{d_k} \boxdot sZ_1^{e_1}\cdots Z_k^{e_k}\approxeq rsZ_1^{d_1+e_1} \cdots Z_k^{d_k+e_k}.
\]
\end{enumerate}
\end{proposition}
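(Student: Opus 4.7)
The plan is to use the polynomial m.a.c.\ structure to exhibit generators $Z_1,\ldots,Z_k$ of $S$ (modulo $\approxeq$) and show that every element of $R\setminus R_{=0}$ collapses to a single monomial in these generators. Fix balanced $L$-formulas $\delta_1,\ldots,\delta_k$, exponents $N_1,\ldots,N_k\in\mathbb{N}^{>0}$, and polynomials $\{P_h:h\in R\}\subset\mathbb{R}[X_1,\ldots,X_k]$ witnessing that $\mathcal{C}$ is a polynomial $R$-\mac{}. For each $i$, define $Z_i\in R$ (enlarging $R$ if needed) by setting $Z_i(M):=|\delta_i(M^{|\bar{x}_i|},\bar{a}_i)|^{1/N_i}$ for any choice of witnesses $\bar{a}_i$ making $\delta_i$ non-empty in $M$. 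Because $\delta_i$ is balanced, different choices produce $\approxeq$-equivalent functions, so $[Z_i]_{\approxeq}\in S$ is well-defined. Adjoining all monomials $rZ_1^{d_1}\cdots Z_k^{d_k}$ with $r\in\mathbb{R}^{>0}$ and $d_i\in\mathbb{N}$ to $R$ does not alter $S$, since these are already polynomial expressions in the existing measuring functions.

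The core step is to prove that every $h\in R\setminus R_{=0}$ is $\approxeq$-equivalent to some $rZ_1^{d_1}\cdots Z_k^{d_k}$. Writing $P_h=\sum_{\alpha\in F}c_\alpha X^\alpha$ with $F$ finite and each $c_\alpha\neq 0$, the ultrafilter $\mathcal{U}$ totally pre-orders the functions $\{c_\alpha Z^\alpha:\alpha\in F\}$ by $\ll$, partitioning $F$ into finitely many $\sim$-classes $G_1<\cdots<G_m$. Within a class $G_j$, fix $\alpha_j\in G_j$; then each ratio $c_\alpha Z^\alpha/c_{\alpha_j}Z^{\alpha_j}$ is bounded and hence converges along $\mathcal{U}$ to some $\lambda_{\alpha,j}\in\mathbb{R}$, yielding $\sum_{\alpha\in G_j}c_\alpha Z^\alpha\approxeq t_j Z^{\alpha_j}$ where $t_j:=c_{\alpha_j}\sum_{\alpha\in G_j}\lambda_{\alpha,j}\in\mathbb{R}$. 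Starting from $G_m$ and descending, let $j^*$ be the largest index with $t_{j^*}\neq 0$; by Lemma~\ref{structureofS}(ii) the contributions from the classes $G_j$ with $j<j^*$ are absorbed, so $h\approxeq t_{j^*}Z^{\alpha_{j^*}}$. Such $j^*$ exists because $h\notin R_{=0}$, and $t_{j^*}>0$ because $h(M)\geq 0$ on every $M\in\mathcal{C}$.

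Now let $P_0:=\{rZ_1^{d_1}\cdots Z_k^{d_k}:r\in\mathbb{R}^{>0},\,d_i\in\mathbb{N}\}\subset R$, and let $P$ be a transversal for the restriction of $\approxeq$ to $P_0$. Assertion (i) then follows from the preceding paragraph together with the definition of a transversal. For (ii), the total order on $P\cup\{0\}$ is the restriction of that on $S$, with $0$ least by definition of $R_{=0}$. The product formula for $\boxdot$ is immediate, since $(rZ^\alpha)(sZ^\beta)=rsZ^{\alpha+\beta}$ holds pointwise and $P$ contains a unique $\approxeq$-representative on the right. For $\boxplus$: the cases $f\ll g$ and $g\ll f$ follow from Lemma~\ref{structureofS}(ii); when $f\sim g$, Lemma~\ref{structureofS}(i) supplies $r\in\mathbb{R}^{>0}$ with $f\approxeq rg$, and Lemma~\ref{structureofS}(iii) then gives $f+g\approxeq(r+1)g$, which has a unique $\approxeq$-representative in $P$.

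The main obstacle is the cancellation phenomenon in the central step: because the coefficients $c_\alpha$ of $P_h$ may be negative, the top $\sim$-class can sum to $0$, forcing a descent through lower classes. Termination relies on the finiteness of $F$ together with the hypothesis $h\notin R_{=0}$, and positivity of the final $t_{j^*}$ requires the non-negativity of $h$ on all of $\mathcal{C}$. Once these are in place, the rest of the verification is essentially formal.
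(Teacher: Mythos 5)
Your approach is essentially the one the paper sketches: identify each measuring function $h$ with the polynomial $P_h$, partition the monomials appearing in $P_h$ into $\sim$-classes, use ultralimits to reduce each class to a single scalar multiple of a representative monomial, and then invoke \autoref{structureofS} to let the dominant class take over. Choosing $P$ as a transversal of $\approxeq$ on monomials and verifying (ii) from \autoref{structureofS} also matches the paper.

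There is, however, a genuine gap in the cancellation step, which you flag as the ``main obstacle'' but then dismiss too quickly. When the ultralimit $t_j$ of the relative sum $\sum_{\alpha\in G_j}c_\alpha Z^\alpha / Z^{\alpha_j}$ is $0$, it is \emph{not} true that $\sum_{\alpha\in G_j}c_\alpha Z^\alpha \approxeq 0$, nor is that contribution ``absorbed'' by lower classes. The sum is merely $o(Z^{\alpha_j})$; it can still be unbounded and can dominate every lower class. Concretely, suppose along $\mathcal{U}$ one has $Z_1/Z_2 \to 1$ but $Z_1 - Z_2 \sim Z_1^{2/3}$, and consider $P_h = (X_1-X_2)^2 + X_1$: the top $\sim$-class $\{X_1^2, X_1X_2, X_2^2\}$ has $t_m = 0$, yet $(Z_1-Z_2)^2 \sim Z_1^{4/3}$ dominates $Z_1$, and $Z_1^{4/3}$ is not $\sim$-equivalent to any monomial $Z_1^{d_1}Z_2^{d_2}$ with integer exponents (since modulo the $\sim$-equivalence $Z_1 \sim Z_2$, those have only integral $\log$-exponents). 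So ``letting $j^*$ be the largest index with $t_{j^*}\neq 0$'' does not yield $h \approxeq t_{j^*}Z^{\alpha_{j^*}}$; the cancelled classes above $j^*$ may still carry the dominant contribution, and that contribution need not be $\approxeq$ a monomial at all. Finiteness of $F$ and positivity of $h$ do not rescue this. To be fair, the paper's own proof is only a sketch and does not address this point either; but since the statement of part (i) asserts that every element of $R \setminus R_{=0}$ lands in the $\approxeq$-class of a monomial, the argument needs either an additional hypothesis precluding such near-cancellations, or a separate argument showing that a polynomial measuring function that is nonnegative on all of $\mathcal{C}$ cannot have its leading $\sim$-class wash out.
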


\begin{proof}
We just sketch the proof. We suppose that $\mathcal{C}$ is a polynomial \mac{} with respect to balanced formulas 
\[\delta_1(\bar{x}_1,\bar{y}_1),\ldots, \delta_k(\bar{x}_k,\bar{y}_k)\mbox{\ and\ } N_1, \ldots, N_k\]
as in \autoref{polynomialmac}. Thus, every element $h\in R$ may be identified with a polynomial $P_h(Z_1,\ldots,Z_k)\in \mathbb{R}[Z_1,\ldots,Z_k]$ and a definable set in a structure $M \in \mathcal{C}$ that has cardinality approximately
$P_h\left(|\delta_1(M^{|\bar{x}_1|},\bar{a}_1)|^{\frac{1}{N_1}},\ldots, |\delta_k(M^{|\bar{x_k}|},\bar{a}_k)|^{\frac{1}{N_k}}\right)$ for suitable $\bar{a}_1,\ldots,\bar{a}_k$. Thus, every element of $S\setminus\{0\}$ in the proof of \autoref{macgm} is an $\approxeq$-class of such polynomials.

Choose $P$ to contain a unique monomial $rZ_1^{e_1}\ldots Z_k^{e_k}$ from each $\approxeq$-class restricted to  the set of monomials. By \autoref{structureofS}, every polynomial $P_h\in R\setminus R_{=0}$ is equivalent modulo $\approxeq$ to a unique element of $P$. The ordering on $S$ defined in the proof of \autoref{macgm} induces an ordering on $P$, and the semiring operations of $S$ induce operations $\boxplus$ and $\boxdot$ on $P$ with the stated properties.
\end{proof}

\begin{proposition}\label{naturalorder}
Suppose that $\mathcal{C}$ is a polynomial \mec{} in the sense of \autoref{polynomialmac}, and let $M$ be an ultraproduct of $\mathcal{C}$. Then $M$ is $S$-measurable for some finitely generated ordered ring $S$.
\end{proposition}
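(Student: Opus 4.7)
The plan is to refine the construction in the proof of Theorem \ref{ring} by replacing the ring of all measuring functions with a subring generated, uniformly across all formulas, by finitely many distinguished elements coming from the balanced witnessing formulas. Let $\delta_1(\bar{x}_1,\bar{y}_1),\ldots,\delta_k(\bar{x}_k,\bar{y}_k)$ and exponents $N_1,\ldots,N_k\in\mathbb{N}^{>0}$ witness that $\mathcal{C}$ is a polynomial \mec{}. Since each $\delta_i$ is exactly balanced, whenever $\delta_i(M^{|\bar{x}_i|},\bar{a}_i)\neq\emptyset$ the integer $|\delta_i(M^{|\bar{x}_i|},\bar{a}_i)|$ depends only on $M$, and by definition of polynomial \mec{} such $\bar{a}_i$ exists for every $M\in\mathcal{C}$. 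This lets me define well-defined functions $z_i\colon\mathcal{C}\longrightarrow\mathbb{R}^{>0}$ by $z_i(M):=|\delta_i(M^{|\bar{x}_i|},\bar{a}_i^M)|^{1/N_i}$.

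Next, I would let $A\subseteq\mathbb{R}$ be the subring generated by the coefficients of the polynomials $P_h$ (which in the standard examples is contained in $\mathbb{Q}$, often $\mathbb{Z}$), and let $\bar{S}$ be the subring of $\mathbb{R}^\mathcal{C}$ generated over $A$ by $z_1,\ldots,z_k$; as a homomorphic image of $A[X_1,\ldots,X_k]$, $\bar{S}$ is finitely generated as an $A$-algebra. By the polynomial \mec{} hypothesis, each measuring function $h\in R$ satisfies $h=P_h(z_1,\ldots,z_k)\in\bar{S}$. Following Theorem \ref{ring}, the ultrafilter $\mathcal{U}$ induces a total pre-order on $\bar{S}$ via $f\preceq g\iff\{M:f(M)\leq g(M)\}\in\mathcal{U}$, and the set $J:=\{f\in\bar{S}:\{M:f(M)=0\}\in\mathcal{U}\}$ is a convex prime ideal; the quotient $S:=\bar{S}/J$ inherits the structure of a totally ordered integral domain, still finitely generated because $\bar{S}$ is.

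Finally, define $\tilde h\colon\mathrm{Def}(M)\longrightarrow S^{\geq 0}$ verbatim as in Theorem \ref{ring}: for $X=\phi(M^{|\bar{x}|},\bar{b})$ with $\bar{b}=[(\bar{b}_i)]_\mathcal{U}$, pick the unique part $\pi$ of the $\emptyset$-definable partition associated with $\phi$ such that $\{i:(M_i,\bar{b}_i)\in\pi\}\in\mathcal{U}$, and set $\tilde h(X):=h_\pi+J$. Well-definedness (independent of the defining formula) and the axioms (i)--(iv) of Definition \ref{def:T-measurable} are transcribed from the proof of Theorem \ref{ring}; no new verification is needed since every relevant measuring function already sits in the fixed finitely generated ring $\bar{S}$.

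The main obstacle is conceptual rather than technical: one must be sure that a single finite set of generators $z_1,\ldots,z_k$ suffices to express the measuring functions of \emph{all} formulas $\phi(\bar{x},\bar{y})$ simultaneously, with no need to enlarge the generating set as $\phi$ varies. This is exactly what the polynomial \mec{} axiom secures. A subsidiary point concerns the base ring: if ``finitely generated'' is intended over $\mathbb{Z}$, one needs the coefficient ring $A$ itself to be finitely generated over $\mathbb{Z}$, which holds in all examples in the paper; otherwise one works over the natural base subring of $\mathbb{R}$ that contains the coefficients of the $P_h$.
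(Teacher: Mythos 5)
Your proposal is correct and follows essentially the same route as the paper's (very terse) proof: both specialise the construction of Theorem~\ref{ring} by replacing the ring of all measuring functions with the subring generated by the functions $M\mapsto|\delta_i(M^{|\bar{x}_i|},\bar{a}_i)|^{1/N_i}$ coming from the balanced witnessing formulas, and then quotienting by the convex ideal determined by the ultrafilter. Your additional care about the base ring $A$ of coefficients is a genuine subtlety that the paper's two-sentence proof silently suppresses (the paper simply writes $\mathbb{Z}[X_1,\ldots,X_f]/I$, implicitly assuming integer coefficients, which is consistent with the remark after Definition~\ref{polynomialmac} that all \mec{} examples in the paper have coefficients in $\mathbb{Z}$ or $\mathbb{Q}$), so flagging it explicitly improves rather than departs from the argument.
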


\begin{proof} We follow the proof of \autoref{ring}. The ring $S$ will have the form $\mathbb{Z}[X_1,\ldots,X_f]\big/I$ for some $f$ and some convex ideal $I$. Here each $X_i$ corresponds in $\mathcal{C}$ to a certain set defined by $\delta(\bar{x}_i,\bar{a}_i)$. 
\end{proof}

\begin{example}\label{exgenmeas}
\phantom{done soon}
\begin{enumerate}[(i)]
\item Let $M$ be a measurable structure, as defined at the beginning of this section. Then $M$ is generalised measurable, with monomial measuring semiring $\mathbb{R}\mon{}$.
A definable set assigned the measure-dimension pair 
$(\mu,d)$ will be assigned the value $\mu Z^d\in \mathbb{R}\mon{}$. 
\item Consider a structure of the form $(K,W_1,\ldots,W_k)$, where $K$ is a pseudofinite field in the language of rings and 
$W_1<\ldots<W_k$ are infinite-dimensional vector spaces over $K$, with $W_{i+1}/W_i$ infinite dimensional for each $i$; these are ultraproducts of the class from \autoref{chainsubspace}. This structure is generalised measurable in a monomial semiring of the form $\mathbb{R}\mon{D}$, where $D\cong \mathbb{Z}^{k+1}$ (lexicographically ordered).
\item By Theorem~\ref{bilinear-granger}, the class of finite vector spaces equipped with a symplectic form over a finite field is an $R$-m.a.c., where $R=\mathbb{Q}(\mathbf{F})[\mathbf{V}]$. It follows from Theorem~\ref{macgm} that an ultraproduct consisting of an infinite-dimensional vector space with a symplectic form over an infinite pseudofinite field is $S$-measurable for some measuring semiring $S$. Inspection of the proof of Theorem~\ref{macgm} shows that we may take $S$ to be the set of monomials $\mu \mathbf{V}^r\mathbf{F}^s$ where $\mathbf{V}$
and $\mathbf{F}$ are indeterminates, and the pairs $(r,s)\in \mathbb{N} \times \mathbb{Z}$ are lexicographically ordered. The semiring operations are as in Proposition~\ref{ultrapoly}.

\item Let $M$ be a pseudofinite strongly minimal set. By \autoref{pillaysm}, $M$ is elementarily equivalent to an ultraproduct of a \mec{} and, in fact, \autoref{pillaysm} and \autoref{ring} ensure that $M$ is ring-measurable in the ring $\mathbb{Z}[Z]$, in which $1\ll Z\ll\ldots\ll Z^{n}\ll\ldots$.
\item If $M$ is a smoothly approximable structure, then $M$ is ring-measurable in a finitely generated ordered ring, where the number of generators is at most the number of geometries in a `standard system of geometries' in the sense of \cite[Definition 2.5.6]{Cherlin-Hrushovski03}. This follows from Proposition~\ref{naturalorder} and Theorem~\ref{wolf-thesis}.
\end{enumerate}
\end{example}


\subsection{Some basic model-theoretic properties of generalized measurable structures}

Recall that a theory $T$ has the {\em strict order property} if some model  of $T$ has an interpretable partial order containing an infinite totally ordered subset. A theory is said to have NSOP if it does not have the strict order property. 

\begin{proposition} \label{NSOP}
If $M$ is a weakly generalised measurable structure then its theory has NSOP. 
\end{proposition}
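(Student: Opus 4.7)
Suppose, for contradiction, that $\Th(M)$ has the strict order property: some model of $\Th(M)$ carries an interpretable partial order $\phi(x,y)$ together with an infinite totally ordered subset. The plan is to derive the contradiction inside $M$ itself. The witnessing partial order is a priori only interpretable, i.e.\ definable in $M^{\mathrm{eq}}$, so we first pass to $M^{\mathrm{eq}}$; the proof of \autoref{meq} goes through verbatim in the weak setting since no use is made of the definability clause, so $M^{\mathrm{eq}}$ is again weakly generalised measurable. By compactness and elementarity, since the existence of a strict $\phi$-chain of length $n$ is a first-order sentence entailed by $\Th(M)$, the structure $M$ (or $M^{\mathrm{eq}}$) realises arbitrarily long finite $\phi$-chains $b_1,\ldots,b_n$.

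For such a chain, set $D_{b_i}:=\phi(M,b_i)$. Transitivity and irreflexivity of $\phi$ make $\{D_{b_i}\}$ a strictly increasing chain of definable sets, with $b_i\in D_{b_j}\setminus D_{b_i}$ whenever $i<j$. Apply axiom~(iii) of \autoref{def:T-measurable} to the formula $\psi(x,y_1y_2):=\phi(x,y_2)\wedge\neg\phi(x,y_1)$: the family $\{\psi(M,b,b') : b,b'\}$ attains only finitely many, say $r$, distinct values of $h$. Choose a $\phi$-chain in $M$ of length $N:=R(3,r)$, the finite Ramsey number, and colour each pair $\{i<j\}$ by $h(D_{b_j}\setminus D_{b_i})$. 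Ramsey's theorem produces a monochromatic triple $i<k<j$, with common value
\[
c\;:=\;h(D_{b_k}\setminus D_{b_i})\;=\;h(D_{b_j}\setminus D_{b_k})\;=\;h(D_{b_j}\setminus D_{b_i}).
\]

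The crux is the disjoint-union identity $D_{b_j}\setminus D_{b_i}=(D_{b_k}\setminus D_{b_i})\sqcup(D_{b_j}\setminus D_{b_k})$, which follows from the inclusions $D_{b_i}\subseteq D_{b_k}\subseteq D_{b_j}$ given by transitivity of $\phi$. Finite additivity (axiom~(ii)) then yields $c=c+c=2c$. On the other hand, $b_i\in D_{b_k}\setminus D_{b_i}$, so by the finite-set axiom together with monotonicity---which follows from finite additivity alone as in \autoref{easy-facts}(ii) and hence holds in the weak setting---we have $c\geq 1>0$. But \autoref{lem:multiplication.by.n}(iii) then gives $c<2c$, contradicting $c=2c$.

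The hard part is really just the setup: one has to check that passing to $M^{\mathrm{eq}}$ preserves weak generalised measurability, and that the witnessing chain can be found inside $M$ rather than only in an elementary extension. Neither is delicate---the former by inspection of the proof of \autoref{meq}, the latter by a one-line compactness argument. The measuring-semiring arithmetic needed beyond \autoref{lem:multiplication.by.n}(iii) is none; the key combinatorial input is finite Ramsey.
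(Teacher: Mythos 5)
Your Ramsey-plus-disjoint-decomposition core is exactly the paper's argument: colour pairs of chain elements by the $h$-value of a difference set, extract a monochromatic triple by finite Ramsey, and derive $c = c + c$ with $c > 0$, contradicting \autoref{lem:multiplication.by.n}(iii). You colour by $h(D_{b_j}\setminus D_{b_i})$ and get positivity of $c$ from the endpoint $b_i\in D_{b_k}\setminus D_{b_i}$; the paper colours by $h$ of the open interval $\psi(M;\bar a_i,\bar a_j)=\{\bar z : \bar a_i\prec\bar z\prec\bar a_j\}$ and gets it from the midpoint $\bar a_2$. That difference is purely cosmetic.

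The gap is in your setup. You pass to $M^{\mathrm{eq}}$ and assert that the proof of \autoref{meq} transfers ``verbatim'' to weak generalised measurability because ``no use is made of the definability clause.'' That is not so. \autoref{meq} is stated for full generalised measurability, and its proof, in the case where the $E$-classes of a definable set $X$ take several distinct $h$-values $v_1,\dots,v_k$, must split $X$ into the sets $S_i:=\bigcup\{[x]_E : h([x]_E)=v_i\}$ and sum the contributions of the $S_i/E$; for $h(S_i)$ even to be defined one needs $S_i\in\mathrm{Def}(M)$, which is exactly the $\emptyset$-definability clause that the weak notion drops. So this step is a genuine unjustified claim, not a verbatim transfer. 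Fortunately you do not need $M^{\mathrm{eq}}$ at all, and the paper does not use it: pull the interpretable partial order on $X/E$ (with $X\subseteq M^t$) back to the definable preorder $\bar a\preceq\bar b \iff \bar a/E\leq\bar b/E$ on $M^t$ (replacing any imaginary parameters by representatives), set $D_{\bar b}:=\{\bar a\in M^t : \bar a\prec\bar b\}$, a definable subset of $M^t$ with arbitrarily long strictly increasing chains, and run your argument entirely inside $M$. Everything you invoke --- clause~(iii) giving finitely many $h$-values, finite additivity, monotonicity, the finite-set axiom --- is then available, and the proof closes.
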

\begin{proof}
Let $S$ be a measuring semiring and let $M$ be weakly $S$-measurable. Suppose there exists a formula $\phi(\bar{x}_{1};\bar{x}_{2})$ that defines a (strict) preorder on the power $M^t$ of $M$ with arbitrarily long finite chains.
Let $\psi(\bar{z};\bar{x}_{1},\bar{x}_{2})$ be the formula
\[\phi(\bar{x}_{1};\bar{z})\wedge\phi(\bar{z};\bar{x}_{2}).\]
By Ramsey's theorem and clause~(iii) of weak $S$-measurability, there exists $s\in S$ and $\bar{a}_1,\bar{a}_2,\bar{a}_3$ such that $\phi(\bar{a}_i,\bar{a}_j)$ and $h\big(\psi(\bar{z};\bar{a}_{i},\bar{a}_{j})\big)=s$ for each $i,j\in \{1,2,3\}$ with $i<j$. Now $\psi(M;\bar{a}_{1},\bar{a}_{3})$ contains the disjoint union of $\psi(M;\bar{a}_1,\bar{a}_{2})$ and $\psi(M;\bar{a}_{2},\bar{a}_{3})$, together with the singleton $\{\bar{a}_{2}\}$. Thus
\[s\geq s+s+1\]
which contradicts \autoref{lem:multiplication.by.n}(iii).
\end{proof}

The following definition is essentially from \cite{Elwes07}, although note that there is some confusion in the literature around related notions, as clarified in \cite{kestner-pillay} and \cite{garcia-wagner}.

\begin{definition}\label{func-un}
A structure $M$ is {\em functionally unimodular} if, whenever $f_1,f_2\colon A \longrightarrow B$ are definable maps and there are integers $k_1,k_2$ such that $|f_1^{-1}(b)|=k_1$ and $|f_2^{-1}(b)|=k_2$ for all $b\in B$, then $k_1=k_2$.
\end{definition}

\begin{lemma} \label{unimod}
Let $M$ be a weakly $S$-measurable structure for some measuring semiring $S$. Then $M$ is functionally unimodular.
\end{lemma}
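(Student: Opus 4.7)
The proof plan is to apply the Fubini axiom of $S$-measurability to both $f_1$ and $f_2$, obtaining two expressions for $h(A)$ whose comparison forces $k_1 = k_2$ via the cancellation behaviour of $S$ developed in \autoref{lem:multiplication.by.n}. The argument is short once the right pieces are assembled.

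First I would dispense with the degenerate case where $B = \emptyset$, where both $A$ and $B$ are empty and the conclusion is either vacuous or trivially $k_1 = k_2$ under any reasonable reading; so assume $B \neq \emptyset$. I would then observe that $h(B) > 0$ in $S$: picking any $b \in B$ and applying finite additivity (\autoref{def:T-measurable}(ii)) together with the finite-set clause (\autoref{def:T-measurable}(i)) gives $h(B) = 1 + h(B \setminus \{b\})$, and monotonicity (\autoref{easy-facts}(ii), which transfers to the weak setting) then yields $h(B) \geq 1 > 0$.

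Now the fibres $f_j^{-1}(b)$ are finite of size $k_j$, hence $h(f_j^{-1}(b)) = k_j$ in $S$ (where, as usual, the integer $k_j$ is identified with $k_j \cdot 1_S = 1 + \cdots + 1$). Since the fibres all have the same value in $S$, the Fubini axiom (\autoref{def:T-measurable}(iv)) applies to each $f_j$ and gives
\[
k_1 \cdot h(B) \;=\; h(A) \;=\; k_2 \cdot h(B).
\]
If $k_1 = 0$, then $A = \emptyset$, so $h(A) = 0$; together with $h(B) > 0$ and monotonicity of $n \mapsto n \cdot h(B)$ (an instance of \autoref{lem:multiplication.by.n}(iii)) this forces $k_2 = 0$, and symmetrically. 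If both $k_1, k_2 \geq 1$, then \autoref{lem:multiplication.by.n}(iii) applied to $x = h(B) > 0$ says that $n \cdot h(B)$ is strictly increasing in $n$, so the equality $k_1 \cdot h(B) = k_2 \cdot h(B)$ forces $k_1 = k_2$.

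I do not anticipate any real obstacle: the only subtle point is recalling that functional unimodularity concerns the integers $k_1, k_2$ and not arbitrary elements of $S$, so the required cancellation is precisely the one provided by \autoref{lem:multiplication.by.n}(iii) (which in turn relies on the measuring-semiring axiom \textbf{(MS)}). The definability clause in \autoref{def:T-measurable}(iii) plays no role here, which is why the argument works equally well in the weak setting.
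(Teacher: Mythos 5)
Your proof is correct and follows essentially the same route as the paper's: apply \autoref{def:T-measurable}(i) and (iv) to obtain $k_1 h(B) = h(A) = k_2 h(B)$, then invoke \autoref{lem:multiplication.by.n}(iii) to cancel. The additional care you take with $B=\emptyset$, $h(B)>0$, and the $k_j=0$ case is sound bookkeeping that the paper silently elides, but it is not a different argument.
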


\begin{proof} Let $h$ be the corresponding measuring function, and adopt the notation and assumptions of Definition~\ref{func-un}.  By \autoref{def:T-measurable}(i) and (iv), we have $h(A)=k_1h(B)=k_2h(B)$, and it follows by \autoref{lem:multiplication.by.n}(iii) that $k_1=k_2$.
\end{proof}


We consider briefly the additional model-theoretic implications of {\em ring}-measurability as compared with generalised measurability. 

\begin{proposition} \label{injsur}
Suppose that the structure $M$ is $S$-ring-measurable and let $X\subset M^n$ be definable. Then
\begin{enumerate}[(i)]
\item any definable injection $f\colon X \longrightarrow X$ is surjective; and
\item any definable surjection $f\colon X \longrightarrow X$ is injective.
\end{enumerate}
\end{proposition}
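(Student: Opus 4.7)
The plan is to exploit two features of the ring-measurable setting: that any non-empty definable set has positive measure, and that in an ordered integral domain we may cancel and use the fact that a sum of non-negative elements vanishes only when each summand vanishes. As a preliminary observation, note that for any non-empty definable $Z \subseteq M^n$, picking $z \in Z$ yields $h(Z) \geq h(\{z\}) = 1$ by Proposition~\ref{easy-facts}(ii) and \autoref{def:T-measurable}(i); in particular $h(Z) = 0$ forces $Z = \emptyset$.

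For (i), suppose $f\colon X \to X$ is a definable injection, and set $Y := f(X)$. Then $f\colon X \to Y$ is a definable bijection, so by Proposition~\ref{easy-facts}(iii) we have $h(X) = h(Y)$. By finite additivity,
\[
h(X) = h(Y) + h(X \setminus Y).
\]
Since $S$ is a ring we may cancel $h(Y)$ on both sides to obtain $h(X \setminus Y) = 0$, whence $X \setminus Y = \emptyset$ by the preliminary observation. Thus $f$ is surjective.

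For (ii), suppose $f\colon X \to X$ is a definable surjection. Applying \autoref{def:T-measurable}(iii) to the formula defining the fibers of $f$, we obtain a partition of $X$ into finitely many non-empty $\emptyset$-definable pieces $Y_1, \dots, Y_k$ and values $s_1, \dots, s_k \in S^{\geq 0}$ such that $h(f^{-1}(y)) = s_i$ for all $y \in Y_i$. Applying \autoref{def:T-measurable}(iv) to each restriction $f\colon f^{-1}(Y_i) \to Y_i$ and summing using finite additivity,
\[
h(X) = \sum_{i=1}^{k} s_i \cdot h(Y_i) \quad \text{and} \quad h(X) = \sum_{i=1}^{k} h(Y_i).
\]
Subtracting in the ring $S$ yields $\sum_{i=1}^k (s_i - 1)\,h(Y_i) = 0$. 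Since $f$ is surjective, every fiber is non-empty, so $s_i \geq 1$ for each $i$, and by the preliminary observation $h(Y_i) \geq 1 > 0$. Each summand $(s_i - 1)h(Y_i)$ is therefore non-negative, so in the totally ordered ring $S$ each must be zero; as $S$ is an integral domain and $h(Y_i) \neq 0$, we conclude $s_i = 1$ for all $i$. But a definable set of measure $1$ can contain no two distinct elements (else by additivity its measure would be at least $2$), so every fiber of $f$ is a singleton, i.e.\ $f$ is injective.

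The main obstacle, and the reason the hypothesis cannot be weakened from ring-measurable to generalised measurable, is the concluding step of (ii): in a general measuring semiring we lack both the subtraction used to produce $\sum (s_i-1)h(Y_i) = 0$ and the integral domain property used to split the product. Indeed, in a monomial semiring such as $\mathbb{R}\langle Z^D\rangle$ one can have $s \cdot a = a$ with $s \neq 1$ whenever $d(a) > d(s)$, so the argument genuinely breaks down there.
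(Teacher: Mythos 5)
Your proof is correct and rests on the same two pillars as the paper's: finite additivity plus ring cancellation for (i), and the Fubini axiom combined with positivity in the ordered domain for (ii). For (i) the arguments are essentially identical. For (ii) you organise the argument slightly differently: the paper splits the codomain into the two pieces where the fiber is a singleton or has at least two elements, whereas you invoke \autoref{def:T-measurable}(iii) directly to partition the codomain by fiber measure and then read off $\sum(s_i-1)h(Y_i)=0$. These are the same argument viewed at different levels of explicitness — the paper's one-line claim ``$h(Y_2)\geq 2h(Z_2)$'' secretly uses the same fiber-measure partition that you spell out — so you have not really found a new route, but your version makes the role of the definability clause and the integral domain hypothesis more visible. (One small inaccuracy: the pieces $Y_i$ are only definable over the parameters of $X$ and $f$, not over $\emptyset$, though Proposition~\ref{easy-facts}(i) lets you expand by those constants so nothing is lost.) Your closing remark about why the argument fails in a monomial semiring matches the paper's own remark following the proposition.
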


\begin{proof} Let $h\colon{\rm Def}(M) \longrightarrow S$ denote the function given by $S$-measurability.

(i) Let $Y:=X\setminus f(X)$. Then $h(X)=h(f(X))+h(Y)= h(X)+h(Y)$, as $h(X)=h(f(X))$ by injectivity. Hence $h(Y)=0$, so $Y=\emptyset$.

(ii) Suppose that $X=Y_1 \cup Y_2$, where $Y_1:=\{x\in X:|f^{-1}(f(x))|=1\}$ and $Y_2=X\setminus Y_1$. Put $Z_1=f(Y_1)$ and $Z_2:=f(Y_2)$. Then $h(Y_1)+h(Y_2)=h(X)=h(Z_1)+h(Z_2)=h(Y_1)+h(Z_2)$, in view of the definable partitions $X=Y_1\cup Y_2=Z_1 \cup Z_2$ and of the fact that $h\restriction_{Y_1}$ is a bijection onto $Z_1$. As 
$h\restriction_{Y_2}$ is at least 2-to-1, we have  $h(Y_2) \geq 2h(Z_2)$. Combining these observations, 
$h(Z_2)=h(Y_2)\geq 2h(Z_2)$, forcing  $h(Z_2)=0$ and hence $Y_2=Z_2=\emptyset$.
\end{proof}

\vspace{.1\baselineskip}

\begin{remark}~
\begin{enumerate}[(i)]

\item Both parts of \autoref{injsur} require {\em ring\/}-measurability, not just generalised measurability. For example, the structure
$(\mathbb{N},S)$ (with $S$ the successor function) is strongly minimal and unimodular in the sense of 
\cite{kestner-pillay}, hence measurable by \cite[Proposition 3.1]{kestner-pillay}. Here, the definable function $S$ is injective but not surjective. 

\item For an example of a measurable structure with a definable surjection $M\longrightarrow M$ that is not injective, let $\{c_n:n\in \mathbb{Z}\setminus \mathbb{N}\}$ be disjoint from $\mathbb{Z}$ and let $M$ have domain $\mathbb{Z}\cup C$. Define $f\colon M\longrightarrow M$ by putting $f(n)=n+1$ for $n \in \mathbb{Z}$, $f(c_n)=c_{n+1}$ for $n<-1$, and $f(c_{-1})=0$. Then $(M,f)$ is again strongly minimal and unimodular, hence measurable, but $f$ is surjective but not injective. 

\item \autoref{injsur}(i) also follows from \cite[Theorem 3.1]{krajicek}.

\item If $M$ is $S$-ring-measurable where $S$ is generated as a ring by $\mathrm{Im}(h)$, then the ring $S$ is a quotient of the Grothendieck ring of $M$, as defined for example in \cite{scanlon}.

\item Just as generalised measurability extends to $M^{{\rm eq}}$ (see Proposition~\ref{meq}), also if $M$ is ring-measurable then so is $M^{{\rm eq}}$. This follows essentially because the quotient field of an ordered integral domain is an ordered field, and thus we may extend a measuring function $h$ to quotients (as in the proof of Proposition~\ref{meq}) so that it takes values in an ordered field. 

\end{enumerate}
\end{remark}



\section{Simplicity and supersimplicity}

As shown in \cite[Corollary 3.7]{EM08}, (MS-)measurable structures (defined at the beginning of Section~\ref{genmeassect}) are supersimple of finite rank,
so it is natural to ask what model-theoretic properties generalised measurable structures have.
In this section we give criteria on measuring rings $S$ which are sufficient for the supersimplicity/simplicity of $S$-measurable structures.

\subsection{Ordered polynomial rings with well-ordered variables}

\subsubsection{Well-ordered semigroups}

Let $(S,+,<)$ be a commutative totally ordered semigroup (written additively) and let $\langle A\rangle$ denote the semigroup generated by a subset $A\subseteq S$.

We require the following fact proved by Higman \cite{higman}; a simpler proof was given by Nash-Williams \cite{nash-williams}.


\begin{lemma}\label{lem:generated}
Let $A\subseteq S$ be well-ordered, positive and such that $0\in A$. Then $\langle A\rangle$ is well-ordered.
\end{lemma}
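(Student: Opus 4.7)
The plan is to reduce the statement to Higman's theorem on well-quasi-orderings, together with the observation that a totally ordered set is well-ordered if and only if it is well-quasi-ordered (wqo), i.e.\ every infinite sequence contains an increasing pair. Since $(A,<)$ is well-ordered it is wqo, and by Higman's theorem the set $A^{<\omega}$ of finite tuples from $A$, equipped with the embedding order $\leq_e$ defined by $(a_1,\ldots,a_m)\leq_e(b_1,\ldots,b_n)$ iff there exists a strictly increasing injection $\iota\colon\{1,\ldots,m\}\to\{1,\ldots,n\}$ with $a_i\leq b_{\iota(i)}$ for all $i$, is itself wqo. The cleanest proof of this wqo statement is the Nash-Williams minimal bad sequence argument already cited in the lemma.

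Next I would transfer this back to $\langle A\rangle$. Every $s\in\langle A\rangle$ admits a representation $s=\sum_{i=1}^{k}a_i$ for some tuple $(a_1,\ldots,a_k)\in A^{<\omega}$ (with $s=0$ corresponding to the empty tuple, using $0\in A$). Suppose for a contradiction that $\langle A\rangle$ fails to be well-ordered; then there is an infinite strictly descending chain $s_1>s_2>s_3>\cdots$ in $\langle A\rangle$. For each $n$, fix such a representation $\bar a^{(n)}=(a^{(n)}_1,\ldots,a^{(n)}_{k_n})\in A^{<\omega}$. Applying the wqo property of $(A^{<\omega},\leq_e)$ to the infinite sequence $(\bar a^{(n)})_{n\in\mathbb{N}}$ yields indices $m<n$ and a witness $\iota$ for $\bar a^{(m)}\leq_e \bar a^{(n)}$.

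The final step invokes the positivity hypothesis that every $a\in A$ satisfies $a\geq 0$: the coordinates of $\bar a^{(n)}$ outside the image of $\iota$ contribute non-negatively to the sum, while each $a^{(n)}_{\iota(i)}$ dominates $a^{(m)}_i$. Therefore
\[
s_n=\sum_{j=1}^{k_n}a^{(n)}_j\;\geq\;\sum_{i=1}^{k_m}a^{(n)}_{\iota(i)}\;\geq\;\sum_{i=1}^{k_m}a^{(m)}_i=s_m,
\]
contradicting $s_n<s_m$. Hence $\langle A\rangle$ is well-ordered.

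The only non-routine ingredient here is Higman's theorem itself, which is exactly the result cited before the statement of the lemma; no genuine obstacle remains beyond correctly packaging the embedding-order data into the sum inequality via positivity.
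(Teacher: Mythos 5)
Your proof is correct and takes essentially the approach the paper points to: the paper states the lemma as a known fact, citing Higman and Nash-Williams without spelling out the argument, and your derivation --- reducing well-orderedness of the total order to being wqo, applying Higman's theorem to $(A^{<\omega},\leq_e)$, and transferring the resulting increasing pair to the generated semigroup via positivity and monotonicity of $+$ --- is the standard one. No gaps; the only minor point is that the hypothesis $0\in A$ merely lets one treat the empty product/sum uniformly and is not otherwise load-bearing in your argument.
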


\subsubsection{Well-ordered dimensions}

Let $\{X_i:i\in I\}$ be a set of variables, let $R:=\mathbb{R}[(X_i)_{i\in I}]$ be the commutative ring generated over $\mathbb{R}$ by the  $X_i$ and let $<$ be a ring ordering on $R$. Let $M$ be an $(R^{\geq0},<)$-measurable structure with measuring function $h\colon\mathrm{Def}(M)\longrightarrow R^{\geq 0}$. Let $\sim$ be the usual equivalence relation on $R$ given by: $x\sim y$ if $x$ is in the convex hull of $\{ry\;|\;r\in\mathbb{R}^{>0}\}$. Let $\langle X\rangle$ be the semigroup generated multiplicatively by the variables $\{X_i:i\in I\}$.

\begin{lemma}
Assume $\{X_i/{\sim} : i\in I\}$ is well-ordered by the ordering induced from $R$. Then $R^{\geq0}/{\sim}$ is well-ordered.
\end{lemma}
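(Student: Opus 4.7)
The plan is to reduce $R^{\geq 0}/{\sim}$ to the multiplicative monoid $\langle X\rangle/{\sim}$ of monomials, together with $[0]$ as a bottom element, and then deduce well-orderedness from the Higman-type Lemma~\ref{lem:generated}.

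First I would establish the key reduction: every $p\in R^{\geq 0}\setminus\{0\}$ is $\sim$-equivalent to some element of $\langle X\rangle$. Given $p=\sum_{j=1}^{k}r_{j}m_{j}$ with distinct monomials $m_{j}\in \langle X\rangle$ and $r_{j}\in\mathbb{R}\setminus\{0\}$, group the monomials according to their $\sim$-classes. Since the support is finite, there is a maximal $\sim$-class $C$ among $\{[m_{j}]_{\sim}\}$; write $p=p_{1}+p_{2}$ where $p_{1}$ collects the terms with $[m_{j}]_{\sim}=C$ and $p_{2}$ the rest. Every monomial occurring in $p_{2}$ satisfies $m\ll m'$ for any $m'$ in the support of $p_{1}$, whence $|p_{2}|\leq n^{-1}m'$ for every $n\in\mathbb{N}$, so the absolute value of $p_2$ is dominated by every positive real multiple of any element of $C$. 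Next one checks that $p_{1}$ must be strictly positive: otherwise $p\leq p_{2}$ (or $p_{1}=0$, allowing us to recurse on the next $\sim$-class, which terminates by finiteness of the support), contradicting $p>0$. Since all monomials $m'$ appearing in $p_{1}$ lie in one $\sim$-class, for any fixed such $m'$ there is $n\in\mathbb{N}$ with $m''\leq n m'$ for every monomial $m''$ in the support of $p_{1}$, giving $p_{1}\leq (n\sum|r_{j}|)\,m'$; conversely $m'\leq (1/r)p_{1}$ for appropriate $r>0$ follows from positivity of $p_{1}$ together with $m'\sim m''$. Hence $p_{1}\sim m'$, and since $p_{2}\ll p_{1}$ gives $p\sim p_{1}$, we conclude $p\sim m'\in \langle X\rangle$.

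Next I would note that this reduction yields an order-preserving bijection
\[
\langle X\rangle/{\sim}\;\longrightarrow\;\bigl(R^{\geq 0}/{\sim}\bigr)\setminus\{[0]_{\sim}\},
\]
since the order on both sides is induced by $<$ on $R$, and both descend to the same quotient. Adjoining $[0]_{\sim}$, which is the minimum of $R^{\geq 0}/{\sim}$, it suffices to show $\langle X\rangle/{\sim}$ is well-ordered.

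To finish, I would apply Lemma~\ref{lem:generated} to the (commutative) semigroup $\langle X\rangle/{\sim}$, which in additive notation is generated by the set $A=\{[1]_{\sim}\}\cup\{[X_{i}]_{\sim}:i\in I\}$. In the measuring-ring context the variables $X_i$ represent dimensions of unbounded definable sets, so $X_i\geq 1$ in $R$ and hence $[X_i]_\sim\geq [1]_\sim$, giving the positivity required by the lemma; together with the identity $[1]_\sim$, and well-orderedness of $\{[X_i]_\sim\}$ by hypothesis, Higman's result yields that $\langle A\rangle=\langle X\rangle/{\sim}$ is well-ordered. Combined with the reduction above, this gives well-orderedness of $R^{\geq 0}/{\sim}$.

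The main obstacle I expect is the monomial reduction in the first step: one must track carefully the interplay between possibly negative real coefficients and the positivity of $p$, and show that cancellations among monomials of equal $\sim$-class always leave either a strictly positive remainder (handled by the sum-within-a-class argument) or allow one to pass to a strictly smaller $\sim$-class (handled by recursion). A secondary subtlety is the implicit requirement that the $X_i$'s are non-infinitesimal ($X_i\geq 1$): without this, $[X_i]>[X_i^{2}]>\ldots$ gives a descending chain and the conclusion fails, so the hypothesis on $\{X_i/{\sim}\}$ must be understood in a context guaranteeing this positivity.
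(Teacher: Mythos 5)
Your proposal is the paper's argument spelled out: reduce to the multiplicative semigroup of $\sim$-classes of monomials, then apply Lemma~\ref{lem:generated}. The whole content of the lemma lies in the reduction, and you correctly sense that this is where the work is — but the justification you give for the crucial step does not go through, and the difficulty you run into is in fact present (silently) in the paper's own one-line assertion.

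The step that fails is ``conversely $m'\leq(1/r)p_1$ for appropriate $r>0$ follows from positivity of $p_1$ together with $m'\sim m''$.'' Positivity of $p_1$ gives no lower bound on its magnitude: it is entirely possible for all the monomials of $p_1$ to share one $\sim$-class while $p_1$ itself has strictly smaller magnitude, because of cancellation among terms of comparable size. Concretely, take $R=\mathbb{R}[X,Y]$ with the ring ordering induced by embedding $X\mapsto t$, $Y\mapsto t-s$ into $\mathbb{R}(s,t)$ ordered so that $1$ is infinitesimal relative to $s$ and $s$ is infinitesimal relative to $t$ (the substitution $(X,Y)\mapsto(t,t-s)$ is injective since $t,s$ are algebraically independent). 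Then $X\sim Y$, and $p_1:=X-Y$ is strictly positive, yet $p_1\mapsto s$ satisfies $np_1<X$ for every $n\in\mathbb{N}$, so $[p_1]$ lies strictly between $[1]$ and $[X]$ and is not the $\sim$-class of any monomial. The follow-up ``$p_2\ll p_1$ gives $p\sim p_1$'' is then also unsupported: you have shown only that $p_2$ is dominated by $m'$, which does not dominate $p_2$ by $p_1$ unless one already knows $p_1\sim m'$. Worse, with infinitely many variables the conclusion of the lemma as literally stated can fail: send $X_0\mapsto t$ and $X_i\mapsto t-s_i$ into $\mathbb{R}(t,s_1,s_2,\ldots)$ with $1$ infinitesimal relative to every $s_i$ and $s_{i+1}$ infinitesimal relative to $s_i$, all infinitesimal relative to $t$; then $\{[X_i]_\sim\}$ is a singleton, hence well-ordered, but $[X_0-X_1]>[X_0-X_2]>\cdots$ is an infinite strictly descending chain in $R^{\geq 0}/{\sim}$. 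The paper's assertion that ``the $\sim$-equivalence class of a given $h\in R$ is equal to the maximum of the equivalence classes of the monomials that make up $h$'' quietly assumes such cancellation does not occur — true for polynomials with non-negative coefficients, but not for arbitrary positive elements of $R$. What actually makes the argument work in the intended application, Lemma~\ref{polymacwellorder}, is Proposition~\ref{ultrapoly}: there the semiring is identified from the outset with a set $P$ of monomials, so membership of every $\sim$-class in $\langle X\rangle/{\sim}$ holds by construction rather than needing to be deduced from abstract hypotheses on the ring ordering. If you want to keep the abstract statement you must add a hypothesis forbidding the kind of cancellation above (for instance, that the positive cone is the semiring generated by $\mathbb{R}^{\geq 0}$ and the $X_i$).
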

\begin{proof}
The $\sim$-equivalence class of a given $h\in R$ is equal to the maximum of the equivalence classes of the monomials that make up $h$. Thus it suffices to show that the set of $\sim$-equivalence classes of monomials is well-ordered. This follows from \autoref{lem:generated}, applied multiplicatively.
\end{proof}

Similarly, we have:

\begin{lemma}\label{polymacwellorder}
Let $\mathcal{C}$ be a polynomial \mac{}, let $M$ be an infinite ultraproduct of members of $\mathcal{C}$ and let $S$ be obtained as in \autoref{macgm} (so $M$ is $S$-measurable). Then $S/{\sim}$ is well-ordered.
\end{lemma}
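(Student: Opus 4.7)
The plan is to reduce the statement directly to the preceding lemma (the one just above, about well-orderedness of $R^{\geq 0}/{\sim}$ when the $\sim$-classes of the generating variables are well-ordered), using the structural description of $S$ furnished by \autoref{ultrapoly}.

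First I would unpack the setup. Since $\mathcal{C}$ is a polynomial \mac{}, there are balanced formulas $\delta_1,\ldots,\delta_k$ and integers $N_1,\ldots,N_k$ such that every measuring function in the set $R$ (from \autoref{def:mac}) can be written as a polynomial in the real-valued quantities $|\delta_i(M^{|\bar{x}_i|},\bar{a}_i)|^{1/N_i}$. In the ultraproduct $M$, these quantities give rise to $k$ distinguished elements $Z_1,\ldots,Z_k$ of $S$, and by \autoref{ultrapoly} every nonzero $\approxeq$-class in $S$ has a unique representative of the monomial form $rZ_1^{d_1}\cdots Z_k^{d_k}$ with $r\in\mathbb{R}^{>0}$ and $(d_1,\ldots,d_k)\in\mathbb{N}^k$. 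Since multiplication by a positive real scalar does not change the $\sim$-class, the $\sim$-class of such a monomial is determined entirely by its exponent vector. Thus $(S\setminus\{0\})/{\sim}$ is in order-preserving bijection with a subset of the multiplicative monoid $\langle Z_1/{\sim},\ldots,Z_k/{\sim}\rangle$ ordered by the induced order on $S/{\sim}$, and $S/{\sim}$ is this set together with a least element~$0$.

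Next I would apply the previous lemma. The set $\{Z_1/{\sim},\ldots,Z_k/{\sim}\}$ is finite, hence trivially well-ordered. The preceding lemma (whose proof invokes Higman's theorem \autoref{lem:generated} in the multiplicative semigroup) then gives that the set of $\sim$-classes of monomials in $Z_1,\ldots,Z_k$ is well-ordered. Adjoining $0$ as a new minimum preserves well-ordering, and the conclusion follows.

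The only subtlety to verify carefully is that the ordering on $S/{\sim}$ inherited from $S$ agrees with the ordering used in the previous lemma on monomials — that is, that the ordering used in \autoref{ultrapoly} on exponent vectors, coming from $\approxeq$-comparison of monomials in $S$, is the same one to which Higman's theorem is applied. This is routine: both orderings are determined by the archimedean comparison $f\ll g$ defined before \autoref{structureofS}, and both agree with the natural ``dimension'' ordering. No serious obstacle is expected; the heart of the argument is already contained in the preceding lemma, and the role of ``polynomial'' in the hypothesis is precisely to guarantee finiteness of the generating set $\{Z_1,\ldots,Z_k\}$ so that the Higman-type input is automatic.
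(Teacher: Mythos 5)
Your proof is correct and takes essentially the same route as the paper: both pass through \autoref{ultrapoly} to identify $S$ with a set of monomials in finitely many indeterminates, and then apply Higman's theorem (via \autoref{lem:generated}) to the multiplicative monoid they generate to conclude that the set of $\sim$-classes is well-ordered.
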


\begin{proof} By  \autoref{ultrapoly}, we may identify $S$ with a certain set $P$ of monomials in $X_1,\ldots,X_k$, with the natural multiplication. By \autoref{lem:generated} the multiplicative submonoid of $S$ generated by $\{1,X_1,\ldots,X_k\}$  is well-ordered. Since every element of $S$ is $\sim$-equivalent to an element in this monoid, the result follows.
\end{proof}
\subsubsection{Dimension versus D-rank}

For a definable set $X$ in a structure we write $D(X)$ for the (ordinal-valued) $D$-rank of $X$ (see e.g.\ \cite[Definition 2.5.6]{kim}). This should not be confused with the set $D$ of dimensions. Below, we write $d(X)$ for $d(h(X))$, for any definable set $X$. 

\begin{theorem}\label{supersimple}
Let $M$ be $S$-measurable, let $d\colon S\longrightarrow D$ be the corresponding dimension function and let $D_0=\{d(X): X\in {\rm Def}(M)\}$, the set of dimensions taken by definable sets. Suppose that $D_0$ is well-ordered and identify it with the corresponding ordinal. Then 
\begin{enumerate}[(i)]
\item $D(X)\leq d(X)$ for all definable sets $X$; and
\item $T={\rm Th}(M)$ is supersimple.
\end{enumerate}
\end{theorem}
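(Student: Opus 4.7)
The plan is to prove (i) by transfinite induction on $\alpha$, showing that for every definable $X$, $D(X)\geq\alpha$ implies $d(X)\geq\alpha$. Part (ii) will then follow: since $d$ takes values in the ordinal $D_0$, every formula has $D$-rank bounded by an ordinal independent of parameters, which gives supersimplicity via the standard chain of inequalities $SU\leq D$.

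For the successor step, suppose $D(X)\geq\alpha+1$. By definition of $D$-rank there exist a formula $\psi(x,\bar{y})$, an integer $k$, and parameters $(\bar{b}_i)_{i<\omega}$ such that $\{\psi(x,\bar{b}_i):i<\omega\}$ is $k$-inconsistent and $D(X\cap\psi(M,\bar{b}_i))\geq\alpha$ for every $i$. By the inductive hypothesis, $d(X\cap\psi(M,\bar{b}_i))\geq\alpha$. Clause (iii) of \autoref{def:T-measurable} guarantees that $h$ takes only finitely many values on the definable family $\{X\cap\psi(M,\bar{y}):\bar{y}\in M^{|\bar{y}|}\}$, so after pigeon-hole we may assume $h(X\cap\psi(M,\bar{b}_i))=s$ for all $i$, with $d(s)\geq\alpha$.

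The $k$-inconsistency asserts that each element of $M^{|x|}$ lies in at most $k-1$ of the sets $\psi(M,\bar{b}_i)$. A straightforward finite additivity argument (partition $X$ according to which subset $J\subseteq\{0,\dots,n-1\}$ with $|J|<k$ indexes the sets containing a given point) then yields $\sum_{i<n} h(X\cap\psi(M,\bar{b}_i))\leq (k-1)h(X)$, that is, $n\cdot s\leq (k-1)h(X)$ for all $n<\omega$. If we had $d(X)=\alpha$, then $s\sim h(X)$, so some fixed $N$ satisfies $h(X)\leq N\cdot s$, and hence $n\cdot s\leq (k-1)N\cdot s$ for every $n$. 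This contradicts \autoref{lem:multiplication.by.n}(iii), which gives the strict chain $s<2s<3s<\cdots$. Therefore $d(X)>\alpha$, as required. The limit step is routine: if $D(X)\geq\lambda$ then $D(X)\geq\beta$ and hence $d(X)\geq\beta$ for every $\beta<\lambda$, so $d(X)\geq\lambda$ in the ordinal $D_0$.

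For (ii), (i) shows that $D(\phi(x,\bar{a}))\leq d(\phi(M,\bar{a}))<\infty$ for every formula $\phi$ and every tuple $\bar{a}$, so $T$ is simple. Since $D$ is bounded by the fixed ordinal $D_0$ uniformly in parameters, and $SU\leq D$, the $SU$-rank is ordinal-valued, i.e., $T$ is supersimple. The main obstacle is the additive estimate in the third paragraph: one must correctly justify both the inequality $\sum h(X\cap\psi(M,\bar{b}_i))\leq(k-1)h(X)$ from finite additivity plus $k$-inconsistency, and the use of the measuring axiom $\mathbf{(MS)}$ (via \autoref{lem:multiplication.by.n}) to convert ``$n\cdot s$ bounded for all $n$'' into a genuine contradiction inside the possibly non-cancellative semiring $S$.
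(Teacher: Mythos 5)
Your proof is correct, and it follows the same overall skeleton as the paper's: a transfinite induction on $D$-rank, with the only content in the successor step, where $k$-inconsistency is used to force a dimension drop. But your implementation of that step is genuinely different, and a bit cleaner.

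The paper, having reduced to the case $d(X)=\alpha$ for contradiction, passes to the real-valued relative measure $\rho := \rho_{h(X)}$, takes $\rho^* := \min_i \rho(X_i)$ (which exists by clause (iii) of \autoref{def:T-measurable}), and then for $m \geq Nk^3$ stratifies $\bigcup_{i\leq m} X_i$ by multiplicity: picking a level $j_0 < k$ attained by $\rho$-measure at least $\rho^*/k$ on at least $m/k$ of the $X_i$, and deriving $\rho(X) \geq N\rho^*$ for arbitrary $N$. Your version instead uses clause (iii) to pigeon-hole to an infinite subfamily on which $h(X\cap\psi(M,\bar{b}_i))$ takes a \emph{single} value $s$, and then applies finite additivity directly in the semiring $S$ to the partition of $X$ by the multiset $\{i<n : x\in\psi(M,\bar{b}_i)\}$ (of size $<k$ by $k$-inconsistency), giving $n\cdot s \leq (k-1)h(X)$ for all $n$. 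The contradiction then comes from $\mathbf{(MS)}$ via \autoref{lem:multiplication.by.n}(iii): if $d(X)=\alpha$ then $s\sim h(X)$, so $h(X)\leq Ns$, and $((k-1)N+1)s\leq(k-1)Ns$ is absurd since $s>0$ (as $X_i\neq\emptyset$, so $h(X_i)\geq 1$). Both arguments bottom out in the same combinatorial observation — bounded multiplicity plus unboundedly many pieces of full dimension is impossible — but yours avoids the auxiliary $\rho_s$ and the $Nk^3$ bookkeeping, staying entirely inside $S$. The trade-off is that you need the pigeon-hole pass to a common value, whereas the paper's $\rho^* = \min$ sidesteps that (though it too relies on clause (iii) for finiteness). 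Both routes give the result, and your derivation of (ii) from (i) is exactly the paper's citation of Kim's Proposition 2.5.11, just unwound.
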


\begin{proof} 
We first note that~(ii) follows immediately from (i) and \cite[Proposition 2.5.11]{kim}. For the proof of~(i), it suffices to show for all definable sets $X$ and all $\alpha\in\mathbf{Ord}$ that $D(X)\geq\alpha$ implies $d(X)\geq\alpha$. We prove this by transfinite induction.

Suppose that $D(X)\geq\alpha$. If $\alpha=0$ then the result is trivial since $d$ always takes non-negative values. Likewise the case when $\alpha$ is a limit ordinal is immediate. 

Suppose now that $\alpha=\beta+1$. There exists a formula $\psi(\bar{x};\bar{y})$ and $(\bar{c}_{i})_{i\in\omega}$ such that

\begin{enumerate}[(1)]
\item for all $i\in\omega$, $\psi(M;\bar{c}_{i})\subseteq X$;
\item for all $i\in\omega$, $D(\psi(M;\bar{c}_{i}))\geq\beta$; and
\item there exists $k\in\omega$ such that $\{\psi(\bar{x};\bar{c}_{i})\;|\;i\in\omega\}$ is $k$-inconsistent.
\end{enumerate}

We aim to show that $d(X)\geq\alpha$, equivalently $d(X)>\beta$. Write $X_{i}:=\psi(M;\bar{c}_{i})$. Since $X_i\subseteq X$, by induction we have $d(X)\geq \beta$, so we assume for a contradiction that $d(X)=\beta$. Let $s=h(X)$ and for each definable set $Y$ write $\rho(Y)=\rho_s(h(Y))$, the measure of $Y$ relative to $s$ (i.e.\ to $X$), as defined just before \autoref{mon}. 

Let $\rho^*:=\mathrm{min}\{\rho(X_{i})\;|\;i\in\omega\}$, which exists by clause~(iii) of \autoref{def:T-measurable}. Let $N\in\mathbb{N}$ and choose $m\geq Nk^{3}$. We restrict our attention to $X_{i}$ where $i\leq m$, and put $\mathcal{X}:=\{X_{i}\;|\; i\leq m\}$. We aim to approximate the $\rho$-measure of $\bigcup_{X_{i}\in\mathcal{X}}X_{i}$. We cannot use finite additivity directly since there can be some non-trivial intersection between different $X_{i}$. 
 
 For $j< k$, let $X^{j}$ denote those elements of $\bigcup\mathcal{X}$ which are members of precisely $j$-many $X_{i}$, and let $X^{j}_{i}:=X_{i}\cap X^{j}$. We thus have a partition $X_{i}=\bigsqcup_{j< k}X_{i}^{j}$ for each $i\leq m$. 
For $j< k$, put $I^{j}:=\{i\leq m\;|\;\rho(X^{j}_{i})\geq \rho^*/k\}$.
\begin{claim}
$\bigcup\{I^{j}\;|\;j< k\}=\{1,\ldots,m\}$.
\end{claim}
\begin{proof}[Proof of Claim]
Let $i\leq m$. Since $\rho(X_{i})\geq \rho^*$ and the partition above of $X_i$ has $k$-many elements, there must be at least one, say $X_{i}^{j}$, with measure $\geq \rho^*/k$, hence $i\in I^{j}$.
\end{proof}

Applying the claim there exists $j_0< k$ such that $|I^{j_0}|\geq m/k$. Elements of $X_{i}^{j_0}$ are included in at most $j_0$ members of $\mathcal{X}$, hence 
\[
j_0\cdot \rho(X)
\geq j_0\cdot \rho(\bigcup_{i\in I^{j_0}}X^{j_0}_{i})
\geq\sum_{i\in I^{j_0}}\rho(X_{i}^{j_0}).
\]
Therefore $\rho(X)\geq\frac{1}{j_0}\frac{m}{k}\frac{\rho^*}{k}\geq N\rho^*$. Since $N\in\mathbb{N}$ is arbitrary, $d(X)>\beta$, as required.
\end{proof}

 Under the assumptions of Theorem~\ref{supersimple} and in particular that the set $D_0$ of dimensions of definable sets is well-ordered, for a type $p$ we can define $d(p)=\Min\{d(\phi(\bar{x})): \phi(\bar{x})\in p\}$ and put $d(\bar{a}/C)=d(\tp(\bar{a}/C))$. Now write
$\bar{a}\downarrow^d_C B$ if $d(\bar{a}/B \cup C) < d(\bar{a}/C)$. 

\begin{proposition} Let $M$ be $S$-measurable, and satisfy the assumptions and notation of Theorem~\ref{supersimple}.
If $\tp(\bar{a}/B \cup C)$ forks over $C$ then $\bar{a}\not\downarrow^d_C B$.
\end{proposition}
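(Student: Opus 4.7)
The plan is to argue by contradiction. Suppose $d(\bar{a}/B\cup C)=d(\bar{a}/C)=:e$. Since $T$ is supersimple by \autoref{supersimple}, it is simple, so forking over $C$ coincides with dividing over $C$. Hence there exist a formula $\phi(\bar{x},\bar{y})$, parameters $\bar{b}$ from $B\cup C$ with $\phi(\bar{x},\bar{b})\in\tp(\bar{a}/B\cup C)$, a $C$-indiscernible sequence $(\bar{b}_i)_{i<\omega}$ with $\bar{b}_0=\bar{b}$, and some $k\in\omega$ such that $\{\phi(\bar{x},\bar{b}_i):i<\omega\}$ is $k$-inconsistent.

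Using well-ordering of $D_0$, pick $\theta(\bar{x})\in\tp(\bar{a}/C)$ attaining $d(\theta(M))=e$, and set $X:=\theta(M)$ and $X_i:=\theta(M)\cap\phi(M,\bar{b}_i)$. Since $\bar{a}\in X_0$, the defining formula of $X_0$ lies in $\tp(\bar{a}/B\cup C)$, so $d(X_0)\geq e$; combined with $X_0\subseteq X$ and $d(X)=e$, this forces $d(X_0)=e$. After expanding the language by constants for the parameters of $\theta$ (which preserves $S$-measurability by \autoref{easy-facts}(i)), the formula $\theta(\bar{x})\wedge\phi(\bar{x},\bar{y})$ becomes parameter-free, so $C$-indiscernibility of $(\bar{b}_i)$ together with clause~(iii) of \autoref{def:T-measurable} forces all $X_i$ to share the same $h$-measure. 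In particular each $X_i$ has relative measure $\rho_{h(X)}(X_i)=\rho^\ast$ for a single positive real $\rho^\ast>0$.

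Now apply, verbatim, the successor-ordinal pigeonhole-partition argument from the proof of \autoref{supersimple}, taking $\psi(\bar{x};\bar{y}):=\theta(\bar{x})\wedge\phi(\bar{x},\bar{y})$ and $\bar{c}_i:=\bar{b}_i$. For any $N\in\mathbb{N}$, choose $m\geq Nk^3$; the $k$-inconsistency of $\{\phi(\bar{x},\bar{b}_i):i\leq m\}$ implies no point of $M$ lies in $k$ or more of $\{X_i:i\leq m\}$. Partitioning each $X_i$ by the multiplicity $j<k$ with which its points appear in this family yields some $j_0<k$ and $I^{j_0}\subseteq\{1,\ldots,m\}$ with $|I^{j_0}|\geq m/k$ and $\rho_{h(X)}(X_i^{j_0})\geq\rho^\ast/k$ for all $i\in I^{j_0}$. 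Since points of $\bigcup_{i\in I^{j_0}}X_i^{j_0}$ are covered at most $j_0$ times, $j_0\,\rho_{h(X)}(X)\geq\sum_{i\in I^{j_0}}\rho_{h(X)}(X_i^{j_0})\geq(m/k)(\rho^\ast/k)$, so $\rho_{h(X)}(X)\geq N\rho^\ast$. Since $\rho_{h(X)}(X)=1$ and $N$ is arbitrary, this contradicts $\rho^\ast>0$.

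The main obstacle is ensuring that the $h$-measures of the $X_i$ all coincide: this is where $C$-indiscernibility must be married with the definability clause (iii), after expanding by constants for the parameters of $\theta$. Once equal measures are secured, the rest is a direct transcription of the pigeonhole count already performed in \autoref{supersimple}.
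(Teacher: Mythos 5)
Your proof is correct and takes essentially the same route as the paper's: both reduce to a $C$-indiscernible $k$-inconsistent family of subsets of a $C$-definable set $X$ of the same dimension as $\tp(\bar{a}/C)$, and both then import the pigeonhole count from the proof of \autoref{supersimple}(i) to force a dimension drop. The only cosmetic difference is that you invoke the equivalence of forking and dividing in simple theories to pass directly to a single dividing formula, whereas the paper works with a disjunction of dividing formulas implied by the forking formula.
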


\begin{proof} Suppose that $\tp(\bar{a}/B \cup C)$ forks over $C$.
By our assumption that $D_0$ is well-ordered,  there is a $C$-definable set $X$ containing all realisations of $\tp(\bar{a}/C)$ with $d(X)=d(\bar{a}/C)$.
Some formula $\phi(\bar{x}/\bar{b})$ in $\tp(\bar{a}/B\cup C)$ forks over $C$, so implies
$\phi_1(\bar{x},\bar{b}_1) \vee \ldots \vee \phi_r(\bar{x},\bar{b}_r)$  for some $r$, where each $\phi_i(\bar{x},\bar{b}_i)$ divides over $C$. We may suppose that $\phi(\bar{x},\bar{b})$ and the $\phi_i(\bar{x},\bar{b}_i)$ each imply that $\bar{x}\in X$. 

We claim that for each $i\leq r$, $d(\phi_i(\bar{x},\bar{b}_i))<d(X)$. To see this, fix $i\leq r$. 
There is an indiscernible sequence $(\bar{c}_j:j\in \omega)$ realising $\tp(\bar{b}_i/C)$ and some $k\in \omega$  such that the formulas 
$\phi_i(\bar{x},\bar{c}_j)$ are $k$-inconsistent. Let $X_j$ be the set of realisations of $\phi_i(\bar{x},\bar{c}_j)$. Then the $X_j$ are $k$-inconsistent subsets of $X$ with $d(X_j)=d(X_l)$ for all $j,l\in \omega$, and the proof of 
\autoref{supersimple}(i) above forces $d(X_j)<d(X)$ for all $j$. Thus $d(\phi_i(x,\bar{b}_i))<d(X)$,  as claimed. 

Given the claim, it follows that $d(\phi_1(\bar{x},\bar{b}_1) \vee \ldots\vee \phi_r(\bar{x},\bar{b}_r))< d(X)$, and so
$\phi(\bar{x},\bar{b})<d(X)$. The proposition follows. 
\end{proof}

\begin{corollary}\label{polymacsimple}
Let $\mathcal{C}$ be a polynomial \mac{}. Then any infinite ultraproduct of members of $\mathcal{C}$ has supersimple theory.
\end{corollary}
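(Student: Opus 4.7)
The plan is to assemble this as a direct consequence of three results already established: \autoref{macgm}, \autoref{polymacwellorder}, and \autoref{supersimple}. There is essentially no new content to prove.

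First I would take an infinite ultraproduct $M = \prod_{i\in I} M_i \big/ \mathcal{U}$ of members of the polynomial \mac{} $\mathcal{C}$. Since $M$ is infinite, we may, by passing to a set in $\mathcal{U}$ if necessary, assume that $\mathcal{C}$ contains only finitely many structures of any given finite size (this is the hypothesis needed to apply \autoref{macgm}; thinning $\mathcal{C}$ in this way does not change the ultraproduct, and the thinned class is still a polynomial \mac{}). Then \autoref{macgm} yields a measuring semiring $S$ such that $M$ is $S$-measurable, with the explicit description of $S$ arising from the proof and refined in \autoref{ultrapoly}.

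Next I would invoke \autoref{polymacwellorder}: since $\mathcal{C}$ is a polynomial \mac{}, the quotient $S/{\sim}$ is well-ordered under the induced ordering. In particular, the set $D_0 = \{d(X) : X \in \Def(M)\}$ of dimensions of definable sets in $M$ is a well-ordered subset of $S/{\sim}$.

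Finally, \autoref{supersimple}(ii) applies directly: since $M$ is $S$-measurable and $D_0$ is well-ordered, $\Th(M)$ is supersimple. The main (and only) subtlety is the bookkeeping around the finiteness hypothesis in \autoref{macgm}, but this is entirely routine — there is no genuine obstacle, as all the substantive work has been done in the preceding theorems.
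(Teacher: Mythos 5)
Your proof is correct and takes essentially the same route as the paper's one-line proof, which cites \autoref{ultrapoly}, \autoref{polymacwellorder}, and \autoref{supersimple}. The only cosmetic differences are that you invoke \autoref{macgm} explicitly where the paper points to \autoref{ultrapoly} (the refinement of \autoref{macgm} underlying \autoref{polymacwellorder}, whose statement already supplies the $S$-measurability), and your remark about thinning $\mathcal{C}$ to arrange the finiteness hypothesis of \autoref{macgm} is a side-issue that the paper likewise glosses over by working with an infinite ultraproduct via \autoref{polymacwellorder}.
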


\begin{proof} This follows from \autoref{ultrapoly},  \autoref{polymacwellorder}, and \autoref{supersimple}.
\end{proof}


\begin{example} In each of the cases in \autoref{exgenmeas} the semiring is a polynomial ring or monomial semiring and the dimension function $d$ takes values in some well-ordered set. Thus all of these examples are supersimple by \autoref{supersimple}. 

\end{example}




\subsection{Locally well-ordered dimensions}

Next, we localise the results in the last subsection to instances of a formula. 

Let $\phi(\bar{x};\bar{y})$ be an $L$-formula and let $L_{\phi}$ denote the set of instances of $\phi$, i.e.\ formulas 
$\phi(\bar{x};\bar{c})$ for a $\bar{y}$-tuple of parameters $\bar{c}$. Let $\Delta_{\phi}$ be the set of finite positive 
$\phi$-types, i.e.\ conjunctions of instances of $\phi$, and put $h(\Delta_\phi)=\{h(p):p\in \Delta_\phi\}$. For an $L$-structure $M$ we write $\mathrm{Def}_{\phi}(M)$ for the collection of sets in $M$ defined by formulas from $\Delta_{\phi}$.

Now let $M$ be an $S$-measurable structure with generalised measure  $h$, and $d\colon S\longrightarrow D$  the corresponding dimension function.
Suppose for each $\phi$ that $h(\Delta_{\phi})/{\sim}$ is well-ordered. We may identify this set with its corresponding ordinal and define $d_\phi(p):={\rm Min}\{d(h(\psi)):\psi\in p\}$ for all $p\in \Delta_\phi$. Likewise, if $X\in \mathrm{Def}_{\phi}(M)$ is defined by $p\in \Delta_{\phi}$, then put $d_\phi(X):=d_\phi(p)$. The rank $D(p,\phi,k)$ (for partial types $p$) is defined in \cite[Definition 2.3.4]{wagner2}

\begin{fact}\cite[Theorem 2.4.7]{wagner2}
A theory $T$ is simple if and only if, for all  formulas $\phi$ and all $k<\omega$, $D(\bar{x}=\bar{x},\phi,k)<\omega$.
\end{fact}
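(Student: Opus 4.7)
The plan is to prove both directions via the well-known characterization: $T$ is simple if and only if no formula has the tree property. Recall that $\phi(\bar{x};\bar{y})$ has the \emph{$k$-tree property} if there exist parameters $(\bar{b}_\eta)_{\eta\in\omega^{<\omega}}$ such that, for every $\eta$, the family $\{\phi(\bar{x};\bar{b}_{\eta\frown i}):i<\omega\}$ is $k$-inconsistent, while for every $\sigma\in\omega^\omega$, the set $\{\phi(\bar{x};\bar{b}_{\sigma|n}):n<\omega\}$ is consistent. The local rank $D(p,\phi,k)$ is designed precisely so that $D(p,\phi,k)\geq\alpha+1$ means one can find an instance $\phi(\bar{x};\bar{b})$ and a $k$-inconsistent indiscernible sequence through $\bar{b}$ such that $D(p\cup\{\phi(\bar{x};\bar{b})\},\phi,k)\geq\alpha$. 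Thus large rank is, by construction, a bookkeeping device for approximating a tree-property witness to non-simplicity.

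For the reverse direction, I would argue contrapositively: suppose $D(\bar{x}=\bar{x},\phi,k)\geq\omega$ for some $\phi$ and some $k$. By a compactness argument, this upgrades to $D(\bar{x}=\bar{x},\phi,k)=\infty$, so one can iterate the rank-reduction clause without termination. Unwinding the definition of $D$-rank and using Ramsey/extraction to arrange indiscernibility at every node, one builds a tree $(\bar{b}_\eta)_{\eta\in\omega^{<\omega}}$ witnessing the $k$-tree property for $\phi$. Hence $T$ is not simple. This is the standard direction and follows the template of the unbounded-rank $\Rightarrow$ tree property arguments found in Kim–Pillay style proofs.

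For the forward direction, suppose $T$ is simple; we must show $D(\bar{x}=\bar{x},\phi,k)<\omega$ for every $\phi$ and $k$. Assume for contradiction that $D(\bar{x}=\bar{x},\phi,k)\geq n$ for every $n<\omega$. I would again apply compactness to promote this to $D(\bar{x}=\bar{x},\phi,k)\geq\infty$, and then as in the previous paragraph extract a tree of instances of $\phi$ in which each level is $k$-inconsistent (from the indiscernible sequences produced at each rank-reduction step) while every branch is consistent (from the nested choices of instances). This produces a witness to the $k$-tree property for $\phi$, contradicting simplicity.

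The main obstacle I expect is the compactness/Ramsey step in both directions, namely showing that the family of finite stages of the $D$-rank definition, or of the tree-property definition, can be uniformised into a single infinite configuration. One has to package the local data at each rank level into a consistent type over an expanding set of parameters, then apply the Erd\H{o}s–Rado theorem to extract indiscernibility uniformly along tree levels, all while maintaining $k$-inconsistency of siblings. Once this technical step is in place, both implications follow by unwinding the definitions. Since this is attributed to \cite[Theorem~2.4.7]{wagner2}, I would in practice simply cite that treatment rather than reproduce the compactness bookkeeping.
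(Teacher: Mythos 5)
The paper gives no proof of this statement: it is labelled a \emph{Fact} and credited entirely to Wagner's \emph{Simple Theories} (Theorem~2.4.7), so there is no argument in the text for your sketch to be matched against.

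That said, there is a genuine gap in your reconstruction: both of your paragraphs argue the \emph{same} implication. In the paragraph you label ``reverse direction'' you prove \emph{``$D(\bar{x}=\bar{x},\phi,k)\geq\omega$ for some $\phi,k$ $\Rightarrow$ $T$ is not simple''} by building a tree from an infinite-rank witness; in the paragraph you label ``forward direction'' you again assume unbounded $D$-rank, extract a tree, and contradict simplicity. In both cases the content is \emph{unbounded local rank $\Rightarrow$ tree property}, which is precisely the contrapositive of the forward implication (\emph{simple} $\Rightarrow$ \emph{finite local rank}). The genuinely distinct direction---\emph{tree property $\Rightarrow$ unbounded local rank}, i.e.\ the contrapositive of ``$D<\omega$ for all $\phi,k$ $\Rightarrow$ $T$ simple''---is never argued, so as written you have only proved one half of the equivalence. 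That missing half is in fact the easier one: from a witness $(\bar{b}_\eta)_{\eta\in\omega^{<\omega}}$ to the $k$-tree property for $\phi$, restrict to the subtree of depth $n$ and read the rank-increment clause off its levels to obtain $D(\bar{x}=\bar{x},\phi,k)\geq n$; since $n$ was arbitrary the rank is $\geq\omega$. No compactness or Erd\H{o}s--Rado extraction is needed there---all of that machinery belongs to the other, harder direction (unbounded rank $\Rightarrow$ a full $\omega^{<\omega}$-tree), for which your account is the standard one.
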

\begin{theorem}
Suppose that $h(\Delta_{\phi})/{\sim}$ is well-ordered for each $\phi$. Let $X\in\mathrm{Def}_{\phi}(M)$. Then $D(X,\phi,k)\leq d_{\phi}(X)$.
\end{theorem}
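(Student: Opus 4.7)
The plan is to repeat the argument of \autoref{supersimple} but localised to $\phi$-formulas. Specifically, I would proceed by transfinite induction on $\alpha$ to show that $D(X,\phi,k)\geq\alpha$ implies $d_\phi(X)\geq\alpha$, where the latter is interpreted in the ordinal identified with $h(\Delta_\phi)/{\sim}$. The case $\alpha=0$ is immediate since $d_\phi$ takes non-negative (ordinal) values, and the case where $\alpha$ is a limit ordinal follows from the inductive hypothesis together with the well-ordering.

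For the successor step $\alpha=\beta+1$, unpack the definition of the local $D$-rank: there exist $\phi$-instances $(\phi(\bar{x};\bar{c}_i))_{i\in\omega}$ such that the set $\{\phi(\bar{x};\bar{c}_i):i\in\omega\}$ is $k$-inconsistent and, writing $X_i:=X\cap\phi(M;\bar{c}_i)\in\mathrm{Def}_\phi(M)$, we have $D(X_i,\phi,k)\geq\beta$. The induction hypothesis yields $d_\phi(X_i)\geq\beta$ for each $i$. Now assume for contradiction that $d_\phi(X)=\beta$; since $X_i\subseteq X$, monotonicity of $d_\phi$ forces $d_\phi(X_i)=\beta$, and in particular $h(X_i)\sim h(X)$ in $S$, so the relative measure $\rho(Y):=\rho_{h(X)}(h(Y))$ from the proof of \autoref{supersimple} takes values in $\mathbb{R}^{>0}$ on $X$ and on each $X_i$.

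The key counting step then transfers verbatim. Let $\rho^*:=\min\{\rho(X_i):i\in\omega\}$, which exists by clause~(iii) of \autoref{def:T-measurable}. Given $N\in\mathbb{N}$, choose $m\geq Nk^3$ and, using $k$-inconsistency, decompose $\bigcup_{i\leq m}X_i$ by multiplicity: for $j<k$ let $X^j$ be the set of elements lying in exactly $j$ of the $X_i$ with $i\leq m$, and put $X_i^j:=X_i\cap X^j$, so that $X_i=\bigsqcup_{j<k}X_i^j$. For each $i\leq m$ some $j<k$ satisfies $\rho(X_i^j)\geq\rho^*/k$, so by the pigeonhole principle there is $j_0<k$ with $|I^{j_0}|\geq m/k$, where $I^{j_0}:=\{i\leq m:\rho(X_i^{j_0})\geq\rho^*/k\}$. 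Since elements of $X_i^{j_0}$ lie in at most $j_0$ of the $X_i$'s, finite additivity of $\rho$ on disjoint definable sets gives
\[
j_0\cdot\rho(X)\;\geq\;\sum_{i\in I^{j_0}}\rho(X_i^{j_0})\;\geq\;\frac{m}{k}\cdot\frac{\rho^*}{k}\;\geq\;N\rho^*,
\]
so $\rho(X)\geq N\rho^*/k^2$ for arbitrary $N$, contradicting that $\rho(X)$ is a fixed positive real. Hence $d_\phi(X)>\beta$, completing the induction.

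The main obstacle is the bookkeeping: one has to verify that the relative measure $\rho$, built from the ambient $h$, interacts correctly with the \emph{local} dimension $d_\phi$, which is defined via the well-ordering of $h(\Delta_\phi)/{\sim}$ rather than via $d$. The crucial observation unlocking this is that the counting argument is really a statement inside $S$ about $h$-values of $\sim$-comparable definable sets, so it does not care whether the ambient dimension function $d$ is well-ordered, only that the dimensions occurring in the induction (namely those of $X$ and the $X_i$) agree. Beyond this, everything else is a routine translation of the global proof to the $\phi$-local setting.
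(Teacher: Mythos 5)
Your strategy---reproducing the argument of \autoref{supersimple}(i) localised to $\phi$-formulas---is exactly what the paper intends, since its own proof of this theorem consists of the single sentence ``The proof is the same as that for \autoref{supersimple}(i).'' Your set-up (the transfinite induction, the reduction to a $k$-inconsistent family of $\phi$-instances, the use of the relative measure $\rho_{h(X)}$, and the pigeonhole/multiplicity-counting step) faithfully follows that proof, and the final arithmetic, despite a superfluous factor of $k^2$ in the last line, still gives the required contradiction since $N$ is arbitrary.

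However, there is a genuine gap at the step where you pass from $d_\phi(X_i)=d_\phi(X)$ to $h(X_i)\sim h(X)$. With the paper's definition, $d_\phi(p)=\min\{d(h(\psi)):\psi\in p\}$ is the least dimension among the \emph{individual} instances of $\phi$ making up $p$; this is in general \emph{strictly larger} than $d(h(p(M)))$, the dimension of the set $p(M)$ itself, since the instances $\psi$ can each have large dimension while their conjunction cuts out a much smaller set. Thus $d_\phi(X_i)=d_\phi(X)$ does not entail $d(h(X_i))=d(h(X))$, and so does not entail $h(X_i)\sim h(X)$; without that, you cannot conclude that $\rho_{h(X)}$ is strictly positive on each $X_i$, and the counting step is not yet licensed. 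The fix is routine but worth making explicit: run the transfinite induction on the sharper inequality $D(X,\phi,k)\leq d(h(X))$, reading $d(h(X))$ as the ordinal position of $[h(X)]_{\sim}$ in the well-ordered set $h(\Delta_{\phi})/{\sim}$ (note $h(X)\in h(\Delta_\phi)$ since $X\in\mathrm{Def}_\phi(M)$). In the successor step the inductive hypothesis gives $d(h(X_i))\geq\beta$; combined with the contradiction hypothesis $d(h(X))=\beta$ and the monotonicity $d(h(X_i))\leq d(h(X))$ this forces $d(h(X_i))=d(h(X))$, hence $h(X_i)\sim h(X)$, and the counting argument applies verbatim. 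Finally, since each $\psi\in p$ contains $X=p(M)$, one has $d(h(\psi))\geq d(h(X))$, whence $d(h(X))\leq d_\phi(X)$ and therefore $D(X,\phi,k)\leq d(h(X))\leq d_\phi(X)$ as required.
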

\begin{proof} The proof is the same as that for \autoref{supersimple}(i).
\end{proof}
\begin{corollary}\label{simplecor}
Suppose that $h(\Delta_{\phi})/{\sim}$ is well-ordered for each $\phi$. Then $M$ is simple.
\end{corollary}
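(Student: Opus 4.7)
The plan is to combine the preceding Theorem with the Fact (Wagner's characterisation of simplicity) recalled immediately before it. Fix any $L$-formula $\phi(\bar x;\bar y)$ and any $k<\omega$. The partial type $\{\bar x=\bar x\}$ is the empty conjunction of instances of $\phi$, so it lies in $\Delta_\phi$, and it defines the set $M^{|\bar x|}$, which therefore belongs to $\mathrm{Def}_\phi(M)$. Applying the Theorem to this choice of $X$ gives
\[
D(\bar x=\bar x,\phi,k)\leq d_\phi(\bar x=\bar x).
\]

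Since by hypothesis $h(\Delta_\phi)/{\sim}$ is well-ordered and $d_\phi$ takes its values in this set, we identify $h(\Delta_\phi)/{\sim}$ with its order type via the unique order-preserving bijection, so that $d_\phi(\bar x=\bar x)$ becomes a concrete ordinal. The inequality above then provides an ordinal upper bound on $D(\bar x=\bar x,\phi,k)$. Running this argument over every formula $\phi$ and every $k<\omega$ and invoking the Fact lets us conclude that $\mathrm{Th}(M)$ is simple.

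The substance of the argument, and the main obstacle, is lodged entirely in the preceding Theorem---whose proof, as the author signals, is modelled on the transfinite induction of Theorem~\ref{supersimple}(i) and relies on the measuring-semiring axiom \textbf{(MS)} together with the `rough cancellation' style estimate in the successor step. Once that ordinal bound on the local $D$-rank is in hand, the Corollary itself requires essentially no further work beyond verifying that $\{\bar x=\bar x\}$ is indeed a member of $\Delta_\phi$ so that the Theorem is applicable; the passage through the ordinal identification is order-preserving and therefore does not require any further computation.
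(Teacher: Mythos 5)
Your argument takes exactly the route the paper intends: the corollary carries no explicit proof precisely because it is immediate from the preceding Theorem together with the cited Fact from Wagner, and you have reproduced that deduction correctly, including the point (which the paper leaves tacit) that $\bar x = \bar x$ is the empty conjunction of $\phi$-instances and so lies in $\Delta_\phi$.

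One small elision is worth flagging. The Theorem hands you $D(\bar x = \bar x,\phi,k) \leq d_\phi(\bar x=\bar x)$, and the well-orderedness of $h(\Delta_\phi)/{\sim}$ converts the right-hand side into a fixed ordinal $\gamma$. But the Fact you invoke requires $D(\bar x = \bar x,\phi,k) < \omega$, not merely a bound by some ordinal $\gamma$, which may well exceed $\omega$. The missing step is the standard dichotomy for the local rank: for a single formula $\phi$ and $k<\omega$, the rank $D(p,\phi,k)$ is either finite or $\infty$ (this is a compactness fact, and is exactly what makes Wagner's Theorem~2.4.7 tick). So the chain is: an ordinal bound rules out $\infty$, and the dichotomy then forces the value below $\omega$. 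You should make that explicit, since otherwise the leap from ``ordinal upper bound'' to ``$<\omega$'' is not justified by what you have written. With that one sentence added the proof is complete.
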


We also state a characterisation of stability for a formula, under the assumption that an appropriate set of dimensions is well-ordered. For an $L$-formula $\phi(\bar{x};\bar{y})$, let $\Delta_\phi^*$ be the set of finite $\phi$-types; that is, the set of finite conjunctions of instances of $\phi$ and their negations. 

\begin{theorem}\label{stablethm}
Let $M$ be generalised measurable, let $\phi(\bar{x},\bar{y})$ be a formula and suppose that $h(\Delta^*_{\phi})/{\sim}$ is well-ordered. Then the following are equivalent:
\begin{enumerate}[(i)]
\item $\phi(\bar{x},\bar{y})$ is unstable.
\item There is an $N\equiv M$ and for some $A\subset N$ an $A$-definable set $D\subset N^{|\bar{x}|}$ and sequence $\{\bar{a}_i:i\in \omega\} \subset N^{|\bar{y}|}$ indiscernible over $A$ such that $d(D)=d(D \wedge \bigwedge_{i<k}\phi(\bar{x},\bar{a}_i))$ for all $k \in \omega$ and $\rho_{h(D)}(\phi(\bar{x},\bar{a}_i)\wedge \phi(\bar{x},\bar{a}_j))<\rho_{h(D)}(\phi(\bar{x},\bar{a}_i))$ for all $i<j$.
\item $\phi(\bar{x},\bar{y})$ has the independence property.

\end{enumerate}
\end{theorem}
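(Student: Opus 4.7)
The plan is to prove the cycle $(\mathrm{iii}) \Rightarrow (\mathrm{i}) \Rightarrow (\mathrm{ii}) \Rightarrow (\mathrm{iii})$. The first implication is standard: the independence property implies the order property, hence instability.

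For $(\mathrm{i}) \Rightarrow (\mathrm{ii})$, I would pass to a sufficiently saturated elementary extension $N \succeq M$, which remains $S$-measurable by \autoref{theory}. Instability of $\phi$ together with Ramsey and compactness yields an indiscernible sequence $(\bar{a}_i)_{i \in \omega}$ over some $A \subseteq N$ witnessing the order property for $\phi$. Setting $D_k := \bigcap_{i<k} \phi(N^{|\bar{x}|},\bar{a}_i)$, indiscernibility makes $d(D_k)$ non-increasing, and by the well-ordering hypothesis on $h(\Delta^*_\phi)/{\sim}$ it stabilises at some stage $k_0$ at a value $e$. After re-indexing, take $D := D_{k_0}$; this yields the dimension clause of (ii). The strict pairwise measure drop is obtained by a contradiction argument: if $\rho_{h(D)}$ agreed on single instances and pairwise intersections, then each symmetric difference $(\phi(\bar{x},\bar{a}_i) \triangle \phi(\bar{x},\bar{a}_j)) \cap D$ would have strictly smaller dimension; re-extracting indiscernibles inside such a smaller set (which still witnesses OP) would produce a strictly descending chain in $d(h(\Delta^*_\phi))/{\sim}$, contradicting well-ordering.

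For $(\mathrm{ii}) \Rightarrow (\mathrm{iii})$, by \autoref{monomialise} one may assume $S$ is a monomial semiring, so that within the fixed dimension $e$ of $D$ the relative measure $\rho_{h(D)}(\cdot)$ becomes a real-valued, finitely additive function on $\phi$-definable subsets of $D$. Indiscernibility provides constants $0 < s < r$ with $\rho_{h(D)}(\phi(\bar{x},\bar{a}_i)) = r$ and $\rho_{h(D)}(\phi(\bar{x},\bar{a}_i) \cap \phi(\bar{x},\bar{a}_j)) = s$ for all $i<j$. A Bonferroni-type inclusion--exclusion estimate, carried out after passing to the ordered ring into which the divisible hull embeds (cf.\ \autoref{ring} and \autoref{meq}), then shows that for every finite disjoint $S,T \subseteq \omega$ the set $D \cap \bigcap_{i \in S}\phi(\bar{x},\bar{a}_i) \cap \bigcap_{j \in T}\neg\phi(\bar{x},\bar{a}_j)$ has positive relative measure in $D$, hence is non-empty. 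Compactness then yields a tuple realising every finite pattern over $(\bar{a}_i)_{i\in\omega}$, which is the independence property for $\phi$.

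The main obstacle is the strict-inequality step in $(\mathrm{i}) \Rightarrow (\mathrm{ii})$: one must run the descent argument using only local information about $\phi$-types (well-ordering of $h(\Delta^*_\phi)/{\sim}$) rather than invoking global simplicity of $\mathrm{Th}(M)$, which is not assumed under the hypotheses of the theorem. A secondary difficulty is the inclusion--exclusion in $(\mathrm{ii}) \Rightarrow (\mathrm{iii})$, where the semiring lacks additive cancellation; here one exploits \autoref{monomialise} and the availability of an ordered-ring ambient for the divisible hull to perform the necessary real-valued positivity estimates, and one must verify that the strict gap $s < r$ propagates through arbitrary finite boolean combinations of the $\phi(\bar{x},\bar{a}_i)$ uniformly in the size of the combination.
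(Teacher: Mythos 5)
Your plan follows the same strategy the paper attributes to GMS15 Proposition~3.3 (a cycle $(\mathrm{iii}) \Rightarrow (\mathrm{i}) \Rightarrow (\mathrm{ii}) \Rightarrow (\mathrm{iii})$ with well-ordering of $h(\Delta^*_\phi)/{\sim}$ in place of $(A^*_\phi)$), and both of the obstacles you flag can in fact be cleared, so the outline is sound. For $(\mathrm{i}) \Rightarrow (\mathrm{ii})$, the parenthetical ``which still witnesses OP'' needs a shift argument: with the OP-indexing in which $\phi(\bar b_k,\bar a_l)$ holds iff $k>l$, the witnesses $\bar b_{i+1},\dots,\bar b_j$ already lie in the null set $D\cap\phi(N^{|\bar x|},\bar a_i)\setminus\phi(N^{|\bar x|},\bar a_j)$, so by taking $j-i$ large and invoking compactness (using the $\emptyset$-definability clause to pin down the dimension) one obtains an $\omega$-long indiscernible OP witness inside a single set of strictly smaller $\phi$-dimension; the descent then terminates by well-ordering. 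For $(\mathrm{ii}) \Rightarrow (\mathrm{iii})$, the right way to propagate the strict gap is to observe that $t_m:=\rho_{h(D)}\bigl(D\cap\bigcap_{i<m}\phi(N^{|\bar x|},\bar a_i)\bigr)$ is a completely monotone real sequence with $t_0=1$ (all its iterated forward differences are $\rho_{h(D)}$-measures of disjoint patterns and hence non-negative), so it is a Hausdorff moment sequence, and $t_1>t_2$ then forces every difference $\Delta^l t_m$ with $l,m\geq 1$ to be strictly positive; the patterns with no positive instance are handled by shifting the index set. The appeals to \autoref{ring} and \autoref{meq} are unnecessary: \autoref{monomialise} already gives that $\rho_{h(D)}$ takes values in $\mathbb{R}^{\geq 0}\cup\{\infty\}$ and is finitely additive on sets of dimension $\leq d(D)$, which is all the inclusion--exclusion requires.
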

\begin{proof} The proof is essentially the same as that of \cite[Proposition 3.3]{GMS15}, with the well-ordering assumption in place of $(A_\phi^*)$; we omit the details. 
\end{proof}

\begin{remark} \rm
We refer back to Propositions~\ref{abelianexact} and \ref{homocyclic} above for the \mec{} example consisting of the collection $\mathcal{C}$ of all finite homocyclic groups $(\mathbb{Z}/p^n\mathbb{Z})^m$ where $m,n \in \mathbb{N}^{>0}$ and $p$ is prime. Stability and simplicity properties of ultraproducts can be read off from the formulas for cardinalities of definable sets.  In particular, Proposition~\ref{homocyclic}(ii), in combination with Theorems~\ref{modules}(ii) and \ref{ring}, gives examples of ring-measurable stable unsuperstable pseudofinite groups. 
\end{remark}

\begin{remark} \rm
An interesting example of a generalised measurable structure  is discussed in \cite{berenstein}.
Let $M$ be a structure which is {\em coherent measurable} in the sense of \cite[Definition 5.3]{berenstein}. This means that $M$ is measurable of SU-rank 1, dimension coincides with SU-rank on definable sets, and $M$ is `nowhere trivial', a condition which typically follows if $M$ has some algebraic structure. Consider the structure $(M, H(M))$ where $H$ is a predicate for a `generic' independent subset of $M$. Then, by \cite[Theorem 5.16]{berenstein}, $(M,H(M))$ is generalised measurable and the set $D$ of dimensions has order type $\mathbb{N} \times \mathbb{N}$,  lexicographically ordered. In particular the set of dimensions is well-ordered, so $(M,H(M))$ is supersimple, as already follows from \cite{vass}. 
\end{remark}



\subsection{Quasifinite fields}

In this section we adapt the main result of \cite{Scanlon00} to the context of generalised measurable fields.

\begin{definition}
A field $K$ is \em quasifinite \rm if it is perfect and $G_{K}:=\Aut(K^{{\rm alg}}/K)\cong\widehat{\mathbb{Z}}$.
\end{definition}

\begin{definition}
A \em strong ordered Euler characteristic \rm on a field $K$ is a function 
\[\chi\colon\mathrm{Def}(K)\longrightarrow R_{\geq0}\]
into a partially-ordered ring $R$ such that for all $X,Y\in\mathrm{Def}(K)$:
\begin{enumerate}[(i)]
\item $\chi(X)=\chi(Y)$ if $X$ and $Y$ are definably isomorphic;
\item $\chi(X\times Y)=\chi(X)\cdot\chi(Y)$;
\item $\chi(X\cup Y)=\chi(X)+\chi(Y)$ if $X$ and $Y$ are disjoint; 
\item if $f\colon X\longrightarrow Y$  is a definable function such that $c=\chi(f^{-1}(\{y\}))$ for every $y\in Y$, then 
$\chi(X)=c\cdot\chi(Y)$. 
\end{enumerate}
The Euler characteristic is nontrivial if $0<1$ in $R$ and the image of $\chi$ is not $\{0\}$.
\end{definition}

\begin{theorem}\rm(Theorem 1, \cite{Scanlon00}) \em
Any field admitting a nontrivial strong ordered Euler characteristic is quasifinite.
\end{theorem}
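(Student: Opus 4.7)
The plan is to adapt Scanlon's argument from \cite{Scanlon00}; the hypotheses on a nontrivial strong ordered Euler characteristic match those used there, and the ordering on $R$ provides additive cancellation in $R^{\geq 0}$, which is essentially the only feature of the target ring that Scanlon's manipulations require. Two properties of $K$ need to be established: perfectness, and the existence of a unique extension (necessarily cyclic Galois) of each finite degree, from which $G_K\cong\widehat{\mathbb{Z}}$ follows.

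First I would show that $K$ is perfect. In characteristic $p>0$ the Frobenius $F\colon x\longmapsto x^p$ is an injective definable ring endomorphism, giving a definable bijection $K\longrightarrow K^p$, so clause~(i) gives $\chi(K)=\chi(K^p)$. If $K$ were imperfect then $[K:K^p]\geq p$, so $K$ is a disjoint union of at least $p$ additive translates of $K^p$, each definably isomorphic to $K^p$ via translation. Additivity then yields
\begin{equation*}
\chi(K)\geq p\cdot \chi(K^p)=p\cdot \chi(K),
\end{equation*}
so $(p-1)\chi(K)\leq 0$. Since $\chi(K)\in R^{\geq 0}$ and $p\geq 2$, the order axioms force $\chi(K)=0$. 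By clause~(ii) then $\chi(K^n)=\chi(K)^n=0$ for every $n$, and for each definable $X\subseteq K^n$ the identity $\chi(X)+\chi(K^n\setminus X)=\chi(K^n)=0$ with non-negative summands gives $\chi(X)=0$, so $\chi\equiv 0$ on $\mathrm{Def}(K)$, contradicting nontriviality.

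Next I would show that $K$ has a unique extension of each finite degree $n$. Supposing at least two non-isomorphic degree-$n$ extensions $L_1,L_2$ exist, I would partition the definable set $I_n\subseteq K^n$ of (coefficient vectors of) monic irreducible polynomials of degree $n$ into the definable classes $I_n(L_j)$ of those with splitting field isomorphic to $L_j$. Each $L_j$ is interpretable in $K$ as $K[x]/(f_j)\cong K^n$, and its set of primitive elements is definable with $\chi$-value expressible in $\chi(K)$ by an inclusion--exclusion over proper intermediate subfields. The Fubini clause~(iv) applied to the minimal-polynomial map (whose fibres over $I_n(L_j)$ have constant size $n$, by separability, which follows from the perfectness established in the previous step) yields a formula for $\chi(I_n(L_j))$ in terms of $\chi(K)$. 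Summing these formulas and comparing with a direct computation of $\chi(I_n)$ as a definable subset of $K^n$ (inclusion--exclusion over reducibility patterns) produces an identity in $R$ which, combined with additive cancellation and monotonicity of $\chi$, can only be satisfied when there is a single isomorphism class of degree-$n$ extensions. Cyclicity of the resulting Galois group follows by the usual Kummer/Artin--Schreier argument together with perfectness of $K$.

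The main obstacle will be the technical bookkeeping in the last step: verifying that Scanlon's identities, originally formulated in the Grothendieck-ring setting, translate faithfully into the partially-ordered ring $R$, and that no balance of multiple isomorphism classes can satisfy them. This is the technical heart of \cite{Scanlon00}, and in our setting the analysis is slightly cleaner because the ordering on $R$ makes additive cancellation automatic in $R^{\geq 0}$, so equalities can be freely subtracted and inequalities chained without needing to pass to a quotient.
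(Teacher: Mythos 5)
The paper does not actually prove this statement: it is quoted verbatim as Theorem~1 of Scanlon's unpublished note \cite{Scanlon00}, and the paper's own proof work goes into the \emph{following} theorem (generalised measurable fields are quasifinite), which is handled by adapting the dimension/measure argument of \cite[Theorem~5.18]{MS08} rather than the Euler-characteristic counting you sketch. So there is no in-paper proof to compare against, and your proposal should be judged on its own merits as a reconstruction of Scanlon's argument.

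Your perfectness step is correct and complete: Frobenius is a definable bijection $K\to K^p$, so $\chi(K)=\chi(K^p)$; if $[K:K^p]\geq p$ you get $p$ disjoint definable translates of $K^p$ inside $K$, whence $\chi(K)\geq p\,\chi(K)$, forcing $\chi(K)=0$ and then $\chi\equiv 0$ by monotonicity and multiplicativity --- contradicting nontriviality. This is exactly the ``simple measure argument'' the paper alludes to for its own version.

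The uniqueness-of-extensions step contains a real error and several unaddressed gaps, and I do not think ``bookkeeping'' is the right word for what remains. Concretely: the fibres of the minimal-polynomial map from the primitive elements of $L_j$ onto $I_n(L_j)$ do \emph{not} have size $n$; separability gives $n$ roots in $\bar K$, but the number of roots lying in $L_j$ itself is $|\mathrm{Aut}(L_j/K)|$, which equals $n$ only if $L_j/K$ is already Galois --- which is part of what you are trying to prove. (Fubini still applies since this count is constant for fixed $L_j$, but the resulting identity now involves the unknown quantities $|\mathrm{Aut}(L_j/K)|$, and you need a genuine argument that these constraints force a single Galois $L_j$.) Moreover, nothing in the sketch establishes \emph{a priori} that the number of isomorphism classes of degree-$n$ extensions is finite, which you implicitly use when partitioning $I_n$ and summing. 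Finally, cyclicity of $G_K$ does not need Kummer/Artin--Schreier once uniqueness of each finite degree is known: the unique degree-$n$ extension in $\bar K$ is automatically Galois (it is preserved by every $K$-automorphism of $\bar K$), and a finite group with at most one subgroup of each order is cyclic, so $G_K$ is procyclic; what your sketch does not address is why $K$ has an extension of \emph{every} degree $n$, which is also needed for $G_K\cong\widehat{\mathbb Z}$ rather than a proper quotient. These omissions are precisely the content of Scanlon's note, not cosmetic translation issues.
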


Note that a generalised measurable field need not admit a nontrivial strong ordered Euler characteristic: In the definition above $R$ is a ring whereas the definition of a generalized measurable structure uses a semiring. 
However, we can adjust the proof of Theorem~5.18 of \cite{MS08}  to the generalised measurable context to obtain the following theorem. 

\begin{theorem}
Let $K$ be a  generalised measurable field. Then $K$ is quasifinite.
\end{theorem}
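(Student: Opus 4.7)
The plan is to adapt the proof of \cite[Theorem 5.18]{MS08} to our semiring-valued setting by establishing (a) $K$ is perfect, and (b) for every $n \geq 1$, $K$ has a unique extension of degree $n$ inside $K^{\mathrm{alg}}$.

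For~(a), if $\textup{char}(K) = p > 0$, the Frobenius $F(x) = x^p$ is an $\emptyset$-definable injection $K \to K$ with image $K^p$. Thus $h(K) = h(K^p)$ by \autoref{easy-facts}(iii). If $[K : K^p] = p^r > 1$, partitioning $K$ into $p^r$ additive cosets of $K^p$ and applying finite additivity gives $h(K) = p^r \cdot h(K^p)$, which together with $h(K^p) > 0$ contradicts \autoref{lem:multiplication.by.n}(iii). Hence $K^p = K$.

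For~(b), first treat the case that $n$ is coprime to $\textup{char}(K)$ and $K$ contains a primitive $n$-th root of unity. Working in $M^{\mathrm{eq}}$ via \autoref{meq}, consider the $\emptyset$-definable $n$-th power map $\phi_n\colon K^\times \to K^\times$. Its fibres all have size $n$, so by \autoref{def:T-measurable}(iv), $h(K^\times) = n \cdot h((K^\times)^n)$. The interpretable quotient map $\pi\colon K^\times \to K^\times/(K^\times)^n$ has fibres that are cosets of $(K^\times)^n$, each in $\emptyset$-definable bijection with $(K^\times)^n$ via translation. Fubini again gives $h(K^\times) = h(K^\times/(K^\times)^n) \cdot h((K^\times)^n)$. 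Since $h((K^\times)^n) > 0$, \autoref{cancel} yields $h(K^\times/(K^\times)^n) \approx n$. If the quotient were infinite, then by monotonicity (\autoref{easy-facts}(ii)) we would have $h(K^\times/(K^\times)^n) \geq N$ for every $N \in \mathbb{N}$, contradicting $\approx n$. Hence the quotient is finite of some cardinality $m \in \mathbb{N}$, and since distinct natural numbers lie in distinct $\approx$-classes (a direct consequence of \autoref{lem:multiplication.by.n}(iii)) while $m \approx n$, we conclude $m = n$. By Kummer theory, $K$ then has a unique cyclic extension of degree $n$.

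The remaining cases---prime characteristic (handled by substituting the Artin--Schreier map $\wp(x) = x^p - x$, of constant fibre size $p$, for $\phi_n$), fields not containing the required roots of unity (handled by passing to the extension $K(\zeta_n)$, which by induction on $n$ is unique and inherits generalised measurability), and composite degrees (assembled via the profinite structure of $G_K$)---are Galois-theoretic bootstrapping, exactly as in \cite{MS08} and \cite{Scanlon00}. The main obstacle in the translation is the absence of subtraction: where Scanlon's and MS08's proofs use ring manipulations of an Euler characteristic, here one must work only with definable-bijection invariance, Fubini, the cancellation lemmas \autoref{rough} and \autoref{cancel}, and the strict integer-multiple inequality in \autoref{lem:multiplication.by.n}(iii). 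These semiring tools suffice to extract the required exact integer equalities from the semiring-valued measure identities.
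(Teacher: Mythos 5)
Your proof is correct in substance and follows the same high-level outline as \cite[Theorem~5.18]{MS08} and Scanlon's Euler-characteristic argument, which is also what the paper's proof does, but the technical route differs. The paper first normalises via \autoref{monomialise} to a monomial semiring so that definable sets of equal dimension carry real-valued relative measures, and then imitates the $S_1$-rank argument of \cite{MS08} with dimension in $D$ playing the role of $S_1$-rank and additivity of relative measure replacing additivity of the real-valued MS-measure. You instead leave the semiring abstract and extract the required integer equalities directly from the axioms: Fubini (\autoref{def:T-measurable}(iv)), cancellation (\autoref{cancel}), and the strict-multiple inequality (\autoref{lem:multiplication.by.n}(iii)), together with the observation that distinct naturals lie in distinct $\approx$-classes. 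This yields a cleaner and more self-contained computation of $[K^\times:(K^\times)^n]=n$ than a literal translation of the rank argument, at the cost of implicitly leaning on monomialisation inside the invocation of \autoref{meq}.

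Two small gaps to close. First, the perfectness step should also cover $[K:K^p]$ infinite: take any two additive cosets of $K^p$, giving $h(K)\geq 2h(K^p)=2h(K)$ and the same contradiction via \autoref{lem:multiplication.by.n}(iii), so the case distinction on $p^r$ is unnecessary. Second, the assertion that $K(\zeta_n)$ inherits generalised measurability should be traced to \autoref{meq} or to an interpretability transfer in the spirit of \autoref{thminterpmac}, since $K(\zeta_n)$ is an interpretable field rather than a reduct. The concluding ``Galois-theoretic bootstrapping'' is as compressed as the paper's own sketch, so I raise no objection there.
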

\begin{proof}
For convenience, we may assume by \autoref{monomialise} that the measuring function for $K$ takes values in a monomial semiring (see \autoref{mon}).  We note that $K$ is perfect by a simple measure argument. For the rest, the proof of Theorem~5.18 of \cite{MS08} easily adapts: Dimension in $D$ serves as a proxy for $S_1$ rank, and for disjoint sets of the same dimension, (relative) measure is additive, as is apparent from the definition of a monomial semiring. We omit further details.
\end{proof}


\section{Further Observations}

The examples of \mec{}s in Section 4 have ultraproducts which are smoothly approximable, or are modules, or are graphs of bounded degree (or at least have Gaifman graph of bounded degree), or are uncountably categorical pseudofinite. All such structures are simple and one-based in the sense of  \cite[Definition 5.5.14]{kim}. This suggests the following question.

\begin{problem} \label{simplemec} Find an example of a \mec{} with an ultraproduct which does not have simple theory. Must it have NSOP${}_1$ theory? If the ultraproduct has simple theory, must it be one-based?
\end{problem}

Related to the distinction between \mac{}s and \mec{}s, we ask:

\begin{question} Are there model-theoretic consequences of adjusting the error term $o(h_\pi(M))$ in the definition of an $R$-\mac{} for certain $R$ (e.g.\ for polynomial $R$-\mac{}s)?
\end{question}


We find the conjecture below from Section~\ref{multiexactsection} particularly appealing.

\vspace{.5\baselineskip}

\begin{conjecture}~ \label{mainconj}
\begin{enumerate}[(i)]
\item Let $M$ be a homogeneous structure over a finite relational language $L$. Then there is an \mec{} with ultraproduct elementarily equivalent to $M$ if and only if $M$ is stable.

\item Let $\mathcal{C}$ be an \mec{} and let $M$ be an unstable homogeneous structure over a finite relational language. Then $M$ is not elementarily equivalent to any structure interpretable in an ultraproduct of $\mathcal{C}$.
\end{enumerate}
\end{conjecture}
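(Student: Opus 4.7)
The right-to-left direction of (i) is already established in the text: in the proof of \autoref{homog1} one observes that a stable structure homogeneous over a finite relational language has definable types and quantifier elimination, so only countably many types over any countable model occur, making it $\omega$-stable; then \cite[Corollary 7.4]{chl} yields smooth approximation and Wolf's result (\autoref{Liesoft}, \autoref{wolf-thesis}) produces the required \mec{}. So the work is in the left-to-right direction of (i) and in (ii). I will first reduce (ii) to a statement about ultraproducts of (weak) \mec{}s. Suppose $\mathcal{C}$ is an \mec{} with non-principal ultraproduct $N=\prod_\mathcal{U} M_i$ in which the unstable homogeneous structure $M$ is interpretable. By Łoś the interpretation data unpack uniformly to give a class $\mathcal{C}':=\{M_i':M_i\in \mathcal{C}\}$ parameter-interpretable in $\mathcal{C}$ with $\prod_\mathcal{U} M_i'\equiv M$. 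By \autoref{thminterpmac}(i), $\mathcal{C}'$ is a weak \mec{}. Thus (ii) reduces to: no unstable homogeneous $L$-structure (with $L$ finite relational) can be elementarily equivalent to an ultraproduct of a weak \mec{}, and in particular (i) left-to-right is subsumed.

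For this reduced statement the plan is to exploit the interaction between the \mec{} definability clause and quantifier elimination in $M$. Fix an unstable homogeneous $M\equiv \prod_\mathcal{U} M_i'$. By $M$-instability we have a formula $\phi(\bar{x},\bar{y})$ with the order property in $M$; by QE in $M$ we may take $\phi$ quantifier-free. After thinning $\mathcal{C}'$ (passing to $\mathcal{U}$-large subfamilies) we may assume every sentence in $\Th(M)$ holds in cofinitely many $M_i'$. Using the \mec{} definability clause applied to each quantifier-free formula $\psi(x,\bar{y})$ with $|\bar{y}|\leq k$ and passing to a subfamily, we obtain for every fixed $k$ that cofinitely many $M_i'$ are \emph{$k$-tuple regular} in the sense of \cite{heinrich}: the number of solutions of a quantifier-free 1-type over a tuple of length $\leq k$ depends only on the quantifier-free type of the parameter tuple. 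The goal is to derive from very high finite $k$-regularity, combined with the presence of a long QF-definable order-like configuration, a combinatorial contradiction.

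The key step, and the main obstacle, is a general classification/structural statement along the following lines:

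\begin{quote}
\emph{For a finite relational language $L$, an $L$-structure which is $k$-tuple regular for sufficiently large $k$ and sufficiently many vertices is, up to a definitional expansion bounded in $L$, a finite ``envelope'' of a smoothly approximable structure in the sense of Cherlin--Hrushovski \cite{Cherlin-Hrushovski03}.}
\end{quote}

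Granting this, the argument finishes as follows: by \cite[Corollary 7.4]{chl} and the Cherlin--Hrushovski theory, the limit of any family of such envelopes is $\omega$-stable, hence so is $M$, contradicting instability. The special cases already treated in \autoref{homog2} can be read as instances: for graphs the Cameron classification of $5$-tuple regular graphs does the job (forcing either pentagon, $L(K_{3,3})$, or disjoint unions of cliques, none of which can limit to $H$ or $H_n$); for tournaments a parity argument on regular tournaments suffices; for bipartite graphs \cite[Lemma 4.8]{heinrich} plays the analogous role; the $Q_n$ case reduces to tournaments by considering $D^{||}(x)$.

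The hard part will be the envelope-classification step. A plausible attack is to adapt the coordinatisation machinery of \cite[Ch.~4--5]{Cherlin-Hrushovski03}: show that for every fixed $L$ there is $k=k(L)$ such that $k$-tuple regular sufficiently large finite $L$-structures admit a canonical Lie coordinatisation (after a bounded expansion $L'\supseteq L$ not affecting the automorphism group, analogously to \autoref{wolf-thesis}), using the \mec{} definability to play the role of a finite-orbit hypothesis. A subtler obstacle is that the definability clause gives only \emph{parameter-free} regularity conditions, while Cherlin--Hrushovski's theory is most directly applicable under true homogeneity; bridging this gap, perhaps by a local-to-global argument lifting $k$-regularity to full envelope structure, is where I would expect the most delicate work. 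With such a classification in hand, (i) left-to-right and (ii) follow uniformly, completing \autoref{mainconj}.
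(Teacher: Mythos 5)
This is labelled \textbf{Conjecture}~\ref{mainconj} in the paper; the authors do not prove it, and in fact explicitly state after \autoref{mec-conj} that ``A general proof might require revisiting Lachlan's work on finite homogeneous structures,'' posing the required classification as an open \emph{Problem} in Section~7. So there is no paper proof to compare against, and your writeup should be read as a proof \emph{strategy}, which you yourself acknowledge (``The hard part will be\ldots'', ``where I would expect the most delicate work''). Your recollection of the right-to-left direction of (i) (via \autoref{homog1}, \autoref{chl}, \autoref{Liesoft}, \autoref{wolf-thesis}) is accurate, and the quoted ``envelope classification'' target is essentially equivalent to the paper's open Problem about eventually regular classes and Lachlan's shrinking--stretching theory, so your outline is consistent with what the authors themselves believe is needed.

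There is, however, a genuine internal gap in your reduction. You invoke \autoref{thminterpmac}(i) to pass from the \mec{} $\mathcal{C}$ to the interpreted class $\mathcal{C}'$, but that theorem only yields a \emph{weak} \mec{}: the $\emptyset$-definability of the partition $\Pi$ is not preserved under mere interpretation. Yet two sentences later you ``use the \mec{} definability clause'' on $\mathcal{C}'$ to extract $k$-tuple regularity. Weak \mec{}s do not have a definability clause, and the Remark after \autoref{homog2} stresses that the regularity arguments there ``make essential use of the definability clause.'' So the step from (ii) to your reduced statement, as written, does not go through; the regularity you need is not available for $\mathcal{C}'$. (Compare how the paper handles $Q_n$ in \autoref{homog2}(iv): the passage to $D^{||}(x)$ is a \emph{definable subset}, not a quotient interpretation, precisely so that the definability clause survives; and \autoref{thminterpmac}(ii) recovers the full \mec{} property only under \emph{weak bi-interpretability}, which you have not arranged.) Fixing this would require either working with weak \mec{}s throughout (and then you lose the regularity tool entirely) or proving a sharper interpretation-preservation lemma, neither of which you supply. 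Combined with the unproved classification step, the proposal does not establish the conjecture.
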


The above conjecture leads to natural questions, for a given finite relational language $L$, concerning finite $L$-structures without any symmetry assumption but with arbitrarily high levels of combinatorial regularity; for example with the 5-regularity and 3-tuple regularity  mentioned  respectively in  parts~(ii) and~(v) of the proof of \autoref{homog2}. Such issues are considered also in \cite{heinrich}, with connections mentioned to the Weisfeiler--Leman algorithm for graph isomorphism. It would be interesting to explore such notions further, for arbitrary finite relational languages, in the spirit of Lachlan's shrinking and stretching theory for finite homogeneous structures, cf.\ \cite{lachlan2}. (The structure theory in the latter depends on the existence of a bound -- dependent only on $L$ -- on a certain rank for finite homogeneous $L$-structures; the existence of this bound was verified in \cite{cherlin-lachlan}, using substantial finite group theory, though a somewhat shorter proof is given in the binary case in \cite{lachlan-shelah}.) More precisely:

\begin{problem}
Let $L$ be a finite relational language. We say that the class $\mathcal{C}$ of finite $L$-structures is {\em eventually regular} if the following hold:
\begin{enumerate}[(i)]
\item For each $L$-formula $\phi(\bar{y})$ there is a quantifier-free $L$-formula $\psi(\bar{y})$ such that all but finitely many $M \in \mathcal{C}$ satisfy $M\models \forall \bar{y}(\phi(\bar{y})\leftrightarrow \psi(\bar{y}))$.
\item There is a function $f_L\colon \mathbb{N}\longrightarrow \mathbb{N}$ such that for any $n\in \mathbb{N}$, any quantifier-free formula $\phi(x,\bar{y})$ with $|\bar{y}|=n$, and any $\bar{a}\in M^n$ where $M\in \mathcal{C}$ has size at least $f_L(n)$, the size
of $\phi(M,\bar{a})$ depends only on the quantifier-free type of $\bar{a}$.
\end{enumerate}
Show that Lachlan's shrinking and stretching theory from \cite{lachlan2} applies to eventually regular classes. In particular, show that all but finitely many members of such $\mathcal{C}$ are homogeneous.
\end{problem}

Cameron's result at the end of \cite{cameron}, cited above in the proof of Theorem~\ref{homog2}(ii), yields that this holds for finite graphs even without assumption (i). Likewise, by \cite{heinrich}, it holds (without assumption (i) above) for finite graphs expanded by unary predicates. Ainslie \cite{ainslie} has extensive partial results in this direction for finite structures in  a language with three symmetric irreflexive binary relations such that every pair of distinct vertices satisfies exactly one relation. Ainslie also proves Conjecture~\ref{mainconj}(i) for several other homogeneous structures, namely the universal metrically homogeneous graph of any fixed finite diameter, the universal homogeneous two-graph, and the `semifree' binary structures listed by Cherlin  in the appendix of 
\cite{cherlin_98}.

\begin{question}\label{randomstr}~
\begin{enumerate}[(i)]
\item Is there a weak \mec{} with an ultraproduct elementarily equivalent to the random graph?

\item Is there a \mec{} with an ultraproduct elementarily equivalent to the random (i.e.\ universal homogeneous) digraph? Or to the random 3-uniform hypergraph?
\end{enumerate}
\end{question}

\begin{question}
 Clarify the model-theoretic implications of ring-measurability (as a strengthening of generalised measurability). For example, does it imply simplicity of the theory? Also, is there a ring-measurable structure that is not pseudofinite? Is there an infinite ring-measurable field? Is every ring-measurable group soluble-by-finite?
\end{question}

\begin{problem}
Clarify the connections between \mac{}s and \mec{}s and the Hrushovski--Wagner notion of pseudofinite dimension (\cite{hrushovski-wagner, hrush-pseud}). In particular, clarify the connections to the key concepts of \cite{GMS15}. For example, what natural conditions on a \mac{} ensure that any ultraproduct (in the appropriate extended language) satisfies conditions such as  (SA), (DC), and (FMV) from \cite{GMS15}. There are some results of this kind in \cite{vanabel}, building on \autoref{abel}. 
\end{problem}

In this direction, we note the following result. For the definitions in (i), see \cite{GMS15}.
\begin{proposition}
Let $\mathcal{C}$ be a polynomial m.a.c..
\begin{enumerate}
\item[(i)] Any infinite ultraproduct of members of $\mathcal{C}$ satisfies (SA), (DC${}_L$), (MD${}_L$).
\item[(ii)] Tao's Algebraic Regularity Lemma (as expressed in \cite[Theorem 6.4]{GMS15}) holds for graphs uniformly definable in members of $\mathcal{C}$.
 \end{enumerate}
\end{proposition}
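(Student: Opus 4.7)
The plan is to reduce both parts to the structural results on ultraproducts of polynomial m.a.c.s developed in \autoref{macsmeas}, in particular \autoref{macgm} and \autoref{ultrapoly}. Let $M=\prod_{i\in I}M_{i}/\mathcal{U}$ be an infinite ultraproduct of members of $\mathcal{C}$, and let $S$ be the measuring semiring produced by \autoref{macgm}. By \autoref{ultrapoly}, each non-zero element of $S$ has a canonical representative as a monomial $rZ_{1}^{d_{1}}\cdots Z_{k}^{d_{k}}$, where the $Z_{j}$ correspond to the fixed balanced formulas $\delta_{j}(\bar{x}_{j},\bar{y}_{j})$ of \autoref{polynomialmac}; the order $<$ and operations $\boxplus,\boxdot$ on the monomials are the ones computed there. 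Moreover, by \autoref{polymacwellorder} the dimension quotient $D=S/{\sim}$ is well-ordered.

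For (i), I would check each of the three conditions of \cite{GMS15} directly from the measurability data. The pseudofinite dimension $\delta$ on $\mathrm{Def}(M)$ (taking values in $D$) is just $d\circ h$ in our notation. Strong Additivity (SA), namely $\delta(X\cup Y)=\max\{\delta(X),\delta(Y)\}$ for definable $X,Y$, follows at once from finite additivity of $h$ (\autoref{def:T-measurable}(ii)) and the max-plus addition in the monomial semiring (\autoref{ultrapoly}(ii)): if $\delta(X)>\delta(Y)$ then $h(X)\boxplus h(Y)=h(X)$; if $\delta(X)=\delta(Y)$ then $h(X\cup Y)=h(X)+h(Y)\sim h(X)$. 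The Definability of Counting condition (DC${}_{L}$) -- that for each formula $\phi(\bar{x};\bar{y})$ the map $\bar{b}\longmapsto\delta(\phi(M^{|\bar{x}|};\bar{b}))$, and indeed the full ratio $\rho_{h(X)}$ recording the leading coefficient, takes finitely many values each on a $\emptyset$-definable set -- is precisely clause (iii) of \autoref{def:T-measurable}, which in turn is inherited from the $\emptyset$-definable partition of $(\mathcal{C},\bar{y})$ in the definition of a m.a.c.\ via the construction in the proof of \autoref{macgm}. Finally, the Multiplicativity of Dimension condition (MD${}_{L}$), the Fubini-type statement that if $p\colon X\longrightarrow Y$ is a definable function with all fibres of common dimension and common leading coefficient $f\in S$ then $h(X)=f\cdot h(Y)$, is clause (iv) of \autoref{def:T-measurable} applied inside the monomial semiring.

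For (ii), given (i) there is essentially nothing left to do: \cite[Theorem 6.4]{GMS15} is phrased precisely as the statement that Tao's Algebraic Regularity Lemma holds for graphs uniformly definable in a class of finite structures whose ultraproducts satisfy (SA), (DC${}_{L}$) and (MD${}_{L}$), so one simply applies the theorem to $\mathcal{C}$ using (i). (Note that \autoref{polymacsimple}, which gives that the ultraproduct is supersimple, is morally the same input: well-orderedness of $D$ is what powers both the rank bound and the regularity conclusion.)

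The main obstacle will not be conceptual but notational: the conditions (SA), (DC${}_{L}$), (MD${}_{L}$) are formulated in \cite{GMS15} in terms of the Hrushovski--Wagner pseudofinite dimension and a fixed coarsening parameter $L$, whereas our framework records dimension and leading coefficient separately in the monomial semiring $S$ of \autoref{ultrapoly}. The bookkeeping to match up, for each formula $\phi(\bar{x};\bar{y})$, the finite partition and measuring monomials with the required finitely-valued, definable refinement in the sense of (DC${}_{L}$) -- especially being careful with the normalising coarsening $L$ so that the coefficients land in the correct value group/semiring of \cite{GMS15} -- is the only part that needs genuine care; once this dictionary is in place, the three verifications above are immediate from \autoref{def:T-measurable} and \autoref{ultrapoly}.
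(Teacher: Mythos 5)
Your identification of (DC${}_L$) with clause (iii) of \autoref{def:T-measurable} (the definability/finitely-many-values clause) and of (MD${}_L$) with clause (iv) (Fubini) matches the paper's one-line justification ``follow from definability of generalised measure'', and your treatment of (ii) --- just cite \cite[Theorem 6.4]{GMS15} --- is exactly what the paper does. The gap is in (SA).

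You gloss (SA) as ``strong additivity'', namely $\delta(X\cup Y)=\max\{\delta(X),\delta(Y)\}$, and derive it from max-plus addition in the monomial semiring. But that identity is essentially automatic for any pseudofinite dimension obtained as a coarsening of $\log$ of cardinality, so it is not what the condition (SA) in \cite{GMS15} asserts; there (SA) is an \emph{attainability} condition: for a formula $\phi(\bar x,\bar y)$ and a partial type $\pi(\bar y)$ over a small set, the infimum of $\{\delta(\phi(\bar x,\bar a)) : \bar a\models\pi\}$ is attained. That is the condition which actually carries content, and it is exactly why the paper's proof cites \autoref{polymacwellorder}: well-ordering of $D=S/{\sim}$ guarantees that every nonempty set of dimensions has a least element, hence infima of $\delta$-values over a type are attained. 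You invoke \autoref{polymacwellorder} only as an afterthought (``morally the same input''), but it is the key to (SA), and your displayed argument for (SA) proves a different (and much weaker) statement. To repair the proof, replace the max-plus computation by: observe that any formula and partial type determine a nonempty subset of the well-ordered quotient $D$, which therefore has a minimum, giving (SA) directly from \autoref{polymacwellorder}.

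The ``dictionary'' remark at the end is also hand-waving a real step: one must check that the semiring dimension $d\circ h$ constructed in \autoref{macgm} coincides (after appropriate normalisation by the coarsening parameter $L$) with the Hrushovski--Wagner pseudofinite dimension to which (SA), (DC${}_L$), (MD${}_L$) refer in \cite{GMS15}. That is not onerous but it is precisely the content of the verification, and acknowledging it as ``bookkeeping'' without doing it leaves (i) incomplete even after the (SA) fix.
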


\begin{proof} (i) (SA) holds essentially by Lemma~\ref{polymacwellorder}, and (DC${}_L$) and  (MD${}_L$) follow from definability of generalised measure.

(ii) This follows from \cite[Theorem 6.4]{GMS15}.
\end{proof}

By Proposition 6.5 of \cite{Elwes07} (see \cite{kestner-pillay}) every stable measurable structure is one-based. This suggests the following question. 

\begin{question}
Is every stable generalised measurable structure one-based? 
\end{question}

Note that every generalised measurable strongly minimal set is measurable by Proposition~\ref{smin-MS}, and so is one-based.

We say that a homogeneous structure is {\em free homogeneous} if its age is a free amalgamation class. The first author has shown in \cite{A16} that any free homogeneous structure is generalised measurable; in particular the universal homogeneous triangle-free graph is generalised measurable even though it does not have supersimple theory so is not MS-measurable. As another example, let $M$ be the universal homogeneous tetrahedron-free 3-hypergraph (so determined by the minimal forbidden configuration of 4-set all of whose 3-subsets are edges). Then $M$ is supersimple of SU-rank 1 and even one-based, and not MS-measurable (see \cite[Theorem 7.3.9]{marimon}), but is generalised measurable as it is free-homogeneous. The corresponding measuring semiring has infinite descending chains of dimensions. In light of Theorem~\ref{supersimple}, we ask

\begin{question} Is the universal homogeneous tetrahedron-free 3-hypergraph generalised measurable with a measuring semiring having well-ordered dimensions?
\end{question}

Section 6.3 suggests the following question. 

\begin{question} Must  generalised measurable fields be PAC?
\end{question}

Finally, we remark that Evans \cite{evans} and Marimon \cite{marimon2} have recently shed further light on the content of MS-measurability, finding connections to $n$-amalgamation, and developing new methods for showing structures are {\em not} MS-measurable. It would be interesting to explore whether these methods apply to generalised measurability.




\section*{Acknowledgements}

The authors would like to thank Charlotte Kestner for a useful conversation regarding the proof of  \autoref{modules}, and Dar\'io Garc\'ia for several helpful conversations. They would also like to thank 
Bethany Marsh and William Crawley-Boevey for very helpful discussions on the representation theory of quivers of finite representation type. 

\def\bibfont{\footnotesize}
\bibliographystyle{plain}

\end{document}
We first introduce some standard notation. The {\em Gaifman graph} $G(M)$ of $M$ is the simple graph with vertex set $M$, with two vertices adjacent if and only if there is a tuple containing both of them and satisfying a relation (so the degree of a vertex of $M$ is its degree in $G(M)$). If $a,b\in M$, then the {\em distance} $d(a,b)$ is the length of a shortest path in $M$ with endpoints $a,b$. For each $a\in M$ and $e\in {\mathbb N}$, define the {\em sphere of radius $e$ around $a$} to be 
\[S_e(a):=\{x\in M: d(a,x)\leq e\}.\]
There is a finite set $\psi^d_{e,1(x)},\ldots,\psi^d_{e,n_e}(x)$ of quantifier-free $L$-formulas such that if $M$ is an $L$-structure of degree at most $d$, and $a\in M$, and $e\in {\mathbb N}$, then for some $i$ the sentence $\psi^d_{e,i}(a)$ describes the atomic diagram of the $L(a)$-structure $(S_e(a),a)$.
 
We recall {\em Gaifman's Locality Theorem}. If $M$ is an $L$-structure, and $\bar{a}=(a_1,\ldots,a_n)\in M^n$, and $k\in {\mathbb N}$, let $S_k(\bar{a}):=S_k(a_1)\cup\ldots\cup S_k(a_n)$. For each $L$-formula $\phi(\bar{x})$ and $k\in {\mathbb N}$, there is a formula $\phi^{S_k}(\bar{x})$, called a {\em local formula},  such that for each $L$-structure $M$ and $\bar{a}\in M^n$,
\[M\models \phi^{S_k}(\bar{a}) \mbox{~if and only if~} S_k(\bar{a}) \models \phi(\bar{a}).\]
 
The formula $\phi^{S_k}$ is obtained from $\phi$ by relativising all quantifiers to $S_k(\bar{x})$. A {\em basic local sentence} has the form
\[\exists x_1\ldots \exists x_m\bigwedge_{1\leq i<j\leq m}d(x_i,x_j)>2r \wedge \phi^{S_r}(x_i).\]
Gaifman's Locality Theorem asserts that every first order $L$-sentence is logically equivalent  to a boolean combination of basic local sentences, and that every formula $\phi(\bar{x})$ is logically equivalent to a boolean combination of local formulas and basic local sentences.
 
\medskip
 
{\em Proof of Theorem.} Let $\phi(x,\bar{y})$ be an $L$-formula, with $\bar{y}=(y_1,\ldots,y_n)$. Then by Gaifman's Theorem, we may suppose that $\phi$ is a boolean combination of formulas and basic local sentences. We may suppose no sentences are involved, since they are true or false in any specific structure $M$, and may be viewed as formulas in the free variables $\bar{y}$.   In particular, $\phi(x,\bar{y})$ is equivalent in $M$ to a finite disjunction of pairwise inconsistent formulas
$\rho_i(x,\bar{y})$ (for $1\leq i\leq k$), where each $\rho_i$ is a conjunction of basic local formulas. Thus, $|\phi(M,\bar{a})|=\Sigma_{i=1}^k |\rho_i(M,\bar{a})|$, so it suffices to assume that $\phi(x,\bar{y})$ is itself a conjunction $\psi_1^{(t_1)}(x,\bar{y}) \wedge \ldots \wedge \psi_l^{(t_l)}(x,\bar{y})$, where each $\psi_i^{(t_i)}(x,\bar{y})$ is a basic local formula whose quantifiers are relativised to $S_{t_i}(x,\bar{y})$. 

Let $t:={\rm Max}\{2t_1+1,\ldots,2t_l+1\}$. Also let $\mu(x,\bar{y})$ be the formula $x\in S_t(\bar{y})$, let $\phi_1(x,\bar{y})$ be $\phi(x,\bar{y})\wedge \mu(x,\bar{y})$ and $\phi_2(x,\bar{y})$ be $\phi(x,\bar{y})\wedge \neg \mu(x,\bar{y})$. Then for any $\bar{a}$,
$|\phi(M,\bar{a})|=|\phi_1(M,\bar{a})|+\phi_2(M,\bar{a})|$. Also, $|\phi_1(M,\bar{a})|\leq |S_t(\bar{a})|\leq Q:=n\Sigma_{i=0}^t d^i$. Furthermore, we may suppose that $\phi_2(x,\bar{y})$ has the form $\sigma(x)\wedge \tau(\bar{y}) \wedge \neg \mu(x,\bar{y})$. Thus,  if $M \models\tau(\bar{a})$ then $|\sigma(M)|\geq |\phi_2(M,\bar{a})|\geq |\sigma(M)|-Q$, so
$|\sigma(M)+Q\geq \phi(M,\bar{a})|\geq |\sigma(M)|-Q$, and if $M\models \neg \tau(\bar{a})$ then $|\phi(M,\bar{a})|\leq Q$. 

Define $g_j\colon \mathcal{C} \longrightarrow {\mathbb R}^{\geq 0}$ for $0\leq j\leq 3Q$ by putting $g_j(M):=j$ for $j\leq Q$, and
$g_j(M) =|\sigma(M)| +(j-2Q-1)$  for $Q+1\leq j\leq 3Q+1$. Then for any $M\in \mathcal{C}$ and $\bar{a}\in M^{|\bar{y}|}$, we have $\phi(M,\bar{a})|=g_j(M)$ for some $j$ with $0\leq j\leq 3Q+1$.
Furthermore, it is easily checked that the relevant value of $j$ depends just on a  formula in $\bar{y}$ describing $S_{3t+1}(\bar{y})$, since this gives a complete description of possible balls $S_t(x)$ of radius $t$ which are related to elements of $S_t(\bar{y})$ (`related' here means that some tuple lying in $S_t(x) \cup S_t(\bar{y})$ meets both $S_t(x)$ and $S_t(\bar{y})$). 

***omit
Furthermore, by Gaifman's Theorem, there are finitely many possibilities for the sequence $(\rho_1,\ldots,\rho_k)$ as $M$ ranges through $L$-structures, since there are finitely many ways of assigning truth values to the relevant basic local sentences. Since the $\rho_i$ are pairwise inconsistent, for any $M\in \mathcal{C}_d$ and $\bar{b}\in M^n$, we have
\[|\phi(M,\bar{b})|= \Sigma_{i=1}^k |\rho_i(M,\bar{b})|.\]
Furthermore, the relevant sequence $(\rho_1,\ldots,\rho_k)$ is determined by a sentence, and hence by a formula in $\bar{b}$.

We may suppose that the conjunction $\bigwedge_{i=1}^k\rho_i(x,\bar{y})$ is equivalent to a conjunction of the form
 $\psi^d_{e,j}(x) \wedge \bigwedge_{l=1}^n \psi^d_{e,j_l}(y_l)$, where $j\leq N:=n_e$. For each $j=1,\ldots N$, define the function $f_j\colon \mathcal{C}\longrightarrow {\mathbb R}^{\geq 0}$ by $f_j(M):=|\{a\in M: M\models \psi^d_{e,j}(a)\}|$. Then for any $M\in \mathcal{C}$ and $\bar{b}\in M^n$, there is a subset $I$ of $\{1,\ldots,N\}$, depending definably on $\bar{b}$, such that
\[|\bigwedge_{i=1}^k\rho_i(M,\bar{b})|= \Sigma_{j\in I} f_j(M).\] The result follows.
*** 
------------------

\newtheorem{theorem}{Theorem}
\newaliascnt{proposition}{theorem}
\newaliascnt{lemma}{theorem}
\newaliascnt{corollary}{theorem}
\newaliascnt{fact}{theorem}
\newaliascnt{observation}{theorem}
\newaliascnt{conjecture}{theorem}
\newaliascnt{definition}{theorem}
\newaliascnt{example}{theorem}
\newaliascnt{question}{theorem}
\newaliascnt{remark}{theorem}
\newaliascnt{property}{theorem}
\newaliascnt{construction}{theorem}
\newaliascnt{setting}{theorem}
\theoremstyle{plain}
\newtheorem{proposition}[proposition]{Proposition}
\newtheorem{lemma}[lemma]{Lemma}
\newtheorem{corollary}[corollary]{Corollary}
\newtheorem{claim}{Claim}[theorem]
\newtheorem{fact}[fact]{Fact}
\newtheorem{observation}[observation]{Observation}
\newtheorem{conjecture}[conjecture]{Conjecture}
\theoremstyle{definition}
\newtheorem{definition}[definition]{Definition}
\newtheorem{example}[example]{Example}
\newtheorem{question}[question]{Question}
\theoremstyle{remark}
\newtheorem{remark}[remark]{Remark}
\newtheorem{property}[property]{Property}
\newtheorem{construction}[construction]{Construction}
\newtheorem{setting}[setting]{Setting}
----------------------------------------------------

\begin{theorem}
Let $X\in\mathrm{Def}(M)$. Then $D(X)\leq d(X)$.
\end{theorem}
{\color{red}Refer to the similar inclusion-exclusion argument in G-M-S.}
\begin{proof}
We aim to show that for all definable sets $X$ and for all $\alpha\in\mathbf{Ord}$, $D(X)\geq\alpha$ implies that $d(X)\geq\alpha$.

We prove this by transfinite induction. Suppose that $D(X)\geq\alpha$. If $\alpha=0$ then the result is trivial since $d$ always takes non-negative values. Suppose instead that $\alpha=\beta^{+}$. There exists a formula $\psi(\bar{x};\bar{y})$ and $(\bar{c}_{i})_{i\in\omega}$ such that
\begin{enumerate}
\item for all $i\in\omega$, $\psi(M;\bar{c}_{i})\subseteq X$;
\item for all $i\in\omega$, $D(\psi(M;\bar{c}_{i}))\geq\beta$; and
\item there exists $k\in\omega$ such that $\{\psi(\bar{x};\bar{c}_{i})\;|\;i\in\omega\}$ is $k$-inconsistent.
\end{enumerate}
We aim to show that $d(X)\geq\alpha$, equivalently that $d(X)>\beta$. Write $X_{i}:=\psi(M;\bar{c}_{i})$. Let $h':=\mathrm{min}\{h(X_{i})\;|\;i\in\omega\}$. Let $N\in\mathbb{N}$ and choose $m\geq Nk^{3}$. We restrict our attention to $X_{i}$ where $i\leq m$: let $\mathcal{X}:=(X_{i})_{i\leq m}$.

We aim to approximate the measure of $\bigcup_{X_{i}\in\mathcal{X}}X_{i}$. We cannot use finite additivity directly since there is some non-trivial intersection between different sets $X_{i}$. For $j\leq k$, let $X^{j}$ denote those elements of $\bigcup\mathcal{X}$ which are members of precisely $j$-many $X_{i}$. Also let $X^{j}_{i}:=X_{i}\cap X^{j}$ so that we have a partition $X_{i}=\bigsqcup_{j\leq k}X_{i}^{j}$. Let $I^{j}:=\{i\leq m\;|\;h(X^{j}_{i})\geq h'/k\}$.
\begin{claim}
$\bigcup\{I^{j}\;|\;j\leq k\}=\{1,...,m\}$.
\end{claim}
\begin{proof}[Proof of claim]
Let $i\leq m$. Since $h(X_{i})\geq h'$ and the above partition has $k$-many elements, there must be at least one ($X_{i}^{j}$ say) with measure $\geq h'/k$. Then $i\in I^{j}$.
\end{proof}
Therefore there exists $j\leq k$ such that $|I^{j}|\geq m/k$. Elements in $X_{i}^{j}$ are counted at most $j$-times, so:
\[
\begin{array}{lll}j\cdot h(\bigcup_{i\in I^{j}}X^{j}_{i})&\geq&h(\bigsqcup_{i\in I^{j}}X^{j}_{i})\\&=&\sum_{i\in I^{j}}h(X_{i}^{j}).
\end{array}
\]
Therefore $h(X)\geq\frac{1}{j}\frac{m}{k}\frac{h'}{k}\geq Nh'$.

Since $N\in\mathbb{N}$ was arbitrary, $d(X)>[h']\geq\beta$, as required.

Suppose finally that $\alpha$ is a limit ordinal. Then $D(X)>\beta$, for each $\beta<\alpha$. But by inductive hypothesis, $d(X)>\beta$. Consequently $d(X)\geq\alpha$, as required.
\end{proof}
\begin{corollary}
$M$ is supersimple.
\end{corollary}
\begin{proof}
$D$-rank is ordinal valued.
\end{proof}
-------------------------------------------------------------------------

\subsubsection{The Divisible hull}

Next we construct the divisible hull, i.e.\ the `ordered semiring tensor product with $\mathbb{Q}_{\geq0}$'.

\begin{definition}\label{divhull}
Let $T^{\mathrm{div}}$ be the tensor product of $\mathbb{Q}_{\geq0}$ and $T$, both considered with the (multiplicative) monoid action of $\mathbb{N}$. More precisely, $T^{\mathrm{div}}$ is the quotient of $\mathbb{Q}_{\geq0}\times T$ by the equivalence relation $=^{\mathrm{div}}$ defined by
\[\bigg(\frac{a}{b},r\bigg)=^{\mathrm{div}}\bigg(\frac{c}{d},s\bigg)\text{ iff }dar=bcs.\]
By an abuse of notation, we let $\left(\frac{a}{b},r\right)$ denote its $=^{\mathrm{div}}$-equivalence class. We define addition by
\[\bigg(\frac{a}{b},r\bigg)+^{\mathrm{div}}\bigg(\frac{c}{d},s\bigg):=\bigg(\frac{1}{bd},dar+bcs\bigg),\]
multiplication by
\[\bigg(\frac{a}{b},r\bigg)\cdot^{\mathrm{div}}\bigg(\frac{c}{d},s\bigg):=\bigg(\frac{ac}{bd},r\cdot s\bigg),\]
and the ordering by
\[\bigg(\frac{a}{b},r\bigg)\leq^{\mathrm{div}}\bigg(\frac{c}{d},s\bigg)\text{ iff }dar\leq bcs.\]
Finally let $0^{\mathrm{div}}:=(0,0)$ and $1^{\mathrm{div}}:=(1,1)$. By another abuse of notation, we write $T^{\mathrm{div}}:=(T^{\mathrm{div}},+^{\mathrm{div}},\cdot^{\mathrm{div}},0^{\mathrm{div}},1^{\mathrm{div}},\leq^{\mathrm{div}})$.
\end{definition}

\begin{lemma}
Let $T$ be a measuring semiring. Then the operations on $T^{\mathrm{div}}$ given above in \autoref{divhull} are well-defined, and $T^{\mathrm{div}}$ is a measuring semiring.
\end{lemma}

\begin{proof}
{\color{blue} That the operations are well-defined can be left as a tedious exercise...I checked almost all of them. Also, I checked to see that $T^{div}$ satisfies (MS) and it is equally tedious. My comments in lieu of a long, boring proof are below.} 

The proof that the operations are well-defined is straightforward and is left as an exercise. To show that W$T^{\mathrm{div}}$ satisfies {\bf(MS)}, one simply unpacks the definitions and applies that $T$ satisfies {\bf(MS)}. The details are left to the reader. 
\end{proof}

\subsubsection{The completion}




{\color{blue} CS: I did not make the examples a separate subsection, and with the exception of the 3rd example, these did not seem to require much discussion. If more prose should be added, I would suggest just making these separate begin{example} ...end{example} items} 
 
\begin{examples}
\begin{enumerate}
\item Non-principal ultraproducts of macs; see Section~\ref{macsmeas}, below. 
\item MS-measurable structures are generalised measurable. {\color{blue} CS: I have deleted the parenthetical ``note the `tropical algebra' '' comment, but if something useful can be added back, go ahead and do so.} 
\item strongly minimal examples {\color{red} Discuss how things get trivialised in this situation; CS has not done anything with this}
\item ??
\end{enumerate}
\end{examples}


{\color{blue} I switched the order of this subsection with the ``Basic model-theoretic properties of generalized measurable structures'' subsection. This made more sense to me given that it would 
place the ``Basic model-theoretic properties of generalized measurable structures'' subsection closer to the section on simplicity and supersimplicity}

-------------------------------------------------------------------------------------------------------------------------------------

\subsection{Dimension and quasi-finite dimension}
{\color{red} HDM: I'm inclined just to leave out this section. A goal would be to make links to the conditions in Gardia-Macpherson-Steinhorn, but the definitions there are long and it may be more effort than it is worth.}
{\color{blue} CS: I moved this stuff from the ultraproduct of a mac is gen meas proof, since 
after talking with Sylvy it is not needed there} 
 
The `non-standard' construction in the following proof is very similar to one in \cite{GMS15} and \cite{Hrushovski12}.

Let $L^{+}$ be an expansion of $L$ by adding a second sort with the language of ordered rings and, for each $L\/$-formula $\phi(\bar{x};\bar{y})$, a function symbol $f_{\phi}$ from the first to the second sort with arity $|\bar{y}|$. To expand each $M\in\mathcal{C}$ to an $L^{+}$-structure $M_{i}^{+}$ we first let the second sort be $\mathbb{R}$. For each $\phi(\bar{x};\bar{y})$ we must interpret the function symbol $f_{\phi}$. Let $\mathrm{P}$ be the $\emptyset\/$-definable partition of $(\mathcal{C}, \bar{y})$ associated with  the $\phi(\bar{x},\bar{y})$.  Put 
$f_{\phi}^{M_{i}^{+}}:\bar{a}\longmapsto h_{\rho}(M_{i})$ where 
$(M_{i},\bar{a})\in\rho\in\mathrm{P}$. Let $M^{+}:=\prod_{i\in I}M_{i}^{+}/\mathcal{U}$ be the non-principal ultraproduct of $\mathcal{C}^{+}:=(M_{i}^{+}\;|\;i\in I)$. Let $M^{*}:=M^{+}|_{L}$ be the restriction of $M^{+}$ to $L$ and $\mathcal{R}^{*}$ be its restriction to the second sort.

 {\color{blue} CS: end of moved stuff.}

Next we look at the coarse structure of $T$.

{\color{blue} CS: This repeats the definition of $~$ given in \autoref{orderedsemiring}. It should 
just be recalled in the 2nd place where it appears. Possibly the lemma could be moved to just after \autoref{orderedsemiring}. This depends on the rest of what should be put in here.} 

\begin{definition}
We define an equivalence relation $\sim$ on $T$ by declaring $a\sim b$ iff $a\leq b\leq na$ or $b\leq a\leq nb$, for some $n\in\mathbb{N}$. Let $[a]$ denote the equivalence class of $a$. For $X\in\mathrm{Def}(M)$, we let $d(X):=[h(X)]$ be the \em dimension \rm of $X$.
\end{definition}
Since the equivalence classes are convex, the quotient $T/{\sim}$ inherits a total ordering from $T$.
\begin{lemma}
Let $a,b\in T$. Then $[a+b]:=\mathrm{max}\{[a],[b]\}$.
\end{lemma}
\begin{proof}
Suppose that $a\leq b$. Then $b\leq a+b\leq 2b\sim b$ and so $a+b\sim b$.
\end{proof}

Now we introduce quasi-finite dimension as in Garcia-Macpherson-Steinhorn (and Hrushovski). {\color{red}Add in references. Add in propositions about when quasi-finite dimension and $d$-dimension are equal.}
-------------------------------

\subsection{Canonical measures etc.}
{\color{red} HDM I'm tempted to leave out this subsection.}
Need to work out the relationship between the pseudofinite context and ours. Also Grothendieck rings etc.

In GMS they obtain dimension $\delta$ with many nice properties.

\begin{proposition}
If $\mathcal{C}$ is a mac then $\delta=d$. {\color{red} This is more or less stated in GMS}
\end{proposition}
\begin{proposition}
Let $\mathcal{C}$ be just a class of finite structures and suppose that DC, FMV, MD, and FMM hold. Then we have a mac where the measures are formally given by $\mu X^{\delta}$.{\color{red}This needs a lot of checking}.
\end{proposition}
---------------------------

Let $\Gamma$ be the {\em line graph} of $B$ -- that is, the graph whose vertex set is the edge set of $B$, with two vertices adjacent in $\Gamma$ if and only if they have a common endpoint in $B$. 

Let $\mathcal{C}_\Gamma$ be the set of line graphs of members of $\mathcal{C}$, so $\Gamma$ is elementarily equivalent to an ultraproduct of members of $\mathcal{C}_\Gamma$. It is easily checked that $\Gamma$ is {\em 3-homogeneous}: any isomorphism between induced subgraphs with at most 3 vertices extends to an automorphism. The fact that $\Gamma$ is 2-homogeneous implies (arguing as in (i)) that if $M \in \mathcal{C}_\Gamma$ is sufficiently large then it is {\em strongly regular}: it is regular and connected, and there are parameters $\lambda$ and $\mu$ such that any two adjacent vertices in $M$ have $\lambda$ common neighbours, and any two distinct non-adjacent vertices have $\mu$ common neighbours. In particular, $M$ has diameter at most (in fact exactly) 2. Furthermore, by 3-homogeneity, if $x\in M$ then the set $\Gamma(x)$ of neighbours of $x$ carries the structure of a strongly regular graph, as does  the set $\Delta(x)$ of non-neighbours distinct from $x$.

Since $\Gamma$ is the line graph of a bipartite graph, if $M \in \mathcal{C}_\Gamma$ is sufficiently large then if $x\in M$ then $\Gamma(x)$ is the disjoint union $\Gamma(x)=X_1 \cup X_2$ of two complete graphs, both of the same size. Furthermore, any vertex $y\in \Delta(x)$ has at most one neighbour in $X_1$, and likewise in $X_2$: indeed, if $z_1,z_2\in X_1$ are distinct and joined to $y$, then $\Gamma(z_1)$
contains a path of length 2 $xz_2y$, which is impossible. In fact, provided $M$ is large enough, $y$ has a neighbour in each of $X_1,X_2$: for if $x_1\in X_1$ and $x_2\in X_2$ are both non-adjacent to $y$, then the sets
$\{w: w\mbox{~adjacent to~} x,x_1,y\}$ and $\{w: w\mbox{~adjacent to~} x,x_1,y\}$ must have the same size, which must be 1. It follows that the parameter $\mu$ of $M$ must be 2, and the parameter $\lambda$ is $l-1$ where $l=|X_1|=|X_2|$, and the degree $k=2l$.

Now as $\mu=2$, for every pair $(x_1,x_2)\in X_1 \times X_2$ there is a unique vertex $x_{12}\in \Delta(x)$ adjacent to both of $x_1,x_2$.
Thus, $|\Delta(x)|=l^2$, and $n:=|\Gamma|=1+2l+l^2$, so $\Gamma$ has the same parameters $n,k,\lambda,\mu$ as the strongly regular graph which is the line graph of the {\em complete} bipartite graph $K_{l+1,l+1}$. In fact, it is easily checked that $\Gamma \cong L(K_{l+1,l+1})$. 

It follows by *** that sufficiently large members of $\mathcal{C}$ must be complete  bipartite graphs of the form $K_{l-1,l-1}$. Thus, $\Gamma$ itself must be complete bipartite, contradicting our assumption that it is the {\em generic} bipartite graph.